\title{Hypercontact structures and Floer homology}   
\author{  
Sonja Hohloch \\ ETH Z\"urich
\and
Gregor Noetzel \\ Universit\"at Leipzig
\and
Dietmar~A. Salamon \\
ETH Z\"urich}  
\date{29 July 2008}   
\newtheorem{PARA}{}[section]    
\newtheorem{theorem}[PARA]{Theorem}    
\newtheorem{corollary}[PARA]{Corollary}    
\newtheorem{lemma}[PARA]{Lemma}    
\newtheorem{proposition}[PARA]{Proposition}    
\newtheorem{definition}[PARA]{Definition}    
\newtheorem{remark}[PARA]{Remark}    
\newtheorem{example}[PARA]{Example}   
\newcommand{\para}{\begin{PARA}\rm}    
\newcommand{\arap}{\end{PARA}\rm}    
\newcommand{\dfn}{\begin{definition}\rm}    
\newcommand{\nfd}{\end{definition}\rm}    
\newcommand{\rmk}{\begin{remark}\rm}    
\newcommand{\kmr}{\end{remark}\rm}    
\newcommand{\xmpl}{\begin{example}\rm}    
\newcommand{\lpmx}{\end{example}\rm}    
\newcommand{\sA}{\mathscr{A}}    
\newcommand{\sB}{\mathscr{B}}    
\newcommand{\sC}{\mathscr{C}}    
\newcommand{\sD}{\mathscr{D}}    
\newcommand{\sE}{\mathscr{E}}    
\newcommand{\sF}{\mathscr{F}}    
\newcommand{\sH}{\mathscr{H}}
\newcommand{\sL}{\mathscr{L}}    
\newcommand{\sM}{\mathscr{M}}
\newcommand{\sR}{\mathscr{R}}
\newcommand{\sU}{\mathscr{U}}
\newcommand{\sZ}{\mathscr{Z}}    
\newcommand{\tsF}{\widetilde{\mathscr{F}}}   
\newcommand{\cD}{\mathcal{D}}    
\newcommand{\cF}{\mathcal{F}}
\newcommand{\cH}{\mathcal{H}}
\newcommand{\cL}{\mathcal{L}}
\newcommand{\cP}{\mathcal{P}}
\newcommand{\C}{{\mathbb{C}}}    
\newcommand{\D}{{\mathbb{D}}}    
\renewcommand{\H}{{\mathbb{H}}}
\newcommand{\N}{{\mathbb{N}}}    
\newcommand{\R}{{\mathbb{R}}}    
\newcommand{\T}{{\mathbb{T}}}    
\newcommand{\W}{{\mathbb{W}}}    
\newcommand{\Z}{{\mathbb{Z}}}    
\newcommand{\CF}{{\mathrm{CF}}}    
\newcommand{\HF}{{\mathrm{HF}}}    
\newcommand{\SU}{{\mathrm{SU}}}    
\newcommand{\Sp}{{\mathrm{Sp}}}    
\newcommand{\RE}{{\mathrm{Re}}}    
\newcommand{\IM}{{\mathrm{Im}}}    
\newcommand{\Map}{{\mathrm{Map}}}    
\newcommand{\Vol}{{\mathrm{Vol}}}
\newcommand{\morse}{{\mathrm{morse}}}    
\newcommand{\tu}{{\widetilde u}}
\renewcommand{\i}{{\mathbf{i}}}    
\renewcommand{\j}{{\mathbf{j}}}    
\renewcommand{\k}{{\mathbf{k}}}    
\newcommand{\one}    
{{{\mathchoice \mathrm{ 1\mskip-4mu l} \mathrm{ 1\mskip-4mu l}    
\mathrm{ 1\mskip-4.5mu l} \mathrm{ 1\mskip-5mu l}}}}    
\def\slashi#1{\rlap{\sl/}#1}
\def\slashii#1{\setbox0=\hbox{$#1$}             
\dimen0=\wd0                                 
\setbox1=\hbox{\sl/} \dimen1=\wd1            
\ifdim\dimen0>\dimen1                        
\rlap{\hbox to \dimen0{\hfil\sl/\hfil}}   
#1                                        
\else                                        
\rlap{\hbox to \dimen1{\hfil$#1$\hfil}}   
\hbox{\sl/}                               
\fi}                                         %
\def\slashiii#1{\setbox0=\hbox{$#1$}#1\hskip-\wd0\hbox to\wd0{\hss\sl/\/\hss}}
\newcommand{\dd}{{\slashi{\partial}}}
\newcommand{\dcD}{{\slashiii{\cD}}}
\renewcommand{\div}{\mathrm{ div}}  
\newcommand{\dvol}{\mathrm{ dvol}}  
\newcommand{\trace}{\mathrm{ trace }}  
\newcommand{\g}{{\mathfrak{g}}}    
\newcommand{\G}{{\mathrm{G}}}     
\newcommand{\Lie}{{\mathrm{Lie}}}     
\newcommand{\grad}{\mathrm{ grad }}    
\newcommand{\id}{\mathrm{ id}}         
\newcommand{\im}{\mathrm{ im }}        
\newcommand{\INT}{\mathrm{ int}}       
\newcommand{\Hom}{\mathrm{Hom}}       %
\newcommand{\PSL}{{\mathrm{PSL}}}             
\newcommand{\rank}{\mathrm{ rank }}    
\newcommand{\SL}{{\mathrm{SL}}}             
\newcommand{\Spin}{\mathrm{ Spin}}       
\newcommand{\Diff}{\mathrm{ Diff}}        
\newcommand{\Vect}{\mathrm{ Vect}}        
\newcommand{\eps}{{\varepsilon}}    
\newcommand{\om}{{\omega}}    
\newcommand{\Om}{{\Omega}}    
\newcommand{\Cinf}{C^{\infty}}    
\newcommand{\reg}{\mathrm{ reg}}    
\newcommand{\inner}[2]{\bigl\langle #1, #2\bigr\rangle}
\def\NABLA#1{{\mathop{\nabla\kern-.5ex\lower1ex\hbox{$#1$}}}}    
\def\Nabla#1{\nabla\kern-.5ex{}_{#1}}    
\def\Tabla#1{\Tilde\nabla\kern-.5ex{}_{#1}}    
\def\abs#1{\mathopen|#1\mathclose|}    
\def\Abs#1{\left|#1\right|}    
\def\norm#1{\mathopen\|#1\mathclose\|}    
\def\Norm#1{\left\|#1\right\|}    
\renewcommand{\Tilde}{\widetilde}
\newcommand{\p}{{\partial}}
\begin{document}    
    
\maketitle    
  
    
\begin{abstract}   
We introduce a new Floer theory associated to a pair consisting 
of a Cartan hypercontact $3$-manifold~$M$ and a hyperk\"ahler manifold~$X$.
The theory is a based on the gradient flow of the hypersymplectic action 
functional on the space of maps from $M$ to $X$.  The gradient flow lines
satisfy a nonlinear analogue of the Dirac equation and can also be viewed
as hyperk\"ahler analogues of holomorphic curves. We work out the 
details of the analysis and compute the Floer homology groups 
in the case where $X$ is flat.  As a corollary we derive an existence 
theorem for the $3$-dimensional perturbed nonlinear Dirac equation 
which can be viewed as an analogue of the Arnold conjecture. 
\end{abstract}

    
\section{Introduction} \label{sec:intro}  

In this paper we examine a hyperk\"ahler analogue of
symplectic Floer homo\-logy. We assume throughout that 
$X$ is a hyperk\"ahler manifold with complex structures 
$I,J,K$ and symplectic forms $\om_1,\om_2,\om_3$.  We also assume  
that $M$ is a compact oriented $3$-manifold equipped with a volume form 
$\sigma\in\Om^3(M)$ and a positive frame $v_1,v_2,v_3\in\Vect(M)$
of the tangent bundle. Associated to these data is a natural 
$1$-form on the space $\sF:=\Cinf(M,X)$ of smooth functions
$f:M\to X$ defined by 
\begin{equation}\label{eq:action-form}
\hat f\mapsto \int_M
\Bigl(
\om_1(\p_{v_1}f,\hat f)
+ \om_2(\p_{v_2}f,\hat f)
+ \om_3(\p_{v_3}f,\hat f) 
\Bigr)\,\sigma
\end{equation}
for $\hat f\in T_f\sF=\Om^0(M,f^*TX)$.
This $1$-form is closed if and only if the vector fields $v_i$
are volume preserving, i.e. 
$$
\cL_{v_1}\sigma = \cL_{v_2}\sigma = \cL_{v_3}\sigma = 0.
$$
Our two main examples are the $3$-torus with the coordinate vector fields 
and the $3$-sphere with the standard hypercontact structure. 

\subsection*{Hypercontact structures}

A {\it hypercontact structure} on a $3$-manifold $M$
is a triple of contact forms 
$
\alpha=(\alpha_1,\alpha_2,\alpha_3)\in\Om^1(M,\R^3)
$ 
such that
$$
\alpha_1\wedge d\alpha_1
= \alpha_2\wedge d\alpha_2
= \alpha_3\wedge d\alpha_3
=: \sigma 
$$
and $\alpha_i\wedge d\alpha_j+\alpha_j\wedge d\alpha_i=0$ 
for $i\ne j$. The Reeb vector fields 
$v_1,v_2,v_3$ are pointwise linearly independent
and preserve the volume form $\sigma$. 
The hypercontact structure is called {\it positive} 
if they form a positive frame of the tangent bundle. 
In this setting the $1$-form~\eqref{eq:action-form}
is the differential of the action functional
$\sA:\sF\to\R$ defined by 
\begin{equation}\label{eq:Action}
\sA(f) := - \int_M \Bigl(\alpha_1\wedge f^*\om_1 
+ \alpha_2\wedge f^*\om_2 + \alpha_3\wedge f^*\om_3  \Bigr).
\end{equation}
A positive hypercontact structure is called a 
{\it Cartan structure} if the $\alpha_i$ form a dual 
frame of the cotangent bundle, i.e.\ $\alpha_i(v_j)=\delta_{ij}$. 
In the Cartan case 
$\kappa:=d\alpha_1(v_2,v_3)=d\alpha_2(v_3,v_1)=d\alpha_3(v_1,v_2)$
is constant and $d\alpha_i=\kappa \alpha_j\wedge\alpha_k$ and 
$[v_i,v_j]=\kappa v_k$ for every cyclic permutation $i,j,k$ of $1,2,3$. 
(We use the sign convention of~\cite{MS} for the Lie bracket.)

The archetypal example is the $3$-sphere $M=S^3$, understood 
as the unit quaternions, with $v_1(y)=\i y$, $v_2(y)=\j y$, $v_3(y)=\k y$. 
Hypercontact structures were introduced by Geiges--Gonzalo~\cite{GG,GG1}.  
They use the term taut contact sphere for what we call 
a hypercontact structure.  They proved that every Cartan hypercontact 
$3$-manifold is diffeomorphic to a quotient of the 
$3$-sphere by the right action of a finite subgroup of $\Sp(1)$. 

\subsection*{Tori}

Let $M=\T^3=\R^3/\Z^3$ be the standard $3$-torus equipped with the 
standard volume form $\sigma=dt_1\wedge dt_2\wedge dt_3$ and
$v_i=\sum_{j=1}^3a_{ij}\p_j$ where $A=(a_{ij})_{i,j=1}^3$ is 
a nonsingular real $3\times3$ matrix.  In this case 
the lift of the $1$-form~\eqref{eq:action-form} to the universal 
cover $\tsF$ of $\sF$ is the differential of the function
\begin{equation}\label{eq:ActionT}
\sA = \sum_{i,j=1}^3a_{ij}\sA_{ij}:\tsF\to\R
\end{equation}
where $\sA_{ij}(f)$ denotes the $\om_i$-symplectic action 
of the loop $t_j\mapsto f(t)$, averaged over the remaining 
two variables $t_k,t_\ell$ with $k,\ell\ne j$. If $X$ is flat 
and $\sF_0\subset\sF$ denotes the space of contractible maps 
${f:\T^3\to X}$ then $\sA$  descends to $\sF_0$.  Explicitly, 
we have 
$
\sA_{ij}(f) := -\int_0^1\int_0^1\int_\D u_{t_k,t_\ell}^*\om_i\,dt_k\,dt_\ell
$
for $f\in\sF_0$, where $u_{t_k,t_\ell}:\D\to X$ is a smooth family of maps
satisfying $u_{t_k,t_\ell}(e^{2\pi i t_j})=f(t_1,t_2,t_3)$.

\subsection*{Hyperbolic spaces}

A third class of examples arises from unit tangent bundles of higher
genus surfaces or equivalently from quotients of the group 
$\G:=\PSL(2;\R)$.  Let $\cH\subset\C$ denote the upper half plane
and 
$
\cP:=\left\{(z,\zeta)\in\C^2\,|\,\IM(z)=\Abs{\zeta}\right\}
$
the unit tangent bundle of~$\cH$.  The group $\G$
acts freely and transitively on $\cP$ by 
$$
g_*(z,\zeta) := \left(\frac{az+b}{cz+d},\frac{\zeta}{(cz+d)^2}\right),\qquad
g=:\left(\begin{array}{cc} a & b \\ c & d\end{array}\right)\in\SL(2;\R)..
$$
Now let $\Gamma\subset\PSL(2;\R)$ be a discrete subgroup acting 
freely on $\cH$ such that the quotient 
$\Sigma:=\Gamma\backslash\cH$ is a closed 
Riemann surface.  Then the $3$-manifold
$$
M := \Gamma\backslash\G
$$
is diffeomorphic to the unit tangent bundle 
$T_1\Sigma=\Gamma\backslash\cP$ via $[g]\mapsto[g_*(i,1)]$.
The group $\G$ carries a natural bi-invariant volume form 
$\sigma\in\Om^3(\G)$ given by
$$
\sigma(g\xi,g\eta,g\zeta) := \frac12\trace([\xi,\eta]\zeta)
$$
for $\xi,\eta,\zeta\in\g:=\Lie(\G)=\mathfrak{sl}(2;\R)$.
This volume form descends to $M$ and is invariant under 
the right action of $\G$. Now consider the traceless matrices 
$$
\xi_1 := \left(\begin{array}{rr} 1 & 0 \\ 0 & -1\end{array}\right),\qquad
\xi_2 := \left(\begin{array}{rr} 0 & 1 \\ 1 & 0\end{array}\right),\qquad
\xi_3 := \left(\begin{array}{rr} 0 & -1 \\ 1 & 0\end{array}\right).
$$
The resulting vector fields $v_i(g):=g\xi_i$ on $\G$ 
are $\Gamma$-equivarient and preserve the volume form $\sigma$.
Hence they descend to volume preserving vector fields
on $M$ (still denoted by $v_i$) and so the 
$1$-form~\eqref{eq:action-form} is closed in this setting.

Note that
$
\sigma(v_1,v_2,v_3) = 2
$
and $d\pi(v_3)=0$, $d\pi(v_1)=id\pi(v_2)$. 
The Lie brackets  of  the vector fields $v_i$ are given by
$$
[v_2,v_3] = -2v_1,\qquad [v_3,v_1]=-2v_2,\qquad [v_1,v_2]=2v_3
$$
(because the $\xi_i$ act on $\G$ on the right). Hence, if
$\alpha_i\in\Om^1(M)$ denote the $1$-forms dual to 
the vector fields $v_i$, we have
$$
d\alpha_1 = -2\alpha_2\wedge\alpha_3,\qquad
d\alpha_2 = -2\alpha_3\wedge\alpha_1,\qquad
d\alpha_3 = 2\alpha_1\wedge\alpha_2.
$$
This implies that the $1$-form~\eqref{eq:action-form} is the differential 
of the action functional 
$$
\sA(f) := \int_M\left(\alpha_1\wedge f^*\om_1+\alpha_2\wedge f^*\om_2
-\alpha_3\wedge f^*\om_3\right).
$$
However, in this setting the energy identity~\eqref{eq:energy-crit} 
discussed below does not help in the compactness proof.  
This is the reason why we do not include the higher genus case 
in our discussion in the main part of this paper. 

\subsection*{Floer theory}

The zeros of the $1$-form~\eqref{eq:action-form} are the solutions 
$f:M\to X$ of the nonlinear elliptic first order partial differential equation 
\begin{equation}\label{eq:crit}
\dd(f) := I\p_{v_1}f + J\p_{v_2}f+K\p_{v_3}f = 0.
\end{equation}
This is a nonlinear analogue of the Dirac equation
that was first introduced by Taubes~\cite{T}.
Obviously, the constant functions are solutions of~\eqref{eq:crit}.
When $M=S^3$ other solutions arise from the composition of 
rational curves with suitable Hopf fibrations (see below). 
When $M=\T^3$ solutions can be obtained 
from elliptic curves. In the case $M=\Gamma\backslash\G$ 
solutions arise from the composition of $K$-holomorphic curves 
$\Sigma\to X$ with $\pi:M\to\Sigma$.   

In this paper we prove an existence result for the 
solutions of the perturbed nonlinear Dirac equation
\begin{equation}\label{eq:crit-H}
\dd_H(f) := I\p_{v_1}f + J\p_{v_2}f+K\p_{v_3}f - \nabla H(f) = 0.
\end{equation}
Here $H:X\times M\to\R$ is a smooth function and we denote by 
$\nabla H(f)$ the gradient with respect to the first argument.
The linearized operator for this equation is self adjoint and we call 
a solution $f:M\to X$ of~\eqref{eq:crit-H} {\it nondegenerate}
if the linearized operator is bijective. In the nondegenerate case, 
and when $X$ is flat, one can count the solutions with signs, 
however, it turns out that this count gives zero. Nevertheless we shall 
prove the following hyperk\"ahler analogue of the Conley-Zehnder theorem 
confirming the Arnold conjecture for the torus~\cite{CZ}. 
In fact, in the torus case with $v_1=\p/\p t_1$ the solutions 
of~\eqref{eq:crit} can be interpreted as the periodic orbits 
of a suitable infinite dimensional Hamiltonian system. 

\medskip\noindent{\bf Theorem~A.}
{\it Let $M$ be either a compact Cartan hypercontact 
$3$-manifold (with Reeb vector fields $v_i$) or 
the $3$-torus (with a constant frame~$v_i$). 
Let $X$ be a compact flat hyperk\"ahler manifold. 
Then the space of solutions of~\eqref{eq:crit-H} 
is compact.  Moreover, if the contractible 
solutions are all nondegenerate, then their 
number is bounded below by the sum of the 
$\Z_2$-Betti numbers of $X$.  In particular, 
equation~\eqref{eq:crit-H} has a contractible 
solution for every $H$.}

\medskip\noindent
The proof of Theorem~A is based on the observation that the 
solutions of~\eqref{eq:crit-H} are the critical points of the 
perturbed {\it hypersymplectic action functional}
$\sA_H(f) := \sA(f) - \int_MH(f) \sigma$.
As in symplectic Floer theory, this functional is 
unbounded above and below, and the Hessian has infinitely many positive 
and negative eigenvalues.  Thus the standard techniques of Morse 
theory are not available for the study of the critical points. 
However, with appropriate modifications, the familiar techniques 
of Floer homology carry over to the present case,
at least when~$X$ is flat, and thus give rise to 
natural Floer homology groups for a pair~$(M,X)$.

The Floer homology groups are determined by a chain 
complex that is generated by the solutions of~\eqref{eq:crit-H}.
The boundary operator is determined by the finite energy 
solutions ${u:\R\times M\to X}$ of the negative gradient flow equation
\begin{equation}\label{eq:floer-H}
\p_su + I\p_{v_1}u + J\p_{v_2}u+K\p_{v_3}u = \nabla H(u).
\end{equation}
One of the key ingredients in the compactness proof is the energy
identity
\begin{equation}\label{eq:energy-crit}
\frac12\int_M\Abs{df}^2 
= \frac12\int\Abs{I\p_{v_1}f + J\p_{v_2}f+K\p_{v_3}f}^2
- \int_M\sum_{i=1}^3\eps_i\wedge f^*\om_i
\end{equation}
for $f:M\to X$, where the $\eps_i\in\Om^1(M)$ 
are dual to the vector fields $v_i$.  In the torus case these forms 
are closed and thus the last term in~\eqref{eq:energy-crit} is a 
topological invariant.  In the Cartan hypercontact case this 
term is the hypersymplectic action $\sA(f)$. 

To compute the Floer homology groups we choose a Morse--Smale 
function ${H:X\to\R}$ and study the equation
\begin{equation}\label{eq:floer-Heps}
\p_su + \eps^{-1}\left(I\p_{v_1}u + J\p_{v_2}u+K\p_{v_3}u\right) 
= \nabla H(u)
\end{equation}
for small values of $\eps$.  The gradient lines of $H$ are solutions of 
this equation and we shall prove that, for $\eps>0$ sufficiently small, there 
are no other contractible solutions.   This implies that our Floer 
homology groups $\HF_*(M,X)$ are isomorphic to the singular 
homology $H_*(X;\Z_2)$.

\medskip\noindent{\bf Theorem~B.}
{\it Let $M$ be either a compact Cartan hypercontact 
$3$-manifold (with Reeb vector fields $v_i$)
or the $3$-torus (with a constant frame~$v_i$).  
Let $X$ be a compact flat hyperk\"ahler manifold 
and fix a class ${\tau\in\pi_0(\sF)}$. 
Then, for a generic perturbation $H:X\times M\to\R$, 
there is a natural Floer homology group $\HF_*(M,X,\tau;H)$
associated to a chain complex generated by 
the solutions of~\eqref{eq:crit-H} where the 
boundary operator is defined by counting the 
solutions of~\eqref{eq:floer-H}.
The Floer homology groups associated to 
different choices of $H$ are naturally 
isomorphic. Moreover, for the component $\tau_0$ of the
constant maps there is a natural isomorphism
$\HF_*(M,X,\tau_0;H)\cong H_*(X;\Z_2)$.}

\medskip\noindent{\bf Remark.}
The precise condition we need for extending the standard techniques
of Floer theory to our setting is that $X$ has nonpositive sectional
curvature. As every hyperk\"ahler manifold has vanishing 
Ricci tensor, nonpositive sectional curvature implies that
$X$ is flat and hence is a quotient of a hyperk\"ahler torus 
by a finite group.  An example is the quotient of the standard 
$12$-torus $\H^3/\Z^{12}$ by the $\Z_2$-action 
determined by $(x,y,z)\mapsto(y,x,z+1/2)$. 

\subsection*{A more general setting}

There is conjecturally a much richer theory which 
provides Floer homological invariants for all triples $(M,X,\tau)$,
consisting of a Cartan hypercontact $3$-manifold~$M$, a compact
hyperk\"ahler manifold~$X$, and a homotopy class $\tau$ 
of maps from $M$ to $X$.  One basic observation is 
that every holomorphic sphere in a hyperk\"ahler manifold gives 
rise to a solution of~\eqref{eq:crit} on $M=S^3$.  
Another point is that $\pi_3(X)$ can be a very rich group.  For example, 
the third homotopy group of the K3-surface has 253 generators
(see~\cite[Appendix]{CH}).

\medskip\noindent{\bf Example.}
Think of the $3$-sphere as the unit sphere in the quaternions 
$\H\cong\R^4$ and of the $2$-sphere as the unit sphere 
in the imaginary quaternions $\IM(\H)\cong\R^3$. 
For $\lambda=\lambda_1\i+\lambda_2\j+\lambda_3\k\in S^2$ 
denote 
$
J_\lambda:=\lambda_1I+\lambda_2J+\lambda_3K
$
and
$
\om_\lambda=\lambda_1\om_1+\lambda_2\om_2+\lambda_3\om_3.
$
Define $h_\lambda:S^3\to S^2$ by 
$h_\lambda(y) := -\bar y\lambda y$.
If $u:S^2\to X$ is a $J_\lambda$-ho\-lo\-mor\-phic sphere then 
$$
f:=u\circ h_\lambda:S^3\to X
$$ 
is a critical point of $\sA$ and 
$$
E(u) = \frac12\int_{S^2}\Abs{du}^2
= \int_{S^2}u^*\om_\lambda
= \frac{1}{2\pi}\sA(u\circ h_\lambda).
$$ 
To see this, assume ${\lambda=\i}$ and write ${h_1(y):=-\bar y\i y}$,  
${h_2(y):=-\bar y\j y}$, and ${h_3(y):=-\bar y\k y}$.  
These functions satisfy $\p_{v_i}h_i=0$ and 
$\p_{v_j}h_i=-\p_{v_i}h_j=2h_k$
for every cyclic permutation $i,j,k$ of $1,2,3$. 
Hence $h_1\wedge\p_{v_3}h_1 = \p_{v_2}h_1$. 
If $u:S^2\to X$ is an $I$-holomorphic sphere it follows that 
the function $f := u\circ h_1$ satisfies 
$\p_{v_1}f=0$ and $I\p_{v_3}f=\p_{v_2}f$
and hence is a solution of~\eqref{eq:crit}.  
Moreover, $2\pi\int_{S^2}\sigma 
= -\int_{S^3}\alpha_1\wedge h_1^*\sigma$
for $\sigma\in\Om^2(S^2)$. 
(When $\sigma$ is exact both sides are zero. Since
$-\alpha_1\wedge h_1^*\dvol_{S^2}=4\dvol_{S^3}$
the value of the factor follows from 
$\Vol(S^2)=4\pi$ and $\Vol(S^3)=2\pi^2$.)
With $\sigma=u^*\om_1$ this implies 
$2\pi\int_{S^2}u^*\om_1 
= - \int_{S^2}\alpha_1\wedge h_1^*u^*\om_1
= \sA(u\circ h_1)$. 
Here the last equation follows from the fact that $u^*\om_2=u^*\om_3=0$
for every $I$-holomorphic curve~$u$. 

\medskip\noindent
The main technical difficulty in setting up the Floer theory for general 
hyperk\"ahler manifolds is to establish a suitable compactness theorem.  
In contrast to the familiar theory the derivatives for a sequence 
of solutions of~\eqref{eq:crit-H} or~\eqref{eq:floer-H}
will not just blow up at isolated points but along codimension-$2$ 
subsets.  For example, if $u_\nu:S^2\to X$ is a sequence of $I$-holomorphic
curves and $h:S^3\to S^2$ is a suitable Hopf fibration, then 
$f_\nu:=u_\nu\circ h$ is a sequence of solutions of~\eqref{eq:crit} 
and its derivatives blow up along the Hopf circle ${h^{-1}(z_0)}$ 
whenever the derivatives of $u_\nu$ blow up near $z_0$.  
This phenomenon is analogous to the codimension 
$4$ bubbling in Donaldson--Thomas theory~\cite{DT}. 

\subsection*{Floer--Donaldson theory}

Let $\Sigma$ be a hyperk\"ahler $4$-manifold with complex structures
$\i,\j,\k$ and symplectic forms $\sigma_1,\sigma_2,\sigma_3$. 
Consider the elliptic partial differential equation
\begin{equation}\label{eq:Hhol}
du - I du\i - J du\j - Kdu\k = 0
\end{equation}
for smooth maps $u:\Sigma\to X$.  This is sometimes called 
the Cauchy--Riemann--Fueter equation and it has been widely 
studied (see~\cite{T}, \cite[Chapter~3]{H} and references).
For $\Sigma=\R\times M$ with its standard hyperk\"ahler 
structure (see below) equation~\eqref{eq:Hhol}
is equivalent to~\eqref{eq:floer-H} with $H=0$.
The solutions of~\eqref{eq:Hhol} satisfy 
the energy identity
\begin{equation}\label{eq:Henergy}
E(u)
= \frac18\int_\Sigma\Abs{du - I du\i - J du\j - Kdu\k}^2\,\dvol_\Sigma
- \int_\Sigma\sum_{i=1}^3\sigma_i\wedge u^*\om_i,
\end{equation}
where $E(u):=\frac12\int_\Sigma\Abs{du}^2\,\dvol_\Sigma$. 
The linearized operator 
$$
\sD_u:\Om^0(\Sigma,u^*TX)\to \Om^1_\H(\Sigma,u^*TX)
$$
takes values in the space of $1$-forms on $\Sigma$ with values in $u^*TX$
that are complex linear with respect to $I$, $J$, and $K$.  When $\Sigma$
is closed this operator is Fredholm between appropriate Sobolev completions 
and its index is
\begin{equation}\label{eq:index}
\mathrm{ind}(\sD_u) 
= - \inner{c_2(TX)}{u_*[\Sigma]} 
+ \frac{\chi(\Sigma)}{24}\dim^\R X,
\end{equation}
where $\chi(\Sigma)$ is the Euler characteristic. 
Equation~\eqref{eq:index} continues to hold in the case 
$\Sigma=S^1\times M$ with its natural quaternionic structure. 
We sketch a proof below. Conjecturally, there should be 
Gromov--Witten type invariants obtained from intersection 
theory on the moduli space of solutions of~\eqref{eq:Hhol}. 

One can also consider hyper\-k\"ahler $4$-manifolds 
$\Sigma$ with cylindrical ends 
$\iota^\pm:\R^\pm\times M^\pm\to\Sigma$. Here we assume that 
$M^\pm$ is either a Cartan hypercontact 3-manifold or a 3-torus.
Then $\R^\pm\times M^\pm$ has a natural flat hyperk\"ahler 
structure~\cite{D,GG1}. In the hypercontact case the symplectic 
forms are 
$\om_i=\kappa^{-1}d(e^{-\kappa s}\alpha_i)
= e^{-\kappa s}
\bigl(-ds\wedge\alpha_i+\alpha_j\wedge\alpha_k\bigr)$
and in the torus case they are 
$\om_i=-ds\wedge\alpha_i+\alpha_j\wedge\alpha_k$
for every cyclic permutation $i,j,k$ of $1,2,3$. 
In both cases the complex structure $\i$ is given by 
$\p_s\mapsto -v_1$, $v_1\mapsto \p_s$, $v_2\mapsto v_3$,
$v_3\mapsto -v_2$ and similarly for $\j$ and $\k$.
We assume that the embeddings $\iota^\pm$ are hyperk\"ahler 
isomorphisms onto their images and that the complement 
$\Sigma\setminus(\im\,\iota^+\cup\,\im\,\iota^-)$ 
has a compact closure. 
Alternatively, it might also be interesting to consider hyperk\"ahler 
$4$-manifolds with asymptotically cylindrical ends as in~\cite{K1,K2}.
One can then (conjecturally) use the solutions 
of equation~\eqref{eq:Hhol} with Hamiltonian perturbations on the cylindrical 
ends to obtain a homomorphism $\HF_*(M^-,X)\to \HF_*(M^+,X)$
respectively $\HF^*(M^+,X)\to\HF^*(M^-,X)$. 

\medskip\noindent{\bf Proof of the index formula.}
We relate $\sD_u$ to a Dirac operator on $\Sigma$ 
associated to a spin$^c$ structure.
On $\Sigma$ we have a Hermitian vector bundle 
$W=W^+ \oplus W^-$ where 
$$
W^+:= u^*TX\oplus u^*TX,\quad
W^-:= \mathrm{Hom}_\H(T\Sigma,u^*TX)\oplus \mathrm{Hom}_I(T\Sigma,u^*TX).
$$ 
Here $\mathrm{Hom}_\H(T\Sigma,u^*TX)$ denotes
the bundle of quaternionic homomorphisms and 
$\mathrm{Hom}_I(T\Sigma,u^*TX)$ denotes the bundle 
of homomorphisms that are complex linear with respect 
to $I$ and complex anti-linear with respect to $J$ and $K$.  
The complex structures on $W^+$ and $W^-$ are given by 
$(\xi_1,\xi_2)\mapsto(I\xi_2,I\xi_1)$. The spin$^c$ 
structure $\Gamma: T\Sigma\to\mathrm{End}(W)$
has the form
$$
\Gamma(v) := \left(\begin{array}{cc}
  0 & -\gamma(v)^* \\
  \gamma(v) & 0
\end{array}\right)\
$$
for $v\in T_z\Sigma$ where $\gamma(v):W_z^+\to\W_z^-$
is given by
$$
\gamma(v)(\xi_1,\xi_2) 
:= (\pi_\H(\inner{v}{\cdot}\xi_1),\pi_I(\inner{v}{\cdot}\xi_2)).
$$
Here $\pi_\H,\pi_I:\Hom_\R(T\Sigma,u^*TX)\to\Hom_\R(T\Sigma.u^*TX)$
denote the projections
\begin{equation*}
\pi_\H\big(A\big) := A  - IA\i - JA\j - KA\k,\quad
\pi_I\big(A\big) := A  - IA\i + JA\j + KA\k.
\end{equation*}
The Dirac operator 
$D:\Om^0(\Sigma,W^+)\to\Om^0(\Sigma,W^-)$ 
is the direct sum of~$\sD_u$ and 
$\widetilde{\sD}_{u}:\Om^0(\Sigma,u^*TX)\to \Om^1_I(\Sigma,u^*TX)$ 
given by~${\widetilde{\sD}_{u}\xi := \pi_I(\nabla\xi)}$.
These operators have the same index and hence 
$$
2\mathrm{ind}^\R(\sD_{u})
= \mathrm{ind}^{\R}(D)
= \frac{\mathrm{rank}^{\R}(W^+)}{24}\chi(\Sigma)
+ \frac12\inner{c_1(W^+)^2-2c_2(W^+)}{[\Sigma]}.
$$
The last equation follows from the Atiyah--Singer 
index theorem (see~\cite{M}). Alternatively, one can 
identify $\Om^0(\Sigma,W^+)$ with 
$\Om^{0,0}(\Sigma,u^*TX)\oplus\Om^{2,0}(u^*TX)$
via 
$
(\xi_1,\xi_2)\mapsto(\xi_1+\xi_2,J(\xi_2-\xi_1)\om_\j+K(\xi_2-\xi_1)\om_\k)
$
and the space $\Om^0(\Sigma,W^-)$ with $\Om^{1,0}(\Sigma,u^*TX)$
via $(\alpha_1,\alpha_2)\to\alpha_1+\alpha_2$.  Under these
identifications the Dirac operator $D$ corresponds to the 
twisted Cauchy--Riemann operator 
$
{\p+\p^*:\Om^{\mathrm{ev},0}(\Sigma,u^*TX)
\to \Om^{\mathrm{odd},0}(\Sigma,u^*TX)}.
$
Since $I$ is homotopic to $-I$, 
the complex Fredholm index of $D$ is the holomorphic 
Euler characteristic of the bundle $u^*TX\to\Sigma$
and, by the Hirzebruch--Riemann--Roch formula, 
$$
\mathrm{ind}^\R(\sD_{u})
= \mathrm{index}^{\C}(D)
= \int_\Sigma\mathrm{ch}(u^*TX)\mathrm{td}(T\Sigma).
$$
With 
$
\mathrm{ch}=\rank^\C+c_1+\frac12(c_1^2-2c_2)
$
and 
$
\mathrm{td}=1+\frac12c_1+\frac1{12}(c_1^2+c_2)
$ 
this gives again the above formula, and~\eqref{eq:index} 
follows because $c_1(TX)=c_1(T\Sigma)=0$.

\subsection*{Ring structure}

As an example of this construction we obtain (conjecturally) a ring 
structure on $\HF^*(S^3,X)$. Take $\Sigma:=\H\setminus\{-\frac12,\frac12\}$
and define $\iota^-:(-\infty,0]\times S^3\to\H$ by 
$$
\iota^-(s,y):=e^{-s}y.
$$
The image of this map is the complement of the open unit ball in $\H$. 
The embedding $\iota^+:[0,\infty)\times(S^3\sqcup S^3)\to\H$
is the disjoint union of the embeddings $(s,y)\mapsto e^{-1-s}y\pm\frac12$. 
The resulting {\it quaternionic pair of pants product} 
$$
\HF^*(S^3,X)\otimes \HF^*(S^3,X)\to\HF^*(S^3,X)
$$
should be independent of the choice of the embeddings and the Hamiltonian
perturbations used to define it.   Moreover, counting the solutions 
of~\eqref{eq:Hhol} on the punctured cylinder 
$\R\times M\setminus\{\mathrm{pt}\}$, will lead to a module structure
of $\HF^*(M,X)$ over $\HF^*(S^3,X)$ for every $M$. 

The compactness and transversality results in the present paper show 
that this construction is perfectly rigorous and gives rise to an associative 
product on $\HF^*(S^3,X)$ whenever $X$ is flat.  Moreover, in this case 
it agrees with the usual cup product under our isomorphism
$$
\HF^*(S^3,X)\cong H^*(X;\Z_2).
$$ 

\subsection*{Relations with Donaldson--Thomas theory}

In~\cite{DT} Donaldson and Thomas outline the construction of
Donaldson type invariants of $8$-dimensional $\Spin(7)$-manifolds $Z$
and Floer homological invariants of $7$-dimensional $\G_2$-manifolds $Y$. 
In the case $Z=\Sigma\times S$, where $\Sigma$ and $S$ are
hyperk\"ahler surfaces, they explain that solutions of their equation
on $\Sigma\times S$ correspond, in the adiabatic limit where the metric
on $S$ degenerates to zero, to solutions $u:\Sigma\to\sM(S)$ of~\eqref{eq:Hhol}
with values in a suitable moduli space $X=\sM(S)$ of bundles over $S$.
In a similar vein there is a conjectural correspondence between the 
Donaldson-Thomas-Floer theory of 
$$
Y=M\times S
$$ 
with the Floer homology groups $\HF_*(M,\sM(S))$ discussed above whenever 
$M$ is either a Cartan hypercontact $3$-manifold or a flat $3$-torus. 
Namely, the solutions of the Floer equation in Donaldson--Thomas theory
on $\R\times Y$ with $Y=M\times S$ correspond, in the adiabatic
limit, formally to the solutions of~\eqref{eq:floer-H} 
on $\R\times M$ with values in $\sM(S)$. 

\subsection*{Boundary value problems}

If $M$ is Cartan hypercontact $3$-manifold with boundary $\p M$
and Reeb vector fields $v_1,v_2,v_3$ then there is a unique map 
$\lambda:\p M\to S^2$ such that 
$$
\nu:=\sum_i\lambda_iv_i:\p M\to TM
$$
is the outward pointing unit normal vector field.  In this case the
$1$-form~\eqref{eq:action-form} is not closed.  Its differential 
is given by the formula
$$
T_f\sF\times T_f\sF\to\R:
(\hat f_1,\hat f_2)\mapsto\int_{\p M}\om_\lambda(\hat f_1,\hat f_2)
\dvol_{\p M}.
$$
This is a symplectic form on the space of maps $\p M\to X$.  Thus it 
seems natural to impose the Lagrangian boundary condition
$$
f(y) \in L_y,\qquad y\in\p M,
$$
where $\bigsqcup_{y\in\p M}L_y$ is a smooth submanifold of $\p M\times X$
such that $L_y$ is Lagrangian with respect to $\om_{\lambda(y)}$ for every
$y\in\p M$.  In this paper we do not carry out the analysis for this boundary 
value  problem. 

In the technical parts of this paper we shall restrict the discussion
to the case where $M$ is a (Cartan) hypercontact $3$-manifold. 
The analysis for the case $M=\T^3$ is almost verbatim the same and in
some places easier because the metric is flat. 
In Section~\ref{sec:hyac} 
we introduce the hypersymplectic action functional and its critical points, 
discuss the Floer equation, and restate Theorem~A.   
In Section~\ref{sec:compact} we prove the main compactness 
and exponential decay theorems for the solutions 
of~\eqref{eq:crit-H} and~\eqref{eq:floer-H}.
These results are only valid for flat target manifolds $X$.  
The details of the transversality theory are worked out 
in Section~\ref{sec:trans} (for general target manifolds $X$).
With compactness and transversality established, the construction
of Floer homology is completely standard and we restrict ourselves 
to restating the result in Section~\ref{sec:floer}.   However, the 
computation of Floer homology still requires some serious analysis 
which is carried out in Section~\ref{sec:floer}. Three appendices 
discuss basic properties of hypercontact $3$-manifolds, 
the relevant a priori estimates, and a removable singularity theorem.

\medskip\noindent{\bf Acknowledgement.}
Thanks to Ron Stern for pointing out to us the 
discussion of $\pi_3(X)$ for a simply connected 
$4$-manifold $X$ in Cochran--Habegger~\cite{CH}. 
Thanks to Oliver Baues, Kenji Fukaya, Hansjoerg Geiges, and Katrin Wehr\-heim 
for helpful comments. Sonja Hohloch and Gregor Noetzel 
would like to thank the Forschungsinstitut f\"ur Mathematik 
at ETH Z\"urich for its hospitality. 

    
\section{The hypersymplectic action functional} \label{sec:hyac}

Let $X$ be a hyperk\"ahler manifold with complex structures 
$I,J,K$ and associated symplectic forms $\om_1,\om_2,\om_3$. 
Let $(M,\alpha_1,\alpha_2,\alpha_3)$ be a positive hypercontact 
$3$-manifold with Reeb vector fields $v_1,v_2,v_3$ 
(see Appendix~\ref{app:hyco}). 
Then the space $\sF:=\Map(M,X)$ 
of smooth maps $f:M\to X$ carries a natural 
{\bf hypersymplectic action functional}
$\sA:\sF\to\R$ given by
\begin{equation}\label{eq:action}
\sA(f) := - \int_M \Bigl(\alpha_1\wedge f^*\om_1 
+ \alpha_2\wedge f^*\om_2 + \alpha_3\wedge f^*\om_3  \Bigr).
\end{equation}
The next lemma shows that the critical points of $\sA$ are the 
solutions of the partial differential equation
\begin{equation}\label{eq:ijk}
\dd(f) := Idf(v_1)+Jdf(v_2)+Kdf(v_3)=0.
\end{equation}
This is a Dirac type elliptic equation because the vector fields $v_i$ 
are everywhere linearly independent (see Lemma~\ref{le:reeb}) 
and the complex structures $I,J,K$ satisfy the quaternionic relations. 
(The square of $\dd$ in local coordinates is a standard second 
order elliptic operator.) 

\begin{lemma}\label{le:crit}
The differential of $\sA$ along a path $\R\to\sF:t\mapsto f_t$
is 
$$
\frac{d}{dt} \sA(f_t) = \int_M\inner{\p_tf_t}{\dd(f_t)}\kappa\;\dvol_M,
$$
where $\kappa$ and the metric on $M$ 
are as in Remark~\ref{rmk:metric}. 
\end{lemma}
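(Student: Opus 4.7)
The plan is to differentiate under the integral, apply Stokes' theorem, and collapse the result pointwise on the Reeb frame using the structure equations of the hypercontact manifold.

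Since the $\alpha_i$ do not depend on $t$, the only $t$-dependence comes from the pullbacks. For each $i$, applying the Cartan formula on $M\times(-\eps,\eps)$ together with the closedness $d\om_i=0$ yields
$$\frac{d}{dt}f_t^*\om_i = d\beta_i,\qquad \beta_i(X):=\om_i(\p_tf_t,\,df_t(X)),$$
so
$$\frac{d}{dt}\sA(f_t) = -\sum_{i=1}^3\int_M\alpha_i\wedge d\beta_i.$$
Integration by parts (Stokes' theorem, since $M$ is closed) then gives $\int_M\alpha_i\wedge d\beta_i = \int_M d\alpha_i\wedge\beta_i$, which transfers the derivative off $\beta_i$ onto $\alpha_i$.

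For the pointwise step I would invoke the hypercontact structure equations $d\alpha_i=\kappa\,\alpha_j\wedge\alpha_k$ (for the cyclic permutation $(i,j,k)$ of $(1,2,3)$) together with the duality $\alpha_i(v_j)=\delta_{ij}$ recorded in Appendix~\ref{app:hyco}. Expanding $\beta_i=\sum_\ell\beta_i(v_\ell)\alpha_\ell$ in the dual coframe, only the $\alpha_i$-component survives the wedge with $\alpha_j\wedge\alpha_k$, so that $\alpha_j\wedge\alpha_k\wedge\beta_i = \beta_i(v_i)\,\sigma$. Summing cyclically and using the hyperk\"ahler compatibility $\om_i(\cdot,\cdot)=\inner{I_i\cdot}{\cdot}$ with $I_1=I,\,I_2=J,\,I_3=K$,
$$\frac{d}{dt}\sA(f_t) = \int_M\kappa\sum_{i=1}^3\om_i(\p_{v_i}f_t,\p_tf_t)\,\sigma = \int_M\kappa\,\inner{\dd(f_t)}{\p_tf_t}\,\sigma.$$
Identifying $\sigma=\dvol_M$ per Remark~\ref{rmk:metric} yields the stated formula.

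The only delicate point is sign bookkeeping: tracking the sign from integration by parts against the sign of the cyclic wedge $\alpha_j\wedge\alpha_k\wedge\alpha_i=+\sigma$, and the convention $\om_i(X,Y)=\inner{I_iX}{Y}$. No analytic input is required; once the hypercontact structure equations and the volume form identification are in hand the argument is a routine computation.
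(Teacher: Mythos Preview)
Your approach is the same as the paper's --- Cartan formula, Stokes, then a pointwise collapse on the Reeb frame --- and the computation is correct in the Cartan case. However, you have invoked identities ($d\alpha_i=\kappa\,\alpha_j\wedge\alpha_k$ and $\alpha_i(v_j)=\delta_{ij}$) that hold only for \emph{Cartan} hypercontact structures (see Definition~\ref{def:cartan} and Corollary~\ref{cor:cartan}), whereas Lemma~\ref{le:crit} is stated, and proved in the paper, for a general \emph{positive} hypercontact structure. In that generality the $\alpha_i$ need not be dual to the $v_j$ (the function $\mu$ in Lemma~\ref{le:dual} need not vanish), and $d\alpha_i$ is not $\kappa\,\alpha_j\wedge\alpha_k$.

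The fix is minor and is exactly what the paper does: after reaching $-\sum_i\int_M d\alpha_i\wedge\beta_i$, evaluate the top form directly on the orthonormal frame $(v_1,v_2,v_3)$. The Reeb condition $\iota_{v_i}d\alpha_i=0$ kills two of the three terms in the expansion of $(d\alpha_i\wedge\beta_i)(v_1,v_2,v_3)$, leaving $d\alpha_i(v_j,v_k)\,\beta_i(v_i)=\kappa\,\beta_i(v_i)$ by the definition of $\kappa$ in Remark~\ref{rmk:metric}. No dual-frame or structure-equation input is needed. (Also note that in the paper's notation $\sigma=\alpha_i\wedge d\alpha_i=\kappa\,\dvol_M$, not $\dvol_M$; your final identification should read $\alpha_j\wedge\alpha_k\wedge\alpha_i=\dvol_M$ in the Cartan case rather than $\sigma$.)
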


\begin{proof} 
By Cartan's formula, we have
$$
\frac{d}{dt} f_t^*\om_i = d\beta_i,\qquad \beta_i:= \om_i(\p_tf,df_t\cdot),
$$
for $i=1,2,3$.  Hence
\begin{eqnarray*}
\frac{d}{dt} \sA(f_t)
&=&
-\int_M\sum_i \alpha_i\wedge d\beta_i 
=
-\int_M\sum_i d\alpha_i\wedge\beta_i \\
&= &
-\int_M\left(\sum_i d\alpha_i\wedge\beta_i\right)(v_1,v_2,v_3)\,\dvol_M \\
&= &
-\int_M\kappa\sum_i\beta_i(v_i) \; \dvol_M \\
&= &
\int_M\kappa\inner{\p_tf_t}{Idf_t(v_1)+Jdf_t(v_2)+Kdf_t(v_3)} \,\dvol_M.
\end{eqnarray*}
Here the second equation follows from integration by parts,
the third equation follows from the fact that $v_1,v_2,v_3$ 
form an orthonormal basis, the fourth follows from the 
definition of $\kappa$ in Remark~\ref{rmk:metric},
and the last equation uses the definition of $\beta_i$ and the 
hyperk\"ahler structure of $X$.  This proves the lemma. 
\end{proof}

\subsection*{The energy identity}

The {\bf energy} of a smooth function $f:M\to X$ is defined by 
\begin{equation}\label{eq:energy}
\sE(f) :=  \frac12\int_M\Abs{df}^2\;\dvol_M
= \frac12\int_M\sum_{i=1}^3\Abs{df(v_i)}^2\;\dvol_M.
\end{equation}

\begin{lemma}\label{le:acten}
The energy of a smooth function $f:M\to X$ is related to the 
hypersymplectic action via
\begin{equation}\label{eq:acten}
\sE(f) = \sA(f) + \frac12\int_M\Abs{\dd(f)}^2\,\dvol_M 
- \int_M\inner{\dd(f)}{df(v_0)}\,\dvol_M,
\end{equation}
where 
\begin{equation}\label{eq:R}
v_0:=\alpha_2(v_3)v_1 + \alpha_3(v_1)v_2 + \alpha_1(v_2)v_3. 
\end{equation}
In particular $\sE(f)=\sA(f)$ for every solution of~\eqref{eq:ijk}.
\end{lemma}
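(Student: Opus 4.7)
My approach is to reduce~\eqref{eq:acten} to two pointwise algebraic identities on $M$ and then integrate against $\dvol_M$.

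First, I would expand $\Abs{\dd(f)}^2$ using the quaternionic relations together with $\om_i(\xi,\eta)=\inner{L_i\xi}{\eta}$ (where $L_1=I$, $L_2=J$, $L_3=K$).  The three squared terms sum to $\sum_i\Abs{\p_{v_i}f}^2=\Abs{df}^2$ (since the $v_i$ are orthonormal), and the six cross terms collapse to $2\Omega(f)$, where
$$
\Omega(f):=\om_1(\p_{v_2}f,\p_{v_3}f)+\om_2(\p_{v_3}f,\p_{v_1}f)+\om_3(\p_{v_1}f,\p_{v_2}f).
$$
This gives the pointwise identity $\frac12\Abs{df}^2=\frac12\Abs{\dd(f)}^2-\Omega(f)$, and after integration $\sE(f)=\frac12\int_M\Abs{\dd(f)}^2\,\dvol_M-\int_M\Omega(f)\,\dvol_M$.

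Second, I would evaluate each 3-form $\alpha_i\wedge f^*\om_i$ on the ordered frame $(v_1,v_2,v_3)$.  Writing $A_{ij}:=\alpha_i(v_j)$, the wedge formula yields three terms per $i$; since $v_i$ is the Reeb field of $\alpha_i$ we have $A_{ii}=1$, so the three diagonal contributions sum to $\Omega(f)$.  Hence
$$
\sA(f)=-\int_M\Omega(f)\,\dvol_M-\int_M R(f)\,\dvol_M,
$$
where $R(f)$ collects the six off-diagonal ($i\ne j$) contributions.  Combined with step one, this reduces~\eqref{eq:acten} to the pointwise identity $R(f)=-\inner{\dd(f)}{df(v_0)}$.

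The main obstacle is this last identity.  The key input is the antisymmetry $A_{ij}+A_{ji}=0$ for $i\ne j$, which I would derive (or quote from Appendix~\ref{app:hyco}) by evaluating the hypercontact relation $\alpha_i\wedge d\alpha_j+\alpha_j\wedge d\alpha_i=0$ on $(v_1,v_2,v_3)$: using $\iota_{v_k}d\alpha_k=0$ to annihilate the many forced zero terms, the relation collapses to $(A_{ij}+A_{ji})\kappa=0$ with $\kappa:=\sigma(v_1,v_2,v_3)\ne 0$.  Given this antisymmetry, I would expand $\inner{\dd(f)}{\p_{v_k}f}=\sum_{i\ne k}\om_i(\p_{v_i}f,\p_{v_k}f)$ and match, term by term, the coefficient of each of the six bilinears $\om_i(\p_{v_a}f,\p_{v_b}f)$ appearing in $R(f)$ and in $-\inner{\dd(f)}{df(v_0)}$ with $v_0$ as in~\eqref{eq:R}.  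Assembling the three pieces yields~\eqref{eq:acten}, and the ``in particular'' clause is immediate because the two rightmost terms both vanish when $\dd(f)=0$.
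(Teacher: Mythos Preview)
Your proposal is correct and follows essentially the same route as the paper: expand $\Abs{\dd(f)}^2$ to obtain $\sE(f)=\tfrac12\int_M\Abs{\dd(f)}^2-\int_M\Omega(f)$, evaluate $\alpha_i\wedge f^*\om_i$ on the frame $(v_1,v_2,v_3)$, and match the off-diagonal terms with $\inner{\dd(f)}{df(v_0)}$ using the antisymmetry $\alpha_i(v_j)+\alpha_j(v_i)=0$. The only differences are organizational---the paper shows directly that $\tfrac12\int\Abs{\dd(f)}^2-\sE(f)$ and $\int\inner{\dd(f)}{df(v_0)}-\sA(f)$ both equal $\int\Omega(f)$, while you isolate the remainder $R(f)$ first---and you spell out the derivation of the antisymmetry that the paper records as~\eqref{eq:adaij} in Remark~\ref{rmk:reeb}.
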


\begin{remark}\label{rmk:R}\rm
The vector field $v_0$ vanishes if and only if 
$\alpha_i(v_j)=\delta_{ij}$. If this holds then, 
for every $f\in\sF$, we have
$$
\sE(f) = \sA(f) + \frac12\int_M\Abs{\dd(f)}^2\,\dvol_M. 
$$
Hence the energy of $f$ is controlled by the $L^2$ norm 
of $\dd(f)=\grad\sA(f)$ and the action. 
\end{remark}

\begin{proof}[Proof of Lemma~\ref{le:acten}]
By direct calculation (dropping the term $\dvol_M$) we obtain
\begin{eqnarray*}
&&\frac12\int_M\Abs{\dd(f)}^2 - \sE(f)  \\
&& =
\frac12\int_M\biggl(\Abs{Idf(v_1)+Jdf(v_2)+Kdf(v_3)}^2
-\sum_i\Abs{df(v_i)}^2\biggr) \\
&& =
\int_M\biggl(\inner{Kdf(v_1)}{df(v_2)}
+\inner{Idf(v_2)}{df(v_3)}
+\inner{Jdf(v_3)}{df(v_1)}
\biggr) \\
&& =
\int_M\biggl(f^*\om_1(v_2,v_3)
+f^*\om_2(v_3,v_1)+f^*\om_3(v_1,v_2)
\biggr).
\end{eqnarray*}
On the other hand
\begin{eqnarray*}
&& \int_M\inner{\dd(f)}{df(v_0)} - \sA(f)  \\
&& = 
\int_M\inner{\dd(f)}
{\alpha_2(v_3)df(v_1) + \alpha_3(v_1)df(v_2) + \alpha_1(v_2)df(v_3)} 
- \sA(f) \\
&& = 
\int_M\alpha_2(v_3)\biggl(f^*\om_2(v_2,v_1)+f^*\om_3(v_3,v_1)\biggr) \\
&&\quad +\,
\int_M\alpha_3(v_1)\biggl(f^*\om_1(v_1,v_2)+f^*\om_3(v_3,v_2)\biggr) \\
&&\quad +\,
\int_M\alpha_1(v_2)\biggl(f^*\om_1(v_1,v_3)+f^*\om_2(v_2,v_3)\biggr) \\
&&\quad
+\,  \int_M\biggl(\alpha_1\wedge f^*\om_1+
\alpha_2\wedge f^*\om_2+\alpha_3\wedge f^*\om_3\biggr)\\
&& =
\int_M\biggl(f^*\om_1(v_2,v_3)
+f^*\om_2(v_3,v_1)+f^*\om_3(v_1,v_2)
\biggr).
\end{eqnarray*}
The last equation follows by inserting the vector fields 
$v_1,v_2,v_3$ into the $3$-forms $\alpha_i\wedge f^*\om_i$. 
This proves the lemma.
\end{proof}
   
\subsection*{The Hessian}

The tangent space of $\sF$ at $f$ is the space
of vector fields along~$f$:
$$
T_f\sF = \Vect(f) = \Om^0(M,f^*TX).
$$
It is convenient to use the inner product
\begin{equation}\label{eq:L2}
\inner{\xi}{\eta}_{L^2} 
:= \int_M\inner{\xi}{\eta}\,\kappa\,\dvol_M.
\end{equation}
on this space.  One reason for this choice is the formula in
Lemma~\ref{le:crit}.  Another is the following observation.

\begin{lemma}\label{le:div}
For every smooth function $f:M\to\R$ we have
\begin{equation}\label{eq:dfint}
\int_M df(v_i)\kappa\,\dvol_M = 0.
\end{equation}
Thus the covariant divergence of the vector field $v_i$ is given by
$$
\div(v_i) = -\kappa^{-1}d\kappa(v_i)
$$
and the operator $\Nabla{v_i}:\Om^0(M,E)\to\Om^0(M,E)$
is skew adjoint with respect to the $L^2$ inner product~\eqref{eq:L2}
(for every Riemannian vector bundle $E\to M$ with any 
Riemannian connection).
\end{lemma}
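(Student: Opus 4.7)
The plan is to reduce everything to the fact that the Reeb vector fields preserve the hypercontact volume form $\sigma$, combined with Stokes' theorem.

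First I would unwind the meaning of the weighted inner product. From Remark~\ref{rmk:metric}, the Riemannian metric is chosen so that $v_1,v_2,v_3$ are orthonormal, and a direct computation using $\iota_{v_i} d\alpha_i = 0$ and $\alpha_i(v_i) = 1$ gives
\begin{equation*}
\sigma(v_1,v_2,v_3) = (\alpha_1 \wedge d\alpha_1)(v_1,v_2,v_3) = \alpha_1(v_1)\,d\alpha_1(v_2,v_3) = \kappa.
\end{equation*}
Hence $\sigma = \kappa\,\dvol_M$, so the weighted measure $\kappa\,\dvol_M$ is simply the hypercontact volume form $\sigma$. Since $v_i$ is a Reeb field, $\cL_{v_i}\sigma = 0$ (this is a basic property recorded in Appendix~\ref{app:hyco}).

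To prove \eqref{eq:dfint}, I would write $df(v_i)\,\sigma = (\cL_{v_i} f)\sigma = \cL_{v_i}(f\sigma)$ using $\cL_{v_i}\sigma=0$, and then apply Cartan's formula together with the fact that $f\sigma$ is a top form:
\begin{equation*}
\cL_{v_i}(f\sigma) = d\,\iota_{v_i}(f\sigma) + \iota_{v_i} d(f\sigma) = d\,\iota_{v_i}(f\sigma).
\end{equation*}
Stokes' theorem on the closed manifold $M$ then gives the vanishing.

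Next, for the divergence formula, I would expand $0 = \cL_{v_i}\sigma = \cL_{v_i}(\kappa\,\dvol_M)$ via the Leibniz rule to get $d\kappa(v_i)\,\dvol_M + \kappa\,\cL_{v_i}\dvol_M = 0$, and recall that $\cL_{v_i}\dvol_M = \div(v_i)\dvol_M$, yielding $\div(v_i) = -\kappa^{-1}d\kappa(v_i)$.

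Finally, for skew-adjointness: given $\xi,\eta \in \Om^0(M,E)$ and any Riemannian connection on $E$, the compatibility of $\nabla$ with the metric gives
\begin{equation*}
d\inner{\xi}{\eta}(v_i) = \inner{\Nabla{v_i}\xi}{\eta} + \inner{\xi}{\Nabla{v_i}\eta}.
\end{equation*}
Integrating against $\kappa\,\dvol_M$ and applying \eqref{eq:dfint} to the function $\inner{\xi}{\eta}$ shows that the sum of the two terms has vanishing integral, which is precisely the skew-adjointness statement $\inner{\Nabla{v_i}\xi}{\eta}_{L^2} = -\inner{\xi}{\Nabla{v_i}\eta}_{L^2}$. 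No step looks genuinely hard; the only subtlety is recognizing that the weight $\kappa$ in the inner product \eqref{eq:L2} is precisely what is needed to convert Riemannian integration into integration against the Reeb-invariant volume form $\sigma$.
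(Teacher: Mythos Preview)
Your proof is correct and follows essentially the same route as the paper: both arguments identify $\kappa\,\dvol_M$ with the Reeb-invariant volume form $\sigma$ and reduce \eqref{eq:dfint} to Stokes' theorem for an exact top form (the paper writes this as $df\wedge d\alpha_i = d(f\,d\alpha_i)$, which unwinds to your $d\,\iota_{v_i}(f\sigma)$). The only minor organizational difference is in the divergence formula: the paper obtains it by substituting $\kappa^{-1}f$ for $f$ in \eqref{eq:dfint} and invoking the integration-by-parts characterization of $\div$, whereas you expand $\cL_{v_i}(\kappa\,\dvol_M)=0$ directly using $\cL_{v_i}\dvol_M = \div(v_i)\dvol_M$; both are immediate.
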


\begin{proof}
The covariant divergence of a vector field $v\in\Vect(M)$
is the function $\div(v):M\to\R$ defined by 
$
\div(v):=\sum_j\inner{\Nabla{e_j}v}{e_j}
$
for any orthonormal frame $e_j$ of $TM$.
It is characterized by the property 
$$
\int_M df(v)\dvol_M + \int_Mf\div(v)\dvol_M = 0
$$
for every function $f:M\to\R$.  Now, for every $1$-form $\beta\in\Om^1(M)$,
we have $(\beta\wedge d\alpha_i)(v_1,v_2,v_3) = \beta(v_i)\kappa$
and hence $\beta\wedge d\alpha_i = \beta(v_i)\kappa\dvol_M$.
With $\beta=df$ this gives~\eqref{eq:dfint}. The formula for the 
covariant divergence of $v_i$ follows by replacing $f$ with $\kappa^{-1}f$.
This proves the lemma. 
\end{proof}

\begin{lemma}\label{le:hessian}
The covariant Hessian of $\sA$ at $f\in\sF$ is the 
operator 
$$
\dcD=\dcD_f:\Om^0(M,f^*TX)\to\Om^0(M,f^*TX)
$$ 
given by
\begin{equation}\label{eq:hessian}
\dcD\xi := I\Nabla{v_1}\xi 
+ J\Nabla{v_2}\xi + K\Nabla{v_3}\xi
\end{equation}
for $\xi\in\Om^0(M,f^*TX)$.
Here $\nabla$ is the Levi--Civita connection of the 
hyperk\"ahler metric on~$X$.  The operator 
$\dcD:W^{1,2}(M,f^*TX)\to L^2(M,f^*TX)$ is 
self-adjoint with respect to the $L^2$ inner product~\eqref{eq:L2}.
\end{lemma}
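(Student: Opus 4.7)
The plan is to interpret $\dcD_f$ as the covariant derivative of the gradient of $\sA$ with respect to the weighted $L^2$-inner product~\eqref{eq:L2}. By Lemma~\ref{le:crit}, $d\sA(f)\hat f = \langle\hat f,\dd(f)\rangle_{L^2}$, which identifies $\dd(f)$ as the gradient of $\sA$ at $f$, and hence
$$\dcD_f\xi = \Nabla{t}\big|_{t=0}\dd(f_t)$$
along any smooth path $t\mapsto f_t$ in $\sF$ with $f_0=f$ and $\p_tf_t|_{t=0}=\xi$. To extract~\eqref{eq:hessian}, I would extend $v_1,v_2,v_3$ to vector fields on $\R\times M$ that are constant in $t$, so $[\p_t,v_i]=0$. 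Torsion-freeness of the Levi--Civita connection on $X$, pulled back along the map $(t,y)\mapsto f_t(y)$, then yields $\Nabla{t}\p_{v_i}f_t = \Nabla{v_i}\p_tf_t$. Moreover, $(X,I,J,K)$ hyperk\"ahler forces $\nabla I=\nabla J=\nabla K=0$, so $\Nabla{t}$ commutes with each complex structure. Applying these two facts termwise in $\dd(f_t)$ and evaluating at $t=0$ produces~\eqref{eq:hessian}.

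For self-adjointness I would combine three ingredients: (i)~each of $I,J,K$ is pointwise skew-adjoint on $TX$ with respect to the hyperk\"ahler metric, as follows immediately from antisymmetry of the $2$-forms $\om_i(\cdot,\cdot)=\langle I\cdot,\cdot\rangle$ etc.; (ii)~by Lemma~\ref{le:div}, each $\Nabla{v_i}$ is skew-adjoint on $\Om^0(M,f^*TX)$ with respect to~\eqref{eq:L2}; (iii)~parallelism $\nabla I=\nabla J=\nabla K=0$ implies that $\Nabla{v_i}$ commutes with each of $I,J,K$. Combining these, for any $A\in\{I,J,K\}$,
$$\bigl(A\Nabla{v_i}\bigr)^* = (\Nabla{v_i})^*A^* = (-\Nabla{v_i})(-A) = A\Nabla{v_i},$$
so each of the three summands in $\dcD_f$ is formally self-adjoint, and hence so is their sum. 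Ellipticity of $\dcD_f$, inherited from $\dd$ via the quaternionic relations on a compact manifold, then promotes formal self-adjointness to genuine self-adjointness of the unbounded operator with domain $W^{1,2}(M,f^*TX)\subset L^2(M,f^*TX)$.

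I do not anticipate a serious obstacle here: the only nontrivial identity in the formula-derivation is the pullback symmetry $\Nabla{t}\p_{v_i}f_t=\Nabla{v_i}\p_tf_t$, which follows directly from vanishing torsion together with $[\p_t,v_i]=0$. The one point to emphasize is that the weight $\kappa$ in~\eqref{eq:L2} is essential both for identifying $\dd(f)$ with the gradient of $\sA$ (matching the factor $\kappa$ in Lemma~\ref{le:crit}) and, via Lemma~\ref{le:div}, for making $\Nabla{v_i}$ skew; without this weighting, a divergence term involving $d\kappa(v_i)$ would spoil the clean self-adjoint structure.
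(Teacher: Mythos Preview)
Your proof is correct and follows essentially the same approach as the paper: both derive~\eqref{eq:hessian} by differentiating $\dd(f_t)$ using torsion-freeness and $\nabla I=\nabla J=\nabla K=0$, and both deduce symmetry from Lemma~\ref{le:div}. The paper's self-adjointness step is slightly more explicit---it computes $\dcD^2$ (equation~\eqref{eq:D2}) and uses second-order elliptic regularity to show the domain equals $W^{1,2}$---but your appeal to first-order ellipticity of $\dcD$ on a compact manifold is equally standard and valid.
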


\begin{proof}
The covariant Hessian of $\sA$ at $f\in\sF$ is defined by 
the formula 
$
{\xi\mapsto \Nabla{t}(\dd f_t)|_{t=0}}
$
where $t\mapsto f_t$ is a smooth curve in $\sF$ with $f_0=f$
and $\p_tf_t|_{t=0}=\xi$. Hence~\eqref{eq:hessian} 
follows from the fact that the complex structures $I,J,K$ are 
covariant constant and $\nabla$ is torsion free. 
That $\dcD$ is symmetric with respect to the
$L^2$ inner product~\eqref{eq:L2} follows from 
Lemma~\ref{le:div}. To prove that $\dcD$ is self-adjoint 
we observe that its square is given by
\begin{equation}\label{eq:D2}
\begin{split}
\dcD\dcD\xi = 
&-\Nabla{v_1}\Nabla{v_1}\xi 
-\Nabla{v_2}\Nabla{v_2}\xi 
-\Nabla{v_3}\Nabla{v_3}\xi \\
&+ I\bigl(R(df(v_2),df(v_3))\xi - \Nabla{[v_2,v_3]}\xi\bigr) \\
&+ J\bigl(R(df(v_3),df(v_1))\xi - \Nabla{[v_3,v_1]}\xi\bigr) \\
&+ K\bigl(R(df(v_1),df(v_2))\xi - \Nabla{[v_1,v_2]}\xi\bigr). 
\end{split}
\end{equation}
Here $R$ denotes the Riemann curvature tensor on $X$.
Since $v_1,v_2,v_3$ are linearly independent
$\dcD^2$ is a standard second order elliptic operator 
in local coordinates (with leading term in diagonal form) 
and hence has the usual elliptic regularity properties.
In particular, if $\xi\in L^2$ and $\dcD\xi\in L^2$,
then $\dcD^2\xi\in W^{-1,2}$ and elliptic regularity 
gives $\xi\in W^{1,2}$. This implies that $\dcD$ 
is self-adjoint as an operator on $L^2$ with 
domain $W^{1,2}$, as claimed.
\end{proof}

As in symplectic and instanton Floer theories it is a fundamental 
observation that the action functional is unbounded above and below
and that the operator $\dcD$ has infinitely positive and negative 
eigenvalues.  

\begin{remark}\label{rmk:unbounded}\rm
If the symplectic forms $\om_i=d\lambda_i$ on $X$ are exact
then the hypersymplectic action functional can be written 
in the form
$$
\sA(f) = \int_M\sum_{i=1}^3\lambda_i(\p_{v_i}f)\,\kappa\,\dvol_M.
$$
The archetypal example is the space $X=\H$ of quaternions
with the standard hyperk\"ahler structure.  In this case
the operator $f\mapsto\dd(f)=\dcD f$ is linear 
and the hypersymplectic action is the associated 
quadratic form  
$$
\sA(f) = \frac12\int_M\inner{f}{\dcD f}\kappa\dvol_M.
$$
Since $\sA(f)=0$ for every real valued function 
$f:M\to\R\subset\H$ it follows that the negative and 
positive eigenspaces of $\dcD$ are both infinite dimensional.
In the case $M=S^3$ with the standard 
hypercontact structure, specific eigenfunctions are 
$f(y)=y$ with eigenvalue $-3$, $f(y)=y+2\bar y$ with 
eigenvalue~$1$, and $f(y)=\iota\circ h(y)$
where $h:S^3\to S^2$ is a suitable Hopf fibration and 
$\iota:S^2\to\H$ is the inclusion of the $2$-sphere
into the imaginary quaternions; in the last example the 
eigenvalue is $-4$.  
\end{remark}

\subsection*{Perturbations} 

Let $H:X\times M\to\R$ be a smooth function and define 
the perturbed hypersymplectic action functional 
$\sA_H:\sF\to\R$ by 
$$
\sA_H(f) := - \int_M \sum_{i=1}^3\alpha_i\wedge f^*\om_i
- \int_M H(f) \kappa\,\dvol_M.
$$
Here we write $H(f)$ for the function $M\to\R:y\mapsto H(f(y),y)$.
For ${y\in M}$ let $H_y:=H(\cdot,y)$ and denote by 
$\nabla H(\cdot,y):=\nabla H_y$ the gradient of $H$ with respect 
to the first argument. Then, by Lemma~\ref{le:crit}, the critical 
points of $\sA_H$ are the solutions of the perturbed equation
\begin{equation}\label{eq:critH}
Idf(v_1)+Jdf(v_2)+Kdf(v_3) = \nabla H(f).
\end{equation}
Here we denote by $\nabla H(f)$ the vector field
$y\mapsto \nabla H_y(f(y))$ along $f$.  By Lemma~\ref{le:acten},
every solution of~\eqref{eq:critH} satisfies the inequality
$$
\sA_H(f)  
\ge - \int_M\left(\kappa H(f) 
+ \frac12\Abs{\nabla H(f)}^2\right)\,\dvol_M.
$$

\subsection*{Gradient flow lines}

By Lemma~\ref{le:crit}, the gradient 
of $\sA_H$ with respect to the $L^2$ inner 
product~\eqref{eq:L2} is given by
$$
\grad\sA_H(f) = Idf(v_1)+Jdf(v_2)+Kdf(v_3)-\nabla H(f)
=: \dd_H(f).
$$
Hence the negative gradient flow lines of $\sA_H$ 
are the solutions $u:\R\times M\to X$ 
of the partial differential equation
\begin{equation}\label{eq:floerH}
\p_s u + I\p_{v_1}u 
+ J\p_{v_2}u + K\p_{v_3}u = \nabla H(u).
\end{equation}
The {\bf energy} of a smooth map $u:\R\times M\to X$ is defined by
$$
\sE_H(u) := \frac12\int_{\R\times M}\left(
\Abs{\p_su}^2 + \Abs{\dd_H(u)}^2
\right)\,\kappa\,\dvol_M\,ds.
$$
As in finite dimensional Morse theory and Floer 
homology, the finite energy solutions of~\eqref{eq:floerH} are the ones
that converge to critical points of the perturbed hypersymplectic 
action functional as $s$ tends to $\pm\infty$ 
(see Theorem~\ref{thm:asymptotic} below).
Thus, in the case $\sE_H(u)<\infty$, there are 
solutions $f^\pm:M\to X$ of equation~\eqref{eq:critH}
such that $\lim_{s\to\pm\infty} \p_su(s,y) = 0$,
uniformly in $y$, and
\begin{equation}\label{eq:limitH}
\lim_{s\to\pm\infty} u(s,y) = f^\pm(y),\qquad
\lim_{s\to\pm\infty} \sA_H(u(s,\cdot)) = \sA_H(f^\pm).
\end{equation}
Moreover the solutions of~\eqref{eq:floerH}
minimize the energy $\sE_H(u)$ subject to~\eqref{eq:limitH}
and their energy is 
$
\sE_H(u) 
= \sA_H(f^-) - \sA_H(f^+).
$

\subsection*{Moduli spaces}

A solution $f$ of $\dd_H(f)=0$ is called 
{\bf nondegenerate} if the perturbed Hessian
\begin{equation}\label{eq:hessH}
\dcD_{f,H}\xi := I\Nabla{v_1}\xi + J\Nabla{v_2}\xi + K\Nabla{v_3}\xi 
- \Nabla{\xi}\nabla H_y(f)
\end{equation}
is bijective.  We shall prove that nondegeneracy can be achieved
by a generic choice of the Hamiltonian $H:X\times M\to\R$
(see Theorem~\ref{thm:morse} below). Assuming this 
we fix two critical points $f^\pm$ of the perturbed 
hypersymplectic action functional $\sA_H$ and 
denote the space of Floer trajectories by 
$$
\sM(f^-,f^+;H)
:= \left\{u:\R\times M\to X\,|\,u\mbox{ satisfies }\eqref{eq:floerH},\,
\eqref{eq:limitH},\,\sup_{\R\times M}\Abs{du}<\infty\right\}.
$$
We shall prove, again for a generic choice of the perturbation,
that these spaces are smooth finite dimensional manifolds.
The proof will involve the linearized operator 
\begin{equation}\label{eq:Du}
\sD_{u,H}\xi := \Nabla{s}\xi + I\Nabla{v_1}\xi 
+ J\Nabla{v_2}\xi + K\Nabla{v_3}\xi - \Nabla{\xi}\nabla H(u).
\end{equation}
As in all other versions of Floer homology the Fredholm index 
of this operator is the spectral flow of the Hessians along $u$.
We shall prove that, when $M$ is a Cartan hypercontact 
$3$-manifold and $X$ is flat all the known analysis 
of symplectic Floer theory carries over to the present setting 
and gives rise to Floer homology groups that are isomorphic 
to the singular homology of $X$.  
This leads to the following 
existence theorem for solutions of $\dd_H(f)=0$.  
We emphasize that the algebraic count of the solutions 
gives zero and thus does not provide an existence result.

\begin{theorem}\label{thm:CZ}
Let $M$ be a compact Cartan hypercontact $3$-manifold 
and $X$ be a compact flat hyperk\"ahler manifold.  
If every solution $f$ of $\dd_H(f)=0$ is nondegenerate 
then their number is bounded below by the sum of the 
Betti numbers of $X$ (with coefficient ring $\Z_2$).
In particular, $\dd_H(f)=0$ has a solution 
for every smooth function ${H:X\times M\to\R}$.
\end{theorem}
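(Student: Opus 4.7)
The plan is to prove this by setting up the Floer homology $\HF_*(M,X,\tau_0;H)$ promised in Theorem~B, identifying it with $H_*(X;\Z_2)$, and invoking the standard Morse-type inequality that the rank of a chain complex's homology bounds its number of generators from below. Granted that identification, the sum of the $\Z_2$-Betti numbers of $X$ bounds the number of critical points of $\sA_H$ in the class $\tau_0$, which are precisely the contractible solutions of $\dd_H(f)=0$. Since contractible solutions are a subset of all solutions, the same lower bound holds for the full count. Because $b_0(X;\Z_2)\ge 1$, this immediately yields existence whenever all solutions are nondegenerate.

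Concretely, I would proceed in four steps. \emph{Step 1 (Fredholm/transversality):} Using the self-adjointness of $\dcD_{f,H}$ from Lemma~\ref{le:hessian} and its perturbed version~\eqref{eq:hessH}, one shows that the linearized operator $\sD_{u,H}$ of~\eqref{eq:Du} is Fredholm on suitable Sobolev completions with index equal to the spectral flow, and that for generic $H$ all critical points are nondegenerate and all moduli spaces $\sM(f^-,f^+;H)$ are smooth manifolds of the expected dimension (this is carried out in Section~\ref{sec:trans}). \emph{Step 2 (Compactness):} Using the flatness of $X$ and the energy identity from Lemma~\ref{le:acten} applied on $\R\times M$, the compactness theorem of Section~\ref{sec:compact} yields $C^\infty_{\mathrm{loc}}$ compactness of the moduli spaces up to breaking; no bubbling occurs because flatness kills the curvature terms in~\eqref{eq:D2} and provides uniform $C^0$ derivative bounds. \emph{Step 3 (Chain complex):} Standard gluing then gives $\p^2=0$, and continuation maps between different $H$ produce canonical isomorphisms on homology, so $\HF_*(M,X,\tau_0;H)$ is well-defined and independent of $H$.

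\emph{Step 4 (Computation):} To identify $\HF_*(M,X,\tau_0;H)$ with $H_*(X;\Z_2)$, I would use the adiabatic rescaling described in the introduction: replace $H$ by a time-independent Morse--Smale function $h:X\to\R$ and study equation~\eqref{eq:floer-Heps} as $\eps\to 0$. One shows that for sufficiently small $\eps$ the only contractible solutions of the critical-point equation are constant maps at critical points of $h$, and that the finite-energy $s$-dependent solutions correspond bijectively (with matching indices) to Morse trajectories of $h$ on $X$. This identifies the Floer complex with the Morse complex of $h$, hence $\HF_*\cong H_*(X;\Z_2)$. The final ``in particular'' statement then follows by a standard limiting argument: for arbitrary $H$ perturb to a nearby $H_\nu$ with only nondegenerate solutions, obtain solutions $f_\nu$, and extract a $C^\infty$-convergent subsequence using the compactness theorem of Section~\ref{sec:compact} applied to the critical-point equation.

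The principal obstacle is Step~4, the adiabatic-limit computation, because~\eqref{eq:floer-Heps} is a genuinely 3-dimensional PDE on $\R\times M$ and ruling out non-constant solutions as $\eps\to 0$ requires delicate a priori estimates: one must show that the $v_i$-derivatives of $u$ become negligible compared with the $s$-derivative, effectively reducing the equation to the Morse gradient flow on $X$. Flatness of $X$ is essential here, since the curvature terms in~\eqref{eq:D2} would otherwise obstruct the needed uniform bounds, and it is this analytic step (carried out in Section~\ref{sec:floer}) that constitutes the hard work behind both Theorem~B and Theorem~\ref{thm:CZ}.
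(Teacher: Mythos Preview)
Your proposal is correct and mirrors the paper's approach: assemble the Floer homology via the compactness and transversality results of Sections~\ref{sec:compact}--\ref{sec:trans}, compute it by the adiabatic limit (Theorem~\ref{thm:morse=floer}, hence Theorem~\ref{thm:HF=H}), and invoke the rank inequality $\#\sC(H)=\dim\CF_*\ge\dim\HF_*=\dim H_*(X;\Z_2)$. The one step the paper makes explicit that you leave implicit is the reduction from $H\in\sH^\morse$ (the hypothesis) to $H\in\sH^\reg$ (needed for $\p^H$ to be defined): the paper uses Theorem~\ref{thm:crit-cpct} to argue that a sufficiently small $C^2$ perturbation leaves $\#\sC(H)$ unchanged, so one may assume regularity without loss of generality; your limiting argument for the final existence clause is exactly the right way to remove the nondegeneracy assumption.
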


\begin{proof}
See Section~\ref{sec:floer}.
\end{proof}

    
\section{Regularity and compactness} \label{sec:compact}

We assume throughout that $X$ is a compact hyperk\"ahler manifold
and $M$ is a compact $3$-manifold equipped with a positive
hypercontact structure $\alpha$. Then the Reeb vector fields 
$v_1,v_2,v_3$ form a (positive) orthonormal frame of $TM$ 
and hence determine a second order elliptic operator
\begin{equation}\label{eq:L}
L := \sum_{i=1}^3 \cL_{v_i}\cL_{v_i}
= - d^*d - \sum_{i=1}^3\div(v_i)\cL_{v_i}.
\end{equation}
If $\alpha$ is a Cartan structure then
$\div(v_i)=d^*\alpha_i=0$ (by Lemma~\ref{le:div}) 
and so~$L$ is the Laplace--Beltrami operator on~$M$. 
In local coordinates $y^1,y^2,y^3$ on $M$
the operator $L$ has the form
\begin{equation}\label{eq:L-loc}
L = \sum_{\mu,\nu=1}^3a^{\mu\nu}\frac{\p^2}{\p y^\mu\p y^\nu}
+ \sum_{\nu=1}^3b^\nu\frac{\p}{\p y^\nu},
\end{equation}
where
$$
a^{\mu\nu} := \sum_{i=1}^3v^\mu_iv^\nu_i,\qquad
b^\nu := \sum_{i,\mu=1}^3 \frac{\p v^\nu_i}{\p y^\mu}v^\mu_i.
$$
Since the vector fields $v_i$ form an orthonormal frame of $TM$,
the coefficients~$a^{\mu\nu}$ define the Riemannian metric 
on the cotangent bundle in our local coordinates.
The operators $L$ on $M$ and 
\begin{equation}\label{eq:sL}
\sL := \p_s\p_s + L
\end{equation}
on $\R\times M$ will play a central role in our study of 
the solutions of equations~\eqref{eq:critH} and~\eqref{eq:floerH}.

\begin{theorem}[\bf Regularity]\label{thm:reg}
If $p>3$ then every $W^{1,p}$ solution $f$ of $\dd_H(f)=0$
is smooth. If $p>4$ then every $W^{1,p}$ solution $u$ 
of~\eqref{eq:floerH} is smooth.
\end{theorem}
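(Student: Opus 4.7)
The approach is a standard elliptic bootstrap argument. The starting point is that the Dirac-type operator $\dd$ is first-order elliptic on $M$: its principal symbol at a covector $\xi=\xi_1\alpha_1+\xi_2\alpha_2+\xi_3\alpha_3$ is $\xi_1 I+\xi_2 J+\xi_3 K$, which squares to $-\abs{\xi}^2\id$ by the quaternionic relations and is therefore invertible whenever $\xi\ne 0$. The same symbol computation shows that $\p_s+I\p_{v_1}+J\p_{v_2}+K\p_{v_3}$ on $\R\times M$ is first-order elliptic. Equivalently, the second-order operators $\dcD^2$ from \eqref{eq:D2} and $\sL$ from \eqref{eq:sL} have diagonal leading part, so the regularity question reduces to standard scalar elliptic theory.

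I would first localize. Since $X$ is covered by finitely many coordinate charts and the equation $\dd_H(f)=0$ is local, it suffices to prove regularity of $f$ on a neighborhood $B\subset M$ whose image lies in a single chart on $X$. Such a $B$ exists around every point because the hypothesis $p>3=\dim M$ forces $f\in C^{0,\alpha}(M)$ with $\alpha=1-3/p$ by Morrey's embedding; the Floer case on the four-dimensional cylinder $\R\times M$ requires the parallel inequality $p>4$. In the chart, represent $I,J,K$ by smooth matrix-valued functions $A_1,A_2,A_3$; the equation becomes a first-order system
\begin{equation*}
\sum_{i=1}^3 A_i(f(y))\,\p_{v_i}f(y)=b(f(y),y)
\end{equation*}
with H\"older continuous coefficients $A_i\circ f$.

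The bootstrap is by induction on $k$. Suppose that $f\in W^{k,p}$ locally for some $k\ge 1$. Because $p$ exceeds the domain dimension, $W^{k,p}$ is a Banach algebra that is stable under left-composition with smooth maps; consequently $A_i(f)$ and $b(f,\cdot)$ also lie in $W^{k,p}$. Interior $L^p$ regularity for the first-order elliptic system, applied via difference quotients to the equation differentiated once in $y$, then promotes $f$ to $W^{k+1,p}$ locally. Iterating and applying Sobolev embedding yields $f\in C^\infty$, and the identical argument handles the Floer equation \eqref{eq:floerH} on $\R\times M$.

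The delicate point is the base step $k=1$, where the coefficients of the linearized system are only H\"older continuous and one cannot directly invoke smooth-coefficient elliptic theory. A clean way to circumvent this is to apply $\dd$ a second time to the equation $\dd(f)=\nabla H(f)$: by the quaternionic relations the resulting second-order scalar operator has leading part $-L$ as in \eqref{eq:D2}, whose coefficients depend only on the Reeb vector fields $v_i$ and are therefore smooth, while the right-hand side is a first-order expression in $f$ and hence lies in $L^p$. Standard interior $L^p$ regularity for scalar second-order elliptic equations then yields $f\in W^{2,p}$ locally, after which the inductive step above proceeds with no further difficulty.
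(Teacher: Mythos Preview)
Your overall strategy---pass to a second-order equation and bootstrap---is exactly the paper's, but the base step contains a genuine gap. When you apply $\dcD$ to $\dd(f)=\nabla H(f)$ and unwind in local coordinates on $X$, the identity $\dcD\dd(f)=-\sum_i\Nabla{v_i}\p_{v_i}f-I\p_{[v_2,v_3]}f-\cdots$ produces, via the Christoffel symbols hidden in $\Nabla{v_i}\p_{v_i}f$, the quadratic term
\[
\sum_{\mu,\nu}\sum_{i,j}\Gamma^k_{ij}(f)\,a^{\mu\nu}\,\p_\mu f^i\,\p_\nu f^j.
\]
This is not a ``first-order expression'' in the sense you need: it is quadratic in $\p f$, so for $f\in W^{1,p}$ it lies only in $L^{p/2}$, not in $L^p$. (The same quadratic term reappears if you instead differentiate the first-order system once, which is why you correctly flagged the $k=1$ step as delicate---but the second-order rewriting does not make the difficulty disappear.) Consequently you cannot jump directly to $f\in W^{2,p}$.

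The paper's fix is to accept the weaker conclusion $f\in W^{2,p/2}$ from scalar elliptic regularity and then use the Sobolev embedding $W^{2,p/2}(M^3)\hookrightarrow W^{1,q}(M^3)$ with $q=3p/(6-p)>p$ whenever $3<p<6$. Iterating this step raises the integrability exponent until eventually $q>6$; at that point $\p f\in L^{2p}$ and the quadratic right-hand side is genuinely in $L^p$, so $f\in W^{2,p}$ and your inductive step (which is fine for $k\ge2$, since then $W^{k-1,p}$ is an algebra) takes over. The four-dimensional Floer case is identical, with $q=4p/(8-p)$.
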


\begin{proof}
For every vector field $v\in\Vect(M)$ we write 
$
\p_vf=\cL_vf=df(v).
$
Then, for every smooth map $f:M\to X$ and any two vector
fields~$v,w$ on~$M$, we have
$
\Nabla{v}\p_wf - \Nabla{w}\p_vf = -\p_{[v,w]}f.
$
Hence
\begin{equation}\label{eq:DDf}
\dcD\dd(f) = - \sum_{i=1}^3\Nabla{v_i}\p_{v_i}f 
 - I\p_{[v_2,v_3]}f - J\p_{[v_3,v_1]}f - K\p_{[v_1,v_2]}f.
\end{equation}
In local coordinates $(x^1,\dots,x^m)$ on $X$ 
and $(y^1,y^2,y^3)$ on $M$ we have 
\begin{equation*}
\begin{split}
(\Nabla{v}\p_wf)^k = 
\sum_{\nu,\mu=1}^3
\left(
\frac{\p^2f^k}{\p y^\nu\p y^\mu}v^\mu w^\nu
+ \frac{\p f^k}{\p y^\nu}\frac{\p w^\nu}{\p y^\mu}v^\mu
+ \sum_{i,j=1}^m\Gamma^k_{ij}\frac{\p f^i}{\p y^\mu}
  \frac{\p f^j}{\p y^\nu}v^\mu w^\nu
\right).
\end{split}
\end{equation*}
With $L$ as in~\eqref{eq:L-loc} this gives
$$
(\dcD\dd(f))^k
= 
- Lf^k 
- \sum_{\mu,\nu=1}^3\sum_{i,j=1}^m
\Gamma^k_{ij}a^{\mu\nu}\frac{\p f^i}{\p y^\mu}
\frac{\p f^j}{\p y^\nu} 
- g^k_f,
$$
where
$$
g^k_f := \sum_{\ell=1}^m\sum_{\nu=1}^3
\left(
I^k_\ell [v_2,v_3]^\nu 
+ J^k_\ell [v_3,v_1]^\nu 
+ K^k_\ell [v_1,v_2]^\nu 
\right)\frac{\p f^\ell}{\p y^\nu}.
$$
Moreover, the function $h_f^k:=(\dcD\dd(f))^k=(\dcD\nabla H(f))^k$ 
is given by 
\begin{eqnarray*}
h_f^k 
&=& 
\sum_{j,\ell=1}^m\sum_{\nu=1}^3
c^{k\nu}_\ell
\left(
\frac{\p(\nabla H)^\ell}{\p x^j}\frac{\p f^j}{\p y^\nu}
+ \frac{\p(\nabla H)^\ell}{\p y^\nu}
+ \sum_{i=1}^m\Gamma^\ell_{ij}(\nabla H)^i\frac{\p f^j}{\p y^\nu}
\right),
\end{eqnarray*}
where 
$$
c^{k\nu}_\ell:= I^k_\ell v^\nu_1 + J^k_\ell v^\nu_2 + K^k_\ell v^\nu_3.
$$
Hence every solution of~\eqref{eq:critH} (of class $W^{1,p}$)
satisfies the elliptic pde
\begin{equation}\label{eq:critH-local}
L f^k =
- \sum_{\mu,\nu=1}^3\sum_{i,j=1}^m
\Gamma^k_{ij}a^{\mu\nu}\frac{\p f^i}{\p y^\mu}
\frac{\p f^j}{\p y^\nu} - g_f^k-h_f^k.
\end{equation}
If $f\in W^{1,p}$ for some $p>3$ then the right hand side is 
in $L^{p/2}$ and hence, by elliptic regularity, $f$ is of 
class $W^{2,p/2}$. By the Sobolev embedding theorem, we then obtain 
$f\in W^{1,q}$ where $q:=3p/(6-p)>p$. Continuing by induction, 
we obtain eventually that $f\in W^{1,q}$ for some $q>6$,
hence $f\in W^{2,p}$ and, again by induction, $f\in W^{k,p}$ 
for every integer $k$. 

To prove regularity for the solutions 
of~\eqref{eq:floerH} we introduce the operators
$$
\sD := \Nabla{s}+I\Nabla{v_1}+J\Nabla{v_2}+K\Nabla{v_3},\quad
\sD^* := -\Nabla{s}+I\Nabla{v_1}+J\Nabla{v_2}+K\Nabla{v_3}
$$
Then 
\begin{equation}\label{eq:DDu}
\begin{split}
\sD^*(\p_su+\dd(u)) 
= &-\Nabla{s}\p_su - \sum_{i=1}^3\Nabla{v_i}\p_{v_i}u  \\ 
& - I\p_{[v_2,v_3]}u - J\p_{[v_3,v_1]}u - K\p_{[v_1,v_2]}u.
\end{split}
\end{equation}
Here we have used~\eqref{eq:DDf} and the fact that 
$\Nabla{s}\p_{v_i}u=\Nabla{v_i}\p_su$ for $i=1,2,3$. 
If $u$ is a solution of~\eqref{eq:floerH}
then $\p_su+\dd(u)=\nabla H(u)$.  Hence in this case we 
obtain the equation
\begin{equation}\label{eq:floerH-local}
\sL u^k =
- \sum_{\mu,\nu=1}^3\sum_{i,j=1}^m
\Gamma^k_{ij}(u)\left(\frac{\p u^i}{\p s}\frac{\p u^j}{\p s}
+ a^{\mu\nu}\frac{\p u^i}{\p y^\mu}\frac{\p u^j}{\p y^\nu}
\right) - g_u^k-h_u^k,
\end{equation}
where
\begin{equation*}
\sL := \frac{\p^2}{\p s^2} 
+ \sum_{\mu,\nu=1}^3a^{\mu\nu}\frac{\p^2}{\p y^\mu\p y^\nu}
+ \sum_{\nu=1}^3b^\nu\frac{\p}{\p y^\nu},
\end{equation*}
and
\begin{equation}\label{eq:gh}
\begin{split}
g^k_u 
&:= \sum_{\ell=1}^m\sum_{\nu=1}^3
\left(
I^k_\ell [v_2,v_3]^\nu 
+ J^k_\ell [v_3,v_1]^\nu 
+ K^k_\ell [v_1,v_2]^\nu 
\right)\frac{\p u^\ell}{\p y^\nu},  \\
h_u^k 
&:=
\sum_{j,\ell=1}^m\sum_{\nu=1}^3
c^{k\nu}_\ell
\left(
\frac{\p(\nabla H)^\ell}{\p x^j}\frac{\p u^j}{\p y^\nu}
+ \frac{\p(\nabla H)^\ell}{\p y^\nu}
+ \sum_{i=1}^m\Gamma^\ell_{ij}(\nabla H)^i\frac{\p u^j}{\p y^\nu}
\right) \\
&\quad
- \sum_{j=1}^m \frac{\p(\nabla H)^k}{\p x^j}\frac{\p u^j}{\p s}
- \sum_{i,j=1}^m\Gamma^k_{ij}(u)(\nabla H)^i\frac{\p u^j}{\p s}.
\end{split}
\end{equation}
If $u\in W^{1,p}$ with $p>4$ then the right hand side 
in~\eqref{eq:DDu} is in $L^{p/2}$ and so $u\in W^{2,p/2}$. 
Thus the Sobolev embedding theorem gives $u\in W^{1,q}$ 
with $q:=4p/(8-p)>p$. Continuing by induction we obtain 
that $u$ is smooth. This proves the theorem.
\end{proof}

The bootstrapping argument in the proof of Theorem~\ref{thm:reg} 
gives rise to uniform estimates for sequences that are bounded 
in $W^{1,p}$.  Hence the Arz\'ela--Ascoli theorem gives the 
following compactness result.

\begin{theorem}\label{thm:cpct}
Assume $X$ is compact.

\smallskip\noindent{\bf (i)} 
Let $p>3$ and $\Om\subset M$ be an open set.  
Then every sequence of solutions 
$f^\nu:\Om\to X$ of equation~\eqref{eq:critH} 
that satisfies
$
\sup_\nu\Norm{df^\nu}_{L^p(C)} < \infty
$
for every compact set $C\subset\Om$ has a subsequence 
that converges in the $\Cinf$ topology on every compact 
subset of $\Om$.

\smallskip\noindent{\bf (ii)} 
Let $p>4$ and $\Om\subset\R\times M$ be an open set.
Then every sequence of solutions $u^\nu:\Om\to X$
of equation~\eqref{eq:floerH} that satisfies
$
\sup_\nu\Norm{du^\nu}_{L^p(C)} < \infty
$
for every compact set $C\subset\Om$ has a subsequence 
that converges in the $\Cinf$ topology on every compact 
subset of $\Om$.
\end{theorem}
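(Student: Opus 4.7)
The plan is to upgrade the bootstrap of Theorem~\ref{thm:reg} into a sequence-level statement by tracking the elliptic estimates uniformly in $\nu$ and then invoking Arz\'ela--Ascoli on a nested exhaustion of $\Om$ by compact subsets. Since $X$ is compact, I would first embed $X\hookrightarrow\R^N$ isometrically via Nash and view the $f^\nu$ (respectively $u^\nu$) as $\R^N$-valued maps. Equations~\eqref{eq:critH-local} and~\eqref{eq:floerH-local} then become scalar elliptic PDEs in which the Christoffel-type quadratic form is replaced by the second fundamental form of $X$ in $\R^N$; the inhomogeneity is still quadratic in the first derivatives with coefficients that depend smoothly and uniformly boundedly on the map. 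This removes any need to argue in local coordinates on $X$.

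Next I would exhaust $\Om$ by compact subsets $C_1\Subset C_2\Subset\cdots\subset\Om$ and rerun the iteration of Theorem~\ref{thm:reg} on nested shrinking pieces. The core input is the interior $L^q$ Calder\'on--Zygmund estimate for the elliptic operator $L$ (respectively $\sL$): for $C'\Subset C$ and $q\in(1,\infty)$, there is a constant depending only on $C'$, $C$, $q$, and the coefficients (which are determined by $M$, not by $\nu$) such that
\[
\Norm{w}_{W^{2,q}(C')}
\le c\left(\Norm{Lw}_{L^q(C)}+\Norm{w}_{L^q(C)}\right).
\]
Applying this with $w=f^\nu$ and $q=p/2$ to~\eqref{eq:critH-local}, the quadratic term is controlled in $L^{p/2}$ by $\Norm{df^\nu}_{L^p(C)}^2$, while $g^k_{f^\nu}$ and $h^k_{f^\nu}$ are linear in $df^\nu$ with bounded coefficients and hence also uniformly bounded in $L^{p/2}$. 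Thus $\sup_\nu\Norm{f^\nu}_{W^{2,p/2}(C')}<\infty$. Sobolev embedding on the three-dimensional base yields $\sup_\nu\Norm{f^\nu}_{W^{1,q_1}(C')}<\infty$ with $q_1=3p/(6-p)>p$ when $p<6$, and any finite $q_1$ once $p\ge 6$. Iterating on slightly shrunken compact sets, after finitely many steps I obtain a uniform $W^{1,q}(C')$ bound with $q>6$, hence a uniform $W^{2,p}(C')$ bound; induction on the order then gives uniform $W^{k,p}$ bounds on every compact subset for every $k$.

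Sobolev embedding converts these into uniform $C^k$ bounds, so Arz\'ela--Ascoli produces $C^k$-convergent subsequences on each $C_j$ for every $k$, and a diagonal argument yields a single subsequence converging in $\Cinf$ on every compact subset of $\Om$. The argument for~(ii) is the same, with $\sL$ replacing $L$ on the four-dimensional domain $\R\times M$, the threshold $p>4$ needed for the seed Sobolev step $W^{2,p/2}\hookrightarrow W^{1,q_1}$ with $q_1=4p/(8-p)>p$, and the extra $\p_s$-derivative terms appearing in~\eqref{eq:gh} treated identically to the spatial ones.

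The main obstacle, such as it is, is clerical rather than conceptual: one must keep the constants in the elliptic estimates independent of $\nu$ (automatic once the Nash embedding is in place, since the coefficients of $L$ and $\sL$ depend only on $M$), arrange cutoffs so that relative boundary effects on the $C_j$ are absorbed at each bootstrap step, and combine the countably many subsequences by diagonalization. With those bookkeeping choices in place, the proof is a line-by-line uniform version of the bootstrap already executed in Theorem~\ref{thm:reg}.
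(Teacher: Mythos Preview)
Your proposal is correct and follows essentially the same approach as the paper, which simply remarks that the bootstrapping in Theorem~\ref{thm:reg} yields uniform estimates and then invokes Arz\'ela--Ascoli. Your use of a Nash embedding in place of local coordinates on~$X$ is a convenient bookkeeping device rather than a genuinely different argument; it makes the uniformity in~$\nu$ of the elliptic constants transparent, but the underlying mechanism (interior $L^q$ estimates for $L$ and $\sL$, Sobolev iteration, diagonal extraction) is identical to what the paper has in mind.
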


\subsection*{A priori estimates}

To remove the bounded derivative assumption in Theorem~\ref{thm:cpct},
at least in the case where $X$ is flat,  we establish mean value inequalities 
for the energy density of the solutions of~\eqref{eq:floerH}.  
The solutions of~\eqref{eq:critH} then correspond to the special case 
$\p_su\equiv0$.  The mean value inequalities will be based on 
Theorem~\ref{thm:heinz} in Appendix~\ref{app:heinz}. 

Throughout we denote by $L$ and $\sL$ the operators~\eqref{eq:L} 
and~\eqref{eq:sL} on $M$ and $\R\times M$, respectively, 
and by $R$ the Riemann curvature tensor on $X$.
For a map $u:\R\times M\to X$ we define the 
{\bf energy density} $e_u:\R\times M\to \R$ by
$$
e_u := \frac12\Abs{\p_su}^2 
+ \frac12\sum_{i=1}^3\Abs{\p_{v_i}u}^2,
$$
and the {\bf scalar curvature} 
$r_u:\R\times M\to\R$ {\bf along $u$} by
$$
r_u := 
2\sum_{j=1}^3\inner{R(\p_su,\p_{v_j}u)\p_{v_j}u}{\p_su}
+ \sum_{i,j=1}^3\inner{R(\p_{v_i}u,\p_{v_j}u)\p_{v_j}u}{\p_{v_i}u}.
$$
Throughout we fix a Hamiltonian perturbation $H:X\times M\to\R$.
We explicitly do not assume that the hypercontact structure on $M$ 
is a Cartan structure (unless otherwise mentioned).

\begin{lemma}\label{le:Ler}
There are positive constants $A$ and $B$, depending only on the 
vector fields $v_i$, the metric on $X$, and the Hamiltonian
perturbation $H$, such that every solution 
$u:\R\times M\to X$ of~\eqref{eq:floerH} 
satisfies the estimate
\begin{equation}\label{eq:Ler}
\sL e_u + r_u \ge - A - B(e_u)^{3/2}.
\end{equation}
If $H=0$ we obtain an estimate of the form
$
\sL e_u + r_u \ge  - Ce_u.
$
\end{lemma}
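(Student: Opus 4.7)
The plan is to derive the estimate from a Bochner--Weitzenb\"ock-type computation of $\sL e_u$, combined with the second-order identity \eqref{eq:DDu} and the Floer equation \eqref{eq:floerH}. Write $w_0 := \p_s u$ and $w_i := \p_{v_i} u$ for $i = 1, 2, 3$, so that $e_u = \frac{1}{2} \sum_{k=0}^{3} \inner{w_k}{w_k}$. Since $\sL = \p_s^2 + \sum_i \cL_{v_i} \cL_{v_i}$ acts on $|w_k|^2$ by the Leibniz rule,
\begin{equation*}
\sL e_u = \sum_{k=0}^{3} \Bigl( |\Nabla{s} w_k|^2 + \sum_{i=1}^{3} |\Nabla{v_i} w_k|^2 \Bigr) + \sum_{k=0}^{3} \inner{\Box w_k}{w_k},
\end{equation*}
where $\Box := \Nabla{s}^2 + \sum_i \Nabla{v_i}^2$. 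The non-negative first sum will be retained in part, to absorb derivative cross-terms via Young's inequality.

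Next I would compute $\Box w_k$ by commuting covariant derivatives. For $w_0$, the identity $[\p_s, v_i] = 0$ on $\R \times M$ yields $\Nabla{v_i} \p_s u = \Nabla{s} w_i$, and one application of $[\Nabla{s}, \Nabla{v_i}] = R(w_0, w_i)$ gives
\begin{equation*}
\Box w_0 = \Nabla{s} \bigl( \Nabla{s} \p_s u + \sum_i \Nabla{v_i} \p_{v_i} u \bigr) + \sum_i R(w_i, w_0) w_i.
\end{equation*}
For $w_j$ with $j \geq 1$, analogous commutations using the torsion-free identity $\Nabla{v_i} \p_{v_j} u - \Nabla{v_j} \p_{v_i} u = \p_{[v_i, v_j]} u$ and $[\Nabla{v_i}, \Nabla{v_j}] = R(w_i, w_j) - \Nabla{[v_i, v_j]}$ produce further curvature terms together with corrections involving the Reeb brackets $[v_i, v_j]$. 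Substituting identity \eqref{eq:DDu} then rewrites $\Nabla{s} \p_s u + \sum_i \Nabla{v_i} \p_{v_i} u$ in terms of $-\sD^* \nabla H(u)$ plus bracket corrections.

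Collecting all contributions, a direct computation using the symmetries of $R$ shows that the curvature terms in $\sum_k \inner{\Box w_k}{w_k}$ assemble precisely into $-r_u$, which is then moved to the left-hand side of the estimate. The remaining terms fall into three classes: (i) bracket corrections coming from $[v_i, v_j]$, which are linear in the $w_k$ and bounded by $C_1 e_u$; (ii) contributions from $\sD^* \nabla H(u)$ involving $\nabla^2 H \cdot du$, bounded by $C_2 (1 + e_u)$; (iii) contributions from $\Nabla{s} \sD^* \nabla H(u)$, which contain a cubic term of the form $\nabla^3 H(du, du) \cdot du$ bounded by $B\, e_u^{3/2}$ together with terms of the form $\nabla^2 H \cdot \Nabla{s} w_k$, the latter controlled by Young's inequality at the cost of absorbing a fraction of $|\Nabla{s} w_k|^2$ out of the retained positive sum. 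This yields \eqref{eq:Ler}. When $H \equiv 0$, classes (ii) and (iii) are absent, so the bound reduces to $\sL e_u + r_u \geq -C e_u$.

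The main obstacle will be the careful bookkeeping: verifying via the algebraic symmetries of $R$ that the curvature contributions across all four indices $k$ reassemble exactly into $-r_u$, tracking the geometric coefficients generated by $[v_i, v_j]$ and the $y$-derivatives of $\nabla H$, and choosing the Young constants small enough that the $\nabla^2 H \cdot \Nabla{s} w_k$ terms can be absorbed into the retained fraction of $\sum_k |\Nabla{s} w_k|^2 + \sum_{k,i} |\Nabla{v_i} w_k|^2$. A subtler point is to confirm that no hidden $e_u^{2}$-terms arise from the interaction of the curvature $R$ with the bracket terms or the Hamiltonian corrections, since only $e_u^{3/2}$ growth is allowed on the right-hand side.
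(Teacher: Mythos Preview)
Your strategy coincides with the paper's: start from the Bochner identity
\[
\sL e_u \;=\; p_u \;+\; \sum_{j=0}^{3}\bigl\langle\sL^\nabla\p_{v_j}u,\,\p_{v_j}u\bigr\rangle,
\qquad p_u:=\sum_{i,j=0}^{3}\bigl|\Nabla{v_i}\p_{v_j}u\bigr|^2,
\]
commute derivatives so that $\sL^\nabla\p_{v_j}u$ splits into a curvature piece (which, after pairing with $\p_{v_j}u$ and summing over $j$, gives exactly $-r_u$), the term $\Nabla{v_j}\sL^Xu$ with $\sL^Xu=-\sD^*\nabla H(u)-I\p_{[v_2,v_3]}u-J\p_{[v_3,v_1]}u-K\p_{[v_1,v_2]}u$ coming from~\eqref{eq:DDu}, and commutator corrections; then absorb cross-terms into $p_u$ via Young's inequality.

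One correction is needed in your class~(i). The bracket contributions are \emph{not} linear in your $w_k=\p_{v_k}u$ and cannot be bounded by $C_1e_u$ alone. Since each Lie bracket $[v_i,v_j]$ is a linear combination of the frame fields $v_\ell$ with smooth coefficients on $M$, the expressions $\Nabla{v_j}\p_{[v_2,v_3]}u$, $\Nabla{v_i}\p_{[v_i,v_j]}u$, $\Nabla{[v_i,v_j]}\p_{v_i}u$ all carry second covariant derivatives $\Nabla{v_\ell}\p_{v_m}u$. The paper packages these into a term $\xi_j(u)$ satisfying $\sum_j|\xi_j(u)|^2\le c\,(e_u+p_u)$, and the $p_u$-part is then absorbed into the retained positive term by the same Young-inequality move you already invoke in class~(iii). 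Once you treat the bracket terms this way as well, the argument closes. Your final worry about hidden $e_u^2$-contributions is unnecessary: the only curvature terms are the $\bigl\langle R(\p_{v_i}u,\p_{v_j}u)\p_{v_i}u,\p_{v_j}u\bigr\rangle$ already accounted for in $-r_u$, and no further curvature factors enter the Hamiltonian or bracket remainders.
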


\bigbreak

\begin{proof}
It is convenient to denote the vector field $\p_s$
on $\R\times M$ by $v_0$.  Then the Lie brackets 
$[v_0,v_j]$ vanish for all $j$, but we shall not 
use this fact.  Abbreviate 
$$
w_1:=[v_2,v_3],\qquad 
w_2:=[v_3,v_1],\qquad 
w_3:=[v_1,v_2]
$$ 
and define the operators 
$$
\sL^X := \sum_{i=0}^3\Nabla{v_i}\p_{v_i},\qquad
\sL^\nabla := \sum_{i=0}^3\Nabla{v_i}\Nabla{v_i}.
$$
Thus $\sL^X$ acts on maps $u:\R\times M\to X$
and $\sL^\nabla$ acts on vector fields along such maps. 
With this notation every solution $u$ 
of~\eqref{eq:floerH} satisfies the equation
\begin{equation}\label{eq:DDfloer}
\sL^Xu = - \sD^*\nabla H(u)
- I\p_{w_1}u - J\p_{w_2}u - K\p_{w_3}u,
\end{equation}
where $\sD^*=-\Nabla{s}+I\Nabla{v_1}+J\Nabla{v_2}+K\Nabla{v_3}$.
Moreover,
\begin{equation}\label{eq:Le}
\sL e_u = 
\sum_{j=0}^3\inner{\sL^\nabla\p_{v_j}u}{\p_{v_j}u}
+ p_u,\qquad
p_u := \sum_{i,j=0}^3\Abs{\Nabla{v_i}\p_{v_j}u}^2.
\end{equation}
We compute
\begin{equation*}
\begin{split}
\sL^\nabla\p_{v_j}u
&=
\sum_i\Nabla{v_i}\Nabla{v_j}\p_{v_i}u
- \sum_i\Nabla{v_i}\p_{[v_i,v_j]}u \\
&=
\sum_i R(\p_{v_i}u,\p_{v_j}u)\p_{v_i}u 
+ \Nabla{v_j}\sL^Xu \\
&\quad
- \sum_i\left(\Nabla{v_i}\p_{[v_i,v_j]}u 
+\Nabla{[v_i,v_j]}\p_{v_i}u\right) \\
&=
\sum_i R(\p_{v_i}u,\p_{v_j}u)\p_{v_i}u 
+ h_j(u) + \xi_j(u),
\end{split}
\end{equation*}
where the sums are over $i=0,1,2,3$ and
\begin{equation*}
\begin{split}
h_j(u) &:= 
\Bigl(
\Nabla{v_j}\Nabla{s}
- I\Nabla{v_j}\Nabla{v_1}
- J\Nabla{v_j}\Nabla{v_2} 
- K\Nabla{v_j}\Nabla{v_3}
\Bigr)\nabla H(u), \\
\xi_j(u) &:=
- I\Nabla{v_j}\p_{w_1}u 
- J\Nabla{v_j}\p_{w_2}u
- K\Nabla{v_j}\p_{w_3}u \\
&\quad\;\,
- \sum_i\left(\Nabla{v_i}\p_{[v_i,v_j]}u 
+ \Nabla{[v_i,v_j]}\p_{v_i}u\right).
\end{split}
\end{equation*} 
Since the vector fields $v_1,v_2,v_3$ form an orthonormal 
frame of $TM$ there is a constant $c\ge1$ such that, 
for every smooth map $u:\R\times M\to X$ and every
smooth perturbation $H:X\times M\to\R$, we have
\begin{equation*}
\sum_{j=0}^3\Abs{\xi_j(u)}^2 
\le c 
\left(e_u+p_u\right), \qquad
\sqrt{\sum_{j=0}^3\Abs{h_j(u)}^2} 
\le c\Norm{H}_{C^3}\left(1+e_u + 
\sqrt{p_u}\right).
\end{equation*}
Here $p_u$ is as in~\eqref{eq:Le}. 
This gives 
\begin{equation*}
\begin{split}
\Abs{\sum_{j=0}^3\inner{\xi_j(u)}{\p_{v_j}u}}
&\le \frac{1}{2c}\sum_{j=0}^3\abs{\xi_j(u)}^2 
+ \frac{c}{2}\sum_{j=0}^3\abs{\p_{v_j}u}^2 \\
&\le \frac{p_u}{2} + \left(c+\frac{1}{2}\right)e_u, \\
\Abs{\sum_{j=0}^3\inner{h_j(u)}{\p_{v_j}u}}
&\le c\Norm{H}_{C^3}\sqrt{2e_u}\left(1+e_u + \sqrt{p_u}\right) \\
&\le \frac{p_u}{2} + c^2\Norm{H}_{C^3}^2e_u
+ \sqrt{2}c\Norm{H}_{C^3}\sqrt{e_u}(1+e_u).
\end{split}
\end{equation*}
Hence it follows from~\eqref{eq:Le} that
\begin{equation*}
\begin{split}
\sL e_u + r_u 
&=
p_u + \sum_{j=0}^3\inner{h_j(u)+\xi_j(u)}{\p_{v_j}u} \\
&\ge 
- \left(\frac12+c+c^2\Norm{H}_{C^3}^2\right)e_u
- \sqrt{2}c\Norm{H}_{C^3}(e_u)^{1/2}(1+e_u) \\
&\ge 
- c^2\Norm{H}_{C^3}^2
- \left(1+c+c^2\Norm{H}_{C^3}^2\right)e_u
- \sqrt{2}c\Norm{H}_{C^3}(e_u)^{3/2}
\end{split}
\end{equation*} 
and thus
\begin{equation}\label{eq:LerH}
\sL e_u + r_u 
\ge 
- C\left(\Norm{H}_{C^3}^2
+ \left(1+\Norm{H}_{C^3}^2\right)e_u
+ \Norm{H}_{C^3}(e_u)^{3/2}\right),
\end{equation} 
where $C:=1+c^2$.
Using the inequality 
$
ab\le \frac{1}{3}a^3+\frac{2}{3}b^{3/2}
$
for $a,b\ge0$ we obtain~\eqref{eq:Ler} with
$$
A:= C\left(\Norm{H}_{C^3}^2 
+ \frac{1}{3}\left(1+\Norm{H}_{C^3}^2\right)^3\right),\qquad
B:=C\left(\frac{2}{3}+\Norm{H}_{C^3}\right).
$$
This proves the lemma.
\end{proof}

\begin{remark}\label{rmk:apriori}\rm
For general hyperk\"ahler manifolds Lemma~\ref{le:Ler}
gives an estimate of the form
$$
\sL e \ge -c(1+e^2)
$$
for the energy density of solutions of~\eqref{eq:critH} 
and~\eqref{eq:floerH}. In dimensions $n=3,4$ the exponent 
$2$ is larger than the critical exponent $(n+2)/n$ 
in Theorem~\ref{thm:heinz}. For the critical points 
$f:M\to X$ of $\sA_H$ this means that the energy
$$
\sE(f)=\frac12\int_M\Abs{df}^2\,\dvol_M
$$ 
does not control the sup norm of $\Abs{df}$ even if we assume 
that there is no energy concentration near points. 
This is related to noncompactness phenomena that can 
be easily observed in examples. 
Namely, composing a 
holomorphic sphere in $X$ (for one of the complex 
structures $J_\lambda=\lambda_1I+\lambda_2J+\lambda_3K$)
with a suitable Hopf fibration gives rise to a solution
of $\dd(f)=0$.  Now the bubbling 
phenomenon for holomorphic spheres leads to sequences
$f^\nu:S^3\to X$ of solutions of~\eqref{eq:critH} 
where the derivative $df^\nu$ blows up along a Hopf
circle, while the energy remains bounded.
\end{remark}

\begin{lemma}\label{le:Les}
There is a constant $C>0$, depending only on the 
vector fields $v_i$, the metric on $X$, and the Hamiltonian
perturbation $H$, such that every solution 
$u:\R\times M\to X$ of~\eqref{eq:floerH} 
satisfies the estimate
\begin{equation}\label{eq:Les}
\sL \Abs{\p_su}^2 
\ge - C\left(1+\Abs{du}^2\right)\Abs{\p_su}^2.
\end{equation}
\end{lemma}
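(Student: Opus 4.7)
The plan is to mimic the strategy of the proof of Lemma~\ref{le:Ler}, specialized to the distinguished section $\xi:=\p_su$ of $u^*TX$. The starting observation is that differentiating the Floer equation~\eqref{eq:floerH} in the $s$-direction and using $[\p_s,v_i]=0$ shows that $\xi$ satisfies $\sD_{u,H}\xi=0$, or equivalently
\[
\sD\xi = \Nabla{\xi}\nabla H(u),\qquad \sD:=\Nabla{s}+I\Nabla{v_1}+J\Nabla{v_2}+K\Nabla{v_3},
\]
just as in the proof of Theorem~\ref{thm:reg}.

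Next I would compute $\sL\Abs{\xi}^2$ pointwise. Setting $v_0:=\p_s$ and $\sL^\nabla:=\sum_{i=0}^3\Nabla{v_i}\Nabla{v_i}$, the Leibniz rule gives
\[
\sL\Abs{\xi}^2 = 2\sum_{i=0}^3\Abs{\Nabla{v_i}\xi}^2 + 2\inner{\sL^\nabla\xi}{\xi}.
\]
A Weitzenb\"ock-type computation of $\sD^*\sD$ on sections of $u^*TX$, entirely analogous to the derivation of~\eqref{eq:DDu} but with the Lie bracket terms $\p_{[v_i,v_j]}$ replaced by $\Nabla{[v_i,v_j]}$ and with the Riemann tensor of $X$ now appearing from the commutators, yields
\[
\sL^\nabla\xi = -\sD^*\sD\xi + R_u(\xi) + I\Nabla{[v_2,v_3]}\xi + J\Nabla{[v_3,v_1]}\xi + K\Nabla{[v_1,v_2]}\xi,
\]
where $R_u(\xi)$ is a zeroth order expression built from the Riemann curvature of $X$ contracted with pairs drawn from $\p_su,\p_{v_1}u,\p_{v_2}u,\p_{v_3}u$; in particular $\Abs{R_u(\xi)}\le c\Abs{du}^2\Abs{\xi}$ since $X$ is compact.

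Finally, I would bound each piece. Applying the first order operator $\sD^*$ to the zeroth order expression $\Nabla{\xi}\nabla H(u)$ by the chain rule for $y\mapsto\nabla H_y(u(y))$ and using $C^3$ control on $H$ yields
\[
\Abs{\sD^*\sD\xi} \le c(1+\Abs{du})\Abs{\xi} + c\sum_{i=0}^3\Abs{\Nabla{v_i}\xi},
\]
and the Lie bracket terms $\Nabla{[v_i,v_j]}\xi$ are similarly controlled by $c\sum_{k=1}^3\Abs{\Nabla{v_k}\xi}$ because $v_1,v_2,v_3$ form an orthonormal frame of $TM$, so each $[v_i,v_j]$ has bounded components in this frame. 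Taking the inner product with $\xi$ and applying $2ab\le\eps a^2+\eps^{-1}b^2$ with $\eps$ small enough to absorb every cross term of the form $\Abs{\Nabla{v_i}\xi}\Abs{\xi}$ into the positive contribution $2\sum_i\Abs{\Nabla{v_i}\xi}^2$ from the first display, one arrives at exactly $\sL\Abs{\xi}^2\ge -C(1+\Abs{du}^2)\Abs{\xi}^2$.

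The main obstacle is the careful bookkeeping in the Weitzenb\"ock step and ensuring that, after the Cauchy--Schwarz absorption, the residual coefficient of $\sum_i\Abs{\Nabla{v_i}\xi}^2$ remains non-negative so that those terms genuinely cancel. This is in the same spirit as the chain of inequalities leading from~\eqref{eq:LerH} to~\eqref{eq:Ler} and should pose no real difficulty once the expansion of $\sD^*\sD\xi$ is written out; observe that no flatness assumption on $X$ is needed at this point, since only pointwise bounds on the Riemann tensor enter.
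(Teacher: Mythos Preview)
Your proposal is correct and follows essentially the same Bochner-type strategy as the paper: both compute $\sL^\nabla(\p_su)$, pair with $\p_su$, and absorb the first-order cross terms into the positive contribution $\sum_i\Abs{\Nabla{v_i}\p_su}^2$. The only organizational difference is that the paper obtains the formula for $\sL^\nabla(\p_su)$ by specializing the identity for $\sL^\nabla(\p_{v_j}u)$ already derived in the proof of Lemma~\ref{le:Ler} to $j=0$ (using $[v_0,v_i]=0$), whereas you reach it via the Weitzenb\"ock identity for $\sD^*\sD$ (the identity appearing in Step~1 of Proposition~\ref{prop:codim2}) together with the linearized equation $\sD(\p_su)=\Nabla{\p_su}\nabla H(u)$; the two expressions differ only by harmless sign conventions and a curvature commutator of the same order.
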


\begin{proof}
Abbreviate $v_0:=\p_s$ and $w_1:=[v_2,v_3]$, 
$w_2:=[v_3,v_1]$, $w_3:=[v_1,v_2]$ as in the 
proof of Lemma~\ref{le:Ler}.  
Define the 
functions $e_0,r_0:\R\times M\to\R$ by 
$$
e_0 := \frac12\Abs{\p_su}^2,\qquad
r_0 := \sum_{i=1}^3\inner{R(\p_su,\p_{v_i}u)\p_{v_i}u}{\p_su}
$$
Then 
\begin{equation}\label{eq:Le0}
\sL e_0 = \sum_{i=0}^3\Abs{\Nabla{v_i}\p_su}^2
+ \inner{\sL^\nabla\p_su}{\p_su}.
\end{equation}
As in the proof of Lemma~\ref{le:Ler} we have
\begin{equation}\label{eq:Lsu}
\sL^\nabla\p_su
= \sum_i R(\p_{v_i}u,\p_su)\p_{v_i}u 
+ h_0(u) + \xi_0(u),
\end{equation}
where the sum is over $i=1,2,3$ and
\begin{equation*}
\begin{split}
h_0(u) &:= 
\Bigl(
\Nabla{s}\Nabla{s}
- I\Nabla{s}\Nabla{v_1}
- J\Nabla{s}\Nabla{v_2} 
- K\Nabla{s}\Nabla{v_3}
\Bigr)\nabla H(u), \\
\xi_0(u) &:=
- I\Nabla{s}\p_{w_1}u 
- J\Nabla{s}\p_{w_2}u
- K\Nabla{s}\p_{w_3}u.
\end{split}
\end{equation*} 
Now 
$$
\Abs{h_0(u)} + \Abs{\xi_0(u)} 
\le c\left(
1+\Abs{du} + 
\sqrt{\sum_i\Abs{\Nabla{v_i}\p_su}^2}
\right)\Abs{\p_su}
$$
and hence it follows from~\eqref{eq:Le0} 
and~\eqref{eq:Lsu} that
\begin{eqnarray*}
\sL e_0 + r_0
&=& 
\sum_i\Abs{\Nabla{v_i}\p_su}^2
+ \inner{h_0(u)+\xi_0(u)}{\p_su} \\
&\ge &
\sum_i\Abs{\Nabla{v_i}\p_su}^2
- 2c\left(
1+\Abs{du} + 
\sqrt{\sum_i\Abs{\Nabla{v_i}\p_su}^2}
\right)e_0 \\
&\ge &
\frac12\sum_i\Abs{\Nabla{v_i}\p_su}^2
- 2c\Bigl(1 + \Abs{du} + ce_0\Bigr)e_0.
\end{eqnarray*} 
Since $e_0\le\Abs{du}^2$ and $r_0\le c\Abs{du}^2e_0$ 
this proves~\eqref{eq:Les}.
\end{proof}

\subsection*{Compactness for critical points} 

\begin{theorem}\label{thm:crit-cpct}
Let $M$ be a Cartan hypercontact $3$-manifold and $X$
be a compact flat hyperk\"ahler manifold. Let ${H:X\times M\to\R}$ 
be any smooth function.  Then the set of solutions 
of~\eqref{eq:critH} is compact in the $\Cinf$ topology.
\end{theorem}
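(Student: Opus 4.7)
The plan is to derive a uniform $C^0$-bound on $\abs{df}$ for solutions of~\eqref{eq:critH}, after which Theorem~\ref{thm:cpct}(i) supplies $C^\infty$-compactness.

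First I would specialize the Bochner-type estimate of Lemma~\ref{le:Ler}. Any critical point $f$ of $\sA_H$ may be viewed as an $s$-independent solution $u(s,\cdot):=f$ of~\eqref{eq:floerH}, and since $X$ is flat its Riemann curvature tensor and hence $r_u$ vanish identically, leaving
$$
L e_f \;\ge\; -A - B\,e_f^{3/2}
$$
on the compact $3$-manifold $M$. The exponent $3/2$ lies strictly below the critical Sobolev exponent $(n+2)/n = 5/3$ in dimension $n=3$, so Theorem~\ref{thm:heinz} in Appendix~\ref{app:heinz} applies and yields constants $r_0>0$ and $C_1>0$ depending only on the data with
$$
e_f(y) \;\le\; C_1\Bigl(1 + \int_{B_{r_0}(y)} e_f \,\dvol_M\Bigr)
$$
for every $y\in M$. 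Covering $M$ by finitely many such balls gives the global estimate $\sup_M e_f \le C_2\bigl(1+\sE(f)\bigr)$.

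Second I would establish a uniform bound on the total energy $\sE(f)$. By Remark~\ref{rmk:R} and equation~\eqref{eq:critH},
$$
\sE(f) \;=\; \sA(f) + \tfrac12\int_M \abs{\nabla H(f)}^2 \,\dvol_M,
$$
and the second term is bounded by $\tfrac12 \norm{\nabla H}_\infty^2 \abs{M}$, so the task reduces to bounding $\sA(f)$. Here both hypotheses enter: by Geiges--Gonzalo a compact Cartan hypercontact $3$-manifold is a finite quotient of $S^3$ and hence a rational homology sphere, while flatness of $X$ makes each $\om_i$ parallel, so $f^*\om_i$ is closed on $M$. Since $H^2(M;\R)=0$, the form $f^*\om_i$ is in fact exact, and Hodge theory provides a primitive $\beta_i^f$ with $d\beta_i^f = f^*\om_i$ and $\norm{\beta_i^f}_{L^2} \le C_3 \norm{f^*\om_i}_{L^2}$. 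Integration by parts then gives
$$
\sA(f) \;=\; -\sum_{i=1}^3 \int_M d\alpha_i \wedge \beta_i^f,
$$
and interpolating $\norm{f^*\om_i}_{L^2} \le \norm{df}_{L^4}^2$ against the sup estimate of Step~1 closes the energy identity into a uniform bound $\sE(f) \le E_0$.

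Combining the two previous steps produces $\norm{df}_{C^0(M)} \le R$ uniformly in the solution, and Theorem~\ref{thm:cpct}(i) applied with any $p>3$ then extracts from any sequence of solutions a subsequence converging in $C^\infty$ to a limit that again satisfies~\eqref{eq:critH}. The main obstacle is the second step: turning the algebraic identity $\sE(f) = \sA(f) + O(1)$ into a genuine a priori energy bound uses crucially both the flatness of $X$ (to render the Bochner inequality subcritical in Step~1 and to supply cohomological primitives in Step~2) and the rational-homology-sphere topology of $M$; without either, the bubbling scenario of Remark~\ref{rmk:apriori} becomes a genuine obstruction.
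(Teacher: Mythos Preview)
Your Step~1 is essentially the paper's strategy (Lemma~\ref{le:Ler} with $r_u\equiv0$ plus the subcritical Heinz estimate of Theorem~\ref{thm:heinz}), though note that Theorem~\ref{thm:heinz}(i) for $n=3$, $\mu=3/2$ gives the exponent $\alpha=2/(2+3-9/2)=4$, so the bound is $\sup e_f\le c\bigl(A+\int e_f+(B^{3/2}\int e_f)^4\bigr)$, quartic rather than linear in $\sE(f)$. This already foreshadows trouble for your bootstrap.

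The genuine gap is in Step~2. Your Hodge-primitive argument yields $\Abs{\sA(f)}\le C\Norm{df}_{L^4}^2$, and interpolation gives $\Norm{df}_{L^4}^2\le\Norm{df}_{L^\infty}\Norm{df}_{L^2}$. Feeding the Heinz bound back in, you obtain at best $\sE(f)\le C'\sE(f)^{5/2}+O(1)$; even granting your (incorrect) linear claim in Step~1, you would get only $\sE(f)\le C'\sE(f)+O(1)$ with no reason whatsoever for $C'<1$. In either case the loop fails to close: you are estimating $\sA(f)$ in terms of $\sE(f)$, while the energy identity says $\sE(f)=\sA(f)+O(1)$, so any bound $\Abs{\sA(f)}\le g(\sE(f))$ with $g$ of at least linear growth is vacuous. (Incidentally, the $\om_i$ are closed on any hyperk\"ahler manifold, so $f^*\om_i$ is closed without appealing to flatness; but this does not rescue the argument.)

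The paper breaks the circularity by a different mechanism, Lemma~\ref{le:apriori}: lifting $f$ to the universal cover $\H^n$ of $X$, combining the Poincar\'e inequality with the linear identity $\dcD\dcD f=d^*df-\kappa\dcD f$ (equation~\eqref{eq:ddDD}), and absorbing via Young's inequality, one obtains $\sA(f)\le c\int_M\Abs{\dd(f)}^2\,\dvol_M$. For a critical point $\dd(f)=\nabla H(f)$ is \emph{pointwise} bounded by $\sup\Abs{\nabla H}$, so $\sA(f)$ --- and hence $\sE(f)$ --- is bounded by a constant depending only on $H$ and $M$, with no reference to $\sE(f)$ at all. Once this uniform energy bound is in hand, your Step~1 and the appeal to Theorem~\ref{thm:cpct}(i) go through exactly as you wrote.
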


\begin{lemma}\label{le:apriori}
Let $M$ be a Cartan hypercontact $3$-manifold
and $X$ be a compact flat hyperk\"ahler manifold.
Then there is a constant $c>0$ such that 
$$
\sA(f) \le c \int_M \Abs{\dd(f)}^2\dvol_M
$$
for every $f\in\cF$. In particular, every solution
of~\eqref{eq:ijk} is constant.
\end{lemma}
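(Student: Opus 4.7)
The plan is to combine the energy identity of Lemma~\ref{le:acten} with a spectral analysis of the self-adjoint Dirac-type operator $\dcD$ from Lemma~\ref{le:hessian}, exploiting the flatness of $X$. Since $M$ is Cartan, Remark~\ref{rmk:R} reduces Lemma~\ref{le:acten} to the identity $\sE(f)=\sA(f)+\tfrac12\int_M|\dd(f)|^2\,\dvol_M$, so the claimed inequality is equivalent to a Poincar\'e-type bound $\sE(f)\le(c+\tfrac12)\int_M|\dd(f)|^2\,\dvol_M$. Once this is established, the ``in particular'' clause follows immediately: a solution of $\dd(f)=0$ satisfies $\sA(f)\le 0$, while the energy identity forces $\sE(f)=\sA(f)\ge 0$, so $\sE(f)=0$ and $f$ is constant.

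To prove the Poincar\'e-type bound I reduce to the case $X=\H^n$ by lifting. Because $M$ is Cartan hypercontact it is a finite quotient of $S^3$, so $\pi_1(M)$ is finite, while $X$ is a flat hyperk\"ahler manifold with universal cover $\H^n$. Hence every $f:M\to X$ lifts to a map $\tilde f:\tilde M\to\H^n$ over some finite cover $\pi:\tilde M\to M$, and $\tilde M$ itself inherits a Cartan hypercontact structure with the same $\kappa$. Both $\sA$ and $\int|\dd(\cdot)|^2\,\dvol$ multiply by $\deg\pi$ under the lift, so it suffices to prove the inequality for maps $\tilde M\to\H^n$.

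For such maps both functionals are quadratic in $\tilde f$, viewed as an $\H^n$-valued function on $\tilde M$: one has $\int_{\tilde M}|\dd(\tilde f)|^2\,\dvol_{\tilde M}=\|\dcD\tilde f\|_{L^2}^2$, and $\sA(\tilde f)$ is a multiple of $\langle\tilde f,\dcD\tilde f\rangle_{L^2}$ with coefficient proportional to $\kappa$, as in Remark~\ref{rmk:unbounded}. Specialising the second-order identity \eqref{eq:D2} to flat $X$ (so that $R=0$) and Cartan $M$ (so that $[v_i,v_j]=\kappa v_k$ for every cyclic permutation of $1,2,3$), the Lie-bracket contributions collapse to a multiple of $\dcD$ and yield a Weitzenb\"ock identity of the form $\dcD^2=\nabla^*\nabla+\kappa\dcD$ on $L^2(\tilde M,\H^n)$. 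Testing this against an eigenvector $\dcD e=\lambda e$ gives $\|\nabla e\|_{L^2}^2=(\lambda^2-\kappa\lambda)\|e\|_{L^2}^2\ge 0$, so every eigenvalue satisfies $\lambda(\lambda-\kappa)\ge 0$ and the spectrum of $\dcD$ avoids the open interval with endpoints $0$ and $\kappa$.

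Expanding $\tilde f=\sum a_n e_n$ in $\dcD$-eigenfunctions reduces the Poincar\'e-type bound to a per-eigenvalue statement of the form $|\kappa|/|\lambda_n|\le 2c$ on the nonzero modes. Combining the Weitzenb\"ock gap with the fact that the self-adjoint elliptic operator $\dcD$ has discrete spectrum on the compact manifold $\tilde M$ shows that the nonzero eigenvalues of $\dcD$ are bounded away from $0$, so a finite $c>0$ exists. The main obstacle is to make this choice of $c$ uniform in $f$: since $\tilde M\to M$ has degree at most $|\pi_1(M)|<\infty$ only finitely many Cartan hypercontact covers $\tilde M$ actually occur, and taking the minimum of the corresponding spectral gaps yields a single $c>0$ that works for every $f:M\to X$.
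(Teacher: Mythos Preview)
Your argument is correct and shares with the paper the reduction to maps into $\H^n$ via lifting to a finite cover of $M$, but then diverges: you diagonalise $\dcD$ and appeal to its discrete spectrum for a gap, whereas the paper pairs the identity $\dcD\dcD f=d^*df-\kappa\dcD f$ directly with a mean-zero $f$ and combines Cauchy--Schwarz with the ordinary Poincar\'e inequality on $M$ to obtain $\|df\|^2\le(2+\kappa^2C^2)\|\dcD f\|^2$ in one line, with an explicit constant. One caveat: your Weitzenb\"ock sign is wrong---substituting $R=0$ and $[v_i,v_j]=\kappa v_k$ into \eqref{eq:D2} gives $\dcD^2=\nabla^*\nabla-\kappa\dcD$ (this is exactly \eqref{eq:ddDD}), so the eigenvalue constraint is $\lambda(\lambda+\kappa)\ge0$ and the forbidden interval is $(-\kappa,0)$, on the negative side. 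That gap does not by itself bound the \emph{positive} eigenvalues away from zero, which is what the per-eigenvalue reduction actually needs (negative eigenvalues contribute nonpositively to $\sA$ and cause no trouble). The proof still goes through because the discrete-spectrum argument you also invoke already supplies the required positive gap on its own; the Weitzenb\"ock paragraph is thus doing less than you claim, though it does correctly identify $\ker\dcD$ with the constants.
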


\begin{proof}
Throughout we abbreviate
$$
\Norm{f} := \sqrt{\int_M\Abs{f}^2\dvol_M},\qquad
\Norm{df} := \sqrt{\int_M\Abs{df}^2\dvol_M}.
$$
The Poincar\'e inequality asserts that there is a constant $C>0$
such that every smooth function $f:M\to\H^n$ satisfies
\begin{equation}\label{eq:poincare}
\int_Mf\,\dvol_M=0\qquad\implies\qquad
\Norm{f} \le C\Norm{df}.
\end{equation}
Since $\alpha$ is a Cartan structure equation~\eqref{eq:DDf} 
takes the form
\begin{equation}\label{eq:ddDD}
\dcD\dcD f = d^*df - \kappa\dcD f
\end{equation}
for $f:M\to\H^n$.  Here we write $\dd(f)=\dcD f$ 
because $X=\H^n$ is equipped with the standard flat metric
and $f\mapsto\dd(f)$ is a linear operator. Taking the 
inner product with $f$ we obtain 
\begin{eqnarray*}
\Norm{df}^2
&=& 
\int_M\inner{f}{\dcD\dcD f+\kappa\dcD f}\,\dvol_M \\
&\le & 
\Norm{\dcD f}^2 + \kappa\Norm{f}\Norm{\dcD f} \\
&\le & 
\Norm{\dcD f}^2 + \kappa C\Norm{df}\Norm{\dcD f} \\
&\le & 
\Norm{\dcD f}^2 + \frac12\Norm{df}^2 
+ \frac{\kappa^2 C^2}{2} \Norm{\dcD f}^2
\end{eqnarray*}
whenever $f$ has mean value zero.
By Lemma~\ref{le:acten}, this implies
$$
\sA(f) = \frac12\Bigl(\Norm{df}^2 - \Norm{\dcD f}^2\Bigr)
\le
\left(1+\kappa^2 C^2\right)\int_M\Abs{\dcD f}^2\dvol_M
$$
for every smooth map $f:M\to\H^n$. (We can drop the mean value zero 
condition by adding a constant to $f$.) Now the theorem of 
Geiges--Gonzalo~\cite{GG} shows that $M$ is a quotient of the $3$-sphere
by a finite subgroup of $\SU(2)$.  
If $M=S^3$ every smooth map $f:M\to X$ factors through 
a map to the universal cover $\H^n$ of~$X$ and the  
assertion follows. The general case follows from the 
special case for the induced map on the universal cover of~$M$.
\end{proof}

\begin{proof}[Proof of Theorem~\ref{thm:crit-cpct}]
By Lemma~\ref{le:apriori} the critical points of $\sA_H$ satisfy a uniform 
action bound. The action bound and the energy identity of Lemma~\ref{le:acten}
give a uniform $L^1$ bound on the functions $e_\nu:=\Abs{df_\nu}^2$.  
Since the exponent $\frac{3}{2}$ in the estimate~\eqref{eq:Ler} 
of Lemma~\ref{le:Ler} is less than the critical exponent $\frac{5}{3}$ 
we obtain from the Heinz trick (Theorem~\ref{thm:heinz})
a uniform $L^\infty$ bound on the sequence~$e_\nu$.  
Hence the result follows from Theorem~\ref{thm:cpct}.
\end{proof}

\begin{remark}\label{rmk:critcpct-T3}\rm
If $M$ is the $3$-torus then the assertion of 
Lemma~\ref{le:apriori} continues to hold for the 
contractible maps $f:M\to X$. In the noncontractible case
we may have nonconstant solutions of~\eqref{eq:critH}
and the estimate of Lemma~\ref{le:apriori} only holds
with an additional constant on the right.
\end{remark}

\begin{remark}\label{rmk:critcpct}\rm
Let $X$ be a K3 surface.  Then compactness fails 
for the critical points of $\sA_H$ even in the case $H=0$
and for sequences with bounded energy 
(see Remark~\ref{rmk:apriori}).
\end{remark}

\subsection*{Compactness and exponential decay for Floer trajectories} 

\begin{lemma}\label{le:bounded}
Let $M$ be a Cartan hypercontact $3$-manifold 
and $X$ be a compact hyperk\"ahler manifold. 
Let $H:X\times M\to\R$ be any smooth function
and ${u:\R\times M\to X}$ be a solution
of~\eqref{eq:floerH}. Then the following holds.

\smallskip\noindent{\bf (i)}
For every $s\in\R$ we have
\begin{equation}\label{eq:dudsu}
\frac12\int_M\Abs{du}^2
\le \sA(u(s,\cdot))
+ \Vol(M)\sup_{X\times M}\Abs{\nabla H}^2
+ \frac32\int_M\Abs{\p_su}^2.
\end{equation}

\smallskip\noindent{\bf (ii)}
If $u$ has finite energy
$$
\sE_H(u) = \int_{-\infty}^\infty\int_M
\Abs{\p_su}^2\,\kappa\,\dvol_M\,ds
<\infty
$$ 
and $\sup\Abs{du}<\infty$ then all the derivatives of $u$ are 
bounded on $\R\times M$ and $\p_su$ converges to zero 
in the $\Cinf$ topology as $s$ tends to $\pm\infty$. 

\smallskip\noindent{\bf (iii)}
If $X$ is flat then
$$
\sE_H(u)<\infty\qquad\implies\qquad \sup\Abs{du}<\infty.
$$
\end{lemma}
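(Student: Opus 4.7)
The approach for (i) is to apply the Cartan energy identity of Lemma~\ref{le:acten} (where the correction vector field $v_0$ vanishes) to the slice $f := u(s,\cdot)$, substitute $\dd(f) = \nabla H(u) - \p_s u$ from~\eqref{eq:floerH}, use $\Abs{\dd(f)}^2 \le 2\Abs{\nabla H}^2 + 2\Abs{\p_s u}^2$, and split $\Abs{du}^2 = \Abs{\p_s u}^2 + 2\sE(f)$. The coefficient $3/2$ emerges naturally from this splitting.

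For (ii), boundedness of all higher derivatives follows from the local equation~\eqref{eq:floerH-local} by the same bootstrap as in Theorem~\ref{thm:reg}: bounded $\Abs{du}$ upgrades to $W^{2,p}_{\mathrm{loc}}$, then to $W^{k,p}_{\mathrm{loc}}$, and the bounds are uniform on $\R\times M$ because the coefficients of~\eqref{eq:floerH-local} are. To obtain $\p_s u\to 0$ in $C^\infty$, I would first observe that $\psi(s):=\int_M\Abs{\p_s u(s,\cdot)}^2\,\dvol_M$ lies in $L^1(\R)$ by finite energy and is uniformly continuous, thanks to the $C^1$ bounds just obtained; hence $\psi(s)\to 0$. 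With $\Abs{du}$ bounded, Lemma~\ref{le:Les} reduces to the \emph{linear} subsolution inequality $\sL\Abs{\p_s u}^2\ge -C\Abs{\p_s u}^2$, to which Theorem~\ref{thm:heinz} applies on unit balls centered at $(s_0,y_0)$; as the local $L^1$ norm of $\Abs{\p_s u}^2$ tends to zero, so does $\sup_M\Abs{\p_s u(s,\cdot)}$. Iterating the same argument on higher covariant derivatives of $\p_s u$ (which satisfy analogous linear inequalities once all $C^k$-bounds on $u$ are in place) yields convergence in $C^\infty$.

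For (iii), the plan is to derive a uniform local $L^1$ bound on the energy density $e_u$ and then apply the mean value inequality. Part~(i) expresses $\int_M\Abs{du(s,\cdot)}^2$ in terms of $\sA(u(s,\cdot))$ and $\int_M\Abs{\p_s u}^2$; Lemma~\ref{le:apriori} (valid because $M$ is Cartan hypercontact and $X$ is flat), applied to $\dd(u) = \nabla H(u) - \p_s u$, yields $\sA(u(s,\cdot))\le C_1 + C_2\int_M\Abs{\p_s u}^2$. Integrating over any unit $s$-interval and using $\sE_H(u)<\infty$ shows that $\int_{B_1(s_0,y_0)}e_u\le C$ uniformly in $(s_0,y_0)$. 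Since $X$ is flat we have $r_u\equiv 0$, so Lemma~\ref{le:Ler} becomes $\sL e_u\ge -A-Be_u^{3/2}$; feeding this and the local $L^1$ bound into Theorem~\ref{thm:heinz} produces the required uniform bound $\sup\Abs{du}<\infty$.

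The main obstacle is precisely the last step of~(iii): the exponent $3/2$ is exactly the critical exponent $(n+2)/n$ for $n=4=\dim(\R\times M)$, so Heinz's trick is borderline rather than comfortably subcritical as in the proof of Theorem~\ref{thm:crit-cpct}. To navigate this I expect one must exploit the flatness of $X$ more carefully: first derive a \emph{linear} inequality $\sL\Abs{\p_s u}^2\ge -C\Abs{\p_s u}^2$ (the troublesome curvature and $e_u^{3/2}$ terms in Lemma~\ref{le:Les} drop out once $X$ is flat and one bounds $\Abs{du}$ through the argument above) to obtain $\sup\Abs{\p_s u}<\infty$, and then control the spatial part on each slice by the genuinely subcritical three-dimensional Heinz inequality, where the critical exponent is $5/3>3/2$. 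Ultimately this reflects the absence of nontrivial holomorphic sphere bubbles into a flat target.
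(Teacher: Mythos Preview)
Your treatment of (i) and (ii) is correct and matches the paper's argument essentially line for line.

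For (iii), your route to the slice-wise inequality is fine; the paper reaches the same conclusion $\int_M|du|^2 \le c + 3\int_M|\p_s u|^2$ using monotonicity of $\sA_H$ along the flow rather than Lemma~\ref{le:apriori}, but either works. The gap is exactly where you locate it, and your proposed workaround is circular. The inequality of Lemma~\ref{le:Les} reads $\sL|\p_s u|^2 \ge -C(1+|du|^2)|\p_s u|^2$, and the factor $|du|^2$ does \emph{not} disappear when $X$ is flat: flatness kills only the curvature term $r_0$, while the $|du|$ contribution survives in $h_0(u)$ through the third derivatives of $H$ (terms of the form $\nabla^3 H(\p_s u,\p_{v_i}u)$). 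So you cannot obtain the linear inequality $\sL|\p_s u|^2\ge -C|\p_s u|^2$ without already knowing $\sup|du|<\infty$. The slice-by-slice 3D Heinz argument has the same circularity: the slice equation $\dd(u(s,\cdot))=\nabla H(u)-\p_s u$ carries $\p_s u$ as a source that is not yet pointwise bounded.

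The paper's resolution does not try to make the inequality subcritical; it makes the local integral \emph{small} near infinity, which is all the critical-exponent Heinz trick (Theorem~\ref{thm:heinz}(ii)) requires. From the slice-wise bound one has
\[
\int_{B_r(s_0,y_0)}|du|^2 \;\le\; \int_{s_0-r}^{s_0+r}\int_M|du|^2 \;\le\; 2cr + 3\int_{s_0-r}^{s_0+r}\int_M|\p_s u|^2.
\]
Choose $T$ so large that the tail $\int_T^\infty\int_M|\p_s u|^2<\hbar/(4B^2)$ (finite energy), and then $r$ so small that $2cr<\hbar/(4B^2)$. For $s_0\ge T+1$ the right-hand side drops below the threshold $\hbar/B^2$, so the critical mean value inequality applies and bounds $|du(s_0,y_0)|$. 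The same works for $s_0\le -T-1$, and on the compact region $[-T-1,T+1]\times M$ smoothness of $u$ gives boundedness automatically. The point you missed is that a \emph{uniform} local $L^1$ bound is not enough at the critical exponent, but \emph{smallness} is available near infinity because the energy tail is small and the constant contribution scales like $r$.
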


\begin{proof}
We prove~(i). By Lemma~\ref{le:acten}, every solution 
$u$ of~\eqref{eq:floerH} satisfies 
$$
\sE(u(s,\cdot)) 
= \sA(u(s,\cdot)) + \frac12\int_M\Abs{\nabla H(u)-\p_su}^2
$$
and hence
\begin{equation*}
\begin{split}
\frac12\int_M\Abs{du}^2
&=
\sE(u(s,\cdot)) + \frac12\int_M\Abs{\p_su}^2 \\
&\le 
\sA(u(s,\cdot))
+ \Vol(M)\sup_{X\times M}\Abs{\nabla H}^2
+ \frac32\int_M\Abs{\p_su}^2.
\end{split}
\end{equation*}
Here we have used the fact that the hypercontact structure 
on $M$ is a Cartan structure. This proves~(i).

We prove~(ii).  
Since $u$ satisfies~\eqref{eq:DDfloer} and $\Abs{du}$
is bounded the standard elliptic bootstrapping arguments 
as in the proof of Theorem~\ref{thm:reg} give uniform bounds 
on the higher derivatives of $u$.  Since $\Abs{du}$ is bounded 
it follows from Lemma~\ref{le:Les} that the function 
$\Abs{\p_su}^2$ satisfies an estimate of the form 
$$
\sL\Abs{\p_su}^2 \ge -C\Abs{\p_su}^2.
$$
This in turn implies that $u$ satisfies the mean
value inequality
$$
\Abs{\p_su(s_0,y)}^2\le 
c\int_{s_0-1}^{s_0+1}\int_M\Abs{\p_su}^2\,\dvol_M\,ds
$$
for a suitable constant $c>0$ (see Theorem~\ref{thm:heinz}
with $A=0$ and $\mu=\alpha=1$). Using the finite energy condition 
again we find that $\p_su$ converges to zero uniformly as $\Abs{s}$
tends to infinity. Convergence of the higher derivatives
of $\p_su$ follows from an elliptic bootstrapping argument 
using equation~\eqref{eq:Lsu}. This proves~(ii).

\bigbreak

We prove~(iii). Assume $X$ is flat. 
Then it follows from Lemma~\ref{le:Ler} that there are positive 
constants $A$ and $B$ such that
$$
\sL\Abs{du}^2 \ge -A - B\Abs{du}^3
$$
for every solution $u:\R\times M\to X$ of~\eqref{eq:floerH}.
Hence, by Theorem~\ref{thm:heinz}, there are positive constants 
$\hbar$ and $c$ such that every solution of~\eqref{eq:floerH} 
satisfies
\begin{equation}\label{eq:mean}
B^2\int_{B_r(z)}\Abs{du}^2 <\hbar
\quad\implies\quad
\Abs{du(z)}^2 \le c\left(
Ar^2 + \frac{1}{r^4}\int_{B_r(z)}\Abs{du}^2
\right)
\end{equation}
for $z\in\R\times M$ and $0<r\le1$. 
Now suppose $u:\R\times M\to X$ is a solution 
of~\eqref{eq:floerH} with finite energy
$\sE_H(u)<\infty$. Then the formula
$$
\int_{s_0}^{s_1}\int_M\Abs{\p_su}^2\,\kappa\,\dvol_M\,ds
= \sA_H(u(s_0,\cdot)) - \sA_H(u(s_1,\cdot))
$$
shows that there is a constant $C>0$ such that
$
\sA_H(u(s,\cdot))  \le C
$
for all~$s$. Explicitly we can choose
$C:=\sA(u(0,\cdot))+\sE_H(u)$.  Combining this 
with~\eqref{eq:dudsu} we obtain an inequality
\begin{equation}\label{eq:dudsu1}
\int_M\Abs{du}^2 \le c + 3\int_M\Abs{\p_su}^2
\end{equation}
for every $s\in\R$, where $c:= 2C + 2\Vol(M)\sup\Abs{\nabla H}$.
Next we choose $T>0$ so large that
$$
\int_T^\infty\int_M\Abs{\p_su}^2\,\dvol_M
< \frac{\hbar}{4B^2}.
$$
Then, for $z_0=(s_0,y_0)\in[T+1,\infty)\times M$  
and $r<\frac{\hbar}{8cB^2}$, we have 
\begin{eqnarray*}
\int_{B_r(z_0)}\Abs{du}^2
&\le& 
\int_{s_0-r}^{s_0+r}\int_M\Abs{du}^2\,\dvol_M\,ds \\
&\le & 
\int_{s_0-r}^{s_0+r}\left(
c+3\int_M\Abs{\p_su}^2\,\dvol_M\
\right)\,ds \\
&\le & 
2cr + 3\int_T^\infty\int_M\Abs{\p_su}^2\,\dvol_M\,ds \\
&\le &
2cr + \frac{3\hbar}{4B^2} 
< \frac{\hbar}{B^2}. 
\end{eqnarray*}
Here the second inequality follows from~\eqref{eq:dudsu1}
and the third from the fact that $s_0-r>T$.
The same estimate holds for $s_0\le -T-1$. 
Hence it follows from~\eqref{eq:mean} that $\Abs{du}$ 
is bounded. This proves the lemma.
\end{proof}

\begin{remark}\label{rmk:bounded}\rm
It is an open question if part~(iii) of Lemma~\ref{le:bounded} 
continues to hold without the hypothesis that $X$ is flat.  
\end{remark}

\begin{lemma}\label{le:T4bounded}
Let $M$ be a Cartan hypercontact $3$-manifold and 
$X$ be a compact flat hyperk\"ahler manifold.
Let $H:X\times M\to\R$ be any smooth function.  
Then there is a constant $c>0$ such that 
$$
-c\le\sA_H(u(s,\cdot))\le c
$$
for every finite energy solution $u:\R\times M\to X$
of~\eqref{eq:floerH} and every $s\in\R$. 
\end{lemma}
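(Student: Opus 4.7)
The plan is to combine monotonicity of $\sA_H$ along the negative gradient flow with compactness of the set of critical points provided by Theorem~\ref{thm:crit-cpct}. First I would observe that, since $u$ satisfies $\p_s u = -\grad\sA_H(u(s,\cdot))$ in the $L^2$ sense, the function $s\mapsto\sA_H(u(s,\cdot))$ is non-increasing and
$$
\sA_H(u(s_0,\cdot))-\sA_H(u(s_1,\cdot)) = \int_{s_0}^{s_1}\int_M|\p_su|^2\,\kappa\,\dvol_M\,ds,
$$
so the limits $L^\pm:=\lim_{s\to\pm\infty}\sA_H(u(s,\cdot))$ exist in $[-\infty,+\infty]$ with $L^+\le L^-$ and $L^--L^+=\sE_H(u)<\infty$. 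The task reduces to showing that $L^+$ and $L^-$ are both finite and lie in a range determined only by $(M,X,H)$.

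Next I would invoke Lemma~\ref{le:bounded}: part~(iii) (which uses flatness of $X$) gives $\sup_{\R\times M}|du|<\infty$, and part~(ii) then provides uniform $\Cinf$-bounds on all derivatives of $u$ together with $\Cinf$-convergence $\p_s u\to 0$ as $s\to\pm\infty$. For any sequence $s_k\to+\infty$, Arzel\`a--Ascoli (or Theorem~\ref{thm:cpct}) yields a subsequence along which $u(s_k,\cdot)\to f^+$ in $\Cinf$ for some smooth map $f^+:M\to X$. Because $\p_s u\to 0$ uniformly, the limit satisfies $\dd_H(f^+)=0$, so $f^+$ is a critical point of $\sA_H$, and continuity of $\sA_H$ in $C^1$ gives $\sA_H(u(s_k,\cdot))\to\sA_H(f^+)$. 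Monotonicity then forces the full limit $L^+$ to exist and equal $\sA_H(f^+)$, which in particular is finite. The analogous argument for $s\to-\infty$ produces a critical point $f^-$ with $L^-=\sA_H(f^-)$.

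Monotonicity now yields $\sA_H(f^+)\le\sA_H(u(s,\cdot))\le\sA_H(f^-)$ for every $s\in\R$. To obtain a $u$-independent constant $c$, I would apply Theorem~\ref{thm:crit-cpct}: the set $\cC$ of critical points of $\sA_H$ is $\Cinf$-compact, and since $\sA_H$ is continuous in the $C^1$ topology the set $\sA_H(\cC)\subset\R$ is bounded; take $c:=\sup_{f\in\cC}|\sA_H(f)|$, which depends only on $(M,X,H)$. I expect the main obstacle to be precisely the asymptotic step---pinning down that $L^\pm$ are critical values rather than $\pm\infty$ or some non-critical cluster value---which rests crucially on Lemma~\ref{le:bounded}(iii) and hence on the flatness of $X$; without the uniform derivative bound supplied there, the Arzel\`a--Ascoli argument at the ends breaks down and $L^\pm$ could in principle be $\mp\infty$. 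Once this asymptotic identification is in place the remainder of the argument is essentially a repackaging of the monotonicity formula together with the compactness of the critical set.
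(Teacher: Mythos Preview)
Your proposal is correct and follows essentially the same approach as the paper: extract subsequential $\Cinf$ limits of $u(s,\cdot)$ as $s\to\pm\infty$ (using Lemma~\ref{le:bounded}(iii) for the derivative bound and~(ii) for $\p_su\to0$), identify these limits as critical points, and then combine monotonicity of $\sA_H$ along the flow with the compactness of the critical set from Theorem~\ref{thm:crit-cpct}. If anything, you are more explicit than the paper about invoking Lemma~\ref{le:bounded}(iii), which is indeed the place where flatness of $X$ enters.
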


\begin{proof}
By Theorem~\ref{thm:crit-cpct},
there is a constant $c>0$ such that
$$
-c\le \sA_H(f)\le c
$$ 
for every critical point of $\sA_H$. Now let $u:\R\times M\to X$ 
be a finite energy solution of~\eqref{eq:floerH} and 
choose a sequence of real numbers $s^\nu\to-\infty$.
Passing to a subsequence we may assume that $u(s^\nu+\cdot,\cdot)$
converges, uniformly with all derivatives, to a solution
of~\eqref{eq:floerH} on the domain $[-1,1]\times M$. 
By~(i), this solution is a critical point of $\sA_H$.  Hence 
$$
\lim_{\nu\to\infty}\sA_H(u(s^\nu,\cdot))\le c.
$$
Since the action is nonincreasing along negative gradient flow 
lines this shows that $\sA(u(s,\cdot))\le c$ for all $s\in\R$.
The lower bound is obtained by the same argument for a 
sequence $s^\nu\to+\infty$.  
This proves the lemma. 
\end{proof}

\begin{theorem}[\bf Exponential decay]\label{thm:asymptotic}
Let $M$ be a Cartan hypercontact $3$-manifold and 
$X$ be a compact hyperk\"ahler manifold. 
Let $H:X\times M\to\R$ be a smooth function 
such that every solution of~\eqref{eq:critH} 
is nondegenerate. Let ${u:\R\times M\to X}$ be a solution 
of~\eqref{eq:floerH}. Then the following are equivalent. 

\smallskip\noindent{\bf (a)}
The energy $\sE_H(u)$ is finite and $\Abs{du}$ is bounded.

\smallskip\noindent{\bf (b)}
There are solutions $f^\pm:M\to X$ of equation~\eqref{eq:critH}
such that 
\begin{equation}\label{eq:limits}
\lim_{s\to\pm\infty} u(s,y) = f^\pm(y),\qquad
\lim_{s\to\pm\infty} \sA_H(u(s,\cdot)) = \sA_H(f^\pm),
\end{equation}
and $\lim_{s\to\pm\infty} \p_su(s,y) = 0$,
Moreover, the convergence is uniform in $y$
and~$\Abs{du}$ is bounded.

\smallskip\noindent{\bf (c)}
There are positive constants 
$\rho$ and $c_1,c_2,c_3,\dots$ such that
\begin{equation}\label{eq:exp}
\Norm{\p_su}_{C^\ell((\R\setminus[-T,T])\times M)}
\le c_\ell e^{-\rho T}
\end{equation}
for every $T>0$ and every integer $\ell\ge 0$.
Moreover, $\Abs{du}$ is bounded.
\end{theorem}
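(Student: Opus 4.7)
The plan is to establish the cycle $(c) \Rightarrow (b) \Rightarrow (a) \Rightarrow (c)$, concentrating the real work on the last implication. The first two are soft. Assuming (c), the bound $\Norm{\p_s u}_{C^0}\le c_0 e^{-\rho\abs{s}}$ would make $u(s,y)$ Cauchy in $X$ uniformly in $y$, so $u(s,\cdot)\to f^\pm$ in $\Cinf$; passing to the limit in~\eqref{eq:floerH} with $\p_s u\to 0$ gives $\dd_H f^\pm=0$, and action convergence follows from continuity of $\sA_H$ in $C^1$. Assuming (b), we would integrate $\frac{d}{ds}\sA_H(u(s,\cdot))=-\Norm{\p_s u}_{L^2_\kappa}^2$ to obtain the finite-energy identity $\sE_H(u)=\sA_H(f^-)-\sA_H(f^+)<\infty$, with $\Abs{du}$ bounded by hypothesis.

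For $(a)\Rightarrow(c)$, I would first invoke Lemma~\ref{le:bounded}(ii), which already delivers uniform bounds on all derivatives of $u$ on $\R\times M$ and $\p_s u\to 0$ in $\Cinf$ as $s\to\pm\infty$. Combined with Arzel\`a--Ascoli, this makes $\{u(s,\cdot)\}_{s\ge 0}$ precompact in $\Cinf(M,X)$, and every subsequential limit satisfies $\dd_H=0$. Hence the $\omega$-limit set $\Omega^+$ is a nonempty, compact, connected subset of the critical set of $\sA_H$. The nondegeneracy hypothesis together with the implicit function theorem applied to $\dd_H$ forces critical points to be $\Cinf$-isolated, so $\Omega^+$ is a single point $f^+$ and $u(s,\cdot)\to f^+$ in $\Cinf$; the analogous argument at $-\infty$ produces $f^-$.

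It remains to upgrade uniform convergence to exponential decay. Fix $s_0$ large and write $u(s,y)=\exp_{f^+(y)}\xi(s,y)$ for $s\ge s_0$, so $\xi$ is a smooth section of $(f^+)^*TX$ tending to zero in $\Cinf$. The Floer equation becomes
\begin{equation*}
\p_s\xi + \dcD_{f^+,H}\xi + Q(s,\xi,\nabla\xi) = 0
\end{equation*}
with $Q$ at least quadratic in $(\xi,\nabla\xi)$. Set $\lambda(s):=\sA_H(u(s,\cdot))-\sA_H(f^+)$; then $\lambda'(s)=-\Norm{\p_s u}_{L^2_\kappa}^2$ and $\lambda\searrow 0$, so $\lambda\ge 0$. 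By Lemma~\ref{le:hessian}, $\dcD_{f^+,H}$ is self-adjoint on $L^2_\kappa$; with the Fredholm/ellipticity already observed and the nondegeneracy assumption, it admits a bounded inverse with spectral gap $\rho>0$. Taylor expanding $\sA_H$ and $\grad\sA_H$ at $f^+$ and using $\abs{\inner{\xi}{\dcD_{f^+,H}\xi}}\le\rho^{-1}\Norm{\dcD_{f^+,H}\xi}^2$ should yield the Lojasiewicz-type estimate
\begin{equation*}
0 \le \lambda(s) \le C\Norm{\p_s u(s,\cdot)}_{L^2_\kappa}^2 = -C\lambda'(s)
\end{equation*}
for $s$ past some $s_1\ge s_0$, whence $\lambda(s)\le\lambda(s_1)e^{-(s-s_1)/C}$ and $\int_s^\infty\Norm{\p_\sigma u}_{L^2_\kappa}^2\,d\sigma\le C_1 e^{-\rho s}$.

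Finally, Lemma~\ref{le:Les} gives $\sL\abs{\p_s u}^2\ge-C\abs{\p_s u}^2$ once $\abs{du}$ is bounded, so the mean value inequality of Theorem~\ref{thm:heinz} converts the integrated $L^2$ bound into a pointwise estimate $\abs{\p_s u(s,y)}\le c\,e^{-\rho s/2}$. The higher-derivative bounds~\eqref{eq:exp} then follow by differentiating~\eqref{eq:floerH} in $s$, obtaining a linear equation for $\p_s u$ with uniformly bounded coefficients, and bootstrapping this equation on shifted cylinders $[s-1,s+1]\times M$. The analogous argument at $-\infty$ completes the proof. I expect the main obstacle to be the Lojasiewicz step: one must show that the spectral gap of $\dcD_{f^+,H}$ really dominates the cubic remainder in the expansion of $\sA_H$ uniformly over a $C^1$-neighborhood of $f^+$, and that the constants can be chosen independently of $s$; a lesser obstacle is securing convergence to a \emph{single} $f^+$, handled above via connectedness of $\Omega^+$ plus isolation from nondegeneracy.
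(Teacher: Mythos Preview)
Your proposal is correct, and the overall architecture --- reduce to convergence via Lemma~\ref{le:bounded}(ii) plus isolation, then prove exponential decay, then bootstrap --- matches the paper. The cycle order differs (you do $(c)\Rightarrow(b)\Rightarrow(a)\Rightarrow(c)$ while the paper does $(c)\Rightarrow(a)$, $(a)\Rightarrow(b)$, $(b)\Rightarrow(c)$), but this is cosmetic, and in fact your $(a)\Rightarrow(c)$ already contains $(a)\Rightarrow(b)$.

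The one genuine methodological difference is in the exponential decay step. You go through a Lojasiewicz-type inequality for the action defect $\lambda(s)=\sA_H(u(s,\cdot))-\sA_H(f^+)$, Taylor-expanding $\sA_H$ and $\grad\sA_H$ at $f^+$ and invoking the spectral gap via $\abs{\inner{\xi}{\dcD_{f^+,H}\xi}}\le\rho^{-1}\Norm{\dcD_{f^+,H}\xi}^2$. The paper instead works directly with $\phi(s)=\tfrac12\Norm{\p_su}_{L^2_\kappa}^2$ and establishes the convexity inequality $\phi''\ge\rho^2\phi$ for large $\abs{s}$: from $\Nabla{s}\p_su=-\dcD_H\p_su$ one gets
\[
\phi''(s)=\Norm{\Nabla{s}\p_su}_{L^2_\kappa}^2+\Norm{\dcD_H\p_su}_{L^2_\kappa}^2
-\int_M\kappa\,\inner{[\Nabla{s},\dcD_H]\p_su}{\p_su},
\]
the commutator term is $o(1)\cdot\phi(s)$ since $\Norm{\p_su}_{L^\infty}\to0$, and the spectral gap of $\dcD_{f^\pm,H}$ controls the middle term. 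From $\phi''\ge\rho^2\phi$ and $\phi\to0$ one reads off $e^{\rho s}\phi$ nonincreasing. This sidesteps precisely the obstacle you flag --- controlling the cubic remainders in the expansion of $\sA_H$ and $\grad\sA_H$ uniformly near $f^+$ --- at the price of one extra differentiation. Your route is equally valid and arguably more portable (it generalizes to Morse--Bott or analytic-nondegenerate settings), but in the strictly nondegenerate case here the paper's second-variation argument is the more economical choice.
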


\begin{proof}
That~(c) implies~(a) is obvious. 
We prove that~(a) implies~(b). 
By Lemma~\ref{le:bounded} it follows from~(a)
that $\Abs{\p_su}$ converges to zero uniformly as $\Abs{s}$
tends to infinity and that $du$ is uniformly 
bounded with all its derivatives. Hence every 
sequence $s_\nu\to\pm\infty$ has a subsequence, still 
denoted by $s_\nu$, such that $u(s_\nu,\cdot)$
converges in the $\Cinf$ topology to a solution 
of~\eqref{eq:critH}.  Now it follows from the 
nondegeneracy of the critical points of $\sA_H$
that they are isolated.  Hence the limit is
independent of the sequence $s_\nu$. 
This proves~(b).

We prove that~(b) implies~(c). 
Consider the function $\phi:\R\to[0,\infty)$
defined by 
$$
\phi(s):= \frac12\int_M\kappa\Abs{\p_su}^2\,\dvol_M.
$$
By assumption, this function converges to zero as $s$
tends to $\pm\infty$. Moreover, its second derivative 
is given by
$$
\phi''(s) = \int_M\kappa\,\Abs{\Nabla{s}\p_su}^2 
+ \int_M\kappa\,\inner{\Nabla{s}\Nabla{s}\p_su}{\p_su}
$$
Denote by 
$$
\dcD_H := I\Nabla{v_1} + J\Nabla{v_2} + K\Nabla{v_3} - \nabla\nabla H(u)
$$
the covariant Hessian as in~\eqref{eq:hessH}.
Since the vector fields $v_i$ are independent of $s$
we have
$$
\Nabla{s}\p_su = -\Nabla{s}\dd_H(u) = -\dcD_H\p_su = \dcD_H\dd_H(u)..
$$
Differentiating this equation covariantly with respect to $s$
we obtain
$$
\Nabla{s}\Nabla{s}\p_su 
= \dcD_H\Nabla{s}\dd_H(u)+ [\Nabla{s},\dcD_H]\dd_H(u)
= \dcD_H\dcD_H\p_su - [\Nabla{s},\dcD_H]\p_su.
$$
Since $\dcD_H$ is self-adjoint with respect 
to the $L^2$ inner product with weight $\kappa$ 
this gives
$$
\phi''(s) = \int_M\kappa\,\Abs{\Nabla{s}\p_su}^2 
+ \int_M\kappa\,\Abs{\dcD_H\p_su}^2 
- \int_M\kappa\,\inner{[\Nabla{s},\dcD_H]\p_su}{\p_su}.
$$
Since $\Abs{du}$ is bounded we have an inequality
$$
\int_M\kappa\,\inner{[\Nabla{s},\dcD_H]\p_su}{\p_su}
\le c\Norm{\p_su}_{L^\infty(M)}\int_M\Abs{\p_su}^2.
$$
Moreover, by Lemma~\ref{le:bounded}, the bound on $\Abs{du}$ 
guarantees that $u(s,\cdot)$ converges in the $\Cinf$ topology 
to $f^\pm$ as $s$ tends to $\pm\infty$.  Since $f^\pm$ are 
nondegenerate critical points of $\sA_H$ we deduce that 
there is a constant $\rho>0$ such that, 
for $\Abs{s}$ sufficiently large, we have
$$
\int_M\kappa\,\Abs{\dcD_H\p_su}^2
\ge 2\rho^2 \int_M\Abs{\p_su}^2.
$$
Choosing $\Abs{s}$ so large that 
$
c\Norm{\p_su}_{L^\infty(M)}<\rho^2
$
we then obtain 
$$
\phi''(s) \ge \rho^2\phi(s).
$$
Hence
$$
\frac{d}{ds} e^{-\rho s}(\phi'(s)+\rho\phi(s)) 
= e^{-\rho s}(\phi''(s) - \rho^2\phi(s)) \ge 0.
$$
Since $\phi(s)\to0$ as $s\to\infty$ we must have 
$$
\rho\phi(s)+\phi'(s)\le0
$$ 
for all sufficiently large $s$ and hence 
$e^{\rho s}\phi(s)$ is nonincreasing.
This proves the exponential decay for $\phi$. 
To establish exponential decay for the higher derivatives 
one can use an elliptic bootstrapping argument 
based on equation~\eqref{eq:Lsu} to show that
the $L^\infty$ norm of $\p_su$ controls the higher derivatives.  
This proves the theorem.
\end{proof}

\begin{remark}\label{rmk:asymptotic}\rm
If $X$ is flat then the condition $\sup\Abs{du}<\infty$ in~(a--c)
in Theorem~\ref{thm:asymptotic} can be dropped.  
This follows from Lemma~\ref{le:bounded}~(iii) and 
the fact that each of the conditions~\eqref{eq:limits} and~\eqref{eq:exp}
guarantees finite energy.  Similarly, the next theorem 
continues to hold for general compact hyperk\"ahler manifolds 
if we impose the additional condition 
$\sup_\nu\sup_{\R\times M}\Abs{du^\nu}<\infty$.
\end{remark}

\begin{theorem}[\bf Compactness]\label{thm:compact}
Let $M$ be a Cartan hypercontact $3$-man\-i\-fold 
and $X$ be a compact flat hyperk\"ahler manifold.
Let $H:X\times M\to\R$ 
be a smooth function such that every solution $f$ of $\dd_H(f)=0$ is nondegenerate. 
Let $f^\pm$ be two distinct critical points of $\sA_H$
and $u^\nu$ be a sequence in $\sM(f^-,f^+;H)$.  
Then there is a subsequence (still denoted by~$u^\nu$), 
a catenation 
$$
u_1\in\sM(f_0,f_1;H),u_2\in\sM(f_1,f_2;H),\dots,u_N\in\sM(f_{N-1},f_N;H) 
$$
of Floer trajectories, and there are sequences
$
s^\nu_1<s^\nu_2<\cdots<s^\nu_N
$
such that
$$
f_0=f^-,\qquad f_N=f^+,\qquad \sA_H(f_{j-1}) > \sA_H(f_j), 
$$
and, for $j=1,\dots,N$, the shifted sequence 
$u^\nu(s^\nu_j+\cdot,\cdot)$ converges to $u_j$ uniformly with
all derivatives on every compact subset of $\R\times M$. 
\end{theorem}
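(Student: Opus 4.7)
The plan is to run the standard Floer-theoretic compactness-with-breaking argument, powered by the a priori estimates and the exponential decay theorem already established.

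For uniform local bounds, note that since $u^\nu\in\sM(f^-,f^+;H)$, the total energy $\sE_H(u^\nu)=\sA_H(f^-)-\sA_H(f^+)=:E_0$ is independent of~$\nu$, and Lemma~\ref{le:T4bounded} bounds $\sA_H(u^\nu(s,\cdot))$ uniformly in $s$ and~$\nu$. Rerunning the argument of Lemma~\ref{le:bounded}(iii)---which combines~\eqref{eq:dudsu1} with the mean-value estimate~\eqref{eq:mean}, available because the exponent $3/2$ in~\eqref{eq:Ler} is strictly below the critical exponent $5/3$---yields a uniform $L^\infty$ bound on $\Abs{du^\nu}$ on every compact subset of $\R\times M$. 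Theorem~\ref{thm:cpct}(ii) then extracts a subsequence converging in $C^\infty_{\mathrm{loc}}(\R\times M)$ to a solution $u_\infty$ of~\eqref{eq:floerH} with $\sE_H(u_\infty)\le E_0$ and bounded derivative; by Theorem~\ref{thm:asymptotic}, $u_\infty\in\sM(g^-,g^+;H)$ for some critical points $g^\pm$ with $\sA_H(f^-)\ge\sA_H(g^-)\ge\sA_H(g^+)\ge\sA_H(f^+)$.

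Next I iterate. By Theorem~\ref{thm:crit-cpct} together with the nondegeneracy hypothesis, the critical set of $\sA_H$ is compact and discrete, hence finite, so the critical values in $[\sA_H(f^+),\sA_H(f^-)]$ form a finite list $\sA_H(f^-)=a_0>a_1>\cdots>a_m=\sA_H(f^+)$. For each~$k$ picking a regular value $c_k\in(a_k,a_{k-1})$, the continuous nonincreasing function $s\mapsto\sA_H(u^\nu(s,\cdot))$ hits $c_k$ at some time $s^\nu_k$; after passing to a subsequence one can arrange $s^\nu_1<\cdots<s^\nu_m$. Applying the previous paragraph to each shifted sequence $u^\nu(s^\nu_k+\cdot,\cdot)$ produces $C^\infty_{\mathrm{loc}}$ limits $u_k\in\sM(f_{k-1},f_k;H)$; monotonicity of the action along Floer trajectories combined with $\sA_H(u_k(0,\cdot))=c_k$ forces $\sA_H(f_{k-1})\ge a_{k-1}>c_k>a_k\ge\sA_H(f_k)$, so each $u_k$ is nontrivial. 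A diagonal subsequence provides all the $u_k$ at once.

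The main obstacle is the standard \emph{no-energy-loss} step: proving that the chain closes up as $f^-=f_0,f_1,\dots,f_N=f^+$ and $\sum_k\sE_H(u_k)=E_0$, i.e.\ no additional descent is hidden in the gaps between consecutive shift levels. The resolution is the usual ``long cylinder lemma'': on each complementary strip $[s^\nu_{k-1}+T,s^\nu_k-T]\times M$ with $T$ large but fixed, the uniform a priori estimates together with the exponential decay~\eqref{eq:exp} from Theorem~\ref{thm:asymptotic} force $u^\nu$ to remain uniformly $C^0$-close to a single critical point, ruling out unaccounted action drops. Since the number of intermediate critical values is bounded by~$m$, the construction terminates after finitely many steps and yields the catenation with the strict inequalities $\sA_H(f_{j-1})>\sA_H(f_j)$ required by the theorem.
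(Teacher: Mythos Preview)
Your argument has a genuine gap at the first step. You claim that the exponent $3/2$ in~\eqref{eq:Ler} lies strictly below the critical exponent $5/3$, but this conflates the three- and four-dimensional settings. The inequality~\eqref{eq:Ler} governs solutions on the four-manifold $\R\times M$, where the critical exponent of Theorem~\ref{thm:heinz} is $(4+2)/4=3/2$; so we are \emph{exactly} at the critical threshold, and the mean-value inequality~\eqref{eq:mean} cannot be invoked without the smallness hypothesis $B^2\int_{B_r}\Abs{du^\nu}^2<\hbar$. (The number $5/3$ is the critical exponent for the three-dimensional problem on~$M$ alone, relevant to Theorem~\ref{thm:crit-cpct} for critical points.) Consequently Lemma~\ref{le:bounded}(iii) does not deliver a uniform $L^\infty$ bound on $\Abs{du^\nu}$ over compact sets: for a \emph{single} solution that lemma controls $\Abs{du}$ near infinity by making the tail energy small, while boundedness on the remaining compact piece is automatic by continuity---an argument that does not survive passage to a sequence, precisely because energy may concentrate.

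The paper confronts this head-on. Rather than ruling out blow-up a priori, it uses the critical-case mean-value inequality to prove an \emph{energy quantization} statement: if $\Abs{du^\nu}$ blows up near a point then at least a fixed quantum $\hbar/(3B^2)$ of $\Abs{\p_su^\nu}^2$-energy concentrates there. Hence potential bubbling occurs at only finitely many points; away from these one gets $C^\infty$ convergence by Theorem~\ref{thm:cpct}, and the removable singularity Theorem~\ref{thm:remsing} (together with the inequality~\eqref{eq:Dudsu}) extends the limit across the punctures. Finally an energy-accounting argument shows that if bubbling actually occurred the total energy $\sum_j\sE_H(u_j)$ of the broken limit would fall strictly below $\sA_H(f^-)-\sA_H(f^+)$, a contradiction. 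Only \emph{then} does one conclude that $\Abs{du^\nu}$ was uniformly bounded. Your breaking scheme in the later paragraphs is fine once those uniform bounds are in hand, but the route to them is the missing content.
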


\begin{proof}
By Lemma~\ref{le:bounded} the functions $u^\nu$ satisfy~\eqref{eq:mean} 
for suitable constants $A,B,c,\hbar$. This implies the following.

\medskip\noindent{\bf Energy quantization~I.}
{\it Let $x_0\in\R\times M$ and suppose that there 
is a sequence $x^\nu\to x_0$ such that $\Abs{du^\nu(x^\nu)}$ 
diverges to infinity.  Then
$$
\liminf_{\nu\to\infty}\int_{B_\eps(x_0)}
\Abs{du^\nu}^2 \ge \frac{\hbar}{B^2}
$$
for every $\eps>0$.}

\smallbreak

\medskip\noindent
The proof uses the Wehrheim trick. 
Suppose, by contradiction, that there is a
constant $\eps>0$ and a sequence $\nu_i\to\infty$ 
such that $B^2\int_{B_\eps(x_0)}\Abs{du^{\nu_i}}^2 < \hbar$
for every $i$.  Then we can use~\eqref{eq:mean}
with $x\in B_{\eps/2}(x_0)$ and $r=\eps/2$ to obtain
$$
\Abs{du^{\nu_i}(x)}^2 
\le c\left(\frac{A\eps^2}{4} 
+ \frac{16}{\eps^4}\int_{B_\eps(x_0)}\Abs{du^{\nu_i}}^2\right)
\le \frac{Ac\eps^2}{4} + \frac{16c\hbar}{B^2\eps^4}
$$
for all $x\in B_{\eps/2}(x_0)$ and $\nu\ge\nu_0$.
With $x=x^{\nu_i}$ it follows that the sequence 
$\Abs{du^{\nu_i}(x^{\nu_i})}$ is bounded,
a contradiction.  

\medskip\noindent{\bf Energy quantization~II.}
{\it Let $x_0=(s_0,y_0)\in\R\times M$ and suppose that there 
is a sequence $x^\nu=(s^\nu,y^\nu)\to (s_0,x_0)$ such that 
$\Abs{du^\nu(x^\nu)}$ diverges to infinity.  
Then
$$
\liminf_{\nu\to\infty}\int_{s_0-\eps}^{s_0+\eps}\int_M
\Abs{\p_su^\nu}^2 \ge \frac{\hbar}{3B^2}
$$
for every $\eps>0$.}

\smallbreak

\medskip\noindent
By Lemma~\ref{le:bounded}~(i) we have 
\begin{equation}\label{eq:Dudsu}
\int_M\Abs{\p_su^\nu}^2 \ge \frac13\int_M \Abs{du^\nu}^2 - c
\end{equation}
for some constant $c>0$ independent of $\nu$ and $s$.
The assertion follows by integrating this inequality from 
$s_0-\eps$ to $s_0+\eps$ and taking the limit $\eps\to 0$. 

\medskip\noindent
With this understood it follows that, after passing to a subsequence, 
we obtain divergence of the energy density at most near finitely 
many points.  On the complement of these finitely many points,
a further subsequence converges to a solution $u^\infty$ of $\partial_su^\infty + \dd_H(u^\infty)=0$ in the $\Cinf$ topology, 
by Theorem~\ref{thm:cpct}.  Now it follows from the 
inequality~\eqref{eq:Dudsu} that the $L^2$ norm of 
$du^\infty$ is finite on every compact subset of 
$\R\times M$ and in particular in a neighborhood of each
bubbling point.  Hence we can use the removable singularity 
theorem~\ref{thm:remsing} to deduce that the limit solution 
can be extended into the finitely many missing points.
The upshot is that, by standard arguments, we obtain a convergent 
subsequence as in the statement of the theorem, except that
$u^\nu(s^\nu_j+\cdot,\cdot)$ need only converge to $u_j$ in the 
complement of finitely many points.  If these bubbling 
points do exist we have 
$$
\sA_H(f_{j-1})-\sA_H(f_j) = \sE_H(u_j) \le 
\lim_{T\to\infty}\int_{s^\nu_j-T}^{s^\nu_j+T}\int_M\Abs{\p_su^\nu}^2
-\frac{\hbar}{B^2}
$$
for some $j$. However, this would imply that the sum of the 
energies $\sE_H(u_j)$ is strictly smaller than 
$\sE_H(u^\nu)=\sA_H(f^-)-\sA_H(f^+)$ which is clearly impossible.  
Thus bubbling cannot occur and the sequence
$\Abs{du^\nu}$ must remain uniformly bounded.
This proves the theorem.
\end{proof}

\begin{remark}\label{rmk:cpct}\rm
A key issue in developing the Floer theory of the action functional $\sA_H$ 
for general (compact) hyperk\"ahler manifolds is to extend 
Theorems~\ref{thm:crit-cpct} and~\ref{thm:compact} to the nonflat case.  
One then has to address the codimension-2 bubbling phenomenon for
finite energy sequences of solutions $f$ of $\dd_H(f)=0$ and $u$ of $\partial_su + \dd_H(u)=0$.
\end{remark}

    
\section{Moduli spaces and transversality} \label{sec:trans}  

\subsection*{Transversality for critical points}

Let $\sH:=\Cinf(X\times M)$ and, for $H\in\sH$, denote by 
$$
\sC(H) := \left\{f:M\to X\,|\,
f\mbox{ satisfies } \dd_H(f)=0 \right\}
$$
the set of critical points of $\sA_H$. Recall that 
a critical point $f\in\sC(H)$ is called nondegenerate 
if the Hessian
$$
\dcD_{f,H} := I\Nabla{v_1}+J\Nabla{v_2}+K\Nabla{v_3}-\nabla\nabla H(f)
$$
is bijective as an operator from $T_f\sF=\Om^0(M,f^*TX)$ 
to itself (respectively as an operator from 
$W^{k+1,p}(M,f^*TX)$ to $W^{k,p}(M,f^*TX)$).
Denote by
$$
\sH^{\morse}
:= \left\{H\in\sH\,|\,
\mbox{every critical point }f\in\sC(H)\mbox{ is nondegenerate}
\right\}
$$
of all $H\in\sH$ such that $\sA_H:\sF\to\R$ is a Morse function.

\begin{theorem}\label{thm:morse}
For every compact $3$-manifold $M$ with a positive hypercontact
structure and every hyperk\"ahler manifold $X$ the set 
$\sH^\morse$ is of the second category in $\sH$.
\end{theorem}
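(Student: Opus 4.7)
The plan is a standard Sard--Smale argument on the universal moduli space of critical points. First I would pass to Banach completions: for $p>3$ and some large integer $\ell$, let $\sF^{1,p}:=W^{1,p}(M,X)$ and $\sH^\ell\subset\sH$ be the $C^\ell$ perturbations. Consider the universal moduli space
$$
\sC^{\mathrm{univ}} := \left\{(f,H)\in\sF^{1,p}\times\sH^\ell\,\big|\,\dd_H(f)=0\right\},
$$
realized as the zero set of the smooth section $\Phi(f,H):=\dd_H(f)$ of the Banach space bundle over $\sF^{1,p}\times\sH^\ell$ whose fiber at $f$ is $L^p(M,f^*TX)$. By Theorem~\ref{thm:reg} any zero is automatically smooth in $f$, so $\sC^{\mathrm{univ}}$ does not depend on $p$. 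The linearization at $(f,H)$ is
$$
d\Phi(f,H)(\xi,\hat H) = \dcD_{f,H}\xi - \nabla\hat H(f).
$$

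The key step is to show that $d\Phi(f,H)$ is surjective (with closed range) at every zero. Since $\dcD_{f,H}$ is self-adjoint and elliptic (Lemma~\ref{le:hessian}), it is Fredholm of index zero, so its range is closed with finite-dimensional cokernel. Suppose $\eta\in L^{p'}(M,f^*TX)$ annihilates the range of $d\Phi(f,H)$. Testing against $(\xi,0)$ gives $\dcD_{f,H}\eta=0$ weakly, so $\eta$ is smooth by elliptic regularity. Testing against $(0,\hat H)$ gives
$$
\int_M\inner{\eta(y)}{\nabla\hat H(f(y),y)}\,\kappa\,\dvol_M = 0
\qquad\text{for all }\hat H\in\sH^\ell.
$$
Fix $y_0\in M$ and a local chart on $X$ near $f(y_0)$. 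Choosing $\hat H(x,y):=\rho_\epsilon(y)\chi(x)\inner{\eta(y_0)}{x-f(y_0)}$, where $\rho_\epsilon$ is a bump supported in a small ball around $y_0\in M$ and $\chi$ is a cut-off on $X$ supported near $f(y_0)$, yields in the limit $\epsilon\to0$ the identity $\abs{\eta(y_0)}^2\kappa(y_0)=0$. Note that because $H$ depends on $y\in M$ as well as on $x\in X$, no somewhere-injectivity of $f$ is needed. Hence $\eta\equiv0$, $d\Phi(f,H)$ is surjective, and $\sC^{\mathrm{univ}}$ is a smooth Banach submanifold.

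Applying the Sard--Smale theorem to the projection $\pi:\sC^{\mathrm{univ}}\to\sH^\ell$ (which is Fredholm of index zero) yields a residual set $\sH^{\ell,\reg}\subset\sH^\ell$ for which every $H\in\sH^{\ell,\reg}$ is a regular value; for such $H$ the operator $\dcD_{f,H}$ is surjective, and hence bijective by self-adjointness, at every $f\in\sC(H)$. To pass from $C^\ell$ perturbations to smooth ones I would apply Taubes' standard trick: write
$$
\sH^{\morse} = \bigcap_{c>0}\left\{H\in\sH\,\big|\,\text{every }f\in\sC(H)\text{ with }\norm{f}_{C^1}\le c\text{ is nondegenerate}\right\},
$$
show each set on the right is open in $\sH$ (using compactness of the relevant critical set together with continuity of the spectrum) and dense (using that $\sH^{\ell,\reg}$ is dense in $\sH^\ell$ for every $\ell$, together with a diagonal approximation of smooth by $C^\ell$ perturbations), and conclude that $\sH^{\morse}$ is a countable intersection of open dense subsets of the Fr\'echet space $\sH$, hence of the second category.

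The main obstacle is the surjectivity step: one must produce enough freedom in the $H$-variable to kill cokernel elements without any assumption on $f$. This is precisely where it is crucial that our perturbations are allowed to depend on $y\in M$, not just on $x\in X$; with $y$-dependence available, one can localize at an arbitrary point of $M$ and reduce the question to a pointwise statement, bypassing the somewhere-injectivity issues familiar from $J$-holomorphic curve theory.
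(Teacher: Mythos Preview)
Your proposal is correct and follows essentially the same route as the paper: set up the universal moduli space over $W^{1,p}\times C^\ell$, prove transversality by showing any cokernel element $\eta$ must vanish via a perturbation $\hat H$ localized near a point $y_0\in M$ with $\nabla\hat H(f(y_0))=\eta(y_0)$, apply Sard--Smale to the projection, then use the Taubes trick (filtering by $\sup|df|\le c$) to pass to smooth perturbations. The paper's construction of the test perturbation is the same in spirit---a cutoff on $M$ times a function on $X$ with prescribed gradient at $f(y_0)$---and it likewise emphasizes that the $y$-dependence of $H$ is what makes the argument go through without any injectivity hypothesis on $f$.
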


\begin{proof}
Fix an integer $\ell\ge2$ and abbreviate
$\sH^\ell := C^\ell(X\times M)$. Then the regularity 
argument in the proof of Theorem~\ref{thm:reg} shows that $f$ with $\dd_H(f)=0$ is of class $W^{\ell,p}$ for 
any $p<\infty$. Fix a constant $p>3$ and denote by 
$$
\sC^\ell := \left\{(f,H)\in W^{1,p}(M,X)\times\sH^\ell\,|\,
f\mbox{ satisfies } \dd_H(f)=0 \right\}
$$
the universal moduli space of critical points.
We prove that $\sC^\ell$ is a $C^{\ell-1}$ Banach manifold.
It is the zero set of a $C^{\ell-1}$ section of 
a Banach space bundle 
$$
\sE\to W^{1,p}(M,X)\times\sH^\ell
$$
with fibers $\sE_{f,H}:=L^p(M,f^*Tx)$. 
The section is given by 
$$
(f,H)\mapsto\dd_H(f)
$$ 
and we must prove that it is transverse to the zero section.  
Equivalently, the operator
\begin{equation}\label{eq:Dhxi}
W^{1,p}(M,f^*TX)\times\sH^\ell\to L^p(M,f^*TX),\
(\xi,h)\mapsto\dcD_{f,H}\xi - \nabla h(f)
\end{equation}
is surjective for every $H\in\sH^\ell$ and every
$f\in\sC(H)$. 

\smallbreak

Let $1/p+1/q=1$ and choose an element 
$\eta\in L^q(M,f^*TX)$ that annihilates the image 
of~\eqref{eq:Dhxi} in the sense that
$$
\int_M\inner{\eta}{\dcD_{f,H}\xi - \nabla h(f)}\,\kappa\,\dvol_M = 0
$$
for all $h\in\sH_\ell$ and $\xi\in W^{1,p}(M,f^*TX)$. 
Then, by elliptic regularity, we have 
$\eta\in W^{\ell,p}(M,f^*TX)$ and 
$$
\dcD_{f,H}\eta = 0,\qquad
\int_M\inner{\eta}{\nabla h(f)}\,\kappa\,\dvol_M = 0\qquad
\forall\;h\in\sH^\ell.
$$
In particular $\eta$ is continuous.
If $\eta\not\equiv0$ then it is easy to find a smooth function
$h:X\times M\to\R$ such that $\inner{\eta}{\nabla h(f)}\ge0$
everywhere on $M$ and $\inner{\eta}{\nabla h(f)}>0$ somewhere.
Namely, choose a point $y_0\in M$ with $\eta(y_0)\ne 0$
and a function $h_0:X\to\R$ such that 
$$
h_0(f(y_0))=0,\qquad \nabla h_0(f(y_0))=\eta(y_0).
$$ 
Then there is a neighborhood $U_0\subset M$ of $y_0$ 
such that 
$$
\inner{\eta(y)}{\nabla h_0(f(y))}>0
$$ 
for all $y\in U_0$.
Now choose a smooth cutoff function $\beta:M\to[0,1]$ 
with support in $U_0$ such that $\beta(y_0)=1$. 
Then the function $h(y,x):=\beta(y)h_0(x)$ has the 
required properties.  Thus we have proved that the 
operator~\eqref{eq:Dhxi} is always surjective and hence
$\sC^\ell$ is a $C^{\ell-1}$ Banach manifold as claimed.

Now the obvious projection 
$$
\pi^\ell:\sC^\ell\to\sH^\ell
$$
is a $C^{\ell-1}$ Fredholm map of index zero.
Since $\ell\ge 2$, it follows from the Sard--Smale theorem
that the set $\sH^{\morse,\ell}\subset\sH^\ell$ 
of regular values of $\pi^\ell$ is dense in $\sH^\ell$.
Now the result follows by the usual Taubes trick as explained,
for example, in~\cite[Chapter~3]{MS}.  Namely, 
for a constant $c>0$ we may introduce the set
$\sH^{\morse,\ell}_c$ of all $H\in\sH^\ell$ such that
the critical points $f\in\sC(H)$ with $\sup\Abs{df}\le c$
are nondegenerate. By Theorem~\ref{thm:crit-cpct}, 
this set is open in $\sH^\ell$ and
$
\sH^{\morse,\ell}=\bigcap_{c>0}\sH^{\morse,\ell}_c.
$
We then obtain with $\ell=\infty$ that each corresponding
set $\sH^\morse_c$ is open and dense in $\sH$ and so 
$$
\sH^\morse=\bigcap_{c>0}\sH^\morse_c
$$
is a countable intersection of open and dense sets in $\sH$.  
This proves the theorem.
\end{proof}

\subsection*{Fredholm theory}

The study of the spaces of solutions of~\eqref{eq:floerH}
is based on the linearized operators
$
\sD_{u,H}:W^{1,p}(\R\times M,u^*TX)\to L^p(\R\times M,u^*TX)
$
defined by
$$
\sD_{u,H} := \Nabla{s} + I\Nabla{v_1} 
+ J\Nabla{v_2} + K\Nabla{v_3} - \nabla\nabla H(u).
$$
It follows from the familiar arguments in Floer homology
that $\sD_{u,H}$ is a Fredholm operator whenever $f^\pm$ 
are nondegenerate critical points of $\sA_H$ 
and $u$ satisfies the exponential decay conditions 
of Theorem~\ref{thm:asymptotic}. It is also a standard result
that the Fredholm index of $\sD_{u,H}$ is given by the spectral 
flow of Atiyah--Patodi--Singer~\cite{APS}. 
More precisely, given a contractible critical point $f\in\sC(H)$
choose a smooth path $[0,1]\to\sF:t\mapsto f_t$ such that
$$
f_0\equiv\mathrm{constant},\qquad f_1=f
$$
and choose $\eps>0$ such that the negative eigenvalues 
of $\dcD_{f_0}$ are all less than~$-\eps$.  Now define
the integer $\mu_H(f)$ by the formula
\begin{equation}\label{eq:muH}
\mu_H(f) := -\mathrm{specflow}\left(
\left\{\dcD_{f_t,tH}+\eps(1-t)\one\right\}_{0\le t\le 1}
\right).
\end{equation}
It follows from equation~\eqref{eq:index} (with $\Sigma=S^1\times M$)
that this integer is independent of homotopy $t\mapsto f_t$ whenever 
$X$ is flat. If $f:M\to X$ is not contractible then the definition 
of the index $\mu_H(f)$ depends on the choice 
of a fixed reference map~$f_0$. 

\begin{proposition}\label{prop:mu}
Assume $H\in\sH^\morse$ and $f^\pm\in\sC(H)$.

\smallskip\noindent{\bf (i)} 
For every smooth map $u:\R\times M\to X$
satisfying~\eqref{eq:limitH} the operator 
$$
\sD_{u,H}:W^{1,p}(\R\times M,u^*TM)\to L^p(\R\times M,u^*TM)
$$
is Fredholm and its Fredholm index is 
$$
\mathrm{index}(\sD_{u,H}) = \mu_H(f^-)-\mu_H(f^+).
$$

\smallskip\noindent{\bf (ii)}
If $H:X\to\R$ is a Morse function with sufficiently small
$C^2$ norm and $f(y)\equiv x_0$ is a critical point of $H$ 
then $\mu_H(f)=\dim\,X-\mathrm{ind}_H(x_0)$ is equal to the 
Morse index of $x_0$ as a critical point of $-H$ 
(i.e.\ the number of positive eigenvalues 
of the Hessian of $H$ at $x_0$).
\end{proposition}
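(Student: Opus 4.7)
The plan is to combine standard Floer-theoretic Fredholm and spectral-flow machinery for Dirac-type operators on $\R \times M$ with a direct eigenvalue computation at the constant critical point.

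For part~(i), I would write $\sD_{u, H} = \Nabla{s} + \dcD(s)$, where
\begin{equation*}
\dcD(s) := I\Nabla{v_1} + J\Nabla{v_2} + K\Nabla{v_3} - \Nabla{\cdot}\nabla H(u(s, \cdot)).
\end{equation*}
Ellipticity of $\sD_{u, H}$ follows because its principal symbol at a nonzero covector $\tau\,ds + \sum_i \eta_i\alpha_i$ is the quaternion-linear map $\tau\,\one + \eta_1 I + \eta_2 J + \eta_3 K$, which is invertible (its square equals $(\tau^2+\sum_i\eta_i^2)\,\one$). By Lemma~\ref{le:hessian} each operator $\dcD(s)$ is self-adjoint with respect to the weighted $L^2$ inner product~\eqref{eq:L2}, and by Theorem~\ref{thm:asymptotic} the family converges exponentially to the nondegenerate self-adjoint operators $\dcD_{f^\pm, H}$. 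The Robbin--Salamon framework then gives that $\sD_{u, H}$ is Fredholm with index equal to the spectral flow of the path $s \mapsto \dcD(s)$ from $-\infty$ to $+\infty$. To match this with $\mu_H(f^-) - \mu_H(f^+)$, I would concatenate $s \mapsto \dcD(s)$ with the two reference paths from~\eqref{eq:muH} (the second reversed) so as to form a loop based at a single reference operator of the form $\dcD_{f_0} + \eps\one$. Additivity of spectral flow under concatenation, together with the vanishing of the spectral flow of the closing reference loop (computed via~\eqref{eq:index} on $S^1 \times M$ under the flatness assumption, or directly when a common reference constant can be used for both endpoints), then yields the stated identity.

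For part~(ii), since $x_0$ is a critical point of $H$ one may take $f_t \equiv x_0$ for all~$t$, so that the family in~\eqref{eq:muH} becomes
\begin{equation*}
A_t := \dd - t\,\nabla^2 H(x_0) + \eps(1-t)\,\one
\end{equation*}
acting on $C^\infty(M, T_{x_0}X)$, where $\dd := I\p_{v_1} + J\p_{v_2} + K\p_{v_3}$ is the linear Dirac-type operator on the trivial bundle $M \times T_{x_0}X$. A preliminary step is to show that $\ker\dd$ consists precisely of the constant sections, so has real dimension $n := \dim X$; this follows by linearizing the proof of Lemma~\ref{le:apriori}, whose Poincar\'e-inequality argument applies verbatim in the linear setting on the universal cover with flat fiber $T_{x_0}X$. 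The nonzero spectrum of $\dd$ is therefore bounded away from zero by some $\delta > 0$. Taking $\|H\|_{C^2}$ and $\eps$ both much smaller than $\delta$, first-order perturbation theory prevents any eigenvalue of $A_t$ lying near the nonzero spectrum of $\dd$ from crossing zero. The only spectral-flow contributions come from $\ker\dd$, on which $A_t$ restricts to the pointwise linear operator $-t\,\nabla^2 H(x_0) + \eps(1-t)\,\mathrm{id}$ on $T_{x_0}X$. Diagonalizing $\nabla^2 H(x_0)$ with real eigenvalues $\lambda_1, \ldots, \lambda_n$, each eigenvalue path $\mu_i(t) := -t\lambda_i + \eps(1-t)$ starts at $\eps > 0$ and ends at $-\lambda_i$; Morse nondegeneracy forces $\lambda_i \ne 0$, and $\mu_i$ crosses zero transversally from positive to negative precisely when $\lambda_i > 0$. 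Each such crossing contributes $-1$ to the spectral flow, so $\mathrm{specflow} = -\#\{i : \lambda_i > 0\}$, whence $\mu_H(f) = \#\{i : \lambda_i > 0\} = n - \mathrm{ind}_H(x_0)$.

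The main obstacle, as I see it, is the convention and additivity bookkeeping in part~(i): one must argue that the spectral-flow contributions of the two reference segments used to define $\mu_H(f^\pm)$ combine correctly under concatenation, and dispose of the closing reference loop via~\eqref{eq:index} on $S^1 \times M$ using flatness. A secondary technical point is the identification $\ker\dd = T_{x_0}X$ in part~(ii), for which one may either appeal to the linear analogue of Lemma~\ref{le:apriori} on the universal cover or argue directly from~\eqref{eq:DDf} by pairing with $\xi \in \ker\dd$ and integrating by parts using the Cartan identities.
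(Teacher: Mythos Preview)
Your proposal is correct and follows the same route as the paper, which simply cites the standard Floer-theoretic Fredholm/spectral-flow machinery~\cite{APS,RS2,SF} for~(i) and says~(ii) ``follows immediately from the definition of~$\mu_H$.'' Your write-up supplies the details the paper omits---the block decomposition $\ker\dd\oplus(\ker\dd)^\perp$ preserved by the constant endomorphism $-t\nabla^2H(x_0)+\eps(1-t)\one$, and the explicit eigenvalue paths on the kernel---and correctly identifies equation~\eqref{eq:index} on $S^1\times M$ (with $c_2(TX)=0$ for flat~$X$) as the mechanism closing the concatenation loop in~(i); one small caveat is that your appeal to Theorem~\ref{thm:asymptotic} for exponential convergence presumes $u$ solves~\eqref{eq:floerH}, whereas the proposition is stated for arbitrary $u$ satisfying~\eqref{eq:limitH}, but the paper's own preamble to the proposition makes the same implicit assumption.
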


\begin{proof}
The Fredholm property in~(i) follows from standard arguments 
in Floer theory as in~\cite{DF,F0} in the instanton setting 
and in~\cite{F3,SF} in the symplectic setting.
The index identity is a well known result about the 
correspondence between the spectral flow and the 
Fredholm index (see~\cite{APS,RS2}). The second
assertion follows immediately from the 
definition of $\mu_H$.
\end{proof}

\subsection*{Transversality for Floer trajectories}

For $f^\pm\in\sC(H)$ we denote by $\sM(f^-,f^+;H)$
the moduli space of all solutions $u:\R\times M\to X$ 
of~\eqref{eq:floerH} and~\eqref{eq:limitH} for which 
$\Abs{du}$ is bounded.  To prove that these spaces are 
smooth manifolds we must show that the linearized operator
$\sD_{u,H}$ is surjective for every solution $u$ of 
equation~\eqref{eq:floerH} and~\eqref{eq:limitH}. Let
$$
\sH^\reg\subset\sH
$$
denote the set of all Hamiltonian perturbations $H\in\sH$
such that $\dcD_{f,H}$ is bijective for every critical
point $f\in\sC(H)$ of $\sA_H$ and $\sD_{u,H}$ is surjective 
for every ${u\in\sM(f^-,f^+;H)}$ and all $f^\pm\in\sC(H)$. 

\begin{theorem}\label{thm:trans}
For every compact $3$-manifold $M$ with a positive hypercontact 
structure and every hyperk\"ahler manifold $X$ the set $\sH^\reg$ 
is of the second category in $\sH$.  If $H\in\sH^\reg$ then 
the moduli space $\sM(f^-,f^+;H)$ is a smooth manifold of dimension
$$
\dim\,\sM(f^-,f^+;H) = \mu_H(f^-)-\mu_H(f^+)
$$
for every pair $f^\pm\in\sC(H)$. 
\end{theorem}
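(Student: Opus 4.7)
The strategy follows the universal moduli space approach of Theorem~\ref{thm:morse}, combined with the Sard--Smale theorem and a Taubes-trick passage from $C^\ell$ to $C^\infty$. Fix an integer $\ell\ge2$ and an exponent $p>4$. For a reference Hamiltonian $H_0\in\sH^\morse\cap C^\ell(X\times M)$ and critical points $f^\pm_0\in\sC(H_0)$, the implicit function theorem produces an open neighborhood $\sH^\ell_0\subset C^\ell(X\times M)$ of $H_0$ on which $f^\pm_0$ persist as smooth families $H\mapsto f^\pm(H)$ of nondegenerate critical points. I then form the universal moduli space
$$
\sM^\ell := \Bigl\{ (u,H)\in \sB^{1,p}(H)\times \sH^\ell_0 \;:\; \p_su + \dd_H(u)=0 \Bigr\},
$$
where $\sB^{1,p}(H)$ is the standard Banach manifold of $W^{1,p}_{\mathrm{loc}}$-maps ${u:\R\times M\to X}$ with exponential $W^{1,p}$-decay to $f^\pm(H)$ as $s\to\pm\infty$. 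Elements of $\sM^\ell$ are smooth in $u$ by Theorem~\ref{thm:reg}.

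The heart of the proof is surjectivity of the total linearization
$$
(\xi,h)\longmapsto \sD_{u,H}\xi - \nabla h(u)
$$
at every $(u,H)\in\sM^\ell$. Since $\sD_{u,H}$ is Fredholm (Proposition~\ref{prop:mu}) this operator has closed range, so it suffices to rule out a nontrivial element $\eta\in L^q(\R\times M,u^*TX)$, ${1/p+1/q=1}$, in the cokernel. Such $\eta$ weakly satisfies $\sD_{u,H}^*\eta=0$, hence is smooth by elliptic regularity, and orthogonality reads
$$
\int_{\R\times M}\inner{\eta}{\nabla h(u)}\,\kappa\,\dvol_M\,ds=0 \qquad \forall\,h\in\sH^\ell_0.
$$
I use two unique-continuation inputs. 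First, $\sD_{u,H}^*$ is of Dirac type: as in~\eqref{eq:DDu}--\eqref{eq:floerH-local}, its square is a generalised Laplacian with scalar principal symbol, so Aronszajn's unique continuation theorem forces the zero set of any $\eta\not\equiv0$ to be nowhere dense. Second, differentiating the Floer equation in~$s$ yields $\sD_{u,H}(\p_su)=0$, and $\p_su\not\equiv0$ (otherwise $u$ would be $s$-independent and $f^-=f^+$), so by the same principle the zero set of $\p_su$ is nowhere dense. I then pick a common non-vanishing point $(s_0,y_0)$ and set $x_0:=u(s_0,y_0)$. Because $\p_su(s_0,y_0)\neq0$, the map $F(s,y):=(u(s,y),y)$ from $\R\times M$ into $X\times M$ is an immersion at $(s_0,y_0)$, and $s_0$ is isolated in the fibre $\{s:u(s,y_0)=x_0\}$. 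I choose bump functions $\chi:M\to[0,1]$ and $h_0:X\to\R$ supported in arbitrarily small neighborhoods of $y_0,x_0$ with $\chi(y_0)=1$ and $\nabla h_0(x_0)=\eta(s_0,y_0)$. Then $h(x,y):=\chi(y)h_0(x)$ gives, after sufficient shrinking,
$$
\int_{\R\times M}\inner{\eta}{\nabla h(u)}\,\kappa\,\dvol_M\,ds > 0,
$$
contradicting the annihilation condition. Hence $\eta\equiv0$ and surjectivity follows.

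Consequently $\sM^\ell$ is a $C^{\ell-1}$ Banach manifold and the projection $\pi^\ell:\sM^\ell\to\sH^\ell_0$ is a $C^{\ell-1}$ Fredholm map of index $\mu_H(f^-(H))-\mu_H(f^+(H))$ by Proposition~\ref{prop:mu}. The Sard--Smale theorem provides a residual set of regular values in each $\sH^\ell_0$; covering $\sH^\morse\cap C^\ell$ by countably many such neighborhoods and intersecting over (the countably many homotopy classes of) asymptotic pairs $(f^-_0,f^+_0)$ yields a residual subset of $C^\ell(X\times M)$. Finally, the Taubes trick exactly as in the proof of Theorem~\ref{thm:morse}---for each rational $c>0$ one restricts to the open subset $\sH^\reg_c\subset\sH$ of Hamiltonians for which transversality holds on the closed subset of trajectories with $\sup\abs{du}\le c$, openness following from the Arzel\'a--Ascoli compactness argument of Theorem~\ref{thm:cpct}---produces the residual set $\sH^\reg=\bigcap_{c\in\Q_{>0}}\sH^\reg_c$ in $\sH$. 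For $H\in\sH^\reg$ the implicit function theorem immediately gives the smooth manifold structure and the dimension formula on $\sM(f^-,f^+;H)$.

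The main obstacle is the unique-continuation and localization step inside the surjectivity proof. Two subtleties must be addressed: verifying that the Dirac-type operator $\sD_{u,H}^*$ on the $4$-dimensional $\R\times M$ fits the hypothesis of Aronszajn's theorem (which follows from the scalar-principal-symbol form of $\sD_{u,H}^*\sD_{u,H}$ computed analogously to~\eqref{eq:D2} and~\eqref{eq:DDu}), and coping with the fact that $h\in C^\ell(X\times M)$ is independent of the flow parameter $s$, so the localization cannot be performed in the $s$-direction but must be carried by the immersive behaviour of $(s,y)\mapsto(u(s,y),y)$ and the isolation of $s_0$ in the fibre $\{s:u(s,y_0)=x_0\}$.
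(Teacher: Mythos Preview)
Your localization step has a genuine gap, and it is precisely the subtle point that drives the paper's more elaborate argument.

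First, ``$s_0$ is isolated in the fibre $\{s:u(s,y_0)=x_0\}$'' is not enough. Shrinking the support of $h_0$ around $x_0$ does \emph{not} exclude other points $s'\ne s_0$ with $u(s',y_0)=x_0$, since $x_0$ lies in every such neighborhood; nor does it prevent $x_0=f^\pm(y_0)$, in which case the preimage contains a half-line. The paper needs the stronger condition that $(s_0,y_0)$ is a \emph{regular point}: $\p_su\ne0$, $u(s_0,y_0)\ne f^\pm(y_0)$, and $u(s_0,y_0)\notin u(\R\setminus\{s_0\},y_0)$. Proving that such points are dense (Proposition~\ref{prop:regular}) is nontrivial and uses Sard's theorem together with unique continuation.

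Second, and more seriously, even at a regular point your bump function with $\nabla h_0(x_0)=\eta(s_0,y_0)$ need not give a positive integral. Writing $h_0=\alpha^\delta\cdot\ell$ with $\ell$ linear and $\alpha^\delta$ a cutoff of width~$\delta$, the term $\ell(u)\,\nabla\alpha^\delta(u)$ contributes at the same order $O(\delta)$ as the main term unless $\ell(u(s,y_0))$ vanishes to \emph{second} order at $s_0$, i.e.\ unless $d h_0(x_0)(\p_su(s_0,y_0))=0$. A toy model makes the obstruction stark: if $\eta=\lambda\,\p_su$ along the curve $\gamma(s)=u(s,y_0)$, then for any compactly supported $h_0$ one has $\int_\R\langle\eta,\nabla h_0(\gamma)\rangle\,ds=\lambda\int_\R (h_0\circ\gamma)'\,ds=0$. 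So when $\eta$ and $\p_su$ are linearly dependent no $s$-independent bump can detect~$\eta$.

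The paper therefore proceeds in two cases. If $\eta(s_0,y_0)$ and $\p_su(s_0,y_0)$ are linearly \emph{independent} at some regular point, one chooses $h_0$ with $dh_0(x_0)\perp\p_su(s_0,y_0)$ and $dh_0(x_0)(\eta)=1$; now $h_0\circ u(\cdot,y_0)$ vanishes to second order and the scaling argument goes through (Step~2). If $\eta$ and $\p_su$ are linearly \emph{dependent} everywhere, one writes $\eta=\lambda\,\p_su$ on the set $\sU$ where both are nonzero; the codimension-$2$ result (Proposition~\ref{prop:codim2}) makes $\sU$ connected so $\lambda$ has constant sign, which contradicts the identity $\int_M\langle\eta,\p_su\rangle\kappa\,\dvol_M=0$ established in Step~1. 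Your sketch omits this entire dichotomy, which is the heart of the proof.
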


To prove this result we follow essentially the discussion in~\cite{FHS}.  
The first step is a unique continuation result.

\begin{proposition}\label{prop:ucon}
Let $\ell\ge 3$, $H\in\sH^\ell$, and 
${u_0,u_1:\R\times M\to X}$ be two $C^{\ell-1}$ 
solution of~\eqref{eq:floerH}.  If $u_0$ and $u_1$ 
agree to infinite order at a point $(s_0,y_0)\in\R\times M$ 
then they agree everywhere.
\end{proposition}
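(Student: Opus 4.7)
The plan is to reduce the statement to Aronszajn's unique continuation theorem for second-order linear elliptic operators with real, smooth principal part. First I would localize: choose a chart $(y^1,y^2,y^3)$ on a neighbourhood $V \subset M$ of $y_0$, a small interval $I \ni s_0$, and a geodesically convex chart $(x^1,\dots,x^{4n})$ on a neighbourhood $U \subset X$ of $u_0(s_0,y_0)$. Shrinking $V$ and $I$, I may assume $u_0(I \times V),u_1(I \times V) \subset U$, and set $\xi := u_1 - u_0 : I \times V \to \R^{4n}$, which by hypothesis vanishes to infinite order at $(s_0,y_0)$.

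Next I would exploit the local second-order form~\eqref{eq:floerH-local}. Its right-hand side is polynomial in $\nabla u$ with coefficients that are $C^1$ in $u$; this is exactly where the assumptions $\ell \ge 3$ and $u_i \in C^{\ell-1}$ are used, so that the Christoffel symbols $\Gamma^k_{ij}(u_i)$ and $\nabla H$, and hence the $g^k_{u_i}$ and $h^k_{u_i}$ of~\eqref{eq:gh}, are $C^1$ along $u_0$ and $u_1$. Subtracting the equation for $u_0$ from the equation for $u_1$ and applying the mean value theorem to every nonlinearity, one obtains a linear equation
\begin{equation*}
\sL \xi^k = A^k_{l,\mu}(s,y)\,\partial_\mu \xi^l + B^k_l(s,y)\,\xi^l
\end{equation*}
with bounded matrix-valued coefficients $A,B$. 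In particular $\xi$ satisfies the pointwise differential inequality
\begin{equation*}
|\sL \xi| \le C\bigl(|\xi| + |\nabla \xi|\bigr)
\end{equation*}
on $I \times V$. Because the principal part of $\sL$ is the Riemannian Laplacian on $\R \times M$ in our orthonormal frame (smooth, real, positive definite), Aronszajn's unique continuation theorem applies and forces $\xi \equiv 0$ on some neighbourhood of $(s_0,y_0)$.

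Finally I would promote this local conclusion to a global one by a standard connectedness argument. The set $Z \subset \R \times M$ of points where $u_0$ and $u_1$ agree to infinite order is closed, because the relevant Taylor coefficients of $\xi$ depend continuously on the base point; by the local step $Z$ is also open; and by hypothesis $Z$ is nonempty. Connectedness of $\R \times M$ then forces $Z = \R \times M$, and thus $u_0 \equiv u_1$.

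The main technical step is the linearisation in the middle paragraph: one must verify that every nonlinear contribution in~\eqref{eq:floerH-local}, including the quadratic Christoffel-symbol terms, the first-order Lie-bracket terms $g^k_u$, and the Hamiltonian terms $h^k_u$ of~\eqref{eq:gh}, can genuinely be written as a bounded first-order linear expression in $\xi$ on $I \times V$. Once this bookkeeping is completed, the verification of Aronszajn's hypotheses is immediate, since the leading symbol of $\sL$ is simply the metric.
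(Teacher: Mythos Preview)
Your proposal is correct and follows essentially the same approach as the paper: both localize, use the second-order form~\eqref{eq:floerH-local}, subtract to get a differential inequality $|\sL\xi|\le C(|\xi|+|\nabla\xi|)$ for the difference $\xi=u_1-u_0$, invoke Aronszajn's theorem for local unique continuation, and then globalize by the open--closed connectedness argument. Your more explicit discussion of why $\ell\ge 3$ is needed (to make the nonlinearities in~\eqref{eq:gh} Lipschitz in $(u,\nabla u)$ via the mean value theorem) is a helpful elaboration of a point the paper leaves implicit.
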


\begin{proof}
In local coordinates $x^1,\dots,x^m$ on $X$ and $y^1,y^2,y^3$
on $M$ both functions satisfy equation~\eqref{eq:floerH-local}.
For the difference 
$$
\hat u^k:=(u_1-u_0)^k
$$
in local coordinates this gives an estimate
$$
\Abs{\sL\hat u^k}
\le c\sum_{j=1}^m\left(
\Abs{\hat u^j}+\Abs{\frac{\p\hat u^j}{\p s}}
+\sum_{\nu=1}^3\Abs{\frac{\p \hat u^j}{\p y^\nu}}
\right),\qquad k=1,\dots,m.
$$
This is precisely the hypothesis of Aronszajn's theorem~\cite{A}.
Hence, if $\hat u$ vanishes to infinite order at a point 
it must vanish identically in a neighborhood of that point.  
This implies that the set of all points $(s,y)$ where 
$u_0$ and $u_1$ agree to infinite order is open and closed.
This proves the proposition.
\end{proof}

\begin{proposition}\label{prop:codim2}
Let $H\in\sH$ and $u:\R\times M\to X$ be a smooth map.
Let $\xi\in\Om^0(\R\times M,u^*TX)$ be a vector field along $u$
such that
$$
\sD_{u,H}\xi
= \Nabla{s}\xi + I\Nabla{v_1}\xi 
+ J\Nabla{v_2}\xi + K\Nabla{v_3}\xi - \Nabla{\xi}\nabla H(u)
=0.
$$  
If $\xi\not\equiv0$ then the set
$$
\sZ := \left\{(s,y)\in\R\times M\,|\,\xi(s,y)=0\right\}
$$
can be covered by countably many codimension $2$ 
submanifolds of $\R\times M$. In particular, the set 
$(\R\times M)\setminus\sZ$ is open, connected, 
and dense in $\R\times M$. 
\end{proposition}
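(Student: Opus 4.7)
The plan is to combine Aronszajn's unique continuation theorem with a local Taylor-expansion analysis at each zero of $\xi$, exploiting the quaternionic Clifford structure of the principal symbol of $\sD_{u,H}$. The principal symbol at a cotangent vector $\tau\,ds+\sum_i\eta_i\alpha_i$ is $\tau\,\mathrm{Id}+\eta_1 I+\eta_2 J+\eta_3 K$, which squares to $(\tau^2+\eta_1^2+\eta_2^2+\eta_3^2)\,\mathrm{Id}$; hence $\sD_{u,H}^*\sD_{u,H}$ is a second-order elliptic operator with diagonal scalar principal symbol, and the vector-valued Aronszajn theorem (as used in the proof of Proposition~\ref{prop:ucon}) applies to it. Since $\xi$ lies in the kernel of this operator and $\xi\not\equiv 0$, no point of $\sZ$ can be a zero of infinite order: each $(s_0,y_0)\in\sZ$ has some finite vanishing order $k\ge 1$.

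Fix such a zero, choose coordinates $y^1,y^2,y^3$ on $M$ near $y_0$ with $v_i(y_0)=\partial/\partial y^i$, and take a parallel (hence quaternion-linear at the origin) trivialization of $u^*TX$ along a chart through $(s_0,y_0)$. Writing $\xi = p + O(|z|^{k+1})$ with $z=(s-s_0,y-y_0)$ and $p$ a nonzero homogeneous $\H^n$-valued polynomial of degree $k$, the equation $\sD_{u,H}\xi=0$ forces the constant-coefficient Cauchy--Riemann--Fueter equation $D_0 p=0$ with $D_0:=\partial_s+I\partial_1+J\partial_2+K\partial_3$. To see that $\{p=0\}\subset\R^4$ has real codimension at least~$2$, suppose toward a contradiction that it contains a real-analytic hypersurface $\{\phi=0\}$ with $\nabla\phi\ne0$ on it, and factor $p=\phi^m\tilde p$ with $\tilde p|_{\{\phi=0\}}\not\equiv 0$. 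The Leibniz rule $D_0(\phi^m\tilde p)=m\phi^{m-1}(D_0\phi)\tilde p+\phi^m D_0\tilde p$ (valid because the scalar $\phi$ commutes with the constant endomorphisms $I,J,K$) evaluated on $\{\phi=0\}$ gives $(D_0\phi)\tilde p\equiv 0$; but $D_0\phi$ is the principal symbol of $D_0$ applied to the nonzero covector $d\phi$, hence acts as an invertible endomorphism of $\H^n$. This forces $\tilde p\equiv 0$ on $\{\phi=0\}$, a contradiction.

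It remains to transfer the codimension-$2$ statement for the leading polynomial to the full zero set $\sZ$, and this is where the main technical work lies. Using either the Bär-type zero-set theorem for Dirac operators or a Weierstrass-preparation argument handling the $O(|z|^{k+1})$ remainder, one shows that near $(s_0,y_0)$ the set $\sZ$ is contained in a real-analytic subvariety of $\R\times M$ of codimension at least~$2$. Real-analytic stratification then expresses this local subvariety as a countable union of smooth submanifolds of codimension $\ge 2$; covering $\R\times M$ by countably many such charts gives the claimed global covering of $\sZ$. The complement $(\R\times M)\setminus\sZ$ is then open (since $\sZ$ is closed), dense (since a countable union of codimension-$2$ submanifolds has empty interior in the $4$-manifold $\R\times M$), and connected (since removing a closed subset of codimension $\ge 2$ from a connected manifold preserves connectedness). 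The principal obstacle is this transfer step: passing from the polynomial statement about the leading jet to a genuine codimension-$2$ structure for the nonlinear zero set $\sZ$ itself is not automatic and requires either Bär's technology or a careful direct argument controlling the higher-order corrections.
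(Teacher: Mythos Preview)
Your approach via the leading homogeneous polynomial and its Fueter-harmonic property is natural, and your unique-continuation step matches the paper's Step~1. However, you correctly identify the transfer step as the principal obstacle, and as written it is a genuine gap: neither B\"ar's theorem nor a Weierstrass-preparation argument is invoked with enough precision to conclude that $\sZ$ itself (rather than just the tangent cone $\{p=0\}$) is locally contained in a codimension-$2$ set. Knowing that $\{p=0\}$ has codimension $\ge 2$ in $\R^4$ does not by itself bound the dimension of $\sZ$, since the higher-order remainder can in principle enlarge the zero set; this is precisely the delicate point that B\"ar's paper addresses, and you have not reproduced that analysis.

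The paper avoids this difficulty by a more elementary stratification that bypasses the leading jet entirely. Write $\sZ_k$ for the set where $\xi$ and its derivatives vanish up to order $k$, so that $\sZ=\bigcup_k(\sZ_k\setminus\sZ_{k+1})$ by Step~1. For $z_0\in\sZ_k\setminus\sZ_{k+1}$ choose a multi-index $\nu$ with $|\nu|=k$ and an index $i$ with $\Nabla{v_i}\nabla^\nu\xi(z_0)\ne 0$, and set $\eta:=\nabla^\nu\xi$. The key observation is that $\eta$ vanishes on \emph{all} of $\sZ_k$ (not merely at $z_0$), and since all derivatives of $\xi$ of order $\le k$ vanish at $z_0$, the equation $\sD_{u,H}\xi=0$ yields $\Nabla{v_0}\eta+I\Nabla{v_1}\eta+J\Nabla{v_2}\eta+K\Nabla{v_3}\eta=0$ at $z_0$. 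The quaternionic relation then forces $d\eta(z_0)$ to have rank at least~$2$: if all $\Nabla{v_i}\eta(z_0)$ were real multiples of a single nonzero vector $w$, linear independence of $w,Iw,Jw,Kw$ would force them all to vanish. Hence two real components $\eta^a,\eta^b$ have linearly independent differentials at $z_0$, and $\{\eta^a=\eta^b=0\}$ is a genuine codimension-$2$ submanifold containing $\sZ_k$ near $z_0$. This constructs the submanifold directly from a higher derivative of $\xi$ itself, so no preparation theorem or B\"ar-type machinery is needed.
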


\begin{proof}  
The proof has three steps.

\medskip\noindent{\bf Step~1.}
{\it If $\xi$ vanishes to infinite order 
at a point $(s_0,y_0)\in\R\times M$
then $\xi$ vanishes identically.}

\medskip\noindent
We use the identity
\begin{eqnarray*}
\sD^*\sD\xi + \sL^\nabla\xi 
&=& 
- \, I\Bigl(R(\p_su,\p_{v_1}u)\xi 
- R(\p_{v_2}u,\p_{v_3}u)\xi + \Nabla{[v_2,v_3]}\xi\Bigr)  \\
&& 
- \, J\Bigl(R(\p_su,\p_{v_2}u)\xi 
- R(\p_{v_3}u,\p_{v_1}u)\xi + \Nabla{[v_3,v_1]}\xi\Bigr)  \\
&& 
- \, K\Bigl(R(\p_su,\p_{v_3}u)\xi 
- R(\p_{v_1}u,\p_{v_2}u)\xi + \Nabla{[v_1,v_2]}\xi\Bigr).
\end{eqnarray*}
If $\sD\xi=\Nabla{\xi}\nabla H(u)$ we obtain an inequality 
of the form
$$
\Abs{\sL^\nabla\xi} \le c\left(
\Abs{\xi} + \Abs{\Nabla{s}\xi}
+ \sum_{j=1}^3 \Abs{\Nabla{v_j}\xi}
\right).
$$ 
In local coordinates the leading term of $\sL^\nabla$
has diagonal form. Hence the assertion of Step~1 follows from Aronszajn's 
theorem~\cite{A}.

\medskip\noindent{\bf Step~2.}
{\it Let $\sZ_k\subset\sZ$ denote the set where 
$\xi$ and its derivatives vanish up to order $k$.  
Then, for every $z_0=(s_0,y_0)\in\sZ_k\setminus\sZ_{k+1}$,
there is an open neighborhood $U_0\subset\R\times M$ 
and a codimension $2$ submanifold 
$V\subset\R\times M$ such that}
$$
\left(\sZ_k\setminus\sZ_{k+1}\right)\cap U_0\subset V
$$

\medskip\noindent
Fix an element $z_0\in\sZ_k\setminus\sZ_{k+1}$. 
For $\nu=(\nu_0,\nu_1,\nu_2,\nu_3)\in\N^4$ denote
$$
\nabla^\nu\xi := \Nabla{v_0}\cdots\Nabla{v_0}
\Nabla{v_1}\cdots\Nabla{v_1}
\Nabla{v_2}\cdots\Nabla{v_2}
\Nabla{v_3}\cdots\Nabla{v_3}\xi,
$$
where $v_0:=\p_s$ and each term $\Nabla{v_i}$ ocurs $\nu_i$ times.  
Since all derivatives of $\xi$ vanish up to order $k$ at the point $z_0$  
we have 
$$
\Nabla{v_i}\nabla^\nu\xi(z_0)=\nabla^\nu\Nabla{v_i}\xi(z_0).
$$
for $\Abs{\nu}:=\nu_0+\nu_1+\nu_2+\nu_3=k$ and $i=0,1,2,3$.  
Since $z_0\notin\sZ_{k+1}$ there is a multi index $\nu\in\N^4$ 
with $\Abs{\nu}=k$ and an $i\in\{0,1,2,3\}$ such that 
$\Nabla{v_i}\nabla^\nu\xi(z_0)\ne 0$.  Consider the vector field
$$
\eta := \nabla^\nu\xi
$$
along $u$.  Again using the fact that all derivatives of $\xi$ up
to order $k$ vanish at $z_0$ we obtain 
$$
\Nabla{v_0}\eta(z_0) + I\Nabla{v_1}\eta(z_0) 
+ J\Nabla{v_2}\eta(z_0) + K\Nabla{v_3}\eta(z_0) = 0.
$$
Since one of the vectors $\Nabla{v_i}\eta(z_0)$ is nonzero it follows
that the four vectors $\Nabla{v_i}\eta(z_0)$ cannot all be linearly 
independent.  Hence, in local coordinates $x^1,\dots,x^m$ on $X$
there exist indices $i,j\in\{1,\dots,m\}$ such that the differentials
of the functions $\eta^i,\eta^j$ (on an open neighborhood of $z_0$ 
in $\R\times M$) are linearly independent.
Hence, by the implicit function theorem, there is a neighborhood
$U_0\subset\R\times M$ of $z_0$ such that the set
$$
V:=\left\{x\in U_0\,|\,\eta^i(x)=\eta^j(x)=0\right\}
$$
is a codimension $2$ submanifold of $\R\times M$.  
Since 
$$
\left(\sZ_k\setminus\sZ_{k+1}\right)\cap U_0\subset V
$$
this proves Step~2. 

\medskip\noindent{\bf Step~3.}
{\it We prove the proposition.}

\medskip\noindent
By Step~1 we have
$$
\sZ = \bigcup_{k=0}^\infty \left(\sZ_k\setminus\sZ_{k+1}\right).
$$
By Step~2 each of the sets $\sZ_k\setminus\sZ_{k+1}$ can be covered by 
finitely many submanifolds of codimension $2$.  This proves 
the proposition.
\end{proof}

Let $H\in\sH^\ell$ and $u:\R\times M\to X$ be a $C^{\ell-1}$ 
solution of~\eqref{eq:floerH} and~\eqref{eq:limitH}. 
Call a point $(s,y)\in\R\times M$ {\bf regular} if
$$
\p_su(s,y)\ne 0,\qquad u(s,y)\ne f^\pm(y),\qquad
u(s,y)\notin u(\R\setminus\{s\},y).
$$
Let $\sR(u)\subset\R\times M$ denote the set of regular
points of $u$.

\begin{proposition}\label{prop:regular}
Fix an integer $\ell\ge 4$.
Let $H\in\sH^\ell$ and $u:\R\times M\to X$ be a $C^{\ell-1}$ 
solution of~\eqref{eq:floerH} and~\eqref{eq:limitH} 
with $f^-\ne f^+$. Then the set $\sR(u)$ of regular points 
of $u$ is open and dense in $\R\times M$. 
\end{proposition}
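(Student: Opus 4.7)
Write $Z_1:=\{\p_s u=0\}$, $Z_2:=\{u=f^-\}\cup\{u=f^+\}$, and $Z_3:=\{(s,y):u(s,y)\in u(\R\setminus\{s\},y)\}$, so that the complement of $\sR(u)$ in $\R\times M$ is $Z_1\cup Z_2\cup Z_3$. The plan is to establish openness of $\sR(u)$ and then nowhere-denseness of each $Z_i$. Openness of the complements of $Z_1$ and $Z_2$ is immediate from continuity. On the open set $W:=(\R\times M)\setminus(Z_1\cup Z_2)$, the preimages $s'$ with $u(s',y)=u(s,y)$ are confined to a compact interval by exponential decay (Theorem~\ref{thm:asymptotic}) together with $u(s,y)\ne f^\pm(y)$, and are kept bounded away from $s$ by local injectivity of $s\mapsto u(s,y)$ coming from $\p_s u(s,y)\ne 0$; a standard compactness argument then yields openness of the complement of $Z_3$ within $W$, hence of $\sR(u)$.

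The density argument for $Z_1$ uses that the Floer equation~\eqref{eq:floerH} is $s$-translation invariant, since $H$ has no $s$-dependence: differentiating in $s$ gives $\sD_{u,H}(\p_s u)=0$. If $\p_s u\equiv 0$ then $u$ is $s$-independent, forcing $f^-=f^+$ contrary to hypothesis; hence Proposition~\ref{prop:codim2} covers $Z_1$ by countably many codimension-$2$ submanifolds. For $Z_2$, suppose $\{u=f^+\}$ has nonempty interior. Working in local coordinates on $X$ centered at $f^+(y_0)$ and on $M$ near $y_0$, the difference $\phi:=u-f^+$ is a vector-valued function satisfying, upon subtracting the equations for $u$ and $f^+$ and invoking the mean value theorem on $\nabla H$, a linear first-order PDE whose formal square is a second-order elliptic operator with diagonal principal symbol. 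Aronszajn's unique continuation theorem~\cite{A}, invoked as in Step~1 of the proof of Proposition~\ref{prop:codim2} and combined with a straightforward connectedness argument to globalize, propagates $\phi\equiv 0$ to all of $\R\times M$, forcing $u\equiv f^+$ and contradicting $f^-\ne f^+$; the case of $f^-$ is identical.

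The main obstacle lies in handling $Z_3\cap W$. Suppose this set has nonempty interior $V$. Theorem~\ref{thm:asymptotic} and $V\subset W$ ensure that, for each $(s,y)\in V$, the set of $T\ne 0$ with $u(s+T,y)=u(s,y)$ lies in a fixed compact subset $K\subset\R\setminus\{0\}$. For each $T\in K$, the closed set $B_T:=\{(s,y)\in V:u(s+T,y)=u(s,y)\}$ has empty interior: otherwise $u$ and its $s$-translate $u_T(s,y):=u(s+T,y)$ would agree on a nonempty open subset of $\R\times M$, and Proposition~\ref{prop:ucon} would force $u\equiv u_T$ globally, making $u$ periodic in $s$ with nonzero period and contradicting the distinct asymptotics $f^-\ne f^+$. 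Hence each $B_T$ is nowhere dense in $V$, while $V=\bigcup_{T\in K}B_T$. To derive a contradiction, introduce the closed set
\begin{equation*}
\Sigma:=\left\{(T,s,y)\in K\times V\,:\,u(s+T,y)=u(s,y)\right\},
\end{equation*}
whose projection $\pi:\Sigma\to V$ is surjective by assumption. In local coordinates on $X$, the evaluation map $F(T,s,y):=u(s+T,y)-u(s,y)$ sends a $5$-dimensional ambient into $\R^{\dim X}$ with $\dim X=4n\ge 4$. Stratifying $\Sigma$ by the rank of $dF$ and applying Sard's theorem on each stratum, in the spirit of the transversality arguments in~\cite{FHS}, shows that $\pi(\Sigma)$ has empty interior, contradicting surjectivity. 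This stratified Sard count is the technical heart of the proof; the remaining steps reduce to Aronszajn unique continuation and the $s$-translation invariance of~\eqref{eq:floerH}.
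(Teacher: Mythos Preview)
Your treatment of openness and of $Z_1$, $Z_2$ is correct and close to the paper's Steps~1 and~2 (your unique-continuation argument for $Z_2$ is heavier than needed---the paper just observes that if $\p_su(s,y)\ne0$ and $u(s,y)=f^\pm(y)$ then $u(s',y)\ne f^\pm(y)$ for $s'$ near $s$---but it works). The genuine gap is in your handling of $Z_3$: the ``stratified Sard count'' cannot deliver the conclusion. On the part of $\Sigma$ where $\p_su(s+T,y)\ne 0$, the implicit function theorem writes $\Sigma$ locally as a graph $T=\sigma(s,y)$ over an open subset of $V$, and $\pi$ restricted to this graph is a diffeomorphism onto that open set. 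So $\Sigma$ is genuinely four-dimensional there and projects onto a four-dimensional set; no rank stratification or Sard argument rules this out. Your observation that each $B_T$ has empty interior is correct, but the union over the uncountable set $K$ cannot be handled by Baire, and dimension counting does not substitute for it.

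The contradiction must come from the PDE, not from dimension counting. The paper first (its Step~3) discards from $W$ the points $(s,y)$ for which $u(s,y)=u(s',y)$ with $\p_su(s',y)=0$; Sard's theorem is applied here, to the one-dimensional map $s'\mapsto u(s',y)$, guaranteeing that only finitely many preimages $s'$ remain at each surviving point. Then (Step~4) a Baire argument on the finitely many branches produces an open set on which a smooth function $\sigma$ with $u(s,y)=u(\sigma(s,y),y)$ is defined, and differentiating this identity via~\eqref{eq:floerH} gives
\begin{equation*}
\bigl((\p_s\sigma-1)\one+(\p_{v_1}\sigma)I+(\p_{v_2}\sigma)J+(\p_{v_3}\sigma)K\bigr)\p_su(\sigma,y)=0.
\end{equation*}
Since $\p_su(\sigma,y)\ne0$ and the quaternionic frame $\one,I,J,K$ is linearly independent, this forces $\p_s\sigma\equiv1$ and $\p_{v_i}\sigma\equiv0$, hence $\sigma(s,y)=s+c$ for a nonzero constant $c$; Proposition~\ref{prop:ucon} then gives the periodicity contradiction. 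Your outline never invokes the hyperk\"ahler algebra in this way, and without this computation the argument cannot close.
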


\begin{proof}
That the set $\sR(u)$ is open follows by the same argument 
as in the proof of~\cite[Theorem~4.3]{FHS}. 
We prove in four steps that $\sR(u)$ is dense.

\medskip\noindent{\bf Step~1.}
{\it The set
$$
\sR_0(u) := \left\{(s,y)\in\R\times M\,|\,
\p_su(s,y)\ne 0\right\}
$$
is open and dense in $\R\times M$.}

\medskip\noindent
The vector field $\p_su$ is in the kernel of the linearized operator
$\sD_{u,H}$ and is a vector field of class $C^{\ell-2}$ and hence of 
class $C^2$.  Now Step~1 in the proof of Proposition~\ref{prop:codim2}
continues to hold for $C^2$ vector fields and hence the set
$\sR_0(u)$ is dense in $\R\times M$.  That it is open is obvious.
This proves Step~1. 

\smallbreak

\medskip\noindent{\bf Step~2.}
{\it The set
$$
\sR_1(u) := \left\{(s,y)\in\sR_0(u)\,|\,
u(s,y)\ne f^\pm(y)\right\}
$$
is open and dense in $\R\times M$.}

\medskip\noindent
That the set is open is obvious. We prove it is dense. 
By Step~1 it suffices to prove that every point $(s,y)\in\sR_0(u)$
can be approximated by a sequence in $\sR_1(u)$. Because $\p_su(s,y)\ne 0$,
every sequence $(s_\nu,y)$ with $s_\nu\to s$ and $s_\nu\ne s$ 
belongs to the set $\sR_1(u)$ for $\nu$ sufficiently large.
This proves Step~2. 

\medskip\noindent{\bf Step~3.}
{\it The set
$$
\sR_2(u) := \left\{(s,y)\in\sR_1(u)\,|\,
u(s,y)\notin u(\R\times\{y\}\setminus\sR_0(u))\right\}
$$
is open and dense in $\R\times M$.}

\medskip\noindent
We prove that the set is open. Suppose, by contradiction, 
that there is an element $(s_0,y_0)\in\sR_2(u)$ and a sequence 
$(s_\nu,y_\nu)\in\sR_1(u)\setminus\sR_2(u)$ converging to $(s_0,y_0)$.
Since $(s_\nu,y_\nu)\notin\sR_2(u)$ there is an $s_\nu'\in\R$ such that 
$$
\p_su(s_\nu',y_\nu)=0,\qquad 
u(s_\nu',y_\nu)=u(s_\nu,y_\nu).
$$
The sequence $s_\nu'$ must be bounded; for if $s_\nu'\to\pm\infty$
then $u(s_\nu',y_\nu)$ converges to $f^\pm(y_0)$ and this 
implies $u(s_0,y_0)=f^\pm(y_0)$, a contradiction.
Thus, passing to a subsequence, we may assumne that
$s_\nu'$ converges to a point $s_0'\in\R$. It then follows 
that $u(s_0,y_0)=u(s_0',y_0)$ and $\p_su(s_0',y_0)=0$, 
contradicting the fact that $(s_0,y_0)\in\sR_2(u)$. 

We prove that the set $\sR_2(u)$ is dense in $\R\times M$. 
It suffices to prove that every element $(s_0,y_0)\in\sR_1(u)$
can be approximated by a sequence in $\sR_2(u)$. 
If this is not the case for some element $(s_0,y_0)\in\sR_1(u)$
then there is an $\eps>0$ such that the following holds:
$$
\Abs{s-s_0}<\eps\;\implies\;
\exists s'\in\R\mbox{ such that }
u(s,y_0)=u(s',y_0),\;\;\p_su(s',y_0)=0.
$$
However this contradicts Sard's theorem.  Namely for $\eps$
small the curve 
$$
\Gamma:=\{u(s,y_0)\,|\,\Abs{s-s_0}<\eps\}
$$
is a one dimensional submanifold of $X$ and we can choose 
a projection $\pi:U\to\Gamma$ on a suitable tubular neighborhood.
Consider the open set $S:=\left\{s\in\R\,|\,u(s,y_0)\in U\right\}$.
The assertion would then mean that every element of $\Gamma$
is a singular value of the map ${S\to\Gamma:s\mapsto \pi(u(s,y_0))}$.  
By Sard's theorem, this is impossible whenever $u$ is $C^1$.
This proves Step~3.

\medskip\noindent{\bf Step~4.}
{\it The set $\sR(u)$ is open and dense in $\R\times M$.}

\medskip\noindent
We have already observed that the set is open. We prove it is dense.
By Step~3, it suffices to prove that every element of $\sR_2(u)$ can be 
approximated by a sequence in $\sR(u)$. Suppose, by contradiction,
that this is not the case for some element $(s_0,y_0)\in\sR_2(u)$.
Then there is an open neighborhood $U\subset M$ of $y_0$
and two positive real number $\eps,T$ such that the follwing holds.
We abbreviate $I:=(s_0-\eps,s_0+\eps)$.
\begin{description}
\item[(a)]
$I\times U\subset\sR_2(u)\setminus\sR(u)$.
\item[(b)]
$u(s,y)\notin u(I\times U)$ for $\Abs{s}\ge T$ and $y\in U$.
\item[(c)]
The map $\bar I\to X:s\mapsto u(s,y)$ is an embedding
for every $y\in U$. 
\end{description}
Since $I\times U\subset\sR_1(u)$, the condition 
$I\times U\cap\sR(u)=\emptyset$ means that for every 
$(s,y)\in I\times U$ there is an $s'\in\R\setminus\{s\}$
such that $u(s',y)=u(s,y)$. Since $(s,y)\in\sR_2(u)$ 
we must have $\p_su(s',y)\ne0$ and, by~(b), we have 
${\Abs{s'}\le T}$.  Hence there can only be finitely many
such points $s'$.  For $s=s_0$ let 
$s_1<\dots<s_N$ be the points in $[-T,T]\setminus\{s_0\}$
with
$$
u(s_0,y_0) = u(s_1,y_0) = \cdots = u(s_N,y_0).
$$
Choose $r>0$ so small that the map 
$[s_j-r,s_j+r]\to X:s\mapsto u(s,y_0)$
is an embedding for every $j$.  Shrinking $U$ if necessary, 
we may assume that this continues to hold for every $y\in U$.

Next we claim that there is a $\delta>0$ and a 
compact neighborhood $V\subset U$ of $y_0$ such that
$$
y\in V\quad\implies\quad
u([s_0-\delta,s_0+\delta],y)
\subset \bigcup_{j=1}^Nu([s_j-r,s_j+r],y).
$$
If this were not the case, we could find sequences 
$(s_\nu,y_\nu)\to(s_0,y_0)$ and $s_\nu'\in\R\setminus\{s_\nu\}$
such that $u(s_\nu,y_\nu)=u(s_\nu',y_\nu)$ and 
$\Abs{s_\nu'-s_j}>r$ for all $j$ and $\nu$.
By taking the limit $s_\nu'\to s'$ we would then
obtain another element $s'\notin\{s_0,\dots,s_N\}$
with $u(s,y_0)=u(s_0,y_0)$, a contradiction.

Now define the set 
$$
\Sigma_j:=\left\{(s,y)\in[s_0-\delta,s_0+\delta]\times V\,|\,
u(s,y)\in u([s_j-r,s_j+r],y)\right\}
$$
for $j=1,\dots,N$. These sets are closed and their union is 
the entire set $[s_0-\delta,s_0+\delta]\times V$.
Hence, by Baire's category theorem, at least one of the
sets $\Sigma_j$ must have nonempty interior. 
Assume without loss of generality that $\Sigma_1$ has 
nonempty interior and that $(s_0,y_0)\in\INT(\Sigma_1)$.
Choose a neighborhood $W\subset V$ of $y_0$ and 
a constant $\rho>0$ such that 
$$
(s_0-\rho,s_0+\rho)\times W\subset\Sigma_1.
$$
Then for every pair $(s,y)\in(s_0-\rho,s_0+\rho)\times W$
there is a unique element $s'=:\sigma(s,y)\in[s_1-r,s_1+r]$
such that 
$$
u(s,y) = u(\sigma(s,y),y).
$$
This map $\sigma$ is evidently $C^{\ell-1}$ and satisfies 
$\sigma(s_0,y_0)=s_1$.  Moreover, 
\begin{equation*}
\begin{split}
0 
&=
\p_su(s,y) -\nabla H(u(s,y)) 
+ I\p_{v_1}u(s,y) + J\p_{v_2}u(s,y) 
+ K\p_{v_3}u(s,y)  \\
&= 
(\p_s\sigma)\p_su(\sigma,y) - \nabla H(u(\sigma,y))
+ I\Bigl(\p_{v_1}u(\sigma,y) + (\p_{v_1}\sigma)\p_su(\sigma,y)\Bigr) \\
&\quad
+ J\Bigl(\p_{v_2}u(\sigma,y) + (\p_{v_2}\sigma)\p_su(\sigma,y)\Bigr)
+ K\Bigl(\p_{v_3}u(\sigma,y) + (\p_{v_3}\sigma)\p_su(\sigma,y)\Bigr) \\
&=
\Bigl((\p_s\sigma-1)\one
+ \p_{v_1}\sigma I
+ \p_{v_2}\sigma J
+ \p_{v_3}\sigma K
\Bigr)\p_su(\sigma,y).
\end{split}
\end{equation*}
Since $\p_su\ne 0$ the four vectors $\p_su$, $I\p_su$, 
$J\p_su$, $K\p_su$ are linearly independent and thus we 
obtain $\p_{v_i}\sigma\equiv0$ for $i=1,2,3$ and 
$\p_s\sigma\equiv1$.  This means that 
$$
\sigma(s,y) = s+s_1-s_0.
$$
In other words, the solution $(s,y)\mapsto u(s+s_1-s_0,y)$
of~\eqref{eq:floerH} agrees with $u$ on an open set.
By Proposition~\ref{prop:ucon}, this implies $u(s,y)=u(s+s_1-s_0,y)$ 
for all $s$ and $y$. Hence $f^+=f^-$, a contradiction. 
This proves the proposition. 
\end{proof}

\begin{proof}[Proof of Theorem~\ref{thm:trans}]
Fix a constant $p>4$.  There is a Banach manifold 
$\sB=\sB(f^-,f^+)$ of all continuous maps $u:\R\times M\to X$ 
that are locally of class $W^{1,p}$ and, near infinity, can 
be written as 
$$
u(s,y)=\exp_{f^\pm(y)}(\xi^\pm(s,y))
$$
with 
$
\xi^+\in W^{1,p}([T,\infty)\times M,(f^+)^*TX)
$
and similarly for $\xi^-$. Fix an element $H_0\in\sH^\morse$.
Following Floer~\cite{F3} we choose a separable
Banach space ${\sH_0\subset\sH}$ of smooth functions 
${h:X\times M\to\R}$ satisfying the following axioms.

\medskip\noindent{\bf (I)}
If $f\in\sC(H_0)$ and $h\in\sH_0$ then $h$ vanishes to 
infinite order at the point $(f(y),y)$ for every $y\in M$. 

\medskip\noindent{\bf (II)}
Let $(x,y)\in X\times M$ such that 
$y\ne f(x)$ for every $f\in\sC(H_0)$.
Let $A:T_xX\to\R$ be a linear map. 
Then there are smooth functions ${h:X\to\R}$, 
${\alpha_x:X\to[0,1]}$, and ${\beta_x:M\to[0,1]}$ 
such that the following holds.

\medskip\noindent{\bf (a)}
$h(x)=0$ and $dh(x)=A$. Moreover, $\alpha$ and $\beta$ 
are supported in the balls of radius~$1$ about $x$ and $y$, 
respectively, and $\alpha(x)=\beta(y)=1$.

\medskip\noindent{\bf (b)}
For $\delta,\eps>0$ define $\alpha_x^\delta:X\to[0,1]$ 
and $\beta_y^\eps:M\to[0,1]$ by 
$$
\alpha_x^\delta(\exp_x(\xi))
:= \alpha_x(\exp_x(\delta^{-1}\xi)),\qquad
\beta_y^\eps(\exp_y(\eta))
:= \beta_y(\exp_y(\eps^{-1}\eta)).
$$
Then the function $h^{\delta,\eps}:M\times X\to\R$
given by 
$$
h^{\delta,\eps}(x',y'):=\alpha_x^\delta(x')\beta_y^\eps(y')h(x')
$$
belongs to $\sH_0$ for $\delta,\eps$ positive and sufficiently small.

\bigbreak

\medskip\noindent
To define the space $\sH_0$ we choose a smooth cutoff function 
$\rho:[0,\infty)\to[0,1]$ such that $\rho(r)=1$ for $r$ 
sufficiently small and $\rho(r)=0$ for $r\ge r_0$, where
$r_0$ is smaller than the injectivity radii of $X$ and $M$.
For $x\in X$ and $y\in M$ define 
$$
\alpha_x(\exp_x(\xi)) := \rho(\Abs{\xi}),\qquad
\beta_y(\exp_y(\eta)) := \rho(\Abs{\eta}).
$$
Then define $\sH_0$ to be the set of all smooth functions
$h:X\times M\to\R$ that vanish to infinite order along 
the graph of any element $f\in\sC(H_0)$ and such that
$$
\Norm{h}_c:= \sum_{\ell=0}^\infty c_\ell^{-1}\Norm{h}_{C^\ell}
<\infty,\qquad
c_\ell := 2^{2^\ell}\left(
\sup_x\Norm{\alpha_x}_{C^\ell}
+ \sup_y\Norm{\beta_y}_{C^\ell}
\right).
$$
This space satisfies~(I) and~(II).

\medskip\noindent
Consider the universal moduli space 
$$
\sM_0(f^-,f^+) := \left\{
(u,H_0+h)\in\sB\times\sH\,|\,h\in\sH_0,\,
u\in\sM(f^-,f^+;H)\right\}.
$$
This space is the zero set of a smooth section of the Banach space bundle 
$$
\sE\to\sB\times(H_0+\sH_0)
$$
with fibers $\sE_{u,H}=L^p(\R\times M,u^*TX)$.  
The section is $(u,H)\mapsto\p_su+\dd_H(u)$ and the claim below asserts 
that it is transverse to the zero section. Hence $\sM_0(f^-,f^+)$ is a 
smooth Banach manifold.  Now the obvious projection 
$$
\pi_0:\sM_0(f^-,f^+)\to H_0+\sH_0
$$
is a Fredholm map. Hence, by the Sard--Smale theorem, the set 
of regular values of $\pi_0$ is of the second category
in the sense of Baire in $H_0+\sH_0$.  Thus 
the set $\sH^\reg$ is dense in $\sH$. Now we may 
introduce sets $\sH^\reg_c\supset\sH^\reg$ for $c>0$,
as in the proof of Theorem~\ref{thm:morse},
where the requirement of transversality is restricted to a compact
set of Floer trajectories.  These sets are all open and, 
by what we have just proved, they are also dense in $\sH$.  
It then follows that $\sH^\reg$ is the intersection of countably 
many open and dense sets $\sH^\reg_c$ for $c=1,2,3,\dots$
and hence is of the second category in the sense of Baire.

\medskip\noindent{\bf Claim.}
{\it The operator 
$$
W^{1,p}(\R\times M,u^*TX)\times\sH_0
\to L^p(\R\times M,u^*TX),\ 
(\xi,h)\mapsto\sD_{u,H}-\nabla h(u)
$$
is surjective for every $H\in\sH$ 
and every $u\in\sM(f^-,f^+;H)$. }

\medskip\noindent
Let $1/p+1/q=1$ and suppose $\eta\in L^q(\R\times M,u^*TM)$ 
annihilates the image of the operator in the sense that
$$
\int_{-\infty}^\infty\int_M
\inner{\eta}{\sD_{u,H}\xi - \nabla h(u)}\,\kappa\,\dvol_M\,ds = 0
$$
for all $h\in\sH_0$ and $\xi\in W^{1,p}(\R\times M,u^*TX)$. 
Then $\eta$ is smooth and 
\begin{equation}\label{eq:trans}
\sD_{u,H}^*\eta = 0,\qquad
\int_{\R\times M}
\inner{\eta}{\nabla h(u)}\,\kappa\,\dvol_M\,ds = 0
\end{equation}
for all $h\in\sH_0$. We prove in three steps 
that $\eta$ vanishes identically.

\medskip\noindent{\bf Step~1.}
{\it For every $s\in\R$ we have
$\int_M\inner{\eta}{\p_su}\kappa\,\dvol_M=0$.}

\medskip\noindent
Since $\sD_{u,H}\p_su=0$ and $\sD_{u,H}^*\eta=0$ we have
\begin{eqnarray*}
\frac{d}{ds}
\int_M\inner{\eta}{\p_su}\,\kappa\,\dvol_M
&=&
\int_M\Bigl(
\inner{\eta}{\Nabla{s}\p_su}
+ \int_M\inner{\Nabla{s}\eta}{\p_su}
\Bigr)\,\kappa\,\dvol_M \\
&=&
\int_M\Bigl(
\inner{\eta}{\sD_{u,H}\p_su} 
- \int_M\inner{\sD_{u,H}^*\eta}{\p_su}
\Bigr)\,\kappa\,\dvol_M  \\
&=&
0.
\end{eqnarray*}
Here we have used the formulas $\sD_{u,H}=\Nabla{s}+\dcD_{u,H}$,
$\sD_{u,H}^*=-\Nabla{s}+\dcD_{u,H}$, and the fact that
$\dcD_{u,H}$ is self-adjoint. Since $\eta\in L^q$ and 
$\p_su\in L^p$, their inner product over $\R\times M$ 
is finite and this proves Step~1.

\medskip\noindent{\bf Step~2.}
{\it $\eta(s,y)$ and $\p_su(s,y)$ are linearly 
dependent for all $(s,y)\in\R\times M$.}

\medskip\noindent
Suppose otherwise that $\p_su(s_0,y_0)$ and $\eta(s_0,y_0)$ 
are linearly independent for some $(s_0,y_0)\in\R\times M$. 
By Proposition~\ref{prop:regular} we may assume
${(s_0,y_0)\in\sR(u)}$. Choose a compact interval $I\subset\R$ 
containing $s_0$ in its interior such that $I\times\{y_0\}\subset\sR(u)$ 
and $I\to X:s\mapsto u(s,y_0)$ is an embedding.
Then there are open neighborhoods 
$U\subset X$ of $u(s_0,y_0)$ and $V\subset M$ of $y_0$
such that
\begin{description}
\item[($*$)]
if $y\in V$ and $s\in\R$ such that $u(s,y)\in U$
then $s\in I$.
\end{description}
Otherwise there are sequences $s_\nu\in\R\setminus I$ 
and $y_\nu\to y_0$ such that $u(s_\nu,y_\nu)$ converges to $u(s_0,y_0)$.
If $s_\nu$ is unbounded then $u(s_0,y_0)\in\{f^-(y_0),f^+(y_0)\}$,
which is impossible because $(s_0,y_0)\in\sR(u)$.
Thus the sequence $s_\nu$ is bounded and hence 
has a limit point $s\in\R\setminus\INT(I)$ with $u(s,y_0)=u(s_0,y_0)$.  
Since $s\ne s_0$ and $(s_0,y_0)\in\sR(u)$ this is a contradiction. 

Since $\p_su(s_0,y_0)$ and $\eta(s_0,y_0)$ are linearly
independent, hypothesis~(II) on the space $\sH_0$ asserts 
that there is a smooth function $h_0:X\to\R$ 
and smooth cutoff functions $\alpha:X\to[0,1]$ and 
$\beta:M\to[0,1]$, centered at $x_0:=u(s_0,y_0)$ and $y_0$,
respectively, such that
\begin{equation*}
h_0(u(s_0,y_0))=0,\;
\left.\frac{\p}{\p s}\right|_{s=s_0}h_0(u(s,y_0)) = 0,\;
dh_0(u(s_0,y_0))\eta(s_0,y_0)=1,
\end{equation*}
and such that the function $h^{\delta,\eps}$ defined by
$$
h^\delta(x,y) := \alpha^\delta(x)h_0(x),\qquad
h^{\delta,\eps}(x,y):=\beta^\eps(y)h^\delta(x,y),
$$
is an element of $\sH_0$ for $\delta,\eps$ sufficiently small.
If $\delta$ and $\eps$ are so small that
$B_\delta(u(s_0,y_0))\subset U$ and $B_\eps(y_0)\subset V$
then the function $(s,y)\mapsto h^{\delta,\eps}(u(s,y),y)$ 
is supported in $I\times V$. Namely, if 
$h^{\delta,\eps}(u(s,y),y)\ne 0$ then $u(s,y)\in U$
and $y\in V$ and hence $s\in I$, by~($*$).  

Next we prove that
\begin{equation}\label{eq:dheta}
\int_\R dh^\delta_{y_0}(u(s,y_0))\eta(s,y_0)\,ds >0
\end{equation}
for $\delta>0$ sufficiently small. To see this we observe 
that there is a constant $c>0$, independent of $\delta$, 
such that the following holds. First, 
$$
\Abs{s-s_0}\le\frac{\delta}{c}\quad\implies\quad
\alpha^\delta(u(s,y_0))dh_{y_0}(u(s,y_0))\eta(s,y_0)
\ge \frac12,
$$
because 
$$
\alpha(u(s_0,y_0))=dh_{y_0}(u(s_0,y_0))\eta(s_0,y_0)=1
$$
and hence the condition ${\Abs{s-s_0}\le\delta/c}$ with $c$ sufficiently 
large guarantees that ${\alpha^\delta(u(s,y_0))\ge 3/4}$ and
$dh_{y_0}(u(s,y_0))\eta(s,y_0)\ge 2/3$. Second,
$$
\Abs{h_{y_0}(u(s,y_0))d\alpha^\delta(u(s,y_0))\eta(s,y_0)}
\le \frac{c\Abs{s-s_0}^2}{\delta}\le c^3\delta,
$$
because the function $s\mapsto h_{y_0}(u(s,y_0))$
vanishes to first order at $s=s_0$ and the first 
derivative of $\alpha^\delta$ is bounded by a 
constant times $1/\delta$. The last inequality
follows from the fact that $d\alpha^\delta(u(s,y_0))=0$
for $\Abs{s-s_0}\ge c\delta$. Both estimates taken 
together show that
$$
\int_\R dh^\delta_{y_0}(u(s,y_0))\eta(s,y_0)\,ds 
\ge \int_{s_0-c\delta}^{s_0+c\delta}
\left(\frac12-c^3\delta\right)\,ds
= 2c\delta\left(\frac12-c^3\delta\right).
$$
Thus~\eqref{eq:dheta} holds for $\delta<1/2c^3$. 

Now choose $\eps$ so small that 
$$
\int_\R dh^\delta_{y}(u(s,y))\eta(s,y)\,ds >0
$$
for every $y\in M$ with $d(y_0,y)<\eps$. Then 
the integral in~\eqref{eq:trans} is positve for the function 
$h(x,y)=h^{\delta,\eps}(x,y)=\beta^\eps(y)h^\delta(x,y)$.
This proves Step~2. 

\medskip\noindent{\bf Step~3.}
{\it $\eta$ vanishes identically.}

\medskip\noindent
Assume, by contradiction, that $\eta\not\equiv0$.  Then, by 
Proposition~\ref{prop:codim2}, the set
$$
\sU:=\left\{(s,y)\in\R\times M\,|\,\p_su(s,y)\ne 0,\,\eta(s,y)\ne 0\right\}
$$
is nonempty, open, and connected. By Step~2, there is a
continuous function $\lambda:\sU\to\R\setminus\{0\}$ such that 
$\eta(s,y) = \lambda(s,y)\p_su(s,y)$ for all $(s,y)\in\sU$.
Since $\sU$ is connected, by Proposition~\ref{prop:codim2}, 
the function $\lambda$ cannot change sign.  
Suppose $\lambda>0$ on $\sU$. 
(Otherwise replace $\eta$ by $-\eta$.) 
Then 
$$
\inner{\eta}{\p_su} = \lambda\Abs{\p_su}^2 > 0
$$
on $\sU$ and $\inner{\eta}{\p_su}=0$ on $\R\times M\setminus\sU$.
This contradicts Step~1 and proves Step~3, the claim, and the 
first assertion of the theorem.  The second assertion follows 
from Proposition~\ref{prop:mu} and the infinite dimensional
implicit function theorem.
\end{proof}

The above proof follows essentially the argument 
in~\cite[Theorem~5.1]{FHS}.  There are, however, a few subtle
but important differences.  In the present setting we cannot remove 
the Hamiltonian term $\nabla H$ from the equation by a change of 
coordinates. Second, in symplectic Floer theory the complement
$\sZ:=(\R\times M)\setminus\sU$ of the set $\sU$ in Step~3 is discrete.  
This is replaced in the present context by the codimenion
$2$ property of Proposition~\ref{prop:codim2}. 
In~\cite{FHS} the proof argues that $\p_s\lambda\equiv0$
and, because $\sZ$ is discrete, that $\lambda$ can 
therefore be defined globally on $\R\times M$ 
(and not just on $\sU$). The condition $\p_s\lambda\equiv0$ 
can also be obtained in the present case by the same 
argument, but we do not need it to obtain the contradiction.

The idea for the proof of the codimension $2$ result was 
communicated to the third author, several years ago, 
by Kim Froyshov (in the context of Seiberg--Witten theory).
This requires smooth perturbations and therefore 
we cannot work with the $C^\ell$ argument
as in the proof of Theorem~\ref{thm:morse} but must instead 
use Floer's Banach spaces of smooth functions. 
As a result the construction of the function $h$ 
in Step~2 above is somewhat less explicit than
in the proof of~\cite[Theorem~5.1]{FHS}.


\section{Floer homology} \label{sec:floer}  

We assume throughout that $M$ is a compact Cartan hypercontact 
$3$-man\-i\-fold and $X$ is a compact flat hyperk\"ahler manifold.  
For $H\in\sH^\reg$ we introduce the 
chain complex 
$$
\CF_k(M,X;H) := \bigoplus_{\stackrel{f\in\sC(H)}{ \mu_H(f)=k}}\Z_2 \langle f\rangle.
$$
This group is finitely generated by Theorem~\ref{thm:crit-cpct}.  
It is graded by the index function in equation~\eqref{eq:muH}
$$
\mu_H:\sC(H)\to\Z.
$$ 
Since $H\in\sH^\reg$, Theorem~\ref{thm:trans} 
asserts that the moduli space $\sM(f^-,f^+;H)$ is a smooth manifolds
of dimension $\mu_H(f^-)-\mu_H(f^+)$ for every pair $f^\pm\in\sC(H)$.
The real numbers act on these spaces by time shift and
it follows from Theorem~\ref{thm:compact} that
$$
\mu_H(f^-)-\mu_H(f^+)=1\qquad\implies\qquad
\#\sM(f^-,f^+;H)/\R < \infty.
$$
Thus we can use the numbers 
$$
n_2(f^-,f^+) :=  \#\sM(f^-,f^+;H)/\R\;\;(\mathrm{modulo}\;2)
$$
to define a boundary operator 
$\p^H:\CF_k(M,X;H)\to\CF_{k-1}(M,X;H)$ by 
$$
\p^H \langle f^-\rangle
:= \sum_{\stackrel{f\in\sC(H)}{ \mu_H(f^+)=k-1} }
n_2(f^-,f^+)\langle f^+\rangle
$$
for $f^-\in\sC(H)$ with $\mu_H(f^-)=k$.

\begin{theorem}\label{thm:floer}
For every $H\in\sH^\reg$ we have $\p^H\circ\p^H=0$.
\end{theorem}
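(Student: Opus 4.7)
The plan is to follow the standard Floer-theoretic scheme: identify the matrix coefficient of $\p^H\p^H$ with the count of boundary points of a compactified one-dimensional moduli space, and appeal to the fact that a compact one-manifold has an even number of boundary points.

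Fix $f^\pm\in\sC(H)$ with $\mu_H(f^-)-\mu_H(f^+)=2$. By Theorem~\ref{thm:trans}, the moduli space
\[
\cM := \sM(f^-,f^+;H)/\R
\]
is a smooth one-dimensional manifold. The coefficient of $\langle f^+\rangle$ in $\p^H\p^H\langle f^-\rangle$ equals
\[
\sum_{\stackrel{f\in\sC(H)}{\mu_H(f)=\mu_H(f^-)-1}} n_2(f^-,f)\,n_2(f,f^+)\pmod{2},
\]
and I want to show this sum vanishes mod $2$ by identifying it with $\#\p\overline{\cM}\pmod{2}$ for a suitable compactification $\overline{\cM}$.

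First I would establish compactness of $\cM$ up to broken trajectories. Theorem~\ref{thm:compact} provides exactly this: every sequence in $\sM(f^-,f^+;H)$ has a subsequence converging (modulo shifts) to a catenation of Floer trajectories with strictly decreasing action at intermediate critical points. For dimension reasons (using the index additivity in Proposition~\ref{prop:mu} and the fact that $\sM(g,g';H)$ is empty for $H\in\sH^{\reg}$ whenever $\mu_H(g)<\mu_H(g')$, and has positive dimension whenever $\mu_H(g)>\mu_H(g')$ and is non-empty with $g\ne g'$), the only possible breakings in the two-dimensional ambient moduli space $\sM(f^-,f^+;H)$ are simple two-level splittings $(u_1,u_2)\in\sM(f^-,f;H)/\R\times\sM(f,f^+;H)/\R$ with $\mu_H(f)=\mu_H(f^-)-1$. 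Add one point to $\cM$ for each such ordered pair to form $\overline{\cM}$.

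Second I would invoke the gluing theorem to equip $\overline{\cM}$ with the structure of a compact one-manifold with boundary. Concretely, for each broken pair $(u_1,u_2)$ one constructs a pre-glued approximate Floer trajectory $u_1\#_T u_2$ for large gluing parameter $T$, uses the surjectivity of $\sD_{u,H}$ (guaranteed by $H\in\sH^{\reg}$) together with exponential decay from Theorem~\ref{thm:asymptotic} to invert the linearised operator on an appropriate Sobolev space, and then applies the infinite-dimensional implicit function theorem to obtain a one-parameter family of genuine solutions limiting to the broken pair. The uniqueness side of gluing, combined with the compactness statement from the previous step, ensures that $\overline{\cM}$ is indeed a manifold with boundary and that $\p\overline{\cM}$ is precisely the finite set of broken pairs. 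Since each connected component of the compact one-manifold $\overline{\cM}$ is either a circle or an arc, $\#\p\overline{\cM}$ is even, which gives $\p^H\p^H=0$.

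The main obstacle is the gluing construction. The Fredholm/transversality ingredients and asymptotic exponential decay needed for the standard gluing argument of Floer \cite{F3} (or more explicitly~\cite{SF} in the symplectic case) are supplied in flat form by Proposition~\ref{prop:mu}, Theorem~\ref{thm:trans}, and Theorem~\ref{thm:asymptotic}; the quadratic estimate needed for the Newton iteration follows from the uniform bounds on $\Abs{du}$ together with the fact that $X$ is flat, so that the nonlinear terms in $\p_su+\dd_H(u)$ depend only on $u$ and $H$ and are controlled by uniform $C^k$ bounds along a given broken trajectory. Once gluing is in place, there is nothing further to verify, and the identity $\p^H\circ\p^H=0$ follows from the elementary observation about boundaries of compact one-manifolds.
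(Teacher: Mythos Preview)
Your proposal is correct and is precisely the standard Floer gluing argument the paper invokes; the paper's own proof is a single sentence stating that this argument carries over verbatim (citing~\cite{DF,MS,SF}), so you have simply spelled out what the paper leaves implicit. The ingredients you identify---compactness via Theorem~\ref{thm:compact}, transversality via Theorem~\ref{thm:trans}, exponential decay via Theorem~\ref{thm:asymptotic}, and the index formula of Proposition~\ref{prop:mu}---are exactly those needed, and nothing in your outline is missing or mistaken.
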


To prove this one just needs to observe that the standard 
Floer gluing argument~\cite{DF,MS,SF} carries over verbatim 
to the present setting. The Floer homology groups of $(M,X;H)$ 
are now defined by
$$
\HF_k(M,X;H) := \frac{\ker\,\p^H:\CF_k(M,X;H)\to\CF_{k-1}(M,X;H)}
{\im\,\p^H:\CF_{k+1}(M,X;H)\to\CF_k(M,X;H)}.
$$
It follows again from the familiar arguments in symplectic Floer
theory that these Floer homology groups are independent of the 
choice of the Hamiltonian perturbation $H\in\sH^\reg$. 
Here one can follow verbatim the discussion in~\cite{F5,SZ},
using the solutions of~\eqref{eq:floerH} with $H$ depending 
on $s$, to prove the following theorem. 

\begin{theorem}\label{thm:iso}
There is a natural family of isomorphisms
$$
\Phi^{\beta\alpha}:\HF_*(M,X;H^\alpha)\to\HF_*(M,X;H^\beta),
$$
one for each pair $H^\alpha,H^\beta\in\sH^\reg$, such that
$$
\Phi^{\gamma\beta}\circ\Phi^{\beta\alpha}=\Phi^{\gamma\alpha},\qquad
\Phi^{\alpha\alpha} = \id.
$$
\end{theorem}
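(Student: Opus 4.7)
The plan is to run the standard Floer continuation argument, adapted to the present hypercontact setting, using the analytic machinery already established in Sections~\ref{sec:compact} and~\ref{sec:trans}. The key point is that once compactness and transversality are in place for the relevant $s$-dependent equations, the construction proceeds verbatim as in symplectic Floer theory~\cite{F5,SZ}.

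\medskip

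First, given $H^\alpha,H^\beta\in\sH^\reg$, choose a smooth homotopy
$H^{\beta\alpha}:\R\times X\times M\to\R$ such that $H^{\beta\alpha}(s,\cdot,\cdot)=H^\alpha$ for $s\le -T$ and $H^{\beta\alpha}(s,\cdot,\cdot)=H^\beta$ for $s\ge T$, and consider the $s$-dependent Floer equation
\begin{equation*}
\p_su + I\p_{v_1}u + J\p_{v_2}u + K\p_{v_3}u = \nabla H^{\beta\alpha}(s,u),
\end{equation*}
with asymptotic limits $f^-\in\sC(H^\alpha)$ and $f^+\in\sC(H^\beta)$. I would first extend Theorems~\ref{thm:compact} and~\ref{thm:trans} to this $s$-dependent setting: the a priori estimates of Section~\ref{sec:compact} go through with minor modifications since $H^{\beta\alpha}$ has $C^3$-norm bounded uniformly in~$s$, and the flatness of~$X$ continues to control $\Abs{du}$ via Lemma~\ref{le:bounded}(iii). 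The action is no longer monotone along trajectories, but the energy bound
\begin{equation*}
\sE_{H^{\beta\alpha}}(u) \le \sA_{H^\alpha}(f^-)-\sA_{H^\beta}(f^+) + \int_{-T}^{T}\int_M\Abs{\p_sH^{\beta\alpha}(s,u)}\,\kappa\,\dvol_M\,ds
\end{equation*}
together with Lemma~\ref{le:T4bounded} still yields uniform energy bounds on finite-energy solutions. Transversality for a generic choice of the homotopy~$H^{\beta\alpha}$ follows by the same Sard--Smale/unique continuation argument as in the proof of Theorem~\ref{thm:trans}, with Proposition~\ref{prop:codim2} replaced by its obvious $s$-dependent analogue.

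\medskip

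Second, counting the zero-dimensional components of the resulting moduli spaces $\sM(f^-,f^+;H^{\beta\alpha})$ modulo~$2$ defines a linear map
\begin{equation*}
\Phi^{\beta\alpha}:\CF_*(M,X;H^\alpha)\to\CF_*(M,X;H^\beta).
\end{equation*}
Analyzing the ends of the one-dimensional components via the compactness theorem shows that broken trajectories occur only at the two ends (breaking off a Floer trajectory at $s=\pm\infty$), so $\Phi^{\beta\alpha}$ intertwines the boundary operators and descends to homology. Independence of the choice of homotopy follows by considering a two-parameter family of homotopies and constructing a chain homotopy from the one-dimensional moduli spaces of the resulting parametric problem, exactly as in~\cite{SZ}.

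\medskip

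Third, the cocycle relation $\Phi^{\gamma\beta}\circ\Phi^{\beta\alpha}=\Phi^{\gamma\alpha}$ is proved by the standard gluing/stretching argument: concatenate a homotopy from $H^\alpha$ to $H^\beta$ with one from $H^\beta$ to $H^\gamma$, parametrized by a gluing length $R\ge R_0$. For $R$ large, a gluing theorem (the analogue of Floer's gluing, which proceeds formally as in symplectic theory once compactness, transversality, and Fredholm index~\eqref{eq:index} are available) identifies the moduli space for the glued homotopy with the fibered product counting $\Phi^{\gamma\beta}\circ\Phi^{\beta\alpha}$; a homotopy-of-homotopies argument between the glued homotopy and any direct homotopy from $H^\alpha$ to $H^\gamma$ then gives the equality on homology. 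The identity $\Phi^{\alpha\alpha}=\id$ follows by using the constant homotopy $H^{\alpha\alpha}(s,\cdot,\cdot)\equiv H^\alpha$: the only finite-energy solutions are $s$-independent, hence critical points of $\sA_{H^\alpha}$, and the count on the diagonal is~$1$. The cocycle property together with $\Phi^{\alpha\alpha}=\id$ then implies each $\Phi^{\beta\alpha}$ is an isomorphism with inverse $\Phi^{\alpha\beta}$.

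\medskip

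The main obstacle is really the first step: verifying that the a priori $L^\infty$-bound on $\Abs{du}$ (Lemma~\ref{le:bounded}(iii)), which crucially uses flatness of $X$ and the Cartan structure on $M$, survives the introduction of an $s$-dependent Hamiltonian. This requires checking that the energy identity, the Heinz-trick estimate~\eqref{eq:mean}, and the uniform action bound of Lemma~\ref{le:T4bounded} all admit straightforward $s$-dependent analogues; the presence of the extra term $\int\p_sH^{\beta\alpha}$ in the action balance is absorbed by the assumption that $H^{\beta\alpha}$ is compactly supported in~$s$ and has bounded $C^3$-norm. Once these adaptations are in place, the remainder of the construction is a mechanical translation of the standard symplectic Floer-theoretic arguments.
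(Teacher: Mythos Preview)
Your proposal is correct and follows precisely the approach the paper indicates: the paper does not give a detailed proof of Theorem~\ref{thm:iso} but simply states that ``one can follow verbatim the discussion in~\cite{F5,SZ}, using the solutions of~\eqref{eq:floerH} with $H$ depending on $s$''. Your outline is exactly that standard continuation argument, and your identification of the only nontrivial adaptation---extending the a priori bounds of Section~\ref{sec:compact} to the $s$-dependent setting---is accurate.
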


\begin{theorem}\label{thm:HF=H}
Let $X$ be a compact flat hyperk\"ahler manifold.
Then there is a natural family of isomorphisms
$$
\Phi^\alpha:H_*(X;\Z_2)\to\HF_*(M,X;H^\alpha),
$$
one for every $H^\alpha\in\sH^\reg$, such that
$$
\Phi^\beta = \Phi^{\beta\alpha}\circ\Phi^\alpha.
$$
\end{theorem}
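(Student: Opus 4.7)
The plan is to compute $\HF_*(M,X;H)$ for a particularly convenient choice of $H$ and transfer the answer to arbitrary $H^\alpha \in \sH^\reg$ via the canonical isomorphisms of Theorem~\ref{thm:iso}. Specifically, I would fix a Morse--Smale function $H_0: X \to \R$, view it as an element of $\sH$ that is independent of $y \in M$, and consider the Hamiltonian $H := \eps H_0$ for a small parameter $\eps > 0$ (after an arbitrarily small further perturbation to achieve $H \in \sH^\reg$). The goal is to show that for $\eps$ sufficiently small, the Floer complex of $(M,X;H)$ coincides, generator-by-generator and differential-by-differential, with the Morse complex of $-H_0$ on $X$.

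The first step is to identify the contractible critical points of $\sA_H$ as exactly the constant maps $f_{x_0} \equiv x_0$ with $x_0 \in \mathrm{Crit}(H_0)$. Every critical point satisfies $\dd(f) = \nabla H(f)$, hence by Lemma~\ref{le:apriori} one has $\sA(f) \le c \|\nabla H(f)\|_{L^2}^2 = O(\eps^2)$; combined with Remark~\ref{rmk:R} this gives $\|df\|_{L^2}^2 = O(\eps^2)$. Since $X$ is flat, the estimate $\sL |df|^2 \ge -A - B|df|^3$ of Lemma~\ref{le:Ler} has subcritical exponent $3/2 < 5/3$, so Theorem~\ref{thm:heinz} upgrades this to $\|df\|_\infty = O(\eps)$. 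Hence every contractible critical point is $C^0$-close to a constant. Near $f_{x_0}$ the Hessian $\dcD$, restricted to the $L^2$-complement of the constant vector fields, has a spectral gap (another consequence of Lemma~\ref{le:apriori}, applied to the linearized equation on the universal cover), while on the constants the perturbation $-\eps\nabla\nabla H_0(x_0)$ is invertible by the Morse assumption. The implicit function theorem then produces for each $x_0 \in \mathrm{Crit}(H_0)$ a unique nearby critical point of $\sA_H$, which is $f_{x_0}$ itself, and rules out all other contractible critical points.

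The second and harder step is to show that for $\eps$ small, every contractible finite-energy Floer trajectory $u: \R \times M \to X$ between constant critical points is $M$-independent, i.e.\ $u(s,y) = \gamma(s)$ for a gradient flow line $\gamma$ of $H_0$. I expect this to be the main obstacle. The approach is by contradiction and adiabatic rescaling: suppose $u_\nu$ is a sequence of non-$M$-independent contractible solutions for $H = \eps_\nu H_0$ with $\eps_\nu \to 0$. By Lemma~\ref{le:bounded}(i) together with Lemma~\ref{le:T4bounded}, the spatial derivatives $\int_M \sum_i |\p_{v_i} u_\nu|^2$ become uniformly small in $s$, so on every compact slab $u_\nu$ is nearly $M$-independent. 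Writing $u_\nu(s,y) = \exp_{\gamma_\nu(s)} \xi_\nu(s,y)$ with $\gamma_\nu(s)$ the spatial mean, one exploits the spectral gap of $\dcD$ on the $L^2$-complement of constants, combined with the exponential decay of Theorem~\ref{thm:asymptotic} at the ends and standard linearized estimates for $\sD_{u_\nu, H}$, to conclude that $\xi_\nu \equiv 0$ for large $\nu$. The residual equation for $\gamma_\nu$ is literally the gradient flow of $H_0$, giving the required bijection between Morse and Floer trajectories.

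With both bijections in hand the conclusion is routine. By Proposition~\ref{prop:mu}(ii), the Floer grading satisfies $\mu_H(f_{x_0}) = \dim X - \mathrm{ind}_{H_0}(x_0)$, which agrees with the Morse index of $x_0$ as a critical point of $-H_0$; thus $\CF_*(M,X;H)$ is identified as a graded chain complex with the Morse complex of $-H_0$, whose homology is $H_*(X;\Z_2)$. Call the resulting isomorphism $\Phi^H$. For a general $H^\alpha \in \sH^\reg$ I would set $\Phi^\alpha := \Phi^{\alpha H} \circ \Phi^H$ with $\Phi^{\alpha H}$ the continuation isomorphism of Theorem~\ref{thm:iso}; the cocycle relation there then immediately yields $\Phi^\beta = \Phi^{\beta\alpha} \circ \Phi^\alpha$.
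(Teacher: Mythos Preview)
Your proposal is correct and takes essentially the same adiabatic-limit approach as the paper, which formalizes it as Theorem~\ref{thm:morse=floer} and then deduces Theorem~\ref{thm:HF=H} via Theorem~\ref{thm:iso} exactly as you outline. One caveat: the exponential decay and the linearized estimates in your second step must hold \emph{uniformly in~$\eps$}, which Theorem~\ref{thm:asymptotic} alone does not give; the paper secures these separately through the auxiliary Lemmas~\ref{le:bounded-eps}--\ref{le:decay-eps} before running the contradiction argument.
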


The proof of Theorem~\ref{thm:HF=H} is based on the following result 
which asserts that the Floer chain complex agrees with the Morse 
complex for a special class of perturbations. 

\begin{theorem}\label{thm:morse=floer}
Let $M$ be a compact Cartan hypercontact $3$-manifold 
and $X$ be a compact flat hyperk\"ahler manifold.
Let ${H:X\to\R}$ be a~Morse function whose gradient flow
is Morse--Smale. Then there is a constant $\eps_0>0$ 
such that the following holds for~${0<\eps\le\eps_0}$.  
If $x^\pm$ are critical points of $H$ with index difference
$\mathrm{ind}_H(x^+)-\mathrm{ind}_H(x^-)\le1$ 
and $u:\R\times M\to X$ is a finite energy solution 
of the Floer equation
\begin{equation}\label{eq:floerHeps}
\p_su+\eps^{-1}\dd(u) = \nabla H(u),\qquad
\lim_{s\to\pm\infty}u(s,y) = x^\pm,
\end{equation}
then $\mathrm{ind}_H(x^+)-\mathrm{ind}_H(x^-)=1$, the function
$u(s,y)$ is independent of $y\in M$, and the operator 
$\sD_{u,\eps}:=\Nabla{s}+\eps^{-1}\dcD-\nabla\nabla H(u)$ 
is surjective.
\end{theorem}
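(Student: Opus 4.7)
The plan is to show that any finite energy solution of~\eqref{eq:floerHeps} with small $\eps$ is forced to be the $y$-independent lift of a Morse gradient flow line of $H$, and to verify surjectivity of the linearization $\sD_{u,\eps}$ at such a solution.  The argument has three ingredients: an action-theoretic slice estimate that forces $u(s,\cdot)$ to be $O(\eps)$-close to a constant map, an adiabatic compactness argument producing a Morse flow line as $\eps\to 0$, and a spectral decomposition of $\sD_{u,\eps}$ into the Morse linearization on $y$-constant modes and an $\eps^{-1}$-dominated invertible part on the orthogonal complement.

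First, pairing~\eqref{eq:floerHeps} with $\kappa\,\p_s u$, integrating over $M$, and invoking Lemma~\ref{le:crit} gives
\begin{equation*}
\int_M\kappa\Abs{\p_s u}^2\,\dvol_M + \eps^{-1}\frac{d}{ds}\sA(u(s,\cdot)) = \frac{d}{ds}\int_M\kappa H(u)\,\dvol_M.
\end{equation*}
Integration over $s\in\R$ with $\sA(x^\pm)=0$ yields the $\eps$-independent total energy identity $\int_\R\int_M\kappa\Abs{\p_s u}^2 = \kappa\Vol(M)\bigl(H(x^+)-H(x^-)\bigr)$.  Rewriting~\eqref{eq:floerHeps} as $\dd(u)=\eps\bigl(\nabla H(u)-\p_s u\bigr)$ and applying Lemma~\ref{le:apriori} together with Remark~\ref{rmk:R} gives the pointwise slice bound
\begin{equation*}
\sE(u(s,\cdot))
\le C\eps^2\Bigl(1+\int_M\Abs{\p_s u(s,\cdot)}^2\,\dvol_M\Bigr),
\end{equation*}
so the $M$-direction derivative $du_M$ tends to zero in $L^2_{\mathrm{loc}}(\R\times M)$ as $\eps\to 0$.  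Arguing by contradiction along a sequence $\eps_\nu\to 0$ with $u_\nu$ failing the conclusion, the flatness of $X$ allows parallel-transport trivialization of $u_\nu^*TX$, under which~\eqref{eq:floerH-local} becomes a linear elliptic system with bounded lower-order perturbation.  A bootstrap as in Theorem~\ref{thm:reg} upgrades the $L^2$ control to $C^\infty_{\mathrm{loc}}$ convergence of a subsequence, and after translating in $s$ the limit $u_\infty(s,y)=\gamma(s)$ is $y$-independent and satisfies $\dot\gamma=\nabla H(\gamma)$ with $\gamma(\pm\infty)=x^\pm$.  The Morse--Smale hypothesis combined with $\mathrm{ind}_H(x^+)-\mathrm{ind}_H(x^-)\le 1$ then forces equality of the indices and places $\gamma$ in a discrete set modulo time shift.

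At the $y$-independent lift $u(s,y)=\gamma(s)$, parallel transport trivializes $\gamma^*TX\cong\H^n$ and $\dcD$ becomes the standard Dirac-type operator on $L^2(M,\H^n)$; by Lemma~\ref{le:hessian} it is self-adjoint, and its kernel consists of the constant functions (as follows from the linear version of Lemma~\ref{le:apriori}), so it has spectral gap $\lambda_0>0$ on the $L^2$-orthogonal complement of constants.  Splitting $\xi=\xi_0(s)+\xi'(s,y)$ into $y$-average and mean-zero parts --- a decomposition preserved by $\sD_{u,\eps}$ because $\dcD$ annihilates constants and preserves mean-zero sections --- the operator acts on $\xi_0$ as the Morse linearization $\Nabla{s}\xi_0-\nabla\nabla H(\gamma)\xi_0$, surjective by Morse--Smale, and on $\xi'$ as $\Nabla{s}\xi'+\eps^{-1}\dcD\xi'-\nabla\nabla H(\gamma)\xi'$, which for $\eps$ small enough is invertible by an energy estimate exploiting the spectral gap.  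Hence $\sD_{u,\eps}$ is surjective, and the implicit function theorem produces a unique Floer solution near the lift of $\gamma$; since that lift itself solves~\eqref{eq:floerHeps}, it forces $u_\nu$ to equal the lift for large $\nu$, contradicting the failure hypothesis.  The main obstacle is securing the $\eps$-uniform $C^1_{\mathrm{loc}}$ compactness in the adiabatic step: the $\eps^{-1}$ coefficient on $\dd$ degrades the bootstrap estimate of Lemma~\ref{le:Ler}, so one must either rescale $\tilde s=\eps s$ to reduce to a uniformly regular Floer equation with small perturbation $\eps\nabla H$, or exploit the slice-wise $O(\eps)$ bound on $du_M$ together with the flat-target principal symbol of~\eqref{eq:floerH-local} to absorb the singular coefficient.
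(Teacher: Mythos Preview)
Your strategy matches the paper's: argue by contradiction along $\eps_\nu\to 0$, show the solutions limit to a Morse trajectory, prove surjectivity of $\sD_{u,\eps}$ at a $y$-independent solution by splitting into the $M$-average and mean-zero parts, and conclude by a uniqueness argument.  The surjectivity step (your spectral decomposition) is exactly the paper's Step~1.

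The genuine gap is the one you flag yourself: the $\eps$-uniform $C^1$ bound.  Your bootstrap ``as in Theorem~\ref{thm:reg}'' does not work, because the principal symbol of the $\eps$-equation degenerates: applying $\sD^*$ to~\eqref{eq:floerHeps} produces $\p_s^2u+\eps^{-2}L_Mu=\text{(lower order)}$, so the elliptic constants blow up as $\eps\to 0$.  Your slice estimate $\sE(u(s,\cdot))\le C\eps^2(1+\int_M|\p_su|^2)$ is correct but insufficient, since you only control $\int_\R\int_M|\p_su|^2$, not $\int_M|\p_su(s,\cdot)|^2$ for each $s$.  The paper resolves this by a two-stage argument on the rescaled function $\tilde u(s,y)=u(\eps s,y)$: first combine your slice estimate with the Heinz trick (Theorem~\ref{thm:heinz}) to get $\sup|d\tilde u|\le C$; then use a second-order estimate (Lemma~\ref{le:ddu}) bounding $\int\!\!\int(|\Nabla{s}\p_s\tilde u|^2+|\dcD\p_s\tilde u|^2)$ by $C\eps^2$, feed this through the Poincar\'e inequality for $\dcD$ and another Heinz-trick step to get the sharp pointwise bounds $|\p_s\tilde u|\le C\eps$, $|\p_{v_i}\tilde u|\le C\eps$ (Lemma~\ref{le:bounded-eps}).

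There is a second gap you pass over: to invoke uniqueness near the limiting trajectory $\gamma$ you need $\sup_{\R\times M}d(u_\nu,\gamma)$ small, not merely $C^\infty_{\mathrm{loc}}$ convergence after time shift.  Local convergence loses control at the ends, and a priori the sequence could converge to a broken trajectory.  The paper handles this with a separate exponential decay lemma (Lemma~\ref{le:decay-eps}) giving a uniform $L^1$ bound on $\p_su_\nu$, a $W^{2,2}$ estimate on the $M$-average $\bar u_\nu(s)$, and an energy-matching argument that forces $\bar u_\nu$ to converge \emph{uniformly on all of $\R$} to a single unbroken trajectory before the uniqueness step (the paper's Step~2, which requires only $C^0$ closeness) is applied.
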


\begin{remark}\label{rmk:morse=floer}\rm
Equation~\eqref{eq:floerHeps} is equivalent, via 
rescaling, to the equation
\begin{equation}\label{eq:floerHeps1}
\p_s\tu+\dd(\tu) = \eps\nabla H(\tu),\qquad
\lim_{s\to\pm\infty}\tu(s,y) = x^\pm,
\end{equation}
for the function
$
\tu(s,y):=u(\eps s,y).
$ 
Since the limit points $x^\pm$ are constant (as functions of $y$)
the energy is
$$
\frac{1}{\eps}\sE_{\eps H}(\tu) = \sE_H(u) 
= \int_{-\infty}^\infty\int_M\Abs{\p_su}^2\kappa
= \kappa\Vol(M)\Bigl(H(x^+)-H(x^-)\Bigr).
$$
The solutions of~\eqref{eq:floerHeps1}, and hence also those
of~\eqref{eq:floerHeps}, determine the boundary operator on
$\CF(M,X;\eps H)$. Moreover, $\sD_{u,\eps}$ is surjective 
if and only if the operator $\sD_{\tu,\eps H}$ 
in Proposition~\ref{prop:mu} is surjective.
\end{remark}

The proof of Theorem~\ref{thm:morse=floer} needs some preparations.

\bigbreak

\begin{lemma}\label{le:ddu}
Let $M$ be a compact hypercontact $3$-manifold and $X$
be a flat hyperk\"ahler manifold. If $H:M\times X\to\R$ is any 
smooth function and $u:\R\times M\to X$ is a
solution of~\eqref{eq:floerH} then 
\begin{equation}\label{eq:ddu}
\int_{s_0}^{s_1}\int_M
\left(\Abs{\Nabla{s}\p_su}^2 + \Abs{\dcD\p_su}^2
\right)\,\kappa
\le \left(C+\frac{4}{r^2}\right)
\int_{s_0-r}^{s_1+r}\int_M\Abs{\p_su}^2\kappa
\end{equation}
for all $s_0<s_1$ and $r>0$, where 
$$
C:=2\Norm{H}_{C^3}\Norm{\p_su}_{L^\infty}+2\Norm{H}_{C^2}^2.
$$
\end{lemma}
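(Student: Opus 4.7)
The key observation is that $\xi := \p_su$ satisfies a linearized Floer equation. Differentiating $\p_su + \dd(u) = \nabla H(u)$ covariantly in~$s$, and using flatness of $X$ to kill the commutator $[\Nabla{s},\Nabla{v_i}]\xi = R(\p_su,\p_{v_i}u)\xi = 0$ together with $[\p_s,v_i]=0$, gives
$$
\sD_H\xi := \Nabla{s}\xi + \dcD\xi - \Nabla{\xi}\nabla H(u) = 0.
$$
Equivalently $\sD\xi = \Nabla{\xi}\nabla H(u)$, whence $\Abs{\sD\xi}\le\Norm{H}_{C^2}\Abs{\xi}$ pointwise.

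Next, I would choose a piecewise linear cutoff $\beta:\R\to[0,1]$ with $\beta\equiv 1$ on $[s_0,s_1]$, $\supp\beta\subset [s_0-r,s_1+r]$, and $\abs{\beta'}\le 1/r$, and apply the identity
$$
\Abs{\Nabla{s}\xi}^2 + \Abs{\dcD\xi}^2 = \Abs{\sD\xi}^2 - 2\inner{\Nabla{s}\xi}{\dcD\xi},
$$
integrated against $\beta^2\kappa\,\dvol_M\,ds$. The self-adjointness of $\dcD$ with respect to the weighted inner product and the vanishing commutator $[\Nabla{s},\dcD]=0$ turn the cross term into
$$
\int_M\inner{\Nabla{s}\xi}{\dcD\xi}\,\kappa\,\dvol_M = \frac{1}{2}\,\frac{d}{ds}\!\int_M\inner{\xi}{\dcD\xi}\,\kappa\,\dvol_M,
$$
so integration by parts in $s$ replaces it by $-2\int\beta\beta'\inner{\xi}{\dcD\xi}\,\kappa$. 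A Young inequality bounds this boundary term by $\frac{1}{2}\int\beta^2\Abs{\dcD\xi}^2\kappa + 2\int(\beta')^2\Abs{\xi}^2\kappa$, and the first summand is absorbed into the left-hand side.

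Combining with $\int\beta^2\Abs{\sD\xi}^2\kappa\le\Norm{H}_{C^2}^2\int\beta^2\Abs{\xi}^2\kappa$ and $\abs{\beta'}\le 1/r$ essentially yields~\eqref{eq:ddu}. The extra $\Norm{H}_{C^3}\Norm{\p_su}_{L^\infty}$ contribution to $C$ appears when one tracks constants through the parallel computation for $\dcD_H$ instead of $\dcD$, which is forced by the cleaner identity $\Nabla{s}\xi = -\dcD_H\xi$: here the commutator $[\Nabla{s},\dcD_H]\xi = -(\nabla_x^3H)(u)\cdot\p_su\cdot\xi$ is the only nonzero term, has norm $\le\Norm{H}_{C^3}\Norm{\p_su}_{L^\infty}\Abs{\xi}$, and converting back via $\dcD\xi=\dcD_H\xi+\Nabla{\xi}\nabla H(u)$ produces the remaining $\Norm{H}_{C^2}^2$ piece.

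The main obstacle is the bookkeeping of commutators and cross terms. Flatness of $X$ is essential: without it, the curvature commutator $R(\p_su,\p_{v_i}u)\xi$ would force $C$ to depend on $\Norm{du}_{L^\infty}^2$, which would defeat the purpose of the lemma (it is presumably used in an iteration where only an $L^2$ bound on $\p_su$ is available). Once the flat setup is in place, the only substantive work is choosing the Young constants and the slope of $\beta$ so that the prefactors come out exactly as $C$ and $4/r^2$.
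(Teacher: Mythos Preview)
Your approach is correct and in fact yields a sharper constant than stated: carrying out your first computation (with $\sD = \Nabla{s}+\dcD$, the cutoff $\beta$, and the bound $\Abs{\sD\xi}\le\Norm{H}_{C^2}\Abs{\xi}$) already gives~\eqref{eq:ddu} with $C = 2\Norm{H}_{C^2}^2$ alone. Your final paragraph about switching to $\dcD_H$ is therefore unnecessary; nothing forces that detour, and the $\Norm{H}_{C^3}\Norm{\p_su}_{L^\infty}$ term simply does not appear in your scheme.

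The paper takes a genuinely different route. It sets $\phi(s) := \tfrac12\int_M\Abs{\p_su}^2\kappa$ and $\psi(s) := \tfrac12\int_M(\Abs{\Nabla{s}\p_su}^2 + \Abs{\dcD\p_su}^2)\kappa$, computes $\phi''$ directly, and establishes the differential inequality $\phi'' \ge \psi - C\phi$; the $C^3$ term enters because expanding $\phi''$ produces $\Nabla{s}\Nabla{s}\nabla H(u)$. The estimate~\eqref{eq:ddu} is then extracted from $\phi'' \ge \psi - C\phi$ by two successive integrations over nested intervals of width growing from $0$ to $r$. Both arguments rest on the same two ingredients --- self-adjointness of $\dcD$ with respect to the weighted $L^2$ product and the flat commutator $[\Nabla{s},\dcD]=0$ --- but package them differently: your cutoff--energy method is shorter and loses less in the constants, while the paper's second-derivative inequality $\phi''\ge\psi-C\phi$ is a reusable template (essentially the same device drives the exponential decay arguments elsewhere in the paper).
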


\begin{proof}
For $s\in\R$ define 
$$
\phi(s) := \frac12\int_M\Abs{\p_su}^2\,\kappa,\quad
\psi(s) := \frac12\int_M\left(
\Abs{\Nabla{s}\p_su}^2 + \Abs{\dcD\p_su}^2
\right)\,\kappa.
$$
Then
\begin{equation*}
\begin{split}
\phi''(s) 
&= 
\int_M\Abs{\Nabla{s}\p_su}^2\kappa
+ \int_M\inner{\Nabla{s}\Nabla{s}\p_su}{\p_su}\kappa \\
&=
\int_M\Abs{\Nabla{s}\p_su}^2\kappa
+ \int_M\inner{\Nabla{s}\Nabla{s}\nabla H(u)}{\p_su}\kappa 
- \int_M\inner{\Nabla{s}\p_su}{\dcD\p_su}\kappa.
\end{split}
\end{equation*}
Here we have used the fact that $\dcD$ commutes 
with $\Nabla{s}$, because $X$ is flat, and that 
$\dcD$ is self-adjoint with respect to the $L^2$ inner
product with weight~$\kappa$. Since 
$\Nabla{s}\p_su=\Nabla{s}\nabla H(u)-\dcD\p_su$
we obtain
$$
\phi''(s) = 2\psi(s) 
+ \int_M\inner{\Nabla{s}\Nabla{s}\nabla H(u)}{\p_su}\kappa 
- \int_M\inner{\Nabla{s}\nabla H(u)}{\dcD\p_su}\kappa.
$$
Using the inequalities 
$
\Abs{\Nabla{s}\Nabla{s}\nabla H(u)}
\le \Norm{H}_{C^3}\Abs{\p_su}^2
+ \Norm{H}_{C^2}\Abs{\Nabla{s}\p_su} 
$
and
$\Abs{\Nabla{s}\nabla H(u)}\le\Norm{H}_{C^2}\Abs{\p_su}$
we obtain
\begin{equation*}
\begin{split}
\phi''(s)
&\ge 
2\psi(s) 
- \int_M\Abs{\Nabla{s}\Nabla{s}\nabla H(u)}\Abs{\p_su}\kappa 
- \int_M\Abs{\Nabla{s}\nabla H(u)}\Abs{\dcD\p_su}\kappa \\
&\ge
2\psi(s) - \Norm{H}_{C^3}\int_M\Abs{\p_su}^3\kappa 
- \Norm{H}_{C^2}\int_M\Bigl(
\Abs{\Nabla{s}\p_su}+\Abs{\dcD\p_su}
\Bigr)\Abs{\p_su}\kappa \\
&\ge
\psi(s) - \left(\Norm{H}_{C^3}\Norm{\p_su}_{L^\infty}
+ \Norm{H}_{C^2}^2\right)\int_M\Abs{\p_su}^2\kappa \\
&=
\psi(s) - C\phi(s).
\end{split}
\end{equation*}
Now let $r,R>0$. Then, for $0\le s\le r$, we have
\begin{equation*}
\begin{split}
\int_{s_0}^{s_1}\psi-C\int_{s_0-r}^{s_1+r}\phi
&\le \int_{s_0-s}^{s_1+s}(\psi-C\phi)
\le \int_{s_0-s}^{s_1+s}\phi'' \\
&= \phi'(s_1+s)-\phi'(s_0-s)  \\
&= \frac{d}{ds}\Bigl(\phi(s_1+s)+\phi(s_0-s)\Bigr).
\end{split}
\end{equation*}
Integrating this inequality from $0$ to $t$ we obtain
$$
\frac{r}{2}\left(\int_{s_0}^{s_1}\psi-C\int_{s_0-r}^{s_1+r}\phi\right)
\le \phi(s_1+t)+\phi(s_0-t)
$$
for $\frac{r}{2}\le t\le r$. Integrating this inequality
again from $\frac{r}{2}$ to $r$ gives
$$
\int_{s_0}^{s_1}\psi
\le \left(C+\frac{4}{r^2}\right)\int_{s_0-r}^{s_1+r}\phi.
$$
This proves the lemma.
\end{proof}

\begin{lemma}\label{le:bounded-eps}
Let $M$, $X$, and $H$ be as in Theorem~\ref{thm:morse=floer}.
Then there are positive constants $\eps_0$ and $C$ 
such that every solution $u$ of~\eqref{eq:floerHeps}
with ${0<\eps\le\eps_0}$ satisfies 
$$
\sup_{\R\times M}\Abs{\p_su}\le C,\qquad
\sup_{\R\times M}\Abs{\p_{v_i}u}\le C\eps
$$ 
for $i=1,2,3$.
\end{lemma}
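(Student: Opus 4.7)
The plan is to bootstrap from an $\eps$-independent $L^2$ energy identity to the claimed pointwise bounds, combining action estimates with the mean value inequality of Theorem~\ref{thm:heinz} and exploiting the flatness of $X$ to cancel the singular $\eps^{-1}$ factor in every pointwise estimate.

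First I would collect the basic energy information. Since \eqref{eq:floerHeps} is the negative gradient flow of $\eps^{-1}\sA-\int_M H(\cdot)\,\kappa\,\dvol_M$ and the constant endpoints $x^\pm$ satisfy $\sA(x^\pm)=0$, the standard action identity gives
\[
\int_{-\infty}^\infty\int_M|\p_su|^2\,\kappa\,\dvol_M\,ds = \kappa\,\Vol(M)\bigl(H(x^+)-H(x^-)\bigr),
\]
bounded uniformly in $\eps$. Monotonicity of the perturbed action along the trajectory together with $\|H\|_\infty<\infty$ gives the slicewise bound $|\sA(u(s,\cdot))|\le C\eps$; combining this with the Cartan case of Lemma~\ref{le:acten} (see Remark~\ref{rmk:R}) and the equation identity $\dd(u)=\eps\bigl(\nabla H(u)-\p_su\bigr)$ yields
\[
\int_M\sum_{i=1}^3|\p_{v_i}u|^2\,\kappa\,\dvol_M \le C\eps+2\eps^2\int_M|\p_su|^2\,\kappa\,\dvol_M,
\]
which already shows the spatial derivatives are small in an averaged sense.

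Next I would derive an $\eps$-uniform differential inequality for $|\p_su|^2$. Differentiating \eqref{eq:floerHeps} in $s$ gives $\Nabla{s}\p_su+\eps^{-1}\dcD\p_su=\nabla\nabla H(u)\,\p_su$; since $X$ is flat the covariant derivatives commute and $\dcD$ is self-adjoint in the $\kappa$-weighted $L^2$ inner product, so the $\eps^{-1}$ terms arising in an analogue of Lemma~\ref{le:Les} appear only in the nonnegative square $\eps^{-2}|\dcD\p_su|^2$ or in pairings that form a total $s$-derivative and vanish at $\pm\infty$ by the exponential decay of Theorem~\ref{thm:asymptotic}. The net result is
\[
\sL|\p_su|^2 \ge -C|\p_su|^2
\]
with $C$ depending only on $\|H\|_{C^3}$, and Theorem~\ref{thm:heinz} then upgrades the global $L^2$ bound from the first stage to the pointwise bound $\sup_{\R\times M}|\p_su|\le C$, uniformly in $\eps$. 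The equation now forces $|\dd(u)|\le C\eps$ pointwise, and a second application of the same mean-value machinery to $\sum_{i=1}^3|\p_{v_i}u|^2$ (starting from the $O(\eps)$ averaged bound above, and using flatness once more to produce an $\eps$-uniform differential inequality of the same type) yields $\sup|\p_{v_i}u|\le C\eps$.

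The hardest step is the cancellation of the singular $\eps^{-1}$ factor in the differential inequality for $|\p_su|^2$: applying Lemma~\ref{le:Les} naively to \eqref{eq:floerHeps} pays a factor $\eps^{-1}$ and yields constants that blow up as $\eps\to 0$, leading only to $|\p_su|\le C\eps^{-1/2}$, which is weaker than the claim. Flatness of $X$ (so that curvature terms vanish and $\Nabla{s}$ commutes with $\dcD$) together with self-adjointness of $\dcD$ in the weighted $L^2$ pairing is used crucially: the surviving $\eps^{-1}$ pairings integrate by parts in $s$ and absorb into nonnegative squares, leaving constants that depend only on $H$.
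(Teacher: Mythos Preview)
The central gap is in your derivation of the pointwise inequality $\sL|\p_su|^2 \ge -C|\p_su|^2$ with $C$ independent of $\eps$. Your justification—that the singular $\eps^{-1}$ terms either sit in the nonnegative square $\eps^{-2}|\dcD\p_su|^2$ or ``integrate by parts in $s$''—conflates pointwise and integrated estimates. Theorem~\ref{thm:heinz} requires a \emph{pointwise} differential inequality, and integration by parts is not available there. Concretely, for the equation $\p_su+\eps^{-1}\dd(u)=\nabla H(u)$ (flat $X$, Cartan $M$) the Bochner identity for $e_0=\tfrac12|\p_su|^2$ with respect to $\sL$ produces a term $(\eps^{-2}-1)\langle\dcD^2\p_su,\p_su\rangle$, which is neither a pointwise square nor a total $s$-derivative; it only becomes $|\dcD\p_su|^2$ after integrating over $M$. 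The natural Laplacian adapted to \eqref{eq:floerHeps} is the anisotropic operator $\p_s^2+\eps^{-2}\sum_i\p_{v_i}^2$, and in the unrescaled variables the constants in any $\sL$-based mean-value argument genuinely blow up as $\eps\to 0$. Flatness and self-adjointness of $\dcD$ do not rescue this at the pointwise level.

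The paper's proof circumvents the issue by passing to the rescaled map $\tu(s,y)=u(\eps s,y)$, which satisfies $\p_s\tu+\dd(\tu)=\eps\nabla H(\tu)$ with no singular coefficient, and then runs a two-stage bootstrap. First, action and energy estimates give $\int_{s_0-1}^{s_0+1}\int_M|d\tu|^2\le c\eps$, and Lemma~\ref{le:Ler} together with Theorem~\ref{thm:heinz} yield a \emph{preliminary} uniform sup bound $|d\tu|\le c'$. Second, with $|d\tu|$ bounded, the separately proved integrated estimate Lemma~\ref{le:ddu} (which \emph{is} obtained by integration by parts over $\R\times M$, and whose constant for $\eps H$ is $O(\eps)$) gives $\int_{\R}\int_M|\dcD\p_s\tu|^2\le c\eps^2$; a Poincar\'e argument on $M$ (using $\int_M\p_s\tu=\eps\int_M\nabla H(\tu)=O(\eps)$) converts this to the local bound $\int_{s_0-1}^{s_0+1}\int_M|\p_s\tu|^2\le c\eps^2$. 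Only now can Lemma~\ref{le:Les}—whose constant is controlled by the preliminary bound on $|d\tu|$—be combined with Theorem~\ref{thm:heinz} to obtain $|\p_s\tu|\le c\eps$ pointwise, i.e.\ $|\p_su|\le c$. A further pass through the equation and the same machinery gives $|\p_{v_i}\tu|=|\p_{v_i}u|\le c\eps$. Your outline lacks the rescaling, the preliminary sup bound, and the key intermediate Lemma~\ref{le:ddu}; without these the Heinz step cannot be made $\eps$-uniform.
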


\begin{proof}
It is convenient to work with the solutions 
$$
\tu(s,y)=u(\eps s,y)
$$ 
of equation~\eqref{eq:floerHeps1}.
The function $s\mapsto\sA_{\eps H}(\tu(s,\cdot))$ is nonincreasing 
along $\tu$ and converges to $-\eps\kappa\Vol(M)H(x^-)$ 
as $s\to-\infty$.  Hence
\begin{equation}\label{eq:A}
\sA(\tu(s,\cdot))
= \sA_{\eps H}(\tu(s,\cdot)) + \int_M\eps H(\tu(s,\cdot))\kappa  
\le \eps\kappa\Vol(M)\Norm{H},
\end{equation}
where 
$$
\Norm{H}:=\max H-\min H.
$$ 
The energy of $u$ can be estimated by 
\begin{equation}\label{eq:E}
\sE_{\eps H}(\tu) 
= \int_{-\infty}^\infty\int_M\Abs{\p_s\tu}^2\,\kappa\,\dvol_M 
\le \eps\kappa\Vol(M)\Norm{H}.
\end{equation}
By equation~\eqref{eq:dudsu} in Lemma~\ref{le:bounded}, 
we have
$$
\frac12\int_M\Abs{d\tu}^2
\le\sA(\tu(s,\cdot)) + \eps^2\Vol(M)\sup_{\R\times M}\Abs{\nabla H}^2
+ \frac32\int_M\Abs{\p_s\tu}^2
$$
for every $s\in\R$.  Integrating this inequality from $s_0-1$ to $s_0+1$,
and using~\eqref{eq:A} and~\eqref{eq:E} we obtain
$$
\int_{s_0-1}^{s_0+1}\int_M\Abs{d\tu}^2\le c\eps,\quad 
c:=(3+4\kappa)\Vol(M)\Norm{H}+4\Vol(M)\sup_{\R\times M}\Abs{\nabla H}^2..
$$
Hence, by Lemma~\ref{le:Ler} and Theorem~\ref{thm:heinz}, 
there are positive constants $c'$ and $\eps_0$ such that
$\sup\Abs{d\tu}\le c'$ for every solution of~\eqref{eq:floerHeps1}
with $0<\eps\le\eps_0$. 

To improve this estimate we observe that the constant in 
Lemma~\ref{le:ddu} with $H$ replaced by $\eps H$ is 
$$
C=2\eps^2\Norm{H}_{C^2}^2 + 2\eps\Norm{H}_{C^3}\Norm{\p_s\tu}_{L^\infty}
\le c_1\eps,
$$
where $c_1$ depends only on $H$ and the bound on $\Abs{d\tu}$ 
established above.  Hence it follows from Lemma~\ref{le:ddu} 
with $r=\infty$ that
$$
\int_{-\infty}^\infty\int_M
\left(\Abs{\Nabla{s}\p_s\tu}^2 + \Abs{\dcD\p_s\tu}^2\right)\kappa
\le c_1\eps\sE_{\eps H}(\tu) \le c_2\eps^2
$$
Here we have used the fact that the energy of $d\tu$ is bounded by a 
constant times~$\eps$.  Since $\int_M\p_s\tu = \eps\int_M\nabla H(u)$ 
we obtain from~\eqref{eq:c0} with $\xi=\p_s\tu$ that
$$
\int_M\Abs{\p_s\tu}^2\le c_0\left(\int_M\Abs{\dcD\p_s\tu}^2
+ \Norm{H}_{C^1}^2\eps^2\right).
$$
Integrating this inequality from $s_0-1$ to $s_0+1$
gives
$$
\int_{s_0-1}^{s_0+1}\int_M\Abs{\p_s\tu}^2
\le c_0\int_{-\infty}^\infty\int_M\Abs{\dcD\p_s\tu}^2
+ 2c_0\Norm{H}_{C^1}^2\eps^2
\le c_3\eps^2.
$$
Now it follows from Lemma~\ref{le:Les} and Theorem~\ref{thm:heinz} 
that every solution of~\eqref{eq:floerHeps1} with $0<\eps\le\eps_0$
satisfies the pointwise inequality $\Abs{\p_s\tu}\le c_4\eps$
for a suitable constant $c_4>0$.  Using the equation we obtain
$\Abs{\dd(\tu)}\le c_5\eps$. Using again the fact that $\dd=\dcD$ 
(on functions with values in $\H^n$) is an elliptic operator 
whose kernel consists of the constant functions we obtain 
$\int_M\Abs{d\tu}^2\le c_6\eps^2$ for every~$s$.
Integrating this inequality from $s_0-1$ to $s_0+1$, and using
Lemma~\ref{le:Ler} and Theorem~\ref{thm:heinz}, we conclude that
every solution of~\eqref{eq:floerHeps1} with $0<\eps\le\eps_0$
satisfies the pointwise inequality $\Abs{d\tu}^2\le c_7\eps^2$ 
for a suitable constant $c_7>0$. This proves the lemma.
\end{proof}

\begin{lemma}\label{le:criteps}
Let $M$, $X$, and $H$ be as in Theorem~\ref{thm:morse=floer}.
Then there are positive constants $\eps_0$, $\delta$, and $c$ 
such that the following holds.  If $f:M\to X$ is a smooth function 
such that 
$$
\sup_M\Abs{\eps^{-1}\dd(f)-\nabla H(f)}<\delta
$$
then 
\begin{equation}\label{eq:hesseps}
\int_M\Abs{\xi}^2 \le c\int_M\Abs{\eps^{-1}\dcD\xi-\Nabla{\xi}\nabla H(f)}^2
\end{equation}
for every $\xi\in\Om^0(M,f^*TX)$.  
\end{lemma}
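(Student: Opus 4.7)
The plan is to show that under the hypotheses, $f$ must be uniformly close to a constant map $p\in X$ that sits near a critical point of $H$, and then to deduce the estimate by comparing the operator to its model at that critical point. First I would observe that the equation yields the pointwise bound $|\dd(f)|\le\eps(\|\nabla H\|_{L^\infty}+\delta)$, hence $\int_M|\dd(f)|^2\le C\eps^2$. Combining Lemma~\ref{le:apriori} (which uses that $M$ is Cartan and $X$ is flat) with the energy identity from Remark~\ref{rmk:R} gives $\int_M|df|^2\le C\eps^2$. Since $X$ is flat, the square $\dcD^2$ has constant leading term (the Laplacian) and the bootstrapping argument of Theorem~\ref{thm:reg} upgrades this to a uniform $C^0$ bound $\|f-\bar f\|_{C^0(M)}\le C\eps$ for some point $\bar f\in X$ (work locally, or lift to the universal cover, which is $\H^n$). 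The equation then forces $|\nabla H(\bar f)|\le C(\eps+\delta)$, so for $\eps_0,\delta$ small enough, $\bar f$ lies in a small neighborhood of a unique (non\-degenerate) critical point $p$ of $H$, and $\|f-p\|_{C^0}\le C\eps$.

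Next I would trivialize. Because $X$ is flat and $f(M)$ lies in a small ball around $p$, parallel transport yields a canonical bundle isomorphism $f^*TX\cong M\times T_pX$ under which $\Nabla{v_i}$ becomes the ordinary derivative $\p_{v_i}$, and $\dcD\xi=I\p_{v_1}\xi+J\p_{v_2}\xi+K\p_{v_3}\xi$ becomes a linear operator on sections of the trivial bundle. Set $A_0:=\nabla\nabla H(p)\in\End(T_pX)$, which is symmetric and invertible with smallest eigenvalue (in absolute value) $\lambda_0>0$, and let $A_f(y):=\nabla\nabla H(f(y))$ in the trivialization. Then $\|A_f-A_0\|_{L^\infty}\le C\eps$ by smoothness of $H$.

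The core is now an estimate for the model operator $L_0:=\eps^{-1}\dcD-A_0$. Decompose $\xi=\bar\xi+\xi_\perp$ with respect to the $L^2$ inner product~\eqref{eq:L2} (so $\bar\xi\in T_pX$ is the weighted mean of $\xi$). Since $\dcD$ kills constants and is self-adjoint, $\dcD\xi=\dcD\xi_\perp$ and $\langle\dcD\xi_\perp,A_0\bar\xi\rangle_{L^2}=\langle\xi_\perp,\dcD(A_0\bar\xi)\rangle_{L^2}=0$; orthogonality also gives $\|A_0\xi\|_{L^2}^2=\|A_0\bar\xi\|^2+\|A_0\xi_\perp\|^2$. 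Expanding and using the spectral gap $\|\dcD\xi_\perp\|_{L^2}^2\ge\mu_0^2\|\xi_\perp\|_{L^2}^2$ (where $\mu_0>0$ is the smallest nonzero eigenvalue of $\dcD$ on $L^2(M,T_pX)$), together with a Cauchy--Schwarz split with weight $\tfrac12$ on the cross term, yields
\begin{equation*}
\|L_0\xi\|_{L^2}^2 \ge \tfrac12\eps^{-2}\|\dcD\xi_\perp\|^2 - \|A_0\xi_\perp\|^2 + \|A_0\bar\xi\|^2 \ge \Bigl(\tfrac{\mu_0^2}{2\eps^2}-\|A_0\|^2\Bigr)\|\xi_\perp\|^2+\lambda_0^2\|\bar\xi\|^2.
\end{equation*}
For $\eps$ sufficiently small this gives $\|L_0\xi\|_{L^2}\ge\lambda_0\|\xi\|_{L^2}$. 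Finally the perturbation bound $\|(L_f-L_0)\xi\|_{L^2}=\|(A_f-A_0)\xi\|_{L^2}\le C\eps\|\xi\|_{L^2}$ yields $\|L_f\xi\|_{L^2}\ge(\lambda_0/2)\|\xi\|_{L^2}$ for $\eps\le\eps_0$ small enough, which is the desired estimate with $c=4/\lambda_0^2$.

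The main obstacle is the opening step: extracting a $C^0$ bound $\|f-p\|_{C^0}=O(\eps)$ from the weak hypothesis $\|\eps^{-1}\dd(f)-\nabla H(f)\|_{L^\infty}<\delta$. The $L^2$-to-$L^\infty$ passage requires elliptic bootstrapping for $\dcD^2$, and one must handle the fact that when $X$ is a nontrivial flat quotient, the notion of ``average'' of $f$ and identification of the target with a single tangent space only makes sense after passing to the universal cover and observing that small total derivative of $f$ forces its image into a contractible neighborhood. Once that is in place, the spectral-gap computation above and the perturbation argument are essentially algebraic.
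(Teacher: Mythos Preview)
Your proposal is correct and takes essentially the same approach as the paper: both establish that $f$ is $C^0$-close to a critical point of $H$ via an $L^\infty$ elliptic estimate for $\dcD$ on the universal cover, decompose $\xi$ into its mean and mean-zero parts, and combine the spectral gap of $\dcD$ on mean-zero functions with the nondegeneracy of the Hessian of $H$ at the critical point. The paper organizes this as a contradiction argument with sequences $\eps_\nu\to 0$ and uses the running mean $\bar f_\nu$ in place of a fixed critical point $p$, but the content is the same (one minor slip: your bound should read $\|f-p\|_{C^0}\le C(\eps+\delta)$ rather than $C\eps$, which is harmless since both $\eps_0$ and $\delta$ are chosen small).
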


\begin{proof}
Suppose, by contradiction, that there are sequences $\eps_\nu\to0$
and $f_\nu:M\to X$ such that the sequence
$$
\eta_\nu := \eps_\nu^{-1}\dd(f_\nu)-\nabla H(f_\nu)
$$
converges uniformly to zero and~\eqref{eq:hesseps} does not hold for $f_\nu$.
It is convenient to choose lifts of the maps with values in the universal
cover $\H^n$ of $X$.  These lifts will still be denoted by $f_\nu:M\to\H^n$
and we introduce the sequence of mean values
$$
\bar f_\nu := \frac{1}{\Vol(M)}\int_Mf_\nu.
$$
Assume without loss of generality that the sequence 
$\bar f_\nu\in\H^n$ is bounded and hence, passing to 
a subsequence if necessary, that it converges.  
By elliptic regularity for the operator $\dcD$ 
whose kernel consists of the constant functions 
(Lemma~\ref{le:apriori}), there is a constant $c_0>0$ such that 
\begin{equation}\label{eq:c0}
\int_M\xi=0\quad\implies\quad
\int_M\Abs{\xi}^2\le c_0\int_M\Abs{\dcD\xi}^2,\quad
\sup_M\Abs{\xi}\le c_0\sup_M\Abs{\dcD\xi}
\end{equation}
for every smooth map $\xi:M\to\H^n$. To prove the second inequality
in~\eqref{eq:c0} one can use the Sobolev estimate
$\Norm{\xi}_{L^\infty}\le c\Norm{\xi}_{W^{1,p}}$ for $p>3$
and then $L^p$ regularity for $\dcD$. Applying this inequality to
the sequence $f_\nu-\bar f_\nu$ we obtain 
$$
\sup_M\Abs{f_\nu-\bar f_\nu} 
\le c_0\sup_M\Abs{\dd(f_\nu)}
= c_0\eps_\nu\sup_M\Abs{\nabla H(f_\nu)+\eta_\nu}
\to 0
$$
and so $f_\nu$ converges uniformly to the same limit as $\bar f_\nu$.
Since 
$$
\lim_{\nu\to\infty}\nabla H(\bar f_\nu)
= \lim_{\nu\to\infty}\frac{1}{\Vol(M)}\int_M\nabla H(f_\nu) 
= \lim_{\nu\to\infty}\frac{1}{\Vol(M)}\int_M\eta_\nu=0,
$$
this limit is a critical point of $H$.   Hence there is a constant $c_1>0$
such that, for $\nu$ sufficiently large and $\bar\xi\in\H^n$, we have
\begin{equation}\label{eq:c1}
\Abs{\bar\xi}\le c_1\Abs{\Nabla{\bar\xi}\nabla H(\bar f_\nu)}.
\end{equation}
Now let $\xi:M\to\H^n$ be a smooth map (thought of as a vector field
along~$f_\nu$) and denote
$$
\bar\xi := \frac{1}{\Vol(M)}\int_M\xi.
$$
Then $\dcD\xi$ has mean value zero and hence is $L^2$
orthogonal to $\Nabla{\bar\xi}\nabla H(\bar f_\nu)$. 
This implies
\begin{equation*}
\begin{split}
&\eps_\nu^{-2}\Norm{\dcD\xi}^2 + \Abs{\Nabla{\bar\xi}\nabla H(\bar f_\nu)}^2 
= \Norm{\eps_\nu^{-1}\dcD\xi - \Nabla{\bar\xi}\nabla H(\bar f_\nu)}^2 \\
&\le 
3 \Norm{\eps_\nu^{-1}\dcD\xi - \Nabla{\xi}\nabla H(f_\nu)}^2 
+ 3\Norm{\Nabla{\xi-\bar\xi}\nabla H(f_\nu)}^2 \\
&\quad
+ 3\Norm{\Nabla{\bar\xi}\nabla H(f_\nu)
- \Nabla{\bar\xi}\nabla H(\bar f_\nu)}^2 \\
&\le 
3 \Norm{\eps_\nu^{-1}\dcD\xi - \Nabla{\xi}\nabla H(f_\nu)}^2 
+ c\Norm{\xi-\bar\xi}^2 
+ c\Abs{\bar\xi}^2\Norm{f_\nu-\bar f_\nu}^2 \\
&\le 
3 \Norm{\eps_\nu^{-1}\dcD\xi - \Nabla{\xi}\nabla H(f_\nu)}^2 
+ cc_0\Norm{\dcD\xi}^2 
+ cc_1\Abs{\Nabla{\bar\xi}\nabla H(\bar f_\nu)}^2\Norm{f_\nu-\bar f_\nu}^2 \\
\end{split}
\end{equation*}
Here all norms are $L^2$ norms on $M$, the constant $c$ depends only on $H$,
and the last inequality follows from~\eqref{eq:c0} and~\eqref{eq:c1}. 
For $\nu$ sufficiently large the last two terms one the right are together 
at most one quarter of the terms on the left. For these values of $\nu$ we have
$$
\eps_\nu^{-2}\Norm{\dcD\xi}^2 + \Norm{\Nabla{\bar\xi}\nabla H(\bar f_\nu)}^2 
\le 4\Norm{\eps_\nu^{-1}\dcD\xi - \Nabla{\xi}\nabla H(f_\nu)}^2.
$$
Hence if follows from~\eqref{eq:c0} and~\eqref{eq:c1} that $f_\nu$ 
satisfies~\eqref{eq:hesseps} for $\nu$ sufficiently large, in 
contradiction to our assumption.  This proves the lemma. 
\end{proof}

\begin{lemma}\label{le:decay-eps}
Let $M$, $X$, and $H$ be as in Theorem~\ref{thm:morse=floer}.
Then there are positive constants $\eps_0$, $\delta$, 
$\rho$, and $c$ such that the following holds. 
If $T>0$ and $u:\R\times M\to X$ is a solution 
of~\eqref{eq:floerHeps} with $0<\eps\le\eps_0$ 
such that
$$
\int_{-T}^T\int_M\Abs{\p_su}^2 
<\delta
$$
then 
$$
\sup_{y\in M}\Abs{\p_su(s,y)}^2 \le 
ce^{-\rho(T-\Abs{s})}\int_{-T}^T\int_M\Abs{\p_su}^2
$$
for $\Abs{s}\le T-2$.
\end{lemma}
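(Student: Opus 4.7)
The plan is to derive a differential inequality $\phi''(s) \ge \rho^2 \phi(s)$ for the slice energy
$$
\phi(s) := \tfrac12\int_M \Abs{\p_su(s,\cdot)}^2\,\kappa\,\dvol_M
$$
on a subinterval of $[-T,T]$, to convert it by a maximum principle comparison into an exponential bound on $\phi(s)$, and to pass from $\phi$ to $\sup_y\Abs{\p_su(s,y)}^2$ via the mean value inequality. The preliminary step is to convert the integral hypothesis into a pointwise smallness bound. By Lemma~\ref{le:bounded-eps} the norm $\Abs{du}$ is uniformly bounded on $\R\times M$, so Lemma~\ref{le:Les} yields $\sL\Abs{\p_su}^2\ge -C'\Abs{\p_su}^2$, and Theorem~\ref{thm:heinz} produces fixed constants $r_0,c_0>0$ with
$$
\Abs{\p_su(s_0,y_0)}^2 \le c_0\int_{s_0-r_0}^{s_0+r_0}\int_M\Abs{\p_su}^2\,\dvol_M\,ds
$$
for every $(s_0,y_0)\in\R\times M$. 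Combined with $\int_{-T}^T\int_M\Abs{\p_su}^2<\delta$ and the Floer equation $\eps^{-1}\dd(u)-\nabla H(u) = -\p_su$, this forces $\sup_M\Abs{\eps^{-1}\dd(u(s,\cdot))-\nabla H(u(s,\cdot))}^2 \le c_0\delta$ on $|s|\le T-r_0$, so Lemma~\ref{le:criteps} applies on this range with a constant $c_1$ independent of $\eps$.

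The main computation: since $X$ is flat, $[\Nabla{s},\dcD]=0$, and differentiating the equation in $s$ gives $\Nabla{s}\p_su = -L_\eps\p_su$, where $L_\eps\xi := \eps^{-1}\dcD\xi - \Nabla{\xi}\nabla H(u)$ is self-adjoint in the weighted $L^2$ inner product~\eqref{eq:L2}. A short direct calculation yields
$$
\phi''(s) = 2\int_M\Abs{L_\eps\p_su}^2\kappa - \int_M\inner{[\Nabla{s},L_\eps]\p_su}{\p_su}\kappa.
$$
The crucial observation is that in $[\Nabla{s},L_\eps]\xi = -\Nabla{s}(\Nabla{\xi}\nabla H(u)) + \Nabla{\Nabla{s}\xi}\nabla H(u)$, the Leibniz rule produces a term $\Nabla{\Nabla{s}\xi}\nabla H(u)$ that cancels, leaving only the curvature-of-$H$ term $-\nabla^3 H(u)(\p_su,\xi)$. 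Hence $\Abs{[\Nabla{s},L_\eps]\xi}\le \Norm{H}_{C^3}\Abs{\p_su}\Abs{\xi}$, and Lemma~\ref{le:criteps} gives $\int_M\Abs{L_\eps\p_su}^2\ge (2/c_1)\phi(s)$. Combined with the slice bound $\sup_M\Abs{\p_su(s,\cdot)}\le\sqrt{c_0\delta}$ from the preliminary step,
$$
\phi''(s)\ge \left(\tfrac{4}{c_1}-2\Norm{H}_{C^3}\sqrt{c_0\delta}\right)\phi(s),\qquad |s|\le T-r_0.
$$
Shrinking $\delta$ so that $2\Norm{H}_{C^3}\sqrt{c_0\delta}\le 2/c_1$ yields $\phi''(s)\ge \rho^2\phi(s)$ with $\rho^2:=2/c_1>0$ independent of $\eps$.

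Finally, comparing $\phi$ with the explicit solution $\psi$ of $\psi''=\rho^2\psi$ on $[-T+r_0,T-r_0]$ matching $\phi$ at the endpoints, the maximum principle applied to $\phi-\psi$ gives
$$
\phi(s)\le c_2\bigl(\phi(-T+r_0)+\phi(T-r_0)\bigr)e^{-\rho(T-|s|)},\qquad |s|\le T-2.
$$
Integrating the mean value inequality from the preliminary step once more in $s$ over a neighborhood of $\pm(T-r_0)$ inside $[-T,T]$ bounds each $\phi(\pm(T-r_0))$ by a constant times $\int_{-T}^T\int_M\Abs{\p_su}^2$. One final application of the mean value inequality at the spacetime point $(s,y)$ converts this $L^2$-in-$y$ decay into the desired $L^\infty$ decay of $\sup_y\Abs{\p_su(s,y)}^2$ after absorbing $e^{\rho r_0}$ into the constant $c$, possibly shrinking $r_0$ so that the estimate holds on the full range $|s|\le T-2$.

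The main delicate point is the second paragraph: a naive expansion of $\phi''$ would leave a term proportional to $\Norm{H}_{C^2}^2\phi$ which is not small for small $\eps$ and cannot be absorbed into $2/c_1$. What saves the argument is the exact cancellation $\Nabla{s}\circ\Nabla{\cdot}\nabla H(u) - \Nabla{\Nabla{s}\cdot}\nabla H(u) = \nabla^3 H(u)(\p_su,\cdot)$, which reduces the commutator to a term of order $\sup\Abs{\p_su}$; the hypothesis then provides precisely the smallness of $\sup\Abs{\p_su}$ required to absorb this into the Poincaré-type spectral gap of Lemma~\ref{le:criteps}.
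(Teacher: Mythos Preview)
Your approach is essentially the paper's: prove $\phi''\ge\rho^2\phi$ for the slice energy via a commutator computation, deduce exponential decay, and upgrade to a pointwise estimate. The central calculation is correct; your identity $\phi''(s)=2\int_M\Abs{L_\eps\p_su}^2\kappa-\int_M\inner{[\Nabla{s},L_\eps]\p_su}{\p_su}\kappa$ with $[\Nabla{s},L_\eps]\xi=-\nabla^3H(u)(\p_su,\xi)$ is exactly the paper's formula $\phi''=\psi+\int_M\inner{\nabla^2\nabla H(\p_su,\p_su)}{\p_su}$ rewritten. The maximum-principle comparison for the exponential decay is a valid alternative to the paper's monotonicity of $e^{-\rho s}(\phi'+\rho\phi)$.

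There is, however, a genuine gap in your preliminary step. Lemma~\ref{le:Les} is stated for solutions of~\eqref{eq:floerH}, while your $u$ solves~\eqref{eq:floerHeps}, which carries $\eps^{-1}\dd(u)$ in place of $\dd(u)$. You cannot conclude $\sL\Abs{\p_su}^2\ge -C'\Abs{\p_su}^2$ for the \emph{standard} $\sL=\p_s^2+L$ with $C'$ independent of $\eps$: the elliptic operator adapted to~\eqref{eq:floerHeps} is $\p_s^2+\eps^{-2}L$, and a direct computation produces a term $(\eps^{-2}-1)\dcD^2\p_su$ that blows up. If instead you rescale to $\tu(s,y)=u(\eps s,y)$ and apply Lemma~\ref{le:Les} there, the mean value inequality you recover for $u$ reads
\[
\Abs{\p_su(s_0,y_0)}^2\le \frac{c_0}{\eps}\int_{s_0-\eps}^{s_0+\eps}\int_M\Abs{\p_su}^2,
\]
and the factor $\eps^{-1}$ prevents you from bounding the right side by $c_0\delta$ using only the hypothesis $\int_{-T}^T\int_M\Abs{\p_su}^2<\delta$.

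The paper's remedy is a two-step bootstrap. First apply the \emph{one}-dimensional Heinz trick (Theorem~\ref{thm:heinz} with $n=1$) to $\phi$ itself: your own commutator formula, together with the uniform bound $\sup\Abs{\p_su}\le C$ from Lemma~\ref{le:bounded-eps}, already gives $\phi''\ge -B\phi$ with $B$ independent of $\eps$, hence $\phi(s)\le c_1\int_{s-1}^{s+1}\phi\le c_1\delta$ for $\Abs{s}\le T-1$. With this slice bound in hand, the rescaled four-dimensional mean value inequality then yields $\Abs{\p_su}^2\le c_2\delta$ for $\Abs{s}\le T-2$, which is what you need to invoke Lemma~\ref{le:criteps}. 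The same rescaling subtlety recurs in your final step (passing from decay of $\phi$ to decay of $\sup_y\Abs{\p_su(s,y)}^2$); again one must go through $\tu$ after first controlling $\phi$ pointwise.
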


\begin{proof}
The functions 
\begin{equation*}
\begin{split}
\phi(s)&:=\frac12\int_M\Abs{\p_su}^2,\\
\psi(s)&:= \int_M\Abs{\Nabla{s}\p_su}^2
+ \int_M\Abs{\eps^{-1}\dcD\p_su-\Nabla{\p_su}\nabla H(u)}^2
\end{split}
\end{equation*}
satisfy
\begin{equation*}
\begin{split}
\phi''(s) 
&= \psi(s) + \int_M\inner{\nabla^2\nabla H(\p_su,\p_su)}{\p_su} \\
&\ge \psi(s) - 2\Norm{H}_{C^3}\Norm{\p_su}_{L^\infty(M)}\phi(s).
\end{split}
\end{equation*}
Hence, by Lemma~\ref{le:bounded-eps}, there is 
a constant $B>0$ such that 
$
\phi''\ge -B\phi.
$
Now apply Theorem~\ref{thm:heinz} to the function
$\phi$ to obtain 
$$
\Abs{s}\le T-1\qquad\implies\qquad
\phi(s) \le c_1\int_{s-1}^{s+1}\phi \le c_1\int_{-T}^T\phi\le c_1\delta.
$$
Careful inspection of the proof of Theorem~\ref{thm:heinz} 
for $n=1$ shows that the constant can be chosen as $c_1=8(\sqrt{B}+1)$. 
This shows that the rescaled function $\tu(s,y):=u(\eps s,y)$ 
satisfies the inequality
$$
\Abs{s}\le \eps^{-1}(T-1)\qquad\implies\qquad
\int_M\Abs{\p_s\tu}^2 \le c_1\eps^2\delta.
$$
Now integrate this inequality from $s_0-1$ to $s_0+1$.
Using Lemma~\ref{le:Les} (together with the uniform $C^1$ 
bound of Lemma~\ref{le:bounded-eps}) and Theorem~\ref{thm:heinz}
we then obtain the pointwise inequality 
$\Abs{\p_s\tu(s,y)}^2\le c_2\eps^2\delta$
for ${\Abs{s}\le \eps^{-1}(T-1)-1}$.  For the function $u$ this gives 
$$
\Abs{s}\le T-2\quad\implies\quad 
\sup_M\Abs{\eps^{-1}\dcD\p_su-\nabla H(u)}^2
= \sup_M\Abs{\p_su}^2\le c_2\delta.
$$
If $\delta$ is chosen sufficiently small we obtain from 
Lemma~\ref{le:criteps} with $\xi=\p_su$ that 
$$
\int_M\Abs{\p_su}^2 \le 
c_3\int_M\Abs{\eps^{-1}\dcD\p_su-\Nabla{\p_su}\nabla H(u)}^2
$$
for $\Abs{s}\le T-2$. Thus $\phi(s)\le c_3\psi(s)$ 
and, putting things together, we have
\begin{equation*}
\begin{split}
\phi''(s) 
&\ge \psi(s) - 2\Norm{H}_{C^3}\Norm{\p_su}_{L^\infty(M)}\phi(s) \\
&\ge \left(\frac{1}{c_3}-2\Norm{H}_{C^3}\sqrt{c_2\delta}\right)\phi(s)
\end{split}
\end{equation*}
for $\Abs{s}\le T-2$.  With $\delta$ sufficiently small this gives
$\phi''(s)\ge\rho^2\phi(s)$ and hence the function 
$s\mapsto e^{-\rho s}(\phi'(s)+\rho\phi(s))$ is nondecreasing.  
If $\phi'(s_0)\ge0$ we then obtain 
$
e^{-\rho s_0}\rho\phi(s_0)\le e^{-\rho s_0}(\phi'(s_0)+\rho\phi(s_0))
\le e^{-\rho s}(\phi'(s)+\rho\phi(s))
$
for $s_0\le s\le T-2$.  Thus 
$
\rho e^{\rho(s-s_0)}\phi(s_0)
\le \phi'(s)+\rho\phi(s)
$ 
and integrating this inequality gives 
$$
e^{\rho(T-s_0-2)}\phi(s_0)\le 
\phi(T-2) + \rho\int_{s_0}^{T-2}\phi \le (c_1+\rho)\int_{-T}^{T}\phi.
$$
If $\phi'(s_0)\le 0$ we obtain a similar inequality by reversing time.
Thus we have proved that 
$
e^{\rho(T-\Abs{s})}\phi(s)\le c_4\int_{-T}^{T}\phi
$ 
for $\Abs{s}\le T-2$, where $c_4:=e^{2\rho}(c_1+\rho)$.   
The pointwise estimate for $\Abs{\p_su}^2$ follows by 
the same argument as above from 
Lemma~\ref{le:Les} and Theorem~\ref{thm:heinz} via rescaling.  
This proves the lemma. 
\end{proof}

\begin{proof}[Proof of Theorem~\ref{thm:morse=floer}]
The proof has four steps.  It is modelled on the 
adiabatic limit argument in~\cite{DS}.

\medskip\noindent{\bf Step~1.}
{\it There exists a constant $\eps_0>0$ with the 
following significance. If~${0<\eps\le\eps_0}$, $x^\pm$ 
are critical points of $H$ with 
$\mathrm{ind}_H(x^+)-\mathrm{ind}_H(x^-)=1$, and $u_0:\R\to X$ 
is a gradient trajectory from $x^-$ to $x^+$
for $\eps H$, i.e.\
\begin{equation}\label{eq:morse}
\frac{d}{ds} u_0(s) = \nabla H(u_0(s)),\qquad
\lim_{s\to\pm\infty}u_0(s) = x^\pm,
\end{equation}
then the function $\R\times M\to X:(s,y)\mapsto u_0(s)$ 
is a regular solution of~\eqref{eq:floerHeps},
i.e.\ the operator $\sD_{u_0,\eps}$ is surjective.}

\medskip\noindent
Let $\xi\in W^{1,p}(\R\times M,u_0^*TX)$ and define 
$\bar\xi\in W^{1,p}(\R,u_0^*TX)$ by
$$
\bar\xi(s) := \frac{1}{\Vol(M)}\int_M\xi(s,y)\,\dvol_M(y)
$$
for $s\in\R$. Then
$$
\sD_{u_0,\eps}\xi = \p_s\bar\xi + \Nabla{\bar\xi}\nabla H(u_0)
+ \sD_{u_0,\eps}(\xi-\bar\xi).
$$
Denote by $W^{1,p}_0(\R\times M,u_0^*TX)\subset W^{1,p}(\R\times M,u_0^*TX)$
the subspace of all functions $\xi$ such that $\xi(s,\cdot)$ has mean value
zero on $M$ for every $s$ and simliarly for 
$L^p_0(\R\times M,u_0^*TX)\subset L^p(\R\times M,u_0^*TX)$.
Then the operator 
$$
\sD_{u_0,\eps}:W^{1,p}_0(\R\times M,u_0^*TX)
\to L^p_0(\R\times M,u_0^*TX)
$$
is equivalent to the operator 
$$
\sD_{\tu_0\eps H}= \Nabla{s} + \dcD - \eps\nabla\nabla H(\tu_0)
$$
associated to the rescaled function $\tu_0(s):=u_0(\eps s)$. 
This operator is bijective for $\eps=0$ and hence also for 
$\eps>0$ sufficiently small.  Hence Step~1 follows from the 
above decomposition of the operator $\sD_{u_0,\eps}$ 
(and the fact that there are only finitely
many index one gradient trajectories up to time shift).

\medskip\noindent{\bf Step~2.}
{\it There is a constants $\eps_0>0$ with the 
following significance.  
If $x^\pm$ are critical points of $H$ such that 
$\mathrm{ind}_H(x^+)-\mathrm{ind}_H(x^-)=1$, and $u:\R\times M\to X$ 
and $u_0:\R\to X$ are solutions of~\eqref{eq:floerHeps} 
and~\eqref{eq:morse}, respectively, such that
$$
0<\eps\le\eps_0,\qquad 
\sup_{\R\times M} d(u,u_0)<\delta
$$
then there is an $s_0\in\R$ such that 
$u(s+s_0,y)=u_0(s)$ for all $s$ and $y$.} 

\medskip\noindent
We wish to find a real number $s_0$ close to zero such that
\begin{equation}\label{eq:perp}
\frac{1}{\Vol(M)}\int_Mu(s_0,\cdot) - u_0(0) \perp\nabla H(u_0(0)).
\end{equation}
To prove that $s_0$ exists we consider the function 
$$
\phi(s) := \frac{1}{\Vol(M)}\int_M
\inner{u(s,\cdot)-u_0(0)}{\nabla H(u_0(0))}\,\dvol_M.
$$
It satisfies 
$$
\Abs{\phi(0)} \le \delta \mu,\qquad
\mu:=\Abs{\nabla H(u_0(0))}>0.
$$
Choose a constant $\rho>0$ so small that 
$$
\Abs{x-u_0(0)}\le\rho\qquad\implies\qquad
\inner{\nabla H(x)}{\nabla H(u_0(0))}>\frac{\mu^2}{2}.
$$ 
Let $C$ be the constant of Lemma~\ref{le:bounded-eps} so that 
$\sup_{\R\times M}\Abs{\p_su}\le C$. Then we have
$
\Abs{u(s,y)-u_0(0)}\le \Abs{u(s,y)-u(0,y)}+\delta
\le C\Abs{s}+\delta,
$
and hence
$$
C\Abs{s}+\delta\le\rho\qquad\implies\qquad
\Abs{u(s,y)-u_0(0)}\le\rho.
$$
Combining the last two inequalities we have,
for $C\Abs{s}+\delta<\rho$, that
\begin{equation*}
\begin{split}
\dot\phi(s) 
&= \frac{1}{\Vol(M)}\int_M
\inner{\p_su(s,\cdot)}{\nabla H(u_0(0))} \\
&= \frac{1}{\Vol(M)}\int_M
\inner{\nabla H(u(s,\cdot))}{\nabla H(u_0(0))} \\
&\ge \frac{\mu^2}{2}.
\end{split}
\end{equation*}
To obtain a zero of $\phi$ we need this inequality on an 
interval of length $T$ (on either side of zero)
where $\frac{1}{2}\mu^2T\ge\delta\mu$, or equivalently 
$T\ge \frac{2\delta}{\mu}$. On the other hand, the interval 
at our disposal has length at most $(\rho-\delta)/C$.  
Thus we must impose the condition
${(\rho-\delta)/C>2\delta/\mu}$, 
or equivalently 
$$
\delta\left(1+\frac{2C}{\mu}\right) < \rho.
$$
Under this assumption there is a real number $s_0$ with
$\Abs{s_0}\le2\delta/\mu$ such that~\eqref{eq:perp} holds.  
We can still control the distance of $u(s+s_0,y)$ and $u_0(s)$ 
by a fixed multiple of $\delta$.  We assume from now 
on that  $\Abs{u(s+s_0,y)-u_0(s)}\le c\delta$ for all $s$ and $y$ 
and that~\eqref{eq:perp} holds. 

Consider the functions
\begin{equation*}
\begin{split}
\xi(s,y) &:= u(s+s_0,y)-u_0(s),\\
\eta(s,y)
&:= \nabla H(u(s+s_0,y))-\nabla H(u_0(s))
-\Nabla{\xi(s,y)}\nabla H(u_0(s)).
\end{split}
\end{equation*}
Then $\Abs{\eta(s,y)}\le\Norm{H}_{C^3}\Abs{\xi(s,y)}^2$ and
\begin{equation}\label{eq:xieta}
\p_s\xi + \eps^{-1}\dcD\xi-\Nabla{\xi}\nabla H(u_0) = \eta.
\end{equation}
Hence the functions 
\begin{equation*}
\bar\xi(s):= \frac{1}{\Vol(M)}\int_M\xi(s,\cdot),\qquad
\bar\eta(s):= \frac{1}{\Vol(M)}\int_M\eta(s,\cdot)
\end{equation*}
satisfy
$$
\p_s\bar\xi - \Nabla{\bar\xi}\nabla H(u_0)
= \bar\eta,\qquad \inner{\bar\xi(0)}{\nabla H(u_0(0))} = 0.
$$
Since the gradient flow of $H$ is Morse--Smale 
and $u_0$ is an index-$1$ gradient trajectory of $H$ 
the kernel of the operator 
$$
D_{u_0}:=\p_s+\nabla\nabla H(u_0):W^{1,p}(\R,\H^n)\to L^p(\R,\H^n)
$$
is $1$-dimensional and is spanned by $\p_s\tu_0$.
Since 
$
\p_su_0(0)=\nabla H(u_0(0))
$ 
the restriction of $D_{u_0}$ to the codimention-$1$ 
subspace of all $\zeta\in W^{1,p}(\R,\H^n)$ that satisfy 
$\inner{\zeta(0)}{\nabla H(u_0(0))} = 0$ 
is a Banach space isomorphism.  This implies that there 
is a constant $c_0>0$, depending only on $u_0$, such that 
$$
\inner{\zeta(0)}{\nabla H(u_0(0))} = 0
\qquad\implies\qquad
\norm{\zeta}_{W^{1,p}}\le 
c_0\norm{\p_s\zeta -\Nabla{\zeta}\nabla H(u_0)}_{L^p}.
$$
Applying this to the elements $\zeta=\bar\xi$ 
we have $D_{u_0}\bar\xi=\bar\eta$ and hence
\begin{equation*}
\norm{\bar\xi}_{L^p}
\le c_0\norm{\bar\eta}_{L^p}
\le \frac{c_0}{\Vol(M)^{1/p}}\Norm{\eta}_{L^p}
\le \frac{c_0c\Norm{H}_{C^3}}{\Vol(M)^{1/p}}\delta\Norm{\xi}_{L^p}.
\end{equation*}
Here we have used the inequality 
$\Abs{\eta}\le \Norm{H}_{C^3}\Abs{\xi}^2 \le c\Norm{H}_{C^3}\delta\Abs{\xi}$.
Now it follows from~\eqref{eq:xieta} and the discussion in
the proof of Step~1 for the rescaled operator $\sD_{\tu_0,\eps H}$
that, for a suitable constant (still denoted by $c_0$) 
and $\eps>0$ sufficiently small, we have
\begin{eqnarray*}
\norm{\xi-\bar\xi}_{L^p}
&\le& 
c_0\eps \norm{\eta-\bar\eta}_{L^p} \\
&\le& 
c_0\eps\left(1+\frac{1}{\Vol(M)^{1/p}}\right) \norm{\eta}_{L^p} \\
&\le& 
\frac{c_0c\Norm{H}_{C^3}}{\Vol(M)^{1/p}}\left(\eps+\eps\Vol(M)^{1/p}\right) 
\delta\norm{\xi}_{L^p}.
\end{eqnarray*}
If $\delta(1+\eps+\eps\Vol(M)^{1/p})<\Vol(M)^{1/p}/c_0c\Norm{H}_{C^3}$
then $\xi$ must vanish and this proves Step~2. 

\smallbreak

\medskip\noindent{\bf Step~3.}
{\it There are positive constant $\eps_0$ and $c$
such that the following holds. If $x^\pm$ are
critical points of $H$ and $u:\R\times M\to\H^n$
is a lift of a solution of~\eqref{eq:floerHeps}
(with $0<\eps\le\eps_0$) to the universal cover 
$\H^n$ of $X$, then the function
$$
\bar u(s) := \frac{1}{\Vol(M)}\int_M
u(s,\cdot)\,\dvol_M
$$
satisfies the inequalities
$$
\int_{-\infty}^\infty \Abs{\p_s\bar u(s)}^2\,ds 
\le H(x^+) - H(x^-) + c\eps^2,\qquad
\int_{-\infty}^\infty 
\Abs{\Nabla{s}\p_s\bar u(s)}^2\,ds \le c,
$$
and $\Abs{\p_s\bar u(s) - \nabla H(\bar u(s))}\le c\eps$
for every $s\in\R$.}

\medskip\noindent
First note that
$$
\p_s\bar u(s) - \eps\nabla H(\bar u(s))
= \frac{1}{\Vol(M)}\int_M\left(
\nabla H(u(s,\cdot))-\nabla H(\bar u(s))
\right)\,\dvol_M
$$
and hence
\begin{equation*}
\begin{split}
\Abs{\p_s\bar u(s) - \nabla H(\bar u(s))}^2
&\le
\frac{\Norm{H}_{C^2}}{\Vol(M)}
\int_M\Abs{u(s,\cdot)-\bar u(s)}^2\,\dvol_M \\
&\le
\frac{c_1\Norm{H}_{C^2}\eps^2}{\Vol(M)}
\int_M\Abs{\dd(u)}^2\,\dvol_M \\
&\le 
c_2\eps^4.
\end{split}
\end{equation*}
Here the second inequality follows from~\eqref{eq:c0} and the
last from Lemma~\ref{le:bounded-eps}.  
Second, the function $\bar u$ satisfies 
\begin{equation*}
\int_{-\infty}^\infty
\Abs{\Nabla{s}\p_s\bar u(s)}^2 \,ds
\le
\frac{1}{\Vol(M)}
\int_{-\infty}^\infty\int_M
\Abs{\Nabla{s}\p_su}^2\,\dvol_M\,ds 
\le 
c_3
\end{equation*}
Here we have used Lemma~\ref{le:bounded-eps} and 
Lemma~\ref{le:ddu} for the rescaled function 
$\tu(s,y):=u(\eps s,y)$ with $C$ equal to a constant times
$\eps^2$. Third, we have
\begin{equation*}
\begin{split}
\int_{-\infty}^\infty
\Abs{\p_s\bar u(s)}^2 \,ds
&=
\frac{1}{\Vol(M)}\int_{-\infty}^\infty\int_M
\Bigl(\Abs{\p_su}^2 - \Abs{\p_su-\p_s\bar u}^2\Bigr)  \\ 
&\le
H(x^+)-H(x^-)
+ \frac{c_0}{\Vol(M)}\int_{-\infty}^\infty\int_M
\Abs{\dcD\p_su}^2   \\
&\le H(x^+) - H(x^-) + c_4\eps^2.
\end{split}
\end{equation*}
Here we have used~\eqref{eq:c0} and Lemma~\ref{le:ddu},
again for the rescaled function $\tu(s,y):=u(\eps s,y)$.
This proves Step~3.

\medskip\noindent{\bf Step~4.}
{\it We prove the theorem.}

\medskip\noindent
Let $x^\pm$ be a pair of critical points of $H$ of index difference
less than or equal to~$1$. Suppose, by contradiction, that there 
is a sequence of solutions $u_\nu:\R\times M\to X$
of~\eqref{eq:floerHeps} associated to a sequence $\eps_\nu\to0$ 
such that $u_\nu(s,y)$ is not independent of $y$. 
Replace each $u_\nu$ by a lift to the universal 
cover $\H^n$ of $X$ (still denoted by $u_\nu$)
with the same limit point $\lim_{s\to-\infty}u_\nu(s,y)$. 

First it follows from Lemma~\ref{le:decay-eps} that the functions 
$s\mapsto \p_su_\nu(s,y)$ satisfy a uniform $L^1$ bound.  
Namely, if $\delta$ is the constant of 
Lemma~\ref{le:decay-eps} and ${N>\Vol(M)(H(x^+)-H(x^-))/\delta}$ 
is an integer then, for each $\nu$, the real axis can 
be divided into $N$ intervals such that the energy of $u_\nu$ 
on each of these intervals is less than $\kappa\delta$ and hence, 
by Lemma~\ref{le:decay-eps}, $\p_su_\nu$ satisfies uniform
exponential estimates on all these intervals.
This shows that the images of the functions $u_\nu$ 
are contained in a fixed compact subset of $\H^n$. 

Now consider the associated 
functions
$$
\bar u_\nu(s) := \frac{1}{\Vol(M)}\int_Mu(s,\cdot)\,\dvol_M.
$$
Normalize the sequence such that
$H(\bar u_\nu(0)) = 2^{-1}(H(x^+)+H(x^-))$.
The $W^{2,2}$-bound of Step~3 guarantees the existence 
of a subsequence (still denoted by~$\bar u_\nu$) that converges 
in the $C^1$-norm on every compact subset of $\R$ to a 
gradient trajectory $\bar u_\infty$ of $H$.  The energy bound
of Step~3 shows that the limit sequence has energy at 
most $H(x^+)-H(x^-)$. We claim that $\bar u_\infty$ connects 
$x^-$ to $x^+$.  Otherwise, the standard compactness argument 
would give a subsequence converging to a catenation of at least 
two gradient trajectories running from $x^-$ to $x^+$,
contradicting the Morse--Smale property of the gradient flow.
Now it follows from Step~3 that
$$
\int_{-\infty}^\infty\Abs{\p_s\bar u_\infty}^2
= H(x^+)-H(x^-) = \lim_{\nu\to\infty}
\int_{-\infty}^\infty\Abs{\p_s\bar u_\nu}^2.
$$
This implies that $\bar u_\nu(s_\nu)$ must converge 
to $x^\pm$ for every sequence $s_\nu\to\pm\infty$.
Hence $\bar u_\nu$ converges uniformly to $\bar u_\infty$ 
on all of $\R$. Now it follows from the Sobolev
inequality and the elliptic estimate for the operator
$\dcD$  that
\begin{equation*}
\Norm{u_\nu(s,\cdot)-\bar u_\nu(s)}_{L^\infty(M)}
\le c_1\Norm{\dcD u_\nu(s,\cdot)}_{L^p(M)} 
\le c_2\eps_\nu
\end{equation*}
for $p>3$.  Here the last inequality follows from 
Lemma~\ref{le:bounded-eps}. Hence 
$$
\lim_{\nu\to\infty}\sup_{s,y}
\Abs{u_\nu(s,y)-\bar u_\infty(\eps_\nu s)} = 0.
$$
By Step~2 this implies that, for $\nu$ sufficiently large, 
$u_\nu(s,y)$ agrees with $\bar u_\infty(s)$
up to a time shift and hence is independent of $y$.
This contradicts our assumption and proves the theorem.
\end{proof}

\begin{proof}[Proof of Theorem~\ref{thm:HF=H}]
Let $H:X\to\R$ be as in Theorem~\ref{thm:morse=floer}. 
Then, if $\eps>0$ is sufficiently small, each Floer trajectory 
for $\eps H$ of index $1$ is a Morse gradient line 
and there are no nontrivial Floer trajectories
with index less than $1$. Thus, for $H'\in\sH^\reg$ 
sufficiently $C^2$ close to $\eps H$, the Floer chain complex 
$(\CF(M,X;H'),\p^{H'})$ coincides with the Morse complex of $\eps H$.
Hence the Floer homology group $\HF(M,X;H')$ is naturally isomorphic 
to the Morse homology of $(X,\eps H)$.  This gives rise to an isomorphism
$H_*(X;\Z_2)\to\HF(M,X;H')$ and composition with the
isomorphisms $\HF(M,X;H')\to\HF(M,X;H^\alpha)$ of Theorem~\ref{thm:iso} 
gives a family of isomorphisms satisfying the requirements 
of Theorem~\ref{thm:HF=H}. 
\end{proof}

\begin{proof}[Proof of Theorem~\ref{thm:CZ}]
Assume $X$ is a compact flat hyperk\"ahler manifold
and let $H\in\sH^\morse$. Then, by Theorem~\ref{thm:crit-cpct}, 
the number of critical points of $\sA_H$ remains unchanged under 
any perturbation of $H$ that is sufficiently small
in the $C^2$ norm. Hence, by Theorem~\ref{thm:trans},
we may assume without loss of generality that $H\in\sH^\reg$.
By Theorem~\ref{thm:HF=H}, we then have
$$
\#\sC(H) = \dim \CF_*(M,X;H)
\ge \dim\HF_*(M,X;H) = \dim H_*(X;\Z_2).
$$
This proves the theorem.
\end{proof}

\begin{remark}\label{rmk:S3H4}\rm
An alternative proof of Theorem~\ref{thm:HF=H} can be given along the 
lines of~\cite{PSS}, avoiding the adiabatic limit argument 
of Theorem~\ref{thm:morse=floer}.  This would involve 
Morse--Bott exponential decay for finite energy solutions 
of~\eqref{eq:floerH} with $H=0$ on a half cylinder $[0,\infty)\times M$
respectively $(-\infty,0]\times M$.  Since $X$ is flat, such solutions 
converge to a point in $X$ as $s\to\pm\infty$, and one can then study 
solutions where this limit point lies on a gradient trajectory
of a Morse function on $X$, as in~\cite{PSS}, to obtain the 
desired isomorphism from Morse to Floer homology, respectively its inverse.

If $M:=S^3$ with the standard hypercontact structure, 
the Morse--Bott exponential decay as $s\to+\infty$ 
can be reduced to the removable singularity 
theorem~\ref{thm:remsing}: If $u:\R\times S^3\to X$ 
is a solution of~\eqref{eq:floerH} with $H=0$
and $w:\H\setminus\{0\}\to X$ is given by $w(e^{-s}y) := u(s,y)$
then
\begin{equation}\label{eq:w}
\p_0w-I\p_1w-J\p_2w-K\p_3w = 0.
\end{equation}
Moreover, the energy of $w$ on a ball of radius $r=e^{-s_0}$ is given by
$$
r^2\int_{B_r}\Abs{dw}^2 
= \sA(u(s_0,\cdot)) 
= 2\int_{s_0}^\infty\int_{S^3}\Abs{\p_su}^2.
$$
(Here we use $\kappa=2$ for $M=S^3$.)  We emphasize that no such 
argument is available for the limit $s\to-\infty$.  This reflects
a fundamental asymmetry in equation~\eqref{eq:floerH} related
to the noncommutativity of the quaternions.
\end{remark}

 
\appendix 

\section{Hypercontact manifolds} \label{app:hyco}

Let $M$ be an oriented $3$-manifold.  
Three contact structures $\xi_1,\xi_2,\xi_3$ on $M$
are said to form a {\bf hypercontact structure} if there exists a 
$1$-form ${\alpha=(\alpha_1,\alpha_2,\alpha_3)\in\Om^1(M,\R^3)}$
such that $\alpha_i\wedge d\alpha_i>0$, $\xi_i=\ker\alpha_i$, and
\begin{equation}\label{eq:hc}
\alpha_i\wedge d\alpha_i=\alpha_j\wedge d\alpha_j=:\sigma,\qquad
\alpha_i\wedge d\alpha_j+\alpha_j\wedge d\alpha_i=0
\end{equation}
for $i\ne j$.  The $1$-form $\alpha$ is determined by the contact 
structures $\xi_i$ up to multiplication by a positive function 
on $M$.  We shall sometimes abuse notation and refer to the $1$-form
$\alpha\in\Om^1(M,\R^3)$ as the hypercontact structure.
Associated to $\alpha$ is a family of contact forms
$$
\alpha_\lambda := \inner{\lambda}{\alpha} = 
\lambda_1\alpha_1+\lambda_2\alpha_2+ \lambda_3\alpha_3
$$
parametrized by the standard $2$-sphere $S^2\subset\R^3$.
In this formulation equations~\eqref{eq:hc} hold if and only the volume 
form $\alpha_\lambda\wedge d\alpha_\lambda$ is independent of $\lambda$.
Hypercontact structures were introduced and studied 
by Geiges--Gonzalo~\cite{GG,GG1}.  They used the term 
{\it taut contact sphere}  for the map 
$\lambda\mapsto\alpha_\lambda$.  
The term {\it hypercontact structure}
was used with a different meaning in~\cite{GTH}.

\begin{lemma}\label{le:reeb}
Let $\alpha\in\Om^1(M,\R^3)$ be a hypercontact structure.
Then the associated Reeb vector fields $v_1,v_2,v_3\in\Vect(M)$ 
are everywhere linearly independent.
\end{lemma}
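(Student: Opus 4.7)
My plan is to show that the $3\times 3$ matrix $A_{ij}:=\alpha_i(v_j)$ is invertible at every point $p\in M$, since this simultaneously implies linear independence of the Reeb vector fields (and, incidentally, of the forms $\alpha_i$). The strategy is to exploit the fact that on a $3$-manifold the space of $2$-forms at a point has the same dimension as the tangent space, so every $2$-form can be written as $\iota_v \sigma$ for a unique vector $v$.

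The first step is to represent each $d\alpha_i|_p$ through the volume form $\sigma$. Since $\sigma$ is nondegenerate, there is a unique vector $u_i\in T_pM$ with $d\alpha_i|_p=\iota_{u_i}\sigma|_p$. The Reeb condition $\iota_{v_i}d\alpha_i=0$ gives $\iota_{v_i}\iota_{u_i}\sigma=0$, which in a $3$-dimensional vector space forces $u_i$ to be parallel to $v_i$. Write $u_i=c_i v_i$; using the identity
\[
\alpha_i\wedge\iota_{v_i}\sigma=\alpha_i(v_i)\,\sigma-\iota_{v_i}(\alpha_i\wedge\sigma)=\sigma
\]
(the second term vanishes because $\alpha_i\wedge\sigma$ is a $4$-form on a $3$-manifold) together with the normalization $\alpha_i\wedge d\alpha_i=\sigma$, I get $c_i=1$, so
\[
d\alpha_i|_p=\iota_{v_i}\sigma|_p.
\]

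The second step is to plug this representation into the cross-relation $\alpha_i\wedge d\alpha_j+\alpha_j\wedge d\alpha_i=0$. Applying the same identity again,
\[
\alpha_i\wedge d\alpha_j=\alpha_i\wedge\iota_{v_j}\sigma=\alpha_i(v_j)\,\sigma,
\]
so the cross-relation becomes $(\alpha_i(v_j)+\alpha_j(v_i))\sigma=0$. Since $\sigma\neq 0$, this yields the antisymmetry $\alpha_i(v_j)+\alpha_j(v_i)=0$ for $i\neq j$. Together with $\alpha_i(v_i)=1$ this shows that $A=I+S$ where $S$ is antisymmetric.

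Finally, $(I+S)^T=I-S$, so $\det(A)^2=\det(I+S)\det(I-S)=\det(I-S^2)\ge 1$ since $-S^2=S^T S$ is positive semidefinite. Hence $A$ is invertible. Writing $v_j$ and $\alpha_i$ in any basis of $T_pM$ and its dual, $A$ factors as the product of the coefficient matrix of the $\alpha_i$'s and that of the $v_j$'s, so nonzero determinant forces both to be bases. In particular the $v_1,v_2,v_3$ are linearly independent at $p$.

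There is no real obstacle; the only nontrivial maneuver is recognizing that in three dimensions the Reeb condition $\iota_{v_i}d\alpha_i=0$ together with the contact normalization pins down $d\alpha_i$ as exactly $\iota_{v_i}\sigma$. Everything else is linear algebra.
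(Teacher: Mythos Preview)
Your proof is correct, and it shares with the paper the key identity $d\alpha_i=\iota_{v_i}\sigma$. After that the two arguments diverge. The paper observes that the hypercontact relations give $\alpha_\lambda\wedge d\alpha_\lambda=|\lambda|^2\sigma$ for all $\lambda\in\R^3$; hence no nontrivial combination $d\alpha_\lambda$ can vanish at a point, so the $d\alpha_i$ are pointwise linearly independent, and the isomorphism $v\mapsto\iota_v\sigma$ transfers this to the $v_i$. Your route instead plugs $d\alpha_j=\iota_{v_j}\sigma$ into $\alpha_i\wedge d\alpha_j$ to compute $\alpha_i\wedge d\alpha_j=\alpha_i(v_j)\sigma$, reads off that the matrix $A=(\alpha_i(v_j))$ has the form $I+S$ with $S$ skew, and concludes via $\det(A)^2=\det(I+S^TS)\ge 1$. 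The paper's argument is marginally quicker, but yours yields the antisymmetry $\alpha_i(v_j)+\alpha_j(v_i)=0$ for $i\ne j$ along the way --- precisely the content the paper records separately in the remark immediately following the lemma.
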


\begin{proof}
Since $\alpha_\lambda\wedge d\alpha_\lambda=\Abs{\lambda}^2\sigma$
for $\lambda\in\R^3$ the $2$-forms $d\alpha_1,d\alpha_2,d\alpha_3$
are everywhere linearly independent. Since 
$d\alpha_i=\iota(v_i)\sigma$ this shows that $v_1,v_2,v_3$ 
are everywhere linearly independent.
\end{proof}

\begin{remark}\label{rmk:metric}\rm
If the $1$-forms $\alpha_1,\alpha_2,\alpha_3$ form a hypercontact
structure then, by Lemma~\ref{le:reeb}, the Reeb vector fields 
$v_1,v_2,v_3$ form a global framing of the tangent bundle. 
Call the hypercontact structure {\bf positive} if this
framing is compatible with the orientation. This can be 
achieved by reversing the sign of all three $1$-forms, 
if necessary. In the positive case the function
\begin{equation}\label{eq:kappa}
\kappa := d\alpha_1(v_2,v_3)
= d\alpha_2(v_3,v_1) = d\alpha_3(v_1,v_2)
\end{equation}
on $M$ is positive. Moreover, it is convenient to choose a Riemannian
metric on $M$ in which the $v_i$ form an orthonormal basis.  
The associated volume form is given by
$$
\dvol_M=\frac{\alpha_i\wedge d\alpha_i}{\kappa},\qquad i=1,2,3.
$$ 
\end{remark}

\begin{remark}\label{rmk:reeb}\rm
Let $\alpha_1,\alpha_2,\alpha_3$ be a hypercontact structure 
with Reeb vector fields $v_1,v_2,v_3$ and, for $\lambda\in S^2$,
denote 
$
v_\lambda:=\lambda_1v_1+\lambda_2v_2+\lambda_3v_3.  
$
Then $v_\lambda$ is the Reeb vector field of $\alpha_\lambda$. 
To see this note that 
\begin{equation}\label{eq:adaij}
\alpha_i(v_j)+\alpha_j(v_i)=0,\qquad
d\alpha_i(v_j,\cdot) + d\alpha_j(v_i,\cdot) = 0
\end{equation}
for $i\ne j$, by~\eqref{eq:hc} and Lemma~\ref{le:reeb}.
Hence $\alpha_\lambda(v_\lambda)=1$ and 
$d\alpha_\lambda(v_\lambda,\cdot)=0$. 
\end{remark}

\begin{lemma}\label{le:dual}
Let $\alpha$ be a hypercontact structure on $M$
with Reeb vector fields $v_1,v_2,v_3$.  Let
$\kappa:M\to\R$ be defined by~\eqref{eq:kappa} 
and $\mu:M\to\R^3$ by 
$$
\mu_1 := \alpha_2(v_3),\qquad
\mu_2 := \alpha_3(v_1),\qquad
\mu_3 := \alpha_1(v_2).
$$
Let $e_1,e_2,e_3$ denote the standard basis of $\R^3$.
Then the following holds.

\smallskip\noindent{\bf (i)}
The Lie brackets of the Reeb vector fields satisfy
\begin{equation}\label{eq:lie}
[v_2,v_3]=\kappa v_1,\qquad
[v_3,v_1]=\kappa v_2,\qquad 
[v_1,v_2]=\kappa v_3
\end{equation}
if and only if 
\begin{equation}\label{eq:murei}
d\mu(v_i) = \kappa e_i\wedge\mu,\qquad i=1,2,3.
\end{equation}

\smallskip\noindent{\bf (ii)}
If~\eqref{eq:lie} and~\eqref{eq:murei} hold then 
$\kappa$ is constant.  Conversely, if
$\kappa$ and $\mu$ are constant then $\mu\equiv 0$.

\smallskip\noindent{\bf (iii)}
The function $\mu$ vanishes if and only if 
$\alpha_i\wedge d\alpha_j=0$ for $i\ne j$, 
or equivalently $d\alpha_i=\kappa *\alpha_i$ for $i=1,2,3$. 
Here $*$ denotes the Hodge $*$-operator.
\end{lemma}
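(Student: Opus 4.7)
The lemma unpacks the structure encoded in the matrix $A_{ij}:=\alpha_i(v_j)$, whose diagonal is $1$ and whose off-diagonal entries are $\pm$ components of $\mu$ by the skew-symmetry $\alpha_i(v_j)+\alpha_j(v_i)=0$ of Remark~\ref{rmk:reeb}. All three parts reduce to Cartan's formula for $d\alpha_m$, combined with the Reeb identities $\alpha_i(v_i)=1$ and $d\alpha_i(v_i,\cdot)=0$, and the defining relation $d\alpha_m(v_i,v_j)=\pm\kappa$ for the appropriate cyclic permutations.

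For Part~(i), I would evaluate $\alpha_m([v_i,v_j])$ via Cartan's formula (in the paper's sign convention for the Lie bracket, so that $[v_i,v_j]=\kappa v_k$ is consistent with $d\alpha_i=\kappa\alpha_j\wedge\alpha_k$ in the Cartan case). Each of the nine expressions with $i\neq j$ and $m\in\{1,2,3\}$ reduces to a sum of a $\pm\kappa$ term (from $d\alpha_m(v_i,v_j)$) and one derivative $v_i(\mu_k)$ (from $v$-differentiation of an $\alpha_m(v_\bullet)$ entry of $A$). The Lie bracket condition $[v_i,v_j]=\kappa v_\ell$ translates into $\alpha_m([v_i,v_j])=\kappa A_{m\ell}$ for $m=1,2,3$; doing this for all three cyclic pairs produces nine scalar equations. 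The three ``diagonal'' ones combine to $v_i(\mu_i)=0$, while the remaining six are exactly the nontrivial components of the three vector equations $d\mu(v_i)=\kappa e_i\wedge\mu$, under the identification $e_1\wedge e_2\leftrightarrow e_3$ (and cyclic) that makes $e_i\wedge\mu$ the cross product $e_i\times\mu$. Both directions of the equivalence are visible in the same collection of identities.

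Part~(ii) is where the genuine content lies, and I expect this to be the main obstacle. Granted~\eqref{eq:lie} and~\eqref{eq:murei}, I would apply the Jacobi-type identity $[v_i,v_j](\mu_k)=v_iv_j\mu_k-v_jv_i\mu_k$: the left-hand side equals $\kappa v_\ell(\mu_k)$ by~\eqref{eq:lie} and~\eqref{eq:murei}, while the right-hand side is computed by nesting~\eqref{eq:murei} twice, producing terms involving $v_i(\kappa)$. Specializing to $(i,j)=(2,3)$ with $k=2$ and $k=3$ gives the paired relations
\begin{equation*}
v_2(\kappa)\,\mu_1=-2\kappa^2\mu_3, \qquad v_2(\kappa)\,\mu_3=2\kappa^2\mu_1,
\end{equation*}
a linear system in $(v_2(\kappa),2\kappa^2)$ whose coefficient determinant is $\mu_1^2+\mu_3^2$. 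Since $\kappa>0$ by positivity (Remark~\ref{rmk:metric}), consistency forces $\mu_1=\mu_3=0$ pointwise. Cycling through the other pairs $(i,j)$ gives $\mu\equiv 0$, so we are in the Cartan case with $d\alpha_i=\kappa\,\alpha_j\wedge\alpha_k$. Applying $d$ yields $d\kappa\wedge\alpha_j\wedge\alpha_k=0$ for each cyclic triple, so $d\kappa$ lies simultaneously in the three two-dimensional subspaces $\mathrm{span}(\alpha_j,\alpha_k)$, whose common intersection is trivial; hence $d\kappa=0$. For the converse clause, $\mu$ constant means $d\mu=0$, and~\eqref{eq:murei} then reads $\kappa\,e_i\wedge\mu=0$ for each $i$; with $\kappa>0$ this forces $e_i\wedge\mu=0$ and hence $\mu=0$.

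For Part~(iii), a direct expansion of $(\alpha_i\wedge d\alpha_j)(v_1,v_2,v_3)$ via the standard triple-product formula for $1$-form $\wedge$ $2$-form, combined with $d\alpha_j(v_j,\cdot)=0$, leaves a single surviving term equal to $\pm\kappa\,\mu_k$ for an index $k$ determined by $(i,j)$. Thus $\alpha_i\wedge d\alpha_j=0$ for all $i\neq j$ is equivalent to the vanishing of every off-diagonal entry of $A$, i.e.\ $\mu\equiv 0$. The further equivalence with $d\alpha_i=\kappa{*}\alpha_i$ follows by expanding both sides in the orthonormal coframe $\beta_i$ dual to $v_i$: the Reeb identity forces $d\alpha_i=\kappa\,\beta_j\wedge\beta_k$, while ${*}\alpha_i=A_{i1}\beta_2\wedge\beta_3+A_{i2}\beta_3\wedge\beta_1+A_{i3}\beta_1\wedge\beta_2$ acquires extra $\mu$-dependent terms, and equality for all $i$ demands exactly $A=\mathrm{Id}$.
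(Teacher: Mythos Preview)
Your arguments for Parts~(i) and~(iii) are essentially correct and parallel the paper's approach (which packages the same Cartan-formula computations into matrix identities). The problem is Part~(ii), where both halves of your argument break down.

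For the forward direction of~(ii), the displayed pair of equations is wrong. You invoke $[v_i,v_j]\mu_k=v_iv_j\mu_k-v_jv_i\mu_k$, but in the paper's sign convention (which you say you adopt in Part~(i), and which is visible in the formula $\Nabla{v}\p_wf-\Nabla{w}\p_vf=-\p_{[v,w]}f$ in the proof of Theorem~\ref{thm:reg}) the bracket acts on functions as $[v_i,v_j]f=v_jv_if-v_iv_jf$. When the computation is carried out consistently, the $\kappa^2$-terms on the two sides cancel rather than add, and one obtains only $v_i(\kappa)\,\mu_j=0$ for all $i,j$ --- not the relations $v_2(\kappa)\mu_1=-2\kappa^2\mu_3$, $v_2(\kappa)\mu_3=2\kappa^2\mu_1$ that drive your determinant argument. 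In particular the conclusion $\mu\equiv 0$ does not follow (and the lemma does not claim it). The paper's route is different: it derives $d\kappa(v_i)$ from the closedness relation $dd\alpha_i=0$ (equation~\eqref{eq:hc3}), which expresses $d\kappa$ via the antisymmetric part of the matrix $S=(\rho_i(w_j))$; since \eqref{eq:lie} forces $S=\kappa\one$, symmetry of $S$ gives $d\kappa=0$ at once.

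For the converse clause of~(ii), your argument invokes \eqref{eq:murei}, but that is not a hypothesis: the statement ``$\kappa$ and $\mu$ constant $\Rightarrow\mu\equiv 0$'' concerns an arbitrary hypercontact structure, with no Lie-bracket condition assumed. The paper handles this via the general identities $\Phi S=(\kappa+\sum_k d\mu_k(v_k))\one-B^T$ (equation~\eqref{eq:hc2}, from Cartan's formula) and~\eqref{eq:hc3}: constancy of $\mu$ gives $B=0$, hence $\Phi S=\kappa\one$, while constancy of $\kappa$ makes $S$ symmetric; together these force $\Phi=\kappa S^{-1}$ to be symmetric, whence $\mu=0$.
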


\begin{definition}\label{def:cartan}
A positive hypercontact structure $\alpha$ with $\mu\equiv0$ 
is called a {\bf Cartan structure}.
\end{definition}

\begin{corollary}\label{cor:cartan}
If $\alpha$ is a Cartan structure then $\kappa$ is constant, 
the $\alpha_i$ form the dual basis of the $v_i$,
the $v_i$ satisfy~\eqref{eq:lie}, 
$\alpha_i\wedge d\alpha_j=0$ for $i\ne j$,
and $d^*\alpha_i=0$ for $i=1,2,3$.
\end{corollary}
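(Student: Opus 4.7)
\textbf{Proof plan for Corollary~\ref{cor:cartan}.}
The plan is to unpack the definition of a Cartan structure (namely $\mu\equiv 0$) and then chase through the three parts of Lemma~\ref{le:dual} together with Remark~\ref{rmk:reeb}. All five conclusions will drop out in sequence without any genuinely new computation.

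First I would establish the duality $\alpha_i(v_j)=\delta_{ij}$. By definition of the Reeb vector fields, $\alpha_i(v_i)=1$. For $i\ne j$, Remark~\ref{rmk:reeb} gives $\alpha_i(v_j)+\alpha_j(v_i)=0$, and the hypothesis $\mu\equiv 0$ says $\alpha_2(v_3)=\alpha_3(v_1)=\alpha_1(v_2)=0$; combining these two facts forces all six off-diagonal pairings $\alpha_i(v_j)$ to vanish. Hence the $\alpha_i$ form the frame dual to the $v_i$.

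Next I would derive the remaining properties from Lemma~\ref{le:dual}. Part~(iii) of that lemma directly converts $\mu\equiv 0$ into the statement $\alpha_i\wedge d\alpha_j=0$ for $i\ne j$, together with the identity $d\alpha_i=\kappa *\alpha_i$. For the Lie brackets, observe that $\mu\equiv 0$ makes both sides of~\eqref{eq:murei} vanish trivially, so~\eqref{eq:murei} holds; by part~(i) this is equivalent to~\eqref{eq:lie}, giving $[v_2,v_3]=\kappa v_1$ and its cyclic analogues. Now that~\eqref{eq:lie} and~\eqref{eq:murei} both hold, part~(ii) yields that $\kappa$ is constant.

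Finally, the coclosedness $d^*\alpha_i=0$ follows by taking $d$ of the identity $d\alpha_i=\kappa*\alpha_i$ from part~(iii): since $\kappa$ is constant we get $\kappa\,d*\alpha_i=d(d\alpha_i)=0$, hence $d*\alpha_i=0$, which (up to sign in dimension~$3$) is exactly $d^*\alpha_i=0$.

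There is no real obstacle here; the only point to watch is the logical order, since constancy of $\kappa$ is needed before one can conclude $d^*\alpha_i=0$ from $d\alpha_i=\kappa*\alpha_i$. Everything else is a direct citation of Lemma~\ref{le:dual}.
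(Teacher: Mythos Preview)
Your proof is correct and matches the paper's intended approach: the paper states the corollary without proof, as it is meant to follow immediately from Lemma~\ref{le:dual} (parts (i)--(iii)) together with the antisymmetry in Remark~\ref{rmk:reeb}, exactly as you argue. The only minor variation is that for $d^*\alpha_i=0$ the paper elsewhere (Section~\ref{sec:compact}) invokes Lemma~\ref{le:div}, whereas you deduce it from $d\alpha_i=\kappa*\alpha_i$ with $\kappa$ constant; both routes are valid and equally short.
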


\begin{proof}[Proof of Lemma~\ref{le:dual}]
We introduce the $1$-form $\rho\in\Om^1(M,\R^3)$
and the vector fields $w_1,w_2,w_3\in\Vect(M)$ by
$$
\rho:=\frac{1}{\kappa}
\left(\begin{array}{c}
d\alpha_2(v_3,\cdot) \\
d\alpha_3(v_1,\cdot) \\
d\alpha_1(v_2,\cdot)
\end{array}\right),\qquad
\begin{array}{l}
w_1 := [v_2,v_3],\\
w_2 :=[v_3,v_1],\\
w_3 :=[v_1,v_2].
\end{array}
$$
Then $\rho$ satisfies
\begin{equation}\label{eq:hc1}
\rho_i(v_j)=\delta_{ij},\qquad 
\alpha(\xi) = \rho(\xi)+\rho(\xi)\wedge\mu.
\end{equation}
We also introduce the matrices
$$
A:=\left(\begin{array}{ccc}
\alpha_1(w_1) & \alpha_1(w_2) & \alpha_1(w_3) \\
\alpha_2(w_1) & \alpha_2(w_2) & \alpha_2(w_3) \\
\alpha_3(w_1) & \alpha_3(w_2) & \alpha_3(w_3)
\end{array}\right),\;
S := \left(\begin{array}{ccc}
\rho_1(w_1) & \rho_1(w_2) & \rho_1(w_3) \\
\rho_2(w_1) & \rho_2(w_2) & \rho_2(w_3) \\
\rho_3(w_1) & \rho_3(w_2) & \rho_3(w_3)
\end{array}\right),
$$
$$
\Phi:= \left(\begin{array}{ccc}
1 & \mu_3 & -\mu_2 \\
-\mu_3 & 1 & \mu_1 \\
\mu_2 & -\mu_1 & 1
\end{array}\right),\;
B:= \left(\begin{array}{ccc}
d\mu_1(v_1) & d\mu_1(v_2) & d\mu_1(v_3) \\
d\mu_2(v_1) & d\mu_2(v_2) & d\mu_2(v_3) \\
d\mu_3(v_1) & d\mu_3(v_2) & d\mu_3(v_3)
\end{array}\right).
$$
Then the second equation in~\eqref{eq:hc1} implies
$\Phi S=A$.  Next we observe that
$$
\alpha_i([v_j,v_k]) = d\alpha_i(v_j,v_k) 
- \cL_{v_j}\alpha_i(v_k)+ \cL_{v_k}\alpha_i(v_j).
$$
Hence
$
\alpha_i([v_j,v_k]) = \kappa + d\mu_j(v_j)+d\mu_k(v_k)
$
whenever $i,j,k$ is a cyclic permutation of $1,2,3$,
and
$
\alpha_i([v_i,v_k]) =  -\cL_{v_i}\alpha_i(v_k).
$
These identities can be summarized in the form
$
\alpha_i(w_j) + d\mu_j(v_i) 
= \left(\kappa+\sum_kd\mu_k(v_k)\right)\delta_{ij}
$
or 
\begin{equation}\label{eq:hc2}
\Phi S = A = \left(\kappa+\sum_kd\mu_k(v_k)\right)\one-B^T.
\end{equation}
Moreover, we have
\begin{eqnarray*}
0 &=& dd\alpha_1(v_1,v_2,v_3)  \\
&= & \cL_{v_1}d\alpha_1(v_2,v_3) 
+ \cL_{v_2}d\alpha_1(v_3,v_1) 
+ \cL_{v_3}d\alpha_1(v_1,v_2)  \\
&& 
-\, d\alpha_1(v_1,[v_2,v_3])
- d\alpha_1(v_2,[v_3,v_1])
- d\alpha_1(v_3,[v_1,v_2])  \\
&=& 
d\kappa(v_1) 
- d\alpha_1(v_2,[v_3,v_1])
- d\alpha_1(v_3,[v_1,v_2]).
\end{eqnarray*}
Repeating the argument for $\alpha_2$ and~$\alpha_3$ 
and using equation~\eqref{eq:adaij} we obtain
\begin{equation}\label{eq:hc3}
\begin{split}
d\kappa(v_1) &= \kappa(\rho_3(w_2)-\rho_2(w_3)),\\
d\kappa(v_2) &= \kappa(\rho_1(w_3)-\rho_3(w_1)),\\
d\kappa(v_3) &= \kappa(\rho_2(w_1)-\rho_1(w_2)),
\end{split}
\end{equation}
Hence $\kappa$ is constant if and only if the matrix
$S$ is symmetric. 

We prove~(i). Equation~\eqref{eq:lie} is equivalent 
to $S=\kappa\one$ and equation~\eqref{eq:murei}
to $B=\kappa(\Phi-\one)$.  If $S=\kappa\one$ then it follows
from~\eqref{eq:hc2} that 
$$
B^T = \kappa(\one-\Phi) + \sum_kd\mu_k(v_k)\one.
$$
Examining the diagonal entries 
we find that $d\mu_k(v_k)=0$
for $k=1,2,3$ and hence $B^T=\kappa(\one-\Phi)$. 
This in turn implies that $B^T=-B$ and thus $B=\kappa(\Phi-\one)$. 
Conversely, if $B=\kappa(\Phi-\one)$ then $B$ is skew symmetric 
and $d\mu_k(v_k)=0$ for all $k$.  So it follows from~\eqref{eq:hc2}
that $\Phi S = \kappa\one+B=\kappa\Phi$ and hence
$S=\kappa\one$.  

\smallbreak

We prove~(ii).  If~\eqref{eq:lie} holds
then $S=\kappa\one$ is symmetric and so $\kappa$ 
is constant, by~\eqref{eq:hc3}.  Conversely, if $\kappa$ 
and $\mu$ are constant then, by~\eqref{eq:hc2}, we have 
$\Phi S=\kappa\one$ and, by~\eqref{eq:hc3}, 
$S=S^T$.  Hence $\Phi$ is symmetric and so~$\mu\equiv0$.  

To prove~(iii) we observe that, 
for every cyclic permutation $i,j,k$ of $1,2,3$, we have
$\alpha_i\wedge d\alpha_j=\kappa\mu_k\dvol_M$
and ${\kappa*\alpha_i = d\alpha_i + \mu_kd\alpha_j-\mu_jd\alpha_k}$.
(Take the product with a $1$-form $\beta$
and use the identity $(\beta\wedge d\alpha_i)(v_1,v_2,v_3)=\kappa\beta(v_i)$.) 
This proves the lemma.
\end{proof}

\begin{example}\label{ex:S3}\rm
The standard hypercontact structure on the unit sphere 
$S^3\subset\R^4$ with coordinates $y=(y_0,y_1,y_2,y_3)$ 
is given by the $1$-forms
\begin{equation*}
\begin{split}
\alpha_1 &:= y_0dy_1-y_1dy_0 +y_2dy_3-y_3dy_2,\\
\alpha_2 &:= y_0dy_2-y_2dy_0 +y_3dy_1-y_1dy_3,\\ 
\alpha_3 &:= y_0dy_3-y_3dy_0 +y_1dy_2-y_2dy_1.
\end{split}
\end{equation*}
Identify $\R^4$ with the quaternions via 
$
y=y_0+\i y_1+\j y_2+\k y_3
$ 
and $\R^3$ with the imaginary 
quaternions via $\lambda=\i\lambda_1+\j\lambda_2+\k\lambda_3$. 
Then the $1$-form 
$
\alpha_\lambda :=\lambda_1\alpha_1+\lambda_2\alpha_2+ \lambda_3\alpha_3
$
and its Reeb vector field $v_\lambda$ are given by
$
\alpha_\lambda(y;\eta) = \RE(\lambda y\bar\eta)
$
and
$ 
v_\lambda(y) = \lambda y
$
for $\lambda\in S^2\subset\IM(\H)$ and $\eta\in T_yS^3$.
We emphasize that in this example $\mu\equiv0$ and $\kappa\equiv2$.

The standard hypercontact structure on $S^3$ is preserved 
by the right action of the unit quaternions via 
$\Sp(1)\times S^3\to S^3:(a,y)\mapsto ya$. 
For the left action of $\Sp(1)$ on $S^3$ we have
$
\phi_a^*\alpha_\lambda = \alpha_{a^{-1}\lambda a}
$
and
$
\phi_a^*v_\lambda  = v_{a^{-1}\lambda a},
$
where $\phi_a\in\Diff(S^3)$ is given by 
$\phi_a(y):=ay$ for $a\in\Sp(1)$ and ${y\in S^3}$. 
\end{example}

\begin{proposition}[Geiges--Gonzalo~\cite{GG,GG1}]
\label{prop:GG}
Every Cartan hypercontact $3$-manifold $(M,\alpha)$ 
is diffeomorphic to a quotient of the $3$-sphere 
(with the standard hypercontact structure up to scaling) 
by a finite subgroup of $\Sp(1)$.
\end{proposition}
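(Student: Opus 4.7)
The plan is to promote the pointwise frame $v_1,v_2,v_3$ to a global Lie group action of $\Sp(1)$ on $M$, show the action is free and transitive up to a finite stabilizer, and finally identify the hypercontact data with the standard one on $S^3$ via this action.

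First I would normalize the structure. By Corollary~\ref{cor:cartan} the constant $\kappa$ is positive, so after multiplying $\alpha$ by $2/\kappa$ (which preserves the Cartan condition and rescales the Reeb frame) I may assume $[v_2,v_3]=2v_1$, $[v_3,v_1]=2v_2$, $[v_1,v_2]=2v_3$, i.e.\ the $v_i$ satisfy exactly the bracket relations of the standard basis $\i,\j,\k$ of $\mathfrak{sp}(1)=\IM(\H)$ generating the right-invariant vector fields $y\mapsto\i y$, $y\mapsto\j y$, $y\mapsto\k y$ of Example~\ref{ex:S3}.

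Next I would integrate the infinitesimal action. Because $M$ is compact each $v_i$ is complete, and because the brackets reproduce the Lie algebra $\mathfrak{sp}(1)$ of the simply connected Lie group $\Sp(1)$, Palais' theorem on integration of infinitesimal Lie algebra actions yields a smooth action $\Phi:\Sp(1)\times M\to M$ whose infinitesimal generators are (the duals of) $v_1,v_2,v_3$. This is the key technical step and will require some care: one has to verify the compatibility of the flows $\phi^i_t$ of $v_i$ in the sense that the Baker--Campbell--Hausdorff formula produces a well defined homomorphism out of $\Sp(1)$; completeness on compact $M$ plus simple connectedness of $\Sp(1)$ is exactly what is needed to avoid global obstructions.

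Once the action $\Phi$ exists, the remaining steps are straightforward. At every $p\in M$ the derivative of the orbit map $\Sp(1)\to M$, $g\mapsto\Phi(g,p)$, sends the basis $\i,\j,\k$ to $v_1(p),v_2(p),v_3(p)$, which is a basis of $T_pM$ by Lemma~\ref{le:reeb}; so the orbit map is a local diffeomorphism, every orbit is open, and (being the continuous image of the compact group $\Sp(1)$) also closed. Connectedness of $M$ therefore forces the action to be transitive, so $M\cong\Sp(1)/\Gamma_p$, where the stabilizer $\Gamma_p$ is the preimage of $p$; as the orbit map is a local diffeomorphism, $\Gamma_p$ is discrete, hence finite by compactness of $\Sp(1)$. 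Finally, under the induced diffeomorphism $\Sp(1)/\Gamma\to M$ the vector fields $v_i$ pull back to the standard generators $y\mapsto\i y$, $y\mapsto\j y$, $y\mapsto\k y$ on $S^3=\Sp(1)$; since the Cartan condition $\alpha_i(v_j)=\delta_{ij}$ characterizes the $\alpha_i$ as the dual coframe of $v_i$, the $\alpha_i$ pull back to the dual coframe of these vector fields on $S^3$, which is precisely the standard hypercontact structure of Example~\ref{ex:S3}. The $\Gamma$-invariance of the standard structure then ensures that the chosen structure on $M$ is exactly the quotient structure, completing the identification.

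The main obstacle, as indicated above, is the integration step: turning a Lie-algebra-valued parallelization into a genuine group action. After that the transitivity, finiteness of the stabilizer, and identification of the hypercontact forms are essentially formal consequences of simple connectedness of $\Sp(1)$, compactness of $M$, and the Cartan duality $\alpha_i(v_j)=\delta_{ij}$.
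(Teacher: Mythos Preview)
Your proposal is correct and follows essentially the same route as the paper: normalize to $\kappa=2$, integrate the Lie algebra homomorphism $\IM(\H)\to\Vect(M)$ to an $\Sp(1)$-action using compactness of $M$ and simple connectedness of $\Sp(1)$, deduce transitivity with finite stabilizer, and identify the frame (hence the dual coframe $\alpha$) with the standard one on $S^3$. If anything you give more detail than the paper, which asserts transitivity and finite isotropy without the open--closed orbit argument; the only slip is the scaling factor (multiplying $\alpha$ by $c$ replaces $\kappa$ by $\kappa/c$, so you want $c=\kappa/2$, not $2/\kappa$), which is harmless.
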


\begin{proof}
By rescaling, if necessary, we may assume that $\kappa=2$. 
Then there is a unique Lie algebra homomorphism 
$
\Lie(\Sp(1))=\IM(\H)\to\Vect(M):\xi\mapsto v_\xi
$
such that $v_\i,v_\j,v_\k$ are the Reeb vector fields 
of $\alpha_1,\alpha_2,\alpha_3$, respectively. 
Since $M$ is compact and $\Sp(1)$ is simply connected,
this Lie algebra homomorphism integrates to a unique 
Lie group homomorphism 
$$
\Sp(1)\to\Diff(M):x\mapsto\phi_x.
$$
This group action of $\Sp(1)$ on $M$ is transitive,
because $M$ is connected, and it has finite isotropy 
subgroups. Fix an element $y_0\in M$ and define the map
${\psi:\Sp(1)\to M}$ by $\psi(x):=\phi_x(y_0)$.
This map induces a diffeomorphismm $\Sp(1)/\G_0\to M$, 
where $\G_0:=\left\{x\in\Sp(1)\,|\,\phi_x(y_0)=y_0\right\}$
denotes the stabilizer of $y_0$.  This diffeomorphism 
identifies the vector field $x\mapsto\i x$ on $\Sp(1)/\G_0$
with the vector field $v_\i$ on $M$ and similarly 
for $\j$ and $\k$. 
\end{proof}

 
\section{The Heinz trick for subcritical exponents}\label{app:heinz}   

Let $M$ be a smooth Riemannian $n$-manifold (not necessarily compact)
and let $\sL$ be a scalar second order elliptic operator.
We assume that $\sL$ differs from the Laplace Beltrami operator
$\Delta:=-d^*d$ by a first order operator.  We study nonnegative 
solutions $e:M\to\R$ of the differential inequality
\begin{equation}\label{eq:heinz}
\sL e \ge - A - B e^\mu
\end{equation}
where 
$$
1\le \mu \le \frac{n+2}{n}.
$$
In the critical case $\mu=(n+2)/n$ the Heinz trick gives 
a mean value inequality for nonnegative solutions 
$e:B_r(p_0)\to[0,\infty)$ of~\eqref{eq:heinz}  with 
sufficiently small $L^1$ norm (see for example~\cite{RS3,W}).   
For $\mu<(n+2)/n$ the same proof shows that the
condition on the $L^1$ norm can be dropped and one obtains
a global estimate for the sup-norm in terms of the 
$L^1$ norm of $e$. 

\begin{theorem}\label{thm:heinz}
Let $K\subset M$ be a compact set and let
$1\le\mu\le(n+2)/n$. 

\smallskip\noindent{\bf (i)}
Assume $\mu<(n+2)/n$. Then there is a constant $c>0$ 
with the following significance.
If $e:M\to\R$ is a nonnegative $C^2$ function 
satisfying~\eqref{eq:heinz} then 
\begin{equation}\label{eq:heinz1}
\sup_K e \le c\left(A + \int_Me\,\dvol_M
+ \left(B^{n/2}\int_Me\,\dvol_M\right)^\alpha\right),
\end{equation}
where $\alpha:=2/(2+n-n\mu)$.

\smallskip\noindent{\bf (ii)}
Assume $\mu=(n+2)/n$. Then there are positive constants 
$\hbar,\delta,c$ with the following significance.
If $e:M\to\R$ is a nonnegative $C^2$ function 
satisfying~\eqref{eq:heinz} then, for $x\in K$ 
and $0<r\le\delta$, we have
\begin{equation}\label{eq:hein2}
B^{n/2}\int_{B_r(x)}e < \hbar\quad\implies\quad
e(x) \le c\left(Ar^2 + \frac{1}{r^n}\int_{B_r(x)}e\,\dvol_M\right).
\end{equation}
\end{theorem}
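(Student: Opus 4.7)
The plan is to prove both parts simultaneously by the classical Heinz rescaling trick, with the subcritical gap $\mu < (n+2)/n$ in part~(i) playing the role of the smallness hypothesis present in part~(ii). First set $\gamma := 2/(\mu-1)$ (the borderline case $\mu = 1$ reduces to a standard linear subsolution estimate and I would handle it separately), and via a finite coordinate cover of $K$ reduce to a Euclidean setting in which $K$ is a closed ball inside a larger open ball $\Omega$ with compact closure in $M$.

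The core is a Heinz point selection followed by a scaling. I would consider the weight $F(p) := \dist(p, \partial\Omega)^\gamma\, e(p)$, which vanishes on $\partial\Omega$, and let $p_* \in \Omega$ be a maximizer; writing $\rho := \dist(p_*, \partial\Omega)$, $E_* := e(p_*)$, and $M := \rho^\gamma E_*$, the inequality $F \le F(p_*)$ gives the weighted maximum property $e \le 2^\gamma E_*$ on $B_{\rho/2}(p_*)$ and $\sup_K e \le c E_*$. Next I would rescale by $\lambda := \sqrt{B}\, E_*^{(\mu-1)/2}$ via $\tilde{e}(y) := E_*^{-1} e(p_* + \lambda^{-1} y)$ on $B_{\lambda\rho/2}(0)$, so that $\tilde{e}(0) = 1$, $\tilde{e} \le 2^\gamma$, and
$$
\tilde{\sL}\, \tilde{e} \;\ge\; -\tfrac{A}{BE_*^\mu} - \tilde{e}^{\,\mu}.
$$
If $\lambda\rho < 2$ one directly obtains $M \le cB^{-\gamma/2}$; otherwise the linear mean value inequality for nonnegative subsolutions of $\tilde{\sL}\,u \ge -g$ applied on a ball $B_s(0)$ with $s \in (0, 1]$ yields
$$
1 \;=\; \tilde{e}(0) \;\le\; Cs^{-n}\!\int_{B_s}\tilde{e} \;+\; Cs^2\bigl(2^{\gamma\mu} + \tfrac{A}{BE_*^\mu}\bigr),
$$
and I would choose $s$ small (depending only on $\mu$ and $C$) to drive the $2^{\gamma\mu}$-term below $\tfrac12$, treating the $A$-dominated regime $BE_*^\mu \le A$ separately (where $E_* \le (A/B)^{1/\mu}$ is immediate). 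This produces a universal lower bound $\int_{B_s(0)}\tilde{e} \ge c_0 > 0$; undoing the scaling gives the key inequality
$$
B^{n/2}\, E_*^{\,n(\mu-1)/2 - 1}\!\int_M e \;\ge\; c_0.
$$

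At this point the two parts diverge. In the subcritical case the exponent $n(\mu-1)/2 - 1$ is strictly negative, so solving for $E_*$ yields $E_* \le c\,(B^{n/2}\|e\|_{L^1(M)})^{\alpha}$ with $\alpha = 2/(2+n-n\mu)$; combining with $\sup_K e \le cE_*$, the $A$-dominated subcase, and the absorption of the $\lambda\rho<2$ bound $M \le cB^{-\gamma/2}$ into the other terms gives (i). In the critical case the exponent vanishes and the key inequality collapses to $B^{n/2}\!\int_{B_r(x)} e \ge c_0$; taking $\hbar := c_0/2$ rules this regime out under the smallness hypothesis, leaving only $\lambda\rho < 2$, whose bound combined with the weighted-max property and the $A/(BE_*^\mu)$ contribution produces the pointwise estimate $e(x) \le c(Ar^2 + r^{-n}\int_{B_r(x)}e)$ in (ii). The main obstacle will be the fixed-size source $2^{\gamma\mu}$ in the rescaled inequality, which precludes a naive unit-ball mean-value estimate and forces the sub-unit radius $s$ trick together with careful case-splitting among the $A$-, $B^{-\gamma/2}$-, and $\|e\|_{L^1}$-dominated regimes.
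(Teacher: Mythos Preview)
Your strategy (Heinz point selection plus a mean-value inequality) matches the paper's, but the case analysis does not close. The concrete gap is the branch $\lambda\rho < 2$: the bound $M = \rho^\gamma E_* \le cB^{-\gamma/2}$ you record there is correct but cannot be ``absorbed into the other terms'' of either target estimate. For~(i), take $A=0$, $B$ fixed, and $\int_M e$ arbitrarily small; the right side of~\eqref{eq:heinz1} goes to zero while $cB^{-\gamma/2}$ does not. For~(ii) (where $\gamma=n$) you obtain only $e(x)\le M r^{-n}\le cB^{-n/2}r^{-n}$, which is not controlled by $Ar^2 + r^{-n}\int_{B_r(x)}e$ when $\int_{B_r(x)}e\ll B^{-n/2}$. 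Merely being in the regime $\lambda\rho<2$ does not by itself produce a bound in terms of $A$ and $\int e$; one must still apply the mean-value inequality there. (A related issue: your $A$-dominated branch gives $E_*\le(A/B)^{1/\mu}$, again carrying an uncontrolled negative power of $B$ rather than the clean bound $E_*\le A$.)

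The paper takes the weight exponent to be $n$, setting $h(s)=((\delta-s)/\delta)^n\max_{B_s(p_0)}e$, and splits at the genuine absorbability threshold $c_2B\,2^{n\mu}c^{\mu-1}\eps^2=\tfrac14$, together with the separate trivial case $c\le A$. Above threshold one chooses $r\le\eps$ with $c_2B\,2^{n\mu}c^{\mu-1}r^2=\tfrac14$ and reads off the $(B^{n/2}\int e)^\alpha$ bound on $c$. Below threshold one applies the mean-value inequality at $r=\eps$: the $B$-term is $<c/4$ by the case hypothesis, the $A$-term is $c_2A\eps^2\le c_2c\delta^2\le c/4$ (using $A\le c$ and $\delta$ small), so $c\le 2c_2\eps^{-n}\int e$; then the factor $(2\eps/\delta)^n$ in $e(p_0)\le h(s^*)$ cancels the $\eps^{-n}$, yielding $e(p_0)\le(2^{n+1}c_2/\delta^n)\int e$. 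Your split at $\lambda\rho=2$ is off from this threshold by the large factor $c_2 2^{\gamma\mu}$, and you never invoke the mean-value inequality in the sub-threshold branch, which is precisely where the decisive $\eps^n$--$\eps^{-n}$ cancellation lives.
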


\begin{proof}
The proof has three steps.  For $p_0\in M$ and $r>0$ we denote 
by $B_r(p_0)\subset M$ the closed ball of radius $r$ about $p_0$. 
The first step restates Theorem~9.20 in~\cite{GT}.

\medskip\noindent{\bf Step~1.}
{\it There are constants $c_1>0$ and $\delta>0$ with 
the following significance.  If $p_0\in K$ and 
$0<r\le\delta$ then every nonnegative $C^2$ 
function $e:B_r(p_0)\to\R$ satisfies}
$$
\Delta e\ge 0\quad\implies\quad 
e(p_0) \le \frac{c_1}{r^n}\int_{B_r(p_0)}e\,\dvol_M.
$$

\medskip\noindent

\medskip\noindent{\bf Step~2.}
{\it There are constants $c_2>0$ and $\delta>0$ with 
the following significance. If $p_0\in K$, $0<r\le\delta$, 
and $A\ge 0$, then every nonnegative $C^2$
function $e:B_r(p_0)\to\R$ satisfies}
$$
\sL e\ge -A\quad\implies\quad 
e(p_0) \le c_2\left(A r^2+\frac{1}{r^n}\int_{B_r(p_0)}e\,\dvol_M\right).
$$

\medskip\noindent
Let $\delta$ be smaller than the injectivity radius of $M$ and than
the constant in Step~1. Choose geodesic coordinates 
$y^1,y^2,\dots,y^n$ in $B_r(y_0)$ with $y^i(p_0)=0$.  
Then 
$$
\sL = \sum_{\mu,\nu}a^{\mu\nu}\frac{\p^2}{\p y^\mu\p y^\nu} 
+ \sum_\nu b^\nu\frac{\p}{\p y^\nu}
$$
with $a^{\mu\nu}(0)=\delta^{\mu\nu}$. Choose $\delta$ so small that
$$
\Abs{y}\le\delta\qquad\implies\qquad
\Abs{a^{\nu\nu}(y)-1} + \delta\Abs{b^\nu(y)}\le\frac{1}{n}
$$
for $\nu=1,\dots,n$. Denote by $\Delta_0=\sum_\nu(\frac{\p}{\p y^\nu})^2$
the standard Laplace operator and consider the function
$$
u(y) := \frac{A}{2}\Abs{y}^2.
$$
This function satisfies 
$
\Delta_0u=nA
$
and 
$$
(\sL u-\Delta_0u)(y)
= A\left(\sum_\nu(a^{\nu\nu}(y)-1)
+ \sum_\nu b^\nu(y)y^\nu\right)
\ge -A.
$$
Hence $\sL u\ge 2A$ and 
$$
\sL(e+u)\ge \sL e + (n-1)A \ge 0.
$$
By Step~1, this implies
$$
e(0) = e(0)+u(0)
\le \frac{c_1}{r^n}\int_{B_r}(e+u)\dvol_M.
$$
Hence the assertion follows from the fact that
$$
\int_{B_r}u\,\dvol_M \le \om_n A\int_0^r\rho^{n+1}\,d\rho
= \frac{\om_n A r^{n+2}}{n+2}.
$$
Here $\om_n$ denotes the area of the unit sphere in $\R^n$
and $\delta$ is chosen so small that $\dvol_M$ and the volume 
form of the flat metric differ by a factor at most~$2$.  
This proves Step~2.

\medskip\noindent{\bf Step~3.}
{\it There is a constant $c_3>0$ with the following 
significance. If $e:M\to\R$ is a nonnegative $C^2$ 
function satisfying
$$
\sL e\ge -A-Be^\mu
$$
for some constants $A,B\ge 0$ then}
$$
\sup_K e \le c_3\left(A + \int_Me\,\dvol_M
+ \left(
B^{n/2}\int_Me\,\dvol_M\right)^{2/(2+n-n\mu)}
\right).
$$

\medskip\noindent
Let $\delta$ be as in Step~2 and assume $c_2\delta^2\le\frac{1}{4}$.  
Fix a point $p_0\in K$. Define ${h:[0,\delta]\to\R}$ by 
$$
h(s) := \left(\frac{\delta-s}{\delta}\right)^n\max_{B_s(p_0)}e.
$$
Then 
$$
h(0)=e(p_0),\qquad h(\delta)=0. 
$$
Since $h$ is nonnegative there is an 
$s^*\in[0,\delta)$ and a $p^*\in B_{s^*}(p_0)$ such that
$$
h(s^*) = \max_{0\le s\le\delta}h(s),\qquad
c:=e(p^*) = \max_{B_{s^*}(p_0)}e.
$$
Denote 
$$
\eps := \frac{\delta-s^*}{2}.
$$
Then 
$$
\max_{B_\eps(p^*)}e \le\max_{B_{s^*+\eps}(p_0)}e
= \frac{\delta^nh(s^*+\eps)}{\left(\delta-s^*-\eps\right)^n}
\le \frac{2^n\delta^nh(s^*)}{\left(\delta-s^*\right)^n}
= 2^n\max_{B_{s^*}(p_0)}e
= 2^nc.
$$
Hence in $B_\eps(p^*)$ we have the inequality
$$
\sL e \ge - A-Be^\mu \ge -A - B(2^nc)^\mu.
$$
By Step~2 this implies 
\begin{equation}\label{eq:cr}
c = e(p^*)
\le c_2\left((A+B(2^nc)^\mu) r^2+\frac{1}{r^n}\int_Me\,\dvol_M\right)
\end{equation}
for $0\le r\le\eps$.   Now comes the crucial case distinction.

\smallbreak

\smallskip\noindent{\bf Case~1.}
If $c\le A$ then we have 
$$
e(p_0)\le c\le A
$$ 
and so the desired estimate holds with $c_3=1$.  
Thus we may assume $A\le c$. 

\smallskip\noindent{\bf Case~2.}
Assume 
$$
A\le c,\qquad c_2B2^{n\mu} c^{\mu-1}\eps^2\ge\frac{1}{4}.
$$
Then we may choose $r\le\eps<\delta$ such that 
$c_2B2^{n\mu}c^{\mu-1}r^2=\frac{1}{4}$ and obtain
$$
c_2(A+B(2^nc)^\mu) r^2
\le c_2c\delta^2+c_2B(2^nc)^\mu r^2\le \frac{c}{2}
$$
Hence,  by~\eqref{eq:cr}, we have
$$
c\le \frac{2c_2}{r^n}\int_Me\,\dvol_M
= 2c_2(4c_2B2^{n\mu})^{n/2}c^{(n\mu-n)/2}\int_Me\,\dvol_M.
$$
Since $\mu<(n+2)/n$ we have $2+n-n\mu>0$ and hence
$$
e(p_0)\le c\le c_3\left(B^{n/2}\int_Me\,\dvol_M\right)^{2/(2+n-n\mu)},
$$
with $c_3:=\left(2c_2(c_22^{n\mu+2})^{n/2}\right)^{2/(2+n-n\mu)}$.
(For the critical exponent we have $(n\mu-n)/2=1$.  In this situation
Case~2 can be excluded by the assumption of a sufficiently 
small upper bound on $B^{n/2}\int e\,\dvol_M$.)

\smallskip\noindent{\bf Case~3.}
Assume 
$$
A\le c,\qquad c_2B2^{n\mu}c^{\mu-1}\eps^2 < \frac{1}{4}.
$$ 
Then we may choose $r=\eps$ and obtain
$c_2(A+B(2^nc)^\mu)\eps^2\le \frac{c}{2}$ as before.  Hence, by~\eqref{eq:cr},
we have
$$
c\le \frac{2c_2}{\eps^n}\int_Me\,\dvol_M.
$$
Since $\delta-s^*=2\eps$ this gives
$$
e(p_0) = h(0) \le h(s^*)
= c\left(\frac{\delta-s^*}{\delta}\right)^n 
= \frac{2^nc\eps^n}{\delta^n} \le \frac{2^{n+1}c_2}{\delta^n}\int_Me\,\dvol_M.
$$
Thus in this case the estimate of Step~3 holds with $c_3=2^{n+1}c_2/\delta^n$. 
This proves the theorem.
\end{proof}

    
\section{A removable singularity theorem} \label{app:remsing}

Denote by $B\subset\R^4$ the unit ball
with coordinates $t=(t_0,t_1,t_2,t_3)$ and by 
$$
B_r := \left\{t\in\R^4\,|\,\Abs{t}\le r\right\},\qquad
S_r := \left\{t\in\R^4\,|\,\Abs{t}= r\right\},
$$ 
the ball and sphere of radius $r$.  
Let $X$ be a hyperk\"ahler manifold with complex structures $I,J,K$.
Let $w:B\to\Vect(X)$ and $\Xi=(\xi_i^j):B\to\R^{4\times 4}$ be smooth maps
such that $\Xi(0)=\one$ is the identity matrix and $\Xi(t)$ is nonsingular for 
every $t\in B$.  We examine solutions of the equation
\begin{equation}\label{eq:local}
\sum_{i=0}^3 
\bigl(\xi_0^i(t)\p_iu
+ \xi_1^i(t)I\p_iu
+ \xi_2^i(t)J\p_iu
+ \xi_3^i(t)K\p_iu\bigr) 
= \nabla w(t,u).
\end{equation}
Associated to equation~\eqref{eq:local} is the elliptic operator
$$
L := \sum_{i,j=0}^k a^{ij}\p_i\p_j + \sum_{j=0}^3b^j\p_j,\qquad
a^{ij}:=\sum_\nu\xi_\nu^i\xi_\nu^j,\qquad
b^j:=\sum_{\nu,i}(\p_i\xi_\nu^j)\xi_\nu^i.
$$

\begin{theorem}\label{thm:remsing}
Assume $X$ is a compact flat hyperk\"ahler 
manifold (possibly with boundary).  If $u:B\setminus\{0\}\to X$ 
is a solution of~\eqref{eq:local} on the punctured disc~and
$$
\int_B\Abs{du}^2 = \sum_{i=0}^3\int_B\Abs{\p_iu}^2 < \infty
$$
then $u$ extends to a smooth function from $B$ to $X$. 
\end{theorem}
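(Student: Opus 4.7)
The plan is to reduce equation~\eqref{eq:local} to a linear elliptic PDE on the universal cover $\H^n$ of~$X$, and then apply standard removable singularity theory in four dimensions. Since $n=4\ge 3$, the punctured ball $B\setminus\{0\}$ is simply connected, and since $X$ is a compact flat hyperk\"ahler manifold, it is a quotient $\H^n/\Gamma$ by a discrete group $\Gamma$ of affine hyperk\"ahler isometries. Hence $u$ admits a unique (base-point-fixed) lift $\tilde u:B\setminus\{0\}\to\H^n$ with $\Abs{d\tilde u}=\Abs{du}$, and in particular $d\tilde u\in L^2(B)$. In flat $\H^n$-coordinates the covariant derivative is the ordinary derivative, so~\eqref{eq:local} becomes \emph{linear} in $d\tilde u$. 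Applying the formal adjoint of the first-order operator on the left, exactly as in the derivations~\eqref{eq:critH-local} and~\eqref{eq:floerH-local} of the proof of Theorem~\ref{thm:reg} except that now the Christoffel symbols of $X$ vanish, one obtains an equation
\begin{equation*}
L\tilde u \;=\; A(t)\cdot d\tilde u + G(t),
\end{equation*}
where $L=\sum a^{ij}\p_i\p_j+\sum b^j\p_j$ is uniformly elliptic on $B$ because $\Xi$ is nonsingular. The crucial point is that although $\tilde u$ itself is a priori only in $H^1_{\mathrm{loc}}(B\setminus\{0\})$ and could be unbounded near~$0$, the coefficients $A,G$ depend on $\tilde u$ only through the projection $p\circ\tilde u=u:B\setminus\{0\}\to X$ and through the fixed vector field $w$; since $X$ is compact, both are bounded, so $A,G\in L^\infty(B)$.

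Next I would use this linear PDE, together with the fact that the point $\{0\}\subset\R^4$ has vanishing Sobolev $H^1$-capacity in dimension $4\ge 3$, to extend $\tilde u$ across the singularity. Using the Sobolev--Poincar\'e inequality $\Norm{v-\bar v}_{L^4(B)}\le C\Norm{dv}_{L^2(B)}$ on $B\subset\R^4$, and subtracting a suitable constant $c\in\H^n$ built from the spherical averages of $\tilde u$ near~$0$, the function $v:=\tilde u-c$ satisfies $v\in L^4(B)\cap H^1(B)$ and still solves $Lv=A\cdot dv+G$ weakly on $B\setminus\{0\}$. A standard capacity argument---testing against $\phi(1-\eta_r)$ where $\eta_r$ is a cutoff supported in $B_{2r}$ equal to $1$ on $B_r$ with $\Norm{d\eta_r}_{L^2(\R^4)}^2=O(r^2)\to 0$---extends the weak equation to all of $B$. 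Moser iteration (or the De~Giorgi--Nash theorem) for uniformly elliptic equations with $L^\infty$ coefficients then gives $v\in L^\infty_{\mathrm{loc}}(B)$, so that $\tilde u$ is in fact bounded in a neighborhood of~$0$.

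With $\tilde u\in L^\infty_{\mathrm{loc}}$, the coefficients of the equation become smooth functions of $t$ and $\tilde u$, and I would finish with the standard elliptic bootstrap: $L\tilde u\in L^2$ gives $\tilde u\in H^2_{\mathrm{loc}}(B)$; iterating $L^p$-estimates together with the Sobolev embeddings $W^{2,2}\hookrightarrow W^{1,4}$ and $W^{1,p}\hookrightarrow C^{0,\alpha}$ for $p>4$ eventually yields $\tilde u\in C^{1,\alpha}$, Schauder gives $\tilde u\in C^{2,\alpha}$, and further iteration produces $\tilde u\in C^\infty(B)$. Projecting back down, $u=p\circ\tilde u$ is smooth on $B$. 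The main obstacle is the middle step: choosing the shift $c$ so that $v\in H^1(B)$. This requires combining the global Sobolev inequality on $B$ with a Poincar\'e argument on the dyadic annuli $B_{2^{-k+1}}\setminus B_{2^{-k}}$ to show that the spherical averages of $\tilde u$ form a Cauchy sequence once suitably normalized; the subtlety is that, because $\H^n$ is noncompact, the lift $\tilde u$ need not stay in a bounded region a priori, and without this shift the capacity extension of the weak equation across $\{0\}$ cannot be carried out.
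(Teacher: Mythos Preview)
Your approach is correct and takes a genuinely different route from the paper. The paper never linearizes: it works with the energy density $e=\tfrac12|du|^2$, uses flatness to get $Le\ge -C(1+e^{3/2})$ (Lemma~\ref{le:Le}), applies the Heinz trick (Theorem~\ref{thm:heinz}) to obtain $|t|^4|du(t)|^2\to 0$, and then exploits the hyperk\"ahler energy/action identities and the isoperimetric inequality $\sA_r(u)\le r\sE_r(u)$ on spheres (Lemma~\ref{le:EA}) to prove $\int_{B_r}|du|^2\le cr^\mu$ for every $\mu<4$ via a differential inequality in $r$ (Lemma~\ref{le:BS}); feeding this back through the mean-value inequality gives $du\in L^p$ for some $p>4$, and then Sobolev plus bootstrap finishes. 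Your observation that flatness makes the second-order equation \emph{linear} with $L^\infty$ coefficients (because the inhomogeneity factors through compact $X$) short-circuits all of this geometric machinery and reduces the problem to standard linear elliptic theory in dimension four. This is cleaner in the flat case; the paper's argument, by contrast, is organized around the energy-concentration estimates one would need if $X$ were not flat and the equation stayed genuinely nonlinear.

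One correction to your sketch: the subgoal ``spherical averages of $\tilde u$ form a Cauchy sequence'' is not the right mechanism and is false in general under the hypothesis $d\tilde u\in L^2(B)$ alone --- the radial function $|t|^{-1/2}$ has $d\tilde u\in L^2(B)$ and lies in $L^4(B)$, yet its spherical means diverge. Instead, take $c$ to be the mean of $\tilde u$ over a fixed outer annulus $A_0=B_1\setminus B_{1/2}$, apply Sobolev--Poincar\'e on $B$ to the truncations $\tilde u_M:=\max(-M,\min(M,\tilde u))$ (which lie in $W^{1,2}(B)$ by zero capacity of $\{0\}$ in dimension $4$ and agree with $\tilde u$ on $A_0$ for large $M$), obtain $\|\tilde u_M-c\|_{L^4(B)}\le C\|d\tilde u\|_{L^2(B)}$ uniformly in $M$, and let $M\to\infty$ via Fatou. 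With $v=\tilde u-c\in W^{1,2}(B)\cap L^4(B)$ in hand, your capacity extension of the weak equation and the $W^{2,p}$ bootstrap go through as written.
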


\begin{remark}\label{rmk:remsing}\rm
In Theorem~\ref{thm:remsing} the condition that $X$ is flat 
cannot be omitted.  For example, let $f:S^3\to X$ 
be a nonconstant critical point of the 
hypersymplectic action functional $\sA$. 
Such critical points are described in the introduction
(compositions of rational curves with Hopf fibrations)
and they do not exist in the flat case, by Lemma~\ref{le:apriori}. 
Identify $S^3$ with the unit sphere in $\H$ and define 
$u:\H\setminus\{0\}\to X$ by 
$$
u(t) := f(\Abs{t}^{-1}t).
$$
Then $u$ satisfies the equation
$$
\p_0u - I\p_1u - J\p_2u - K\p_3u = 0.
$$
Moreover, we have
$
\Abs{du(t)}^2 = \Abs{t}^{-2}
\abs{df(\abs{t}^{-1}t)}^2
$
and hence 
$$
\int_{B_r} \Abs{du}^2 = \frac{r^2}{2}\int_{S^3}\Abs{df}^2 
= r^2\sA(f)
$$
for every $r>0$. However, the singularity of $u$ at the origin
cannot be removed. 
\end{remark}

\begin{lemma}\label{le:Le}
Assume $X$ is a compact flat hyperk\"ahler manifold.
Then there is a constant $C>0$ with the following significance.
If $u:B\setminus\{0\}\to X$ is a solution of~\eqref{eq:local} 
then the function $e=e_u:B\to[0,\infty)$ defined by 
$$
e(t) := \frac12\sum_{j=0}^3\Abs{\sum_{i=0}^3\xi_j^i(t)\p_iu(t)}^2
$$
satisfies the inequality
$$
Le \ge -C(1+e^{3/2}). 
$$
\end{lemma}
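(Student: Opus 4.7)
The plan is to adapt the computation in the proof of Lemma~\ref{le:Ler} to the local setting on $B\subset\R^4$. Flatness of $X$ lets me work in local flat coordinates near $u(t)$, so that covariant derivatives reduce to ordinary partial derivatives and all curvature terms vanish; in these coordinates $u$ is locally $\H^n$-valued. Introduce the vector fields $\xi_\rho:=\sum_i\xi_\rho^i\p_i$ on $B$; a direct expansion gives $L=\sum_{\rho=0}^3\xi_\rho^2$ as a second-order operator. Set $\eta_j:=\xi_j(u)$, so that $e=\tfrac12\sum_j|\eta_j|^2$, and put $D:=\xi_0+I\xi_1+J\xi_2+K\xi_3$ and $D^*:=\xi_0-I\xi_1-J\xi_2-K\xi_3$, so that \eqref{eq:local} reads $Du=F$ with $F(t):=\nabla w(t,u(t))$.

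Using flatness, the product rule yields
\begin{equation*}
Le=p_u+\sum_{j=0}^3\langle\eta_j,L\eta_j\rangle,\qquad p_u:=\sum_{j,\rho=0}^3|\xi_\rho\eta_j|^2\ge 0,
\end{equation*}
so the task is to bound $\sum_j\langle\eta_j,L\eta_j\rangle$ from below. A mechanical expansion using $I^2=J^2=K^2=-1$, $IJ=K$, et cycl., yields the Weitzenb\"ock-type identity
\begin{equation*}
D^*D=L+I([\xi_0,\xi_1]-[\xi_2,\xi_3])+J([\xi_0,\xi_2]-[\xi_3,\xi_1])+K([\xi_0,\xi_3]-[\xi_1,\xi_2]).
\end{equation*}
Differentiating the equation $Du=F$ by $\xi_j$ and using the commutator identity $\xi_\rho\eta_j-\xi_j\eta_\rho=[\xi_\rho,\xi_j]u$ gives $D\eta_j=\xi_j F+\sum_\rho C_\rho[\xi_\rho,\xi_j]u$, where $(C_0,C_1,C_2,C_3)=(1,I,J,K)$. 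Applying $D^*$ and substituting the Weitzenb\"ock identity expresses $L\eta_j$ as a first-order operator $D^*$ applied to the right-hand side above, plus a zero-order correction bounded by $C|\eta_j|$.

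The upshot is a pointwise estimate $|L\eta_j|\le C(1+e^{1/2}+e+|d^2u|)$, where the $|d^2u|$ and quadratic-in-$e$ contributions come from applying $\xi_\sigma$ to $F(t,u(t))$ (via the chain rule on $\nabla_t w$ and $\nabla_x w$) and from differentiating the Lie-bracket term $[\xi_\rho,\xi_j]u$. Because $\Xi$ is invertible, the tensor $(\xi_\rho^i\xi_j^k)$ is a non-singular linear map on symmetric $2$-tensors, so $p_u\ge c_0|d^2u|^2-C_1(1+e)$ for positive constants $c_0,C_1$ depending on $\Xi$ and its first derivatives. Cauchy--Schwarz and Young's inequality then give
\begin{equation*}
\sum_j|\langle\eta_j,L\eta_j\rangle|\le Ce^{1/2}\bigl(1+e^{1/2}+e+|d^2u|\bigr)\le \tfrac12 p_u+C'(1+e+e^{3/2}),
\end{equation*}
and substituting into the formula for $Le$ yields $Le\ge\tfrac12 p_u-C'(1+e+e^{3/2})\ge -C''(1+e^{3/2})$, using $e^{1/2},e\le 1+e^{3/2}$.

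The main technical point is the third step: book-keeping the way second derivatives of $u$ enter $L\eta_j$. They come in two channels --- the second $\xi_\sigma$-derivative of $F(t,u(t))$, and a single $\xi_\sigma$-derivative applied to $[\xi_\rho,\xi_j]u$ --- and each produces a term of type $|d^2u|$. The key observation making the absorption work is that $p_u$ is precisely a non-degenerate quadratic form in $d^2u$ (ellipticity of $\Xi$), so these critical second-derivative terms are exactly those dominated by $p_u$. This is why the subcritical power $e^{3/2}$, rather than a higher power, arises on the right of the final estimate.
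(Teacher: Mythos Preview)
Your approach is essentially the paper's: the paper simply says the proof is ``word by word the same arguments as in Lemma~\ref{le:Ler}'', and your computation is exactly that adaptation --- expand $Le=p_u+\sum_j\langle\eta_j,L\eta_j\rangle$, use the equation to express $L\eta_j$, and absorb the second-derivative contributions into the positive term $p_u$ via ellipticity of $\Xi$. The $D^*D$/Weitzenb\"ock packaging you use is an equivalent reorganisation of the direct commutator computation in Lemma~\ref{le:Ler}.

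One slip: the Weitzenb\"ock correction terms $I([\xi_0,\xi_1]-[\xi_2,\xi_3])\eta_j$, etc., are \emph{first}-order in $\eta_j$ (the Lie bracket of two vector fields is a vector field), not zero-order bounded by $C|\eta_j|$. So they contribute another $|d^2u|$ term to the bound on $|L\eta_j|$, alongside those you already list from $D^*(\xi_jF)$ and $D^*([\xi_\rho,\xi_j]u)$. This does not break anything --- your stated bound $|L\eta_j|\le C(1+e^{1/2}+e+|d^2u|)$ remains correct and the absorption into $p_u\ge c_0|d^2u|^2-C_1(1+e)$ goes through unchanged --- but the sentence as written is wrong and should be fixed.
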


\begin{proof}
The proof uses word by word the same arguments as in 
Lemma~\ref{le:Ler} and will be omitted.
\end{proof}

The exponent $\frac{3}{2}=\frac{n+2}{n}$ in Lemma~\ref{le:Le} is the critical 
exponent of Theorem~\ref{thm:heinz} for $n=4$.  Hence  
every solution $u:B\setminus\{0\}\to X$ of~\eqref{eq:local} 
satisfies an inequality of the form 
\begin{equation}\label{eq:r2r}
\Abs{t}=r\qquad\implies\qquad
\Abs{du(t)}^2\le cr^2+\frac{c}{r^4}\int_{B_{2r}}\Abs{du}^2
\end{equation}
for $r$ sufficiently small and a suitable constant $c$.  Thus
$\Abs{t}^4\Abs{du(t)}^2$ converges to zero as $t$ tends to zero.

It is convenient to introduce the $1$-forms $\theta_1,\theta_2,\theta_3$ 
and the vector fields $v_0,v_1,v_2,v_3$ on $B$ by 
\begin{equation*}
\begin{split}
\theta_1 &:= t_0dt_1 - t_1dt_0 - t_2dt_3 + t_3dt_2,\\
\theta_2 &:= t_0dt_2 - t_2dt_0 - t_3dt_1 + t_1dt_3,\\
\theta_3 &:= t_0dt_3 - t_3dt_0 - t_1dt_2 + t_2dt_1, \\
v_0 &:= t_0\p_0 + t_1\p_1 + t_2\p_2 + t_3\p_3,\\
v_1 &:= t_0\p_1 - t_1\p_0 - t_2\p_3 + t_3\p_2,\\
v_2 &:= t_0\p_2 - t_2\p_0 - t_3\p_1 + t_1\p_3,\\
v_3 &:= t_0\p_3 - t_3\p_0 - t_1\p_2 + t_2\p_1.
\end{split}
\end{equation*}
Note that the $v_i$ are orthogonal and $\Abs{v_i(t)}=\Abs{t}$.
In particular, for $t\in S_r$ the vectors 
$r^{-1}v_1(t),r^{-1}v_2(t),r^{-1}v_3(t)$ form an 
orthonormal basis of the tangent space $T_tS_r=t^\perp$.
The energy and the hypersymplectic action of a smooth map
$u:S_r\to X$ are defined by 
$$
\sE_r(u) := \frac1{r^2}\int_{S_r}\sum_{i=1}^3\Abs{du(v_i)}^2,\qquad
\sA_r(u) := \int_{S_r}\sum_i\theta_i\wedge u^*\om_i.
$$

\begin{lemma}\label{le:EA}
The energy and hypersymplectic action satisfy the isoperimetric inequality
\begin{equation}\label{eq:isoperimetric}
\sA_r(u)\le r\sE_r(u)
\end{equation}
and the energy identities
\begin{equation}\label{eq:ArE}
\sE_r(u) + \frac{2}{r}\sA_r(u) 
= \frac{1}{r^2}\int_{S_r}\Abs{Idu(v_1)+Jdu(v_2)+Kdu(v_3)}^2\,\dvol_{S_r}
\end{equation}
for every smooth map $u:S_r\to X$ and
\begin{equation}\label{eq:energy-action}
\int_{B_r}\Abs{du}^2
= \sA_r(u) + \int_{B_r}\Abs{\p_0u+I\p_1u+J\p_2u+K\p_3u}^2
\end{equation}
for every smooth map $u:B_r\setminus\{0\}\to X$ 
satisfying $\lim_{t\to0}\Abs{t}^4\Abs{du(t)}^2=0$.
\end{lemma}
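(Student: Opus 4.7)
The plan is to derive all three identities by expanding the quaternionic squares $|Idu(v_1) + Jdu(v_2) + Kdu(v_3)|^2$ and $|\p_0 u + I\p_1 u + J\p_2 u + K\p_3 u|^2$ using the hyperk\"ahler identities $\inner{J_i\xi}{J_j\eta} = \om_k(\xi,\eta)$ for $(i,j,k)$ a cyclic permutation of $(1,2,3)$ (with convention $\om_i(\cdot,\cdot) = \inner{J_i\cdot}{\cdot}$), and then to interpret the resulting cross terms geometrically in terms of the forms $\theta_i$.

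For \eqref{eq:ArE}, I would first verify by direct computation that $\theta_i(v_j) = r^2 \delta_{ij}$ and $\theta_i(v_0) = 0$ on $S_r$, where $v_0$ is the outward radial field; thus on $S_r$ the $\theta_i$ are $r$ times the coframe dual to the orthonormal frame $e_i := v_i/r$ of $TS_r$. Expanding $|Idu(e_1) + Jdu(e_2) + Kdu(e_3)|^2$ via the quaternionic identities yields $\sum_i|du(e_i)|^2 + 2\sum_{(i,j,k)\text{ cyc.}}u^*\om_i(e_j,e_k)$. Evaluating the wedge $(\theta_i \wedge u^*\om_i)$ on the positively oriented orthonormal frame $(e_1,e_2,e_3)$ gives $r\,u^*\om_i(e_j,e_k)$, so integration over $S_r$ converts the cross-term sum into $\frac{2}{r}\sA_r(u)$, and \eqref{eq:ArE} follows after rescaling by $r^{-2}$. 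The isoperimetric inequality \eqref{eq:isoperimetric} is then an immediate consequence of \eqref{eq:ArE} together with the elementary bound $|A+B+C|^2 \le 3(|A|^2 + |B|^2 + |C|^2)$, which shows the right-hand side of \eqref{eq:ArE} is at most $3\sE_r(u)$.

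For \eqref{eq:energy-action}, the same expansion applied to $|\p_0u + I\p_1u + J\p_2u + K\p_3u|^2 - |du|^2$ produces a cross-term sum whose value against $dt_0\wedge dt_1\wedge dt_2\wedge dt_3$ coincides with that of $\sum_i \gamma_i \wedge u^*\om_i$, where $\gamma_i := dt_0\wedge dt_i - dt_j\wedge dt_k$ (cyclic) is a self-dual $2$-form on $\H$. A direct calculation gives $d\theta_i = 2\gamma_i$, and since $u^*\om_i$ is closed one has $\gamma_i\wedge u^*\om_i = \tfrac12 d(\theta_i\wedge u^*\om_i)$. Stokes' theorem applied on the annulus $B_r \setminus B_\eps$ then yields boundary contributions $\sA_r(u) - \sA_\eps(u)$, and \eqref{eq:energy-action} follows upon letting $\eps\to 0$.

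The main technical point is thus the vanishing of $\sA_\eps(u)$ as $\eps\to 0$. Since $\theta_i(v_j) = \eps^2\delta_{ij}$ with $|v_j| = \eps$ on $S_\eps$, the $1$-form $\theta_i$ restricted to $S_\eps$ has pointwise norm bounded by $\eps$; combined with $\Vol(S_\eps) = O(\eps^3)$ this gives $|\sA_\eps(u)| \le C\eps^4\sup_{S_\eps}|du|^2$. The hypothesis $\lim_{t\to 0}|t|^4|du(t)|^2 = 0$, which (as the paper notes just before the lemma) follows from $\int_B|du|^2 < \infty$ via the mean-value inequality \eqref{eq:r2r} derived from Lemma~\ref{le:Le} and Theorem~\ref{thm:heinz}, then gives $\sup_{S_\eps}|du|^2 = o(\eps^{-4})$, so $\sA_\eps(u) \to 0$ as required.
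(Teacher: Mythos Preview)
Your argument is correct and follows essentially the same route as the paper: the paper also computes $\theta_i(v_j)=r^2\delta_{ij}$, obtains the integral formula $\sA_r(u)=\tfrac1r\int_{S_r}\sum u^*\om_i(v_j,v_k)$ (from which both~\eqref{eq:isoperimetric} and~\eqref{eq:ArE} follow, the latter by reference to Lemma~\ref{le:acten}), and proves~\eqref{eq:energy-action} by the same annulus--Stokes argument, using the isoperimetric inequality to show $\sA_\rho(u)\to0$. The only cosmetic differences are that you deduce~\eqref{eq:isoperimetric} \emph{from}~\eqref{eq:ArE} rather than directly from the integral formula, and you make the Stokes step explicit via $d\theta_i=2\gamma_i$ where the paper simply says ``by direct computation''; your two-sided bound $|\sA_\eps(u)|\le C\eps^4\sup_{S_\eps}|du|^2$ is in fact slightly cleaner than invoking~\eqref{eq:isoperimetric} alone.
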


\begin{proof}
We have $\theta_i(v_j)=r^2\delta_{ij}$ 
and so the standard volume form on $S_r$ is 
$\dvol_{S_r}=r^{-3}\theta_1\wedge\theta_2\wedge\theta_3$.
Hence $\theta_i\wedge u^*\om_i=r^{-1}u^*\om_i(v_j,v_k)\dvol_{S_r}$
for every cyclic permutation $i,j,k$ of $1,2,3$.  This implies
$$
\sA_r(u) = \frac{1}{r}\int_{S_r}
\Bigl(u^*\om_1(v_2,v_3) 
+ u^*\om_2(v_3,v_1) + u^*\om_3(v_1,v_2)
\Bigr)\,\dvol_{S_r}
$$
and hence the isoperimetric inequality~\eqref{eq:isoperimetric}.
The energy identity~\eqref{eq:ArE} is an adaptation of Lemma~\ref{le:acten}
to the present notation.  To prove~\eqref{eq:energy-action} we 
assume that $u:B_r\setminus\{0\}\to X$ satisfies 
$
\lim_{t\to0}\Abs{t}^4\Abs{du(t)}^2=0.
$
Then it follows from~\eqref{eq:isoperimetric} that
$
\lim_{\rho\to 0}\sA_\rho(u) = 0.
$
Moreover, by direct computation, we have
$$
\int_{B_r\setminus B_\rho}\Bigl(
\Abs{du}^2 - \Abs{\p_0u+I\p_1u+J\p_2u+K\p_3u}^2
\Bigr)
= \sA_r(u) - \sA_\rho(u)
$$
for $0<\rho\le r$. The assertion follows by taking
the limit $\rho\to 0$.  This proves the lemma. 
\end{proof}

\begin{lemma}\label{le:BS}
Assume $X$ is compact and fix any real number $0<\mu<4$. 
Let $u:B\setminus\{0\}\to X$ be a solution of~\eqref{eq:local}
satisfying 
$
\lim_{t\to0}\Abs{t}^4\Abs{du(t)}^2=0.
$
Then there are positive constants $r_0$ and $c$ such that 
$$
0<r\le r_0\qquad\implies\qquad
\int_{B_r}\Abs{du}^2 \le cr^\mu.
$$ 
\end{lemma}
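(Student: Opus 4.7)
Plan. I plan to show $\epsilon(r) := \int_{B_r}|du|^2 \le cr^\mu$ by deriving a self-improving differential inequality for $\epsilon$ and iterating it via the Heinz mean value inequality. The starting point is the energy identity of Lemma~\ref{le:EA}, $\epsilon(r) = \sA_r(u) + \int_{B_r}|D_0 u|^2$ with $D_0 u := \partial_0 u + I\partial_1 u + J\partial_2 u + K\partial_3 u$; the hypothesis $|t|^4|du|^2 \to 0$ is precisely what kills the inner boundary term when applying Stokes on the annulus $B_r \setminus B_\rho$ and letting $\rho\to 0$. To this identity I append the isoperimetric bound $\sA_r(u) \le r\sE_r(u) \le r\epsilon'(r)$ from the same lemma, and the pointwise estimate $|D_0 u(t)| \le C + C|t||du(t)|$ that follows from~\eqref{eq:local}, from $\Xi(0) = \one$ (so $|\Xi(t)-\one|\le C|t|$), and from boundedness of $|\nabla w|$ (by compactness of $X$). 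The crude integrated consequence, $\int_{B_r}|D_0 u|^2 \le Cr^4 + Cr^2\epsilon(r)$, then produces the basic differential inequality $\epsilon(r) \le r\epsilon'(r) + Cr^4 + Cr^2\epsilon(r)$.

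For $r \le r_0$ sufficiently small, absorbing the $Cr^2\epsilon$ term shows $\epsilon(r) \le (1+\eta)r\epsilon'(r) + Cr^4$ for any prescribed $\eta > 0$. Substituting $\epsilon(r) = r^{1/(1+\eta)}\psi(r)$ reduces this to $\psi'(r) \ge -Cr^{\alpha}$ for a positive $\alpha$, which integrates to $\psi$ bounded, giving the seed estimate $\epsilon(r) \le Cr^{\mu_0}$ with $\mu_0$ arbitrarily close to $1$. I then bootstrap via Theorem~\ref{thm:heinz}(ii) (applicable with critical exponent $n=4$ because the subquadratic inequality~\eqref{eq:heinz} holds for $e := |du|^2$ by Lemma~\ref{le:Le}, and the small-energy hypothesis is automatic since $\epsilon(r) \to 0$): starting from $\epsilon(\rho) \le C\rho^{\mu_0}$, the mean value inequality delivers $|du(t)|^2 \le C(|t|^2 + |t|^{\mu_0 - 4})$. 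Replacing the crude $Cr^2\epsilon$ term by this refined pointwise bound yields $\int_{B_r}|t|^2|du|^2 \le Cr^{\mu_0 + 2}$, so the new ODE becomes $\epsilon(r) \le r\epsilon'(r) + Cr^{\min(4,\mu_0 + 2)}$, now free of any absorption term, and one extracts a strictly larger exponent. Iterating finitely many times brings the exponent up to any prescribed $\mu < 4$.

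The main obstacle is pushing the exponent past the apparent $\mu = 1$ barrier of the basic ODE: on its own the inequality $\epsilon \le r\epsilon' + Cr^\alpha$ with $\alpha > 1$ only gives $\epsilon \le Cr$, since the homogeneous solution $\epsilon = Cr$ saturates it. The escape uses the integral representation $\sA_r(u) = \sum_i \int_{B_r}d\theta_i \wedge u^*\omega_i$ (again by Stokes, since $d(\theta_i \wedge u^*\omega_i) = d\theta_i \wedge u^*\omega_i$ when $\omega_i$ is closed, and the boundary term at the origin vanishes by the same $|t|^4|du|^2 \to 0$ hypothesis), which bounds $|\sA_r(u)|$ directly by $C\epsilon(r)$ with a pointwise integrand. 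Combined with the complementary estimate $\sA_r(u) \ge \epsilon(r) - \int_{B_r}|D_0 u|^2$ from the energy identity, the two bounds together force $\epsilon(r)$ to inherit the faster decay $r^{\min(4, \mu_0 + 2)}$ of the residual $\int|D_0 u|^2$, at the cost of absorbing the Stokes constant into a Campanato-type iteration $\epsilon(r/2) \le \gamma_k\epsilon(r) + Cr^{\mu_k + 2}$ where the contraction ratio $\gamma_k$ can be taken arbitrarily small by choosing $r_0$ smaller. Termination after finitely many steps is what ultimately yields $\mu$ arbitrarily close to, though strictly less than, $4$, with the constant $c$ in the conclusion allowed to blow up as $\mu \uparrow 4$.
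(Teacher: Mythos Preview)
Your plan correctly identifies the starting ingredients (the ball energy identity~\eqref{eq:energy-action}, the isoperimetric inequality~\eqref{eq:isoperimetric}, and the pointwise bound $|D_0u|\le C(1+|t|\,|du|)$ coming from $\Xi(0)=\one$), and you are right that these alone produce only $\epsilon(r)\le r\epsilon'(r)+Cr^2\epsilon(r)+Cr^4$, which cannot break the exponent barrier $\mu=1$. The gap is in your proposed escape. The Stokes representation $\sA_r(u)=\sum_i\int_{B_r}d\theta_i\wedge u^*\omega_i$ gives $|\sA_r(u)|\le C\epsilon(r)$ with a \emph{fixed} constant $C$ of order one (the forms $d\theta_i$ are constant $2$-forms, independent of $r$); in fact the algebraic identity $|du|^2-|D_0u|^2=\sum_i d\theta_i\wedge u^*\omega_i/\dvol$ shows this $C$ cannot be taken below $1$. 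Combining $|\sA_r|\le C\epsilon$ with $\sA_r=\epsilon-\int_{B_r}|D_0u|^2$ therefore yields nothing, and there is no mechanism in what you wrote to make the contraction ratio $\gamma_k$ in your Campanato step small: neither the Stokes constant nor the isoperimetric constant depends on $r_0$. Feeding the Heinz estimate back in improves $\int_{B_r}|D_0u|^2$ but not the coefficient $1$ in front of $r\epsilon'(r)$, so the iteration stalls at $\mu_0\approx 1$.

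The paper closes this gap in a single step, with no iteration, by extracting a factor of $4$ from the \emph{sphere} energy identity~\eqref{eq:ArE} rather than only the isoperimetric inequality. On $S_r$ the PDE gives $|du(v_0)|\approx |Idu(v_1)+Jdu(v_2)+Kdu(v_3)|$ up to terms of order $Cr^2|du|+Cr$, so
\[
\int_{S_r}|du|^2
=\sE_r(u)+\tfrac{1}{r^2}\int_{S_r}|du(v_0)|^2
\gtrsim \sE_r(u)+\Bigl(\sE_r(u)+\tfrac{2}{r}\sA_r(u)\Bigr)
\ge \tfrac{4}{r}\sA_r(u),
\]
using~\eqref{eq:ArE} for the middle term and~\eqref{eq:isoperimetric} for the last. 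Combined with $\epsilon(r)\lesssim\sA_r(u)+Cr^4$ from~\eqref{eq:energy-action}, this yields $\epsilon(r)\le \frac{r}{\mu}\epsilon'(r)+cr^4$ for any prescribed $\mu<4$ once $r$ is small. Then $\phi(r):=r^{-\mu}\epsilon(r)+\frac{\mu c}{4-\mu}r^{4-\mu}$ is nondecreasing, and the conclusion follows immediately. The missing idea in your plan is precisely this use of~\eqref{eq:ArE} together with the PDE on the sphere to match the radial energy to the tangential Dirac energy; the isoperimetric inequality by itself is too weak by a factor of $4$.
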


\begin{proof}
Since $\Xi(0)$ is the identity matrix, there is a constant $C>0$ 
such that every solution of~\eqref{eq:local} 
satisfies the estimate
\begin{equation}\label{eq:ineq}
\Abs{\p_0u(t)+I\p_1u(t)+J\p_2u(t)+K\p_3u(t)}^2 
\le C^2(\Abs{t}^2\Abs{du(t)}^2 + 1)
\end{equation}
Combining this with~\eqref{eq:energy-action} we obtain
\begin{eqnarray}\label{eq:Bdu}
\int_{B_r}\Abs{du}^2
&=& 
\sA_r(u) + \int_{B_r}\Abs{\p_0u+I\p_1u+J\p_2u+K\p_3u}^2 
\nonumber \\
&\le&
\sA_r(u) + C^2r^2\int_{B_r}\Abs{du}^2  +  C^2\Vol(B)r^4.
\end{eqnarray}
Since
$$
\Abs{du(v_0)+Idu(v_1)+Jdu(v_2)+Kdu(v_3)}
= r\Abs{\p_0u+I\p_1u+J\p_2u+K\p_3u}
$$
on $S_r$ and $r^2\Abs{du}^2+1\le(r\Abs{du}+1)^2$, 
it follows also from~\eqref{eq:ineq} that 
\begin{equation*}
\begin{split}
\Abs{du(v_0)}
&\ge
\Abs{Idu(v_1)+Jdu(v_2)+Kdu(v_3)}
- Cr^2\Abs{du} - Cr, \\
\Abs{du(v_0)}^2 
&\ge 
\Abs{Idu(v_1)+Jdu(v_2)+Kdu(v_3)}^2
- 6Cr^3\Abs{du}^2 - 6Cr^2\Abs{du}.
\end{split}
\end{equation*}
This implies 
\begin{eqnarray*}
\int_{S_r}\Abs{du}^2
&=& 
\frac{1}{r^2}\int_{S_r}\sum_{i=0}^3\Abs{du(v_i)}^2  
=
\sE_r(u) + \frac{1}{r^2}\int_{S_r}\Abs{du(v_0)}^2  \\
&\ge&
\sE_r(u) + \frac{1}{r^2}\int_{S_r}\Abs{Idu(v_1)+Jdu(v_2)+Kdu(v_3)}^2  \\
&& -\, 6Cr\int_{S_r}\Abs{du}^2 - 6C\int_{S_r}\Abs{du}   \\
&\ge&
2\sE_r(u) + \frac{2}{r}\sA_r(u) 
- 6C(r+\delta)\int_{S_r}\Abs{du}^2 - \frac{3C}{2\delta}\int_{S_r}1.
\end{eqnarray*}
Here we have dropped the volume form $\dvol_{S_r}$
in the notation.  The last step follows from~\eqref{eq:ArE}.
Since $\sE_r(u)\ge r^{-1}\sA_r(u)$ and the area of the
$3$-sphere is $4\Vol(B)$ this gives
$$
\bigl(1+6C(r+\delta)\bigr)\int_{S_r}\Abs{du}^2 
\ge \frac{4}{r}\sA_r(u) - \frac{6C\Vol(B)r^3}{\delta}.
$$
On the other hand, by~\eqref{eq:Bdu} we have
$$
\bigl(1-C^2r^2\bigr)\int_{B_r}\Abs{du}^2
\le \sA_r(u) +  C^2\Vol(B)r^4.
$$
Combining these two inequalities we obtain
$$
\int_{B_r}\Abs{du}^2 
\le\frac{1+6C(r+\delta)}{1-C^2r^2} \frac{r}{4}\int_{S_r}\Abs{du}^2
+ \left(\frac{C^2}{1-C^2r^2} + \frac{3C}{2\delta}\right)\Vol(B)r^4
$$
for $r<1/C$.  Choose $\delta$ so small that $(1+6C\delta)\mu<4$. 
Then, for $r$ sufficiently small and a suitable constant 
$c>0$, we have 
\begin{equation}\label{eq:duBS}
\int_{B_r}\Abs{du}^2 \le r\mu^{-1}\int_{S_r}\Abs{du}^2 + cr^4.
\end{equation}
Define the function $\phi:(0,1]\to\R$ by 
$$
\phi(r) := r^{-\mu}\int_{B_r}\Abs{du}^2 + \frac{\mu c}{4-\mu}r^{4-\mu}.
$$
Then the derivative of $\phi$ is
\begin{eqnarray*}
\frac{d}{dr}\phi(r) 
&=&
r^{-\mu}\int_{S_r}\Abs{du}^2
- \mu r^{-\mu-1}\int_{B_r}\Abs{du}^2
+ \mu cr^{3-\mu}  \\
&=&
\mu r^{-\mu-1}
\left(r\mu^{-1}\int_{S_r}\Abs{du}^2
- \int_{B_r}\Abs{du}^2
+ cr^4\right)  \ge 0.
\end{eqnarray*}
The last inequality follows from~\eqref{eq:duBS} and holds for 
$r$ sufficiently small, say for $0<r\le r_0$.  Hence
$$
\int_{B_r}\Abs{du}^2 \le \phi(r)r^\mu \le \phi(r_0)r^\mu
$$
for $0< r\le r_0$. This proves the lemma.
\end{proof}

\begin{proof}[Proof of Theorem~\ref{thm:remsing}]
Choose a real number $\mu$ such that $2<\mu<4$.  
Combining Lemma~\ref{le:BS} with the the 
inequality~\eqref{eq:r2r} we obtain
$$
\Abs{du(t)}^2 \le \frac{c}{\Abs{t}^{4-\mu}}
$$
for a suitable constant $c>0$.  For $4<p<8/(4-\mu)$ this implies
$$
\int_B\Abs{du}^p 
= \int_0^1\int_{S_r}\Abs{du}^p
\le 4\Vol(B)c^p\int_0^1r^{3-(4-\mu)p/2} \,dr
<\infty.
$$
That the integral is finite follows from the fact that $3-\frac{1}{2}(4-\mu)p>-1$.
By the Sobolev embedding theorem our function $u:B\setminus\{0\}\to X$
is H\"older continuous and extends to a $W^{1,p}$ function on $B$.
Now it follows from the standard elliptic bootstrapping techniques
that the extended function $u$ is smooth.  This proves the theorem.
\end{proof}


\end{document}